\DeclareSymbolFont{rsfscript}{OMS}{rsfs}{m}{n}
\DeclareSymbolFontAlphabet{\mathrsfs}{rsfscript}
\numberwithin{equation}{section}
\newtheorem{prop}{Proposition}[section]
\newtheorem{lem}[prop]{Lemma}
\newtheorem{cor}[prop]{Corollary}
\def\Jc{\mathrel{\mathrsfs{J}}}
\def\Dc{\mathrel{\mathrsfs{D}}}
\def\Hc{\mathrel{\mathrsfs{H}}}
\def\Lc{\mathrel{\mathrsfs{L}}}
\def\Rc{\mathrel{\mathrsfs{R}}}
\def\Kc{\mathrel{\mathrsfs{K}}}
\def\A{\mathfrak{A}}
\def\B{\mathfrak{B}}
\def\E{\mathrsfs{E}}
\def\lE{\overline{\mathrsfs{E}}}
\def\oE{\vec{\mathrsfs{E}}} 
\def\V{\mathrsfs{V}}
\def\Sl{\mathrsfs{S}_l}
\def\Sr{\mathrsfs{S}_r}
\def\Ga{\Gamma}
\def\l{\ell}
\def\la{\lambda}
\def\al{\alpha}
\def\be{\beta}
\def\cb{\mathbf{c}}
\def\sb{\mathbf{s}}
\def\lv{\mathfrak{l}}
\def\rv{\mathfrak{r}}
\def\av{\mathfrak{a}}
\def\bv{\mathfrak{b}}
\def\ev{\mathfrak{e}}
\def\vev{\vec{\ev}}
\def\oev{\ol{\ev}}
\def\fv{\mathfrak{f}}
\def\w{\mathsf{w}}
\def\Rcc{\mathrm{R}}
\def\Hcc{\mathrm{H}}
\def\Lcc{\mathrm{L}}
\def\Dcc{\mathrm{D}}
\def\Kcc{\mathrm{K}}
\def\rvec{\overrightarrow}
\def\lvec{\overleftarrow}
\def\ol{\overline}
\newcommand{\oX}{\ol{X}}
\newcommand{\wX}{\widehat{X}}
\def\ler{\mathrel{{\le}_{\Rc}}}
\def\lel{\mathrel{{\le}_{\Lc}}}
\def\leh{\mathrel{{\le}_{\Hc}}}
\def\lek{\mathrel{{\le}_{\Kc}}}
\def\wr{\mathrel{\omega^r}}
\def\wl{\mathrel{\omega^l}}
\DeclareMathOperator{\fg}{FG}
\DeclareMathOperator{\fim}{FIM}
\DeclareMathOperator{\bfli}{BFLI}
\DeclareMathOperator{\LI}{LI}
\DeclareMathOperator{\I}{Inv}
\DeclareMathOperator{\G}{G}
\DeclareMathOperator{\Aut}{Aut}
\renewcommand{\iff}{if and only if}
\title[]{A combinatorial approach to the structure of locally inverse semigroups}
\author{Lu\'\i s Oliveira}
\address{Departamento de Matem\'atica,
Faculdade de Ci\^encias da Universidade do Porto,
R. Campo Alegre, 687, 4169-007 Porto, Portugal}
\email{loliveir@fc.up.pt}
\begin{document}

\begin{abstract}
This paper introduces a notion of presentation for locally inverse semigroups and develops a graph structure to describe the elements of locally inverse semigroups given by these presentations. These graphs will have a role similar to the role that Cayley graphs have for group presentations or that Sch\"utzenberger graphs have for inverse monoid presentations. However, our graphs have considerable differences with the latter two, even though locally inverse semigroups generalize both groups and inverse semigroups. For example, the graphs introduced here are not `inverse word graphs'. Instead, they are bipartite graphs with both oriented and non-oriented edges, and with labels on the oriented edges only. A byproduct of the theory developed here is the introduction of a graphical method for dealing with general locally inverse semigroups. These graphs are able to describe, for a locally inverse semigroup given by a presentation, many of the usual concepts used to study the structure of semigroups, such as the idempotents, the inverses of an element, the Green's relations, and the natural partial order. Finally, the paper ends characterizing the semigroups belonging to some usual subclasses of locally inverse semigroups in terms of properties on these graphs.
\end{abstract} 

\subjclass[2010]{(Primary) 20M17, 20M10 (Secondary) 05C25, 20M18, 20M05}
\keywords{Locally inverse semigroup, Presentation, Bipartite graph, Language}

\maketitle

\section{Introduction}

Let $S$ be a semigroup. The set of idempotents of $S$ is denoted by $E(S)$. Contrarily to the Group Theory case, the study of $E(S)$ is an important tool in Semigroup Theory. Another important concept is the notion of inverse of an element $s\in S$. An element $s'\in S$ is an \emph{inverse} of $s$ if $ss's=s$ and $s'ss'=s'$. If $S$ is a group, this notion of inverse is equivalent to the usual notion of inverse in Group Theory. We denote the set of inverses of $s$ in $S$ by $V(s)$. The set $V(s)$ may be empty or have multiple elements for a given $s$. The semigroup $S$ is \emph{regular} if $V(s)$ is non-empty for all $s\in S$, and it is \emph{inverse} if $V(s)$ has exactly one element for each $s\in S$.

A common alternative characterization of inverse semigroups is as regular semigroups whose idempotents form a subsemilattice. An inverse monoid is an inverse semigroup with an identity element. The class of all inverse monoids constitutes a variety of algebras of type $\langle 2,1,0\rangle$ where the unary operation is the operation of taking the inverse and the nullary operation chooses the identity element.

Throughout this paper, $X$ will denote a non-empty set and $X'=\{x'\,:\; x\in X\}$ will denote a disjoint copy of $X$. Let $\oX=X\cup X'$. If $y=x'\in X'$, then $y'$ denotes $x$. Let $\oX^*$ and $\oX^+$ be respectively the free monoid and the free semigroup on $\oX$. Thus $\oX^+$ is the set of all non-empty `words' on the `alphabet' $\oX$ equipped with the concatenation operation, and $\oX^*$ is just $\oX^+$ with the empty word $\iota$ adjoined. Let $\sigma$ be the congruence generated by $\{(yy',\iota)\,:\;y\in\oX\}$ and let $\varrho$ be the Wagner congruence, the least inverse monoid congruence, both on $\oX^*$. Then $\oX^*/\sigma$ is the free group $\fg(X)$ while $\oX^*/\varrho$ is the free inverse monoid $\fim(X)$, both on $X$.

An  \emph{inverse monoid presentation} [\emph{group presentation}] is a pair $P=\langle X;R\rangle$ where $R$ is a relation on $\oX^*$. The inverse monoid presented by $P$ is the inverse monoid $S=\oX^*/\rho$ where $\rho$ is the congruence generated by $\varrho\cup R$; and the group presented by $P$ is the group $G=\oX^*/\theta$ where $\theta$ is the congruence generated by $\sigma\cup R$. We denote $S$ by $\I\langle X;R\rangle$ and $G$ by $\G\langle X;R\rangle$. The \emph{word problem} for an inverse monoid presentation [group presentation] $P$ consists of knowing if there exist an algorithm to decide, given two words $u$ and $v$ from $\oX^*$ as input, whether $(u,v)\in\rho$ [$(u,v)\in\theta$] or not. It is well known that the word problem is undecidable for many group and inverse monoid presentations, that is, no such algorithm exists for many cases.

The structure of $\fim(X)$ was first described by Scheiblich \cite{sch73a, sch73b}. Another important description was given by Munn \cite{munn74}. Munn's approach uses graphical methods, the so called \emph{Munn trees}. Stephen \cite{stephen90} used Munn's idea to develop a graphical method to attack the word problem for inverse monoid presentations. He uses \emph{inverse word graphs}, a natural generalization of Munn trees obtained by dropping the condition of being a tree, to describe the structure of any inverse monoid given by an inverse monoid presentation. These inverse word graphs, called \emph{Sch\"utzenberger graphs}, play for inverse monoid presentations a role similar to the role that Cayley graphs play for group presentations. Both these graphs have revealed themselves to be very important tools in many contexts.

Apart from groups, inverse semigroups and finite semigroups are the two most studied classes of semigroups. The interest in inverse semigroups comes from many different areas. For example, they appear naturally when dealing with partial one-to-one transformations, or they are the natural algebras to consider when generalizing the notion of group preserving the uniqueness of inverses. But, the existence of graphical methods, such as the ones developed by Munn and Stephen, has also boosted the study of these semigroups, specially in Combinatorial Inverse Semigroup Theory. The lack of similar good graphical methods for other classes of semigroups has been a hindrance in the study of those classes. 

In this paper we introduce a notion of presentation for locally inverse semigroups. The goal is to develop a graph structure to describe the elements of a locally inverse semigroup given by a locally inverse semigroup presentation. A byproduct of the theory developed here is the introduction of graphical methods to address general locally inverse semigroups.

A \emph{locally inverse semigroup} is a regular semigroup $S$ where every local submonoid, that is, a submonoid of the form $eSe$ with $e\in E(S)$, is an inverse semigroup. The class $\bf LI$ of all locally inverse semigroups is an \emph{e-variety} of regular semigroups, a class of regular semigroups closed for homomorphic images, direct products and regular subsemigroups \cite{ha1,ks1}, with bifree objects on every set $X$ (see \cite{yeh} for the definition and existence of bifree objects in $\bf LI$ on every set $X$). 

The structure of the bifree locally inverse semigroup $\bfli(X)$ on a set $X$ has been studied in \cite{auinger94,auinger95,billhardt,ol17}. In \cite{ol17}, we gave a graph description for the elements of $\bfli(X)$ that can be viewed as the analogue for $\bfli(X)$ of the Munn tree representation for the elements of $\fim(X)$. In the present paper, we adapt those graphs so that we can describe the elements of a locally inverse semigroup given by a locally inverse semigroup presentation. The work done here generalizes \cite{ol17} in a similar manner as Stephen's work \cite{stephen90} generalizes Munn's approach to the free inverse monoid. 

However, the graphs considered here have considerable differences when compared with both Cayley graphs and Sch\"utzenberger graphs, even though locally inverse semigroups generalize both groups and inverse semigroups. For example, they are not `inverse word graphs' as the latter two are. Instead, the graphs introduced here are bipartite graphs with both oriented and non-oriented edges, and with labels on the oriented edges only.

Bipartite graphs have already been used to study completely 0-simple semigroups. Graham \cite{graham} and Houghton \cite{houghton}, independently, associated bipartite graphs to each regular Rees matrix semigroup representing a completely 0-simple semigroup. The vertices of these bipartite graphs are in one-to-one correspondence with the set of $\Rc$-classes and $\Lc$-classes of the non-trivial $\Dc$-class of the completely 0-simple semigroups. More precisely, the vertices are partitioned into `left' and `right' vertices, where the left vertices represent the $\Rc$-classes and the right vertices represent the $\Lc$-classes. Graham and Houghton used these graphs to study completely 0-simple semigroups. 

Recently, Reilly \cite{reilly} introduced the notion of a \emph{fundamental semigroup} associated with a bipartite graph $\Ga$. If the vertices of $\Ga$ are partitioned into the sets $V_1$ of left vertices and $V_2$ of right vertices, then let $P$ be the set of all walks from a left vertex to a right vertex. If $p_1,p_2\in P$, then define $p_1\cdot p_2=p_1ep_2\in P$ if $e$ is the edge connecting the right vertex of $p_1$ with the left vertex of $p_2$; otherwise, set $p_1\cdot p_2=0$. Reilly used the previous operation on $P\cup\{0\}$ to introduce the fundamental semigroup of the bipartite graph. These semigroups are 0-direct unions of idempotent generated completely 0-simple semigroups. Reilly applied this concept on the graphs introduced by Graham and Houghton, and showed that the fundamental semigroups obtained have a sort of universal property with respect to completely 0-simple semigroups. The reader should consult \cite{reilly} for more details.

In this paper, the bipartite graphs we shall consider have a different provenance. The vertices are no longer determined by the set of $\Rc$-classes and $\Lc$-classes. Instead, the left vertices of the graphs considered here correspond to the elements of an $\Lc$-class, while the right vertices correspond to the elements of an $\Rc$-class that intersects the previous $\Lc$-class. Thus, our graphs shall have more vertices in general than the Graham and Houghton's graphs. Also, we will not follow Reilly's approach trying to describe the elements of the locally inverse semigroups as walks in a bipartite graph. Instead, the elements of the locally inverse semigroups will be described as concrete bipartite graphs with two distinguished vertices. We will not distinguish any particular walk between the two distinguished vertices. This approach will allow us to introduce operations on these bipartite graphs that capture the structure of the locally inverse semigroup. Finally, as mentioned already, our bipartite graphs will have two distinct types of edges: non-oriented edges and oriented edges, the latter with labels. The non-oriented edges will describe the inverses and the idempotents inside a $\Dc$-class, while the labeled oriented edges will gather information about the partial multiplication with the elements of a generating set inside a $\Dc$-class.

In the next section, we recall some concepts and notations used in Semigroup Theory that will be useful to us. In Section 3, we introduce the concept of presentation for locally inverse semigroups. Before we continue with the study of these presentations, we introduce the graphs we shall need and study some of their properties. This will be done in Sections 4 and 5. More precisely, in Section 4, we introduce the birooted locally inverse word graphs and define the language recognized by them; and in Section 5, we study the `reduced' birooted locally inverse word graphs.

We should point out that the notion of language recognized by birooted locally inverse word graphs is crucial for the theory developed here, not only for Sections 4 and 5, but also for the subsequent sections. As usual, this notion comes from associating words to walks inside the graphs, but it has its own peculiarities. Firstly, each walk can have several associated words. Secondly, the words are on a larger alphabet than $\oX$, namely
$$\wX=\oX\cup\{(x\wedge y)\,|\; x,y\in\oX\}\,.$$
Thus, the language recognized by a birooted locally inverse word graph will be a subset of the free monoid $\wX^*$. 

In Section 6, we associate a reduced locally inverse word graph to each idempotent $e$ of a locally inverse semigroup $S$ given by a presentation. We see that these graphs characterize the $\Dc$-classes of $S$ and that they are isomorphic if and only if the associated idempotents belong to the same $\Dc$-class. In Section 7, we introduce the reduced birooted locally inverse word graph associated with a given element $s\in S$. We just need to add two roots to the graphs described in Section 6. We show that no two distinct elements of $S$ have the associated reduced birooted locally inverse word graphs isomorphic, and we analyze the relation between these graphs and the structure of $S$. In particular, we describe the idempotents, the inverses of an element, the Green's relations, the natural partial order and the product on $S$ using these graphs. Finally, in the last section, we describe some special classes of locally inverse semigroups in terms of particular properties on the reduced locally inverse word graphs.

\section{Preliminaries}

For concepts and notations left undefined in this paper, the reader should consult \cite{howie}. In this paper, $S$ and $\leq$ will always denote a regular semigroup and its \emph{natural partial order}, respectively. Recall that, $s\leq t$ for $s,t\in S$ \iff\ there exist idempotents $e,f\in E(S)$ such that $s=te=ft$. Let $(s]_\leq$ and $[s)_\leq$ denote respectively the principal ideal and the principal filter generated by $s\in S$ for the natural partial order. Note that $E(S)$ is an ideal with respect to the natural partial order.

As usual, $\Rc$, $\Lc$, $\Hc$, $\Dc$ and $\Jc$ denote the five \emph{Green's relations} on $S$, and $\Kcc_s$ denotes the $\Kc$-class of $s\in S$ for $\Kc\in\{\Rc,\Lc,\Hc,\Dc,\Jc\}$. Let $\ler$ be the quasiorder defined by
$$s\ler t \quad\Leftrightarrow\quad sS \subseteq tS\,.$$
Let $\lel$ be its dual relation and $\leh\,=\,\ler\cap\lel$. Consider also the quasiorder $\leq_{\Jc}$ defined by
$$s\leq_{\Jc} t\quad\Leftrightarrow\quad SsS\subseteq StS.$$ 
If $\mathrel{{\geq}_{\Rc}}$, $\mathrel{{\geq}_{\Lc}}$, $\mathrel{{\geq}_{\Hc}}$ and $\mathrel{{\geq}_{\Jc}}$ denote the respective reverse relations, then 
$$\Rc\,=\,\ler\cap\mathrel{{\geq}_{\Rc}},\;\;\Lc\,=\,\lel\cap\mathrel{{\geq}_{\Lc}},\;\;\Hc\,=\,\leh\cap\mathrel{{\geq}_{\Hc}}\;\mbox{ and }\;\Jc\,=\, \mathrel{{\leq}_{\Jc}}\cap\mathrel{{\geq}_{\Jc}}.$$ Let $(s]_{\Kc}$ and $[s)_{\Kc}$ denote respectively the principal ideal and the principal filter generated by $s\in S$ for the relations $\lek$ where $\Kc\in\{\Rc,\Lc,\Hc,\Jc\}$.

If $st\Rc s$ for $s,t\in S$, then the mapping $\varphi_t:\Lcc_s\to \Lcc_{st},\; u\mapsto ut$ is a well-defined bijection preserving $\Rc$-related elements by the Green's Lemma (see \cite{howie}). The mapping $\varphi_t$ is called the right translation of $\Lcc_s$ associated with $t$. Dually, if $st\Lc t$, then the mapping $\psi_s:\Rcc_t\to \Rcc_{st},\; u\mapsto su$ is a well-defined bijection preserving $\Lc$-related elements, called the left translation of $\Rcc_t$ associated with $s$.  

Let $\wr$ and $\wl$ be the following relations on $E(S)$:
$$e\wr f \;\Leftrightarrow\; fe=e\qquad\mbox{ and }\qquad e\wl f \;\Leftrightarrow\; ef=e\,.$$
Consider also the relation $\omega = \wr\cap\wl$ on $E(S)$. Once more, for $e\in E(S)$, $(e]_r$, $(e]_l$ and $(e]\,$ [$\,[e)_r$, $[e)_l$ and $[e)\,$] denote the principal ideals [filters] generated by $e$ for the relations $\wr$, $\wl$ and $\omega$, respectively. Then $\wr$ [$\wl$] coincides with the restriction of $\ler\,$ [$\lel$] to $E(S)$, while $\omega$ coincides with the restriction of both $\leq$ and $\leh$ to $E(S)$.

The locally inverse semigroups can be described using the relations $\wr$, $\wl$ and $\omega$. A regular semigroup $S$ is locally inverse if for each pair $(e,f)\in E(S)\times E(S)$, there exists $g\in E(S)$ such that $(e]_r\cap (f]_l=(g]$. Since $\omega$ is a partial order, the idempotent $g$ is unique for each pair $(e,f)$. If we consider the binary operation $\wedge$ on $E(S)$ defined by $e\wedge f=g$, then the algebra $(E(S),\wedge)$ is called the \emph{pseudosemilattice of idempotents} of $S$. Pseudosemilattices were introduced by Nambooripad \cite{na1}, who also gave an abstract characterization for them.

The operation $\wedge$ can be extended naturally to the whole locally inverse semigroup $S$. It is well known that $e\wedge f=e_1\wedge f_1$ if $e\Rc e_1$ and $f\Lc f_1$. Thus, without any ambiguity, we can define $s\wedge t$ for $s,t\in S$ as follows: 
$$s\wedge t=ss'\wedge t't \quad\mbox{ where }\; s'\in V(s)\;\mbox{ and }\; t'\in V(t)\,.$$ 
We shall look at the locally inverse semigroups as binary semigroups, that is, algebras of type $\langle 2,2\rangle$ where the first operation is associative.

Let $S^1$ be the monoid obtained from the semigroup $S$ by adjoining an identity element $1$ if necessary. A locally inverse semigroup with an identity element must be an inverse semigroup. Thus, if $S$ is a non-inverse locally inverse semigroup, then $S^1$ is no longer locally inverse. However, for technical reasons, it will be often convenient to consider the locally inverse semigroups with an identity adjoined. Therefore, we will use the terminology locally inverse monoid with a different meaning than usual. In the context of this paper, a \emph{locally inverse monoid} is a regular monoid $T$ with identity $1$ such that $T\setminus\{1\}$ is a locally inverse semigroup. 

We end this section with two lemmas about locally inverse semigroups that will be useful later. Note that, in any semigroup $S$, if $e,f\in E(S)$, then $eS\cap Sf=eSf$. It is also well known that, if $S$ is locally inverse, then no two idempotents of $(f]_r$ are $\Lc$-related and no two idempotents of $(f]_l$ are $\Rc$-related.

\begin{lem}\label{inv_lis}
Let $S$ be a locally inverse semigroup and $a\in fSe$ for some $e,f\in E(S)$. Then $|V(a)\cap eSf|=1$. 
\end{lem}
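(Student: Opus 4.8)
The plan is to prove existence and uniqueness of an inverse of $a$ inside $eSf$ separately. First I record the basic consequences of the hypotheses: since $a\in fSe$ we have $fa=a=ae$, and any $a'\in V(a)\cap eSf$ satisfies $ea'=a'=a'f$. The reduction I will rely on for uniqueness is the elementary fact that an inverse of $a$ is completely determined by the pair of idempotents $(aa',a'a)$: if $a_1,a_2\in V(a)$ with $aa_1=aa_2$ and $a_1a=a_2a$, then $a_1=a_1aa_1=a_1(aa_2)=(a_1a)a_2=(a_2a)a_2=a_2aa_2=a_2$. Thus it suffices to show that the two idempotents attached to any two candidate inverses coincide.

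For existence I would pick any $b\in V(a)$, which exists because $S$ is regular, and set $a'=ebf\in eSf$. A direct check using $ae=a$ and $fa=a$ gives $aa'a=a(ebf)a=aba=a$ and $a'aa'=eb(fae)bf=e(bab)f=ebf=a'$, so $a'\in V(a)\cap eSf$ and the set is non-empty.

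For uniqueness, let $a_1,a_2\in V(a)\cap eSf$. The elements $a_1a$ and $a_2a$ are idempotents, and each is $\Lc$-related to $a$, so $a_1a\Lc a_2a$. Moreover $ea_i=a_i$ gives $e(a_ia)=a_ia$, so both idempotents lie in $(e]_r$. Since $S$ is locally inverse, no two idempotents of $(e]_r$ are $\Lc$-related (the fact quoted just before the lemma), which forces $a_1a=a_2a$. Dually, $aa_1$ and $aa_2$ are idempotents with $aa_1\Rc aa_2$, and $a_if=a_i$ gives $(aa_i)f=aa_i$, so both lie in $(f]_l$; as no two idempotents of $(f]_l$ are $\Rc$-related, $aa_1=aa_2$. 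By the reduction above, $a_1=a_2$, and hence $|V(a)\cap eSf|=1$.

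The main obstacle—really the only step that is not a formal manipulation—is recognizing that the two idempotents $a_ia$ and $aa_i$ land in the principal ideals $(e]_r$ and $(f]_l$ for the relations $\wr$ and $\wl$, so that the locally inverse hypothesis applies in exactly the form stated in the excerpt. Everything else reduces to the defining identities for inverses together with $fa=a=ae$.
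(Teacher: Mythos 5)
Your proof is correct. The uniqueness half is essentially identical to the paper's: both of you observe that $a_1a$ and $a_2a$ are $\Lc$-related idempotents in $(e]_r$ and that $aa_1$ and $aa_2$ are $\Rc$-related idempotents in $(f]_l$, invoke the quoted consequence of local inversity to force them to coincide, and then recover $a_1=a_2$ from the defining identities for inverses. The only divergence is in the existence half: you write down the explicit inverse $a'=ebf$ for an arbitrary $b\in V(a)$ and verify the two identities directly, whereas the paper constructs the idempotents $f_1=aa'f\in(f]\cap\Rcc_a$ and $e_1=ea'a\in(e]\cap\Lcc_a$ and then appeals to the standard fact that the $\Hc$-class $\Rcc_{e_1}\cap\Lcc_{f_1}$ contains an inverse of $a$. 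Your route is slightly more elementary (it is a two-line computation valid in any regular semigroup and avoids Green's lemma), while the paper's version has the side benefit of exhibiting the inverse together with the idempotents $aa'f$ and $ea'a$ that reappear in the surrounding arguments; both are complete.
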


\begin{proof}
Let $a'\in V(a)$. Then $aa'\in (f]_r$ and $f_1=aa'f\in (f]\cap \Rcc_a$. Similarly $e_1=ea'a\in (e]\cap \Lcc_a$. Hence $\Rcc_{e_1}\cap\Lcc_{f_1}$ contains an inverse of $a$, and so $V(a)\cap eSf\neq\emptyset$. Now, let $a',a_1'\in V(a)\cap eSf$. Then $aa'$ and $aa_1'$ are $\Rc$-related idempotents in $(f]_l$, whence $aa'=aa_1'$. In the same manner we conclude that $a'a=a_1'a$. Thus $a'=a_1'$ and $|V(a)\cap eSf|=1$.
\end{proof}

\begin{lem}\label{inv_yy}
Let $S$ be a locally inverse semigroup, $a\in S$, $e\in E(S)$ and $a'\in V(a)\cap Se$. Then $a'=a_1'(f\wedge e)$ for all $a_1'\in V(a)\cap\Rcc_{a'}$ and all $f\in E(S)$ such that $a\in fS$.
\end{lem}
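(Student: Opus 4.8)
The plan is to reformulate the hypotheses as $fa=a$ (since $a\in fS$) and $a'e=a'$ (since $a'\in Se$), to set $g=f\wedge e$ so that $fg=g=ge$, and to first collapse the $\Rc$-class of inverses to a single idempotent trace. The first step I would carry out is to prove $a'a=a_1'a$. Both are idempotents, both are $\Lc$-related to $a$, and they are $\Rc$-related to each other (because $a'\Rc a'a$, $a_1'\Rc a_1'a$, and $a'\Rc a_1'$ by hypothesis); moreover $(a_1'a)(a'a)=a_1'(aa'a)=a_1'a$, so $a_1'a$ and $a'a$ both lie in $(a'a]_l$. Since no two idempotents of a principal $\wl$-ideal are $\Rc$-related in a locally inverse semigroup, this forces $a'a=a_1'a=:p$. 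From this I immediately obtain $a_1'(aa')=(a_1'a)a'=pa'=a'a a'=a'$, which is the identity I will build on.

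The conceptual bridge to $f\wedge e$ is to locate the idempotent $aa'$ inside $(g]$. Using the two reformulated hypotheses, $f(aa')=(fa)a'=aa'$ and $(aa')e=a(a'e)=aa'$, so $aa'\in(f]_r\cap(e]_l=(f\wedge e]=(g]$; equivalently $aa'\mathrel{\omega}g$, whence $g(aa')=aa'=(aa')g$. Substituting $aa'=g(aa')$ into the relation $a'=a_1'(aa')$ from the previous paragraph rewrites the goal as $a'=(a_1'g)(aa')$, i.e. it produces the candidate $b:=a_1'g$ together with the equation $a'=b(aa')$.

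It then remains to identify $b=a_1'g$ with $a'$, and this is where I expect the real work to lie. First I would check that $b\in V(a)$ and that it carries the correct trace: multiplying $a'=b(aa')$ on the right by $a$ and using $aa'a=a$ gives $ba=b(aa'a)=a'a=p$; then $aba=a(ba)=ap=aa'a=a$ and $bab=(ba)b=pb=(pa_1')g=a_1'g=b$, using $pa_1'=a_1'aa_1'=a_1'$. Thus $b\in V(a)$ with $ba=a'a$. Finally, since $\omega$ is the restriction of the natural partial order $\le$ to $E(S)$, we have $aa'\le g$, and compatibility of $\le$ with multiplication yields $a'=a_1'(aa')\le a_1'g=b$; as two distinct inverses of a fixed element are always incomparable under $\le$, I conclude $a'=b=a_1'g$. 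Because $p=a'a$, the argument is uniform in the choice of $a_1'\in V(a)\cap\Rcc_{a'}$ and of $f$ with $a\in fS$, so the statement holds in the asserted generality. The main obstacle is exactly the verification that the candidate $a_1'g$ is again an inverse of $a$ sharing the trace $p=a'a$ of $a'$; once that is secured, the comparison in the natural partial order closes the argument.
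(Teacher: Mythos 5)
Your proof is correct, and its first two thirds coincide with the paper's: you establish $a'a=a_1'a$ (the paper gets this from the same $\Rc$-related-idempotents-in-$(a'a]_l$ fact, or equivalently from $\Hc$-related idempotents being equal) and you place $aa'$ in $(f]_r\cap(e]_l=(f\wedge e]$ exactly as the paper does. Where you genuinely diverge is the closing step. The paper stays inside the pseudosemilattice: it observes that $aa_1'(f\wedge e)$ and $aa'$ are $\Rc$-related idempotents of $((f\wedge e)]_l$, concludes they are equal by the same elementary fact already recorded in its preliminaries, and then cancels using $a_1'a=a'a$. You instead build the candidate $b=a_1'(f\wedge e)$, verify by direct computation that $b\in V(a)$ with $ba=a'a$, and then finish order-theoretically: $aa'\,\omega\,(f\wedge e)$ gives $a'=a_1'(aa')\le a_1'(f\wedge e)$ by compatibility of the natural partial order with multiplication, and comparable inverses of a fixed element coincide. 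Both of those last two facts are true for locally inverse semigroups, but note that compatibility of $\le$ with multiplication is Nambooripad's characterization of local inversity --- a genuine theorem the paper never states or uses --- and the antichain property of $V(a)$ is itself usually derived from that compatibility. So your argument outsources the decisive step to heavier (if standard) machinery, whereas the paper's closes with only the elementary fact that no two idempotents of $(g]_l$ are $\Rc$-related. What your route buys in exchange is the explicit extra information that $a_1'(f\wedge e)$ is an inverse of $a$ with trace $a'a$ before one even knows it equals $a'$; if you want a self-contained version, replace your final order argument by comparing the $\Rc$-related idempotents $ab=aa_1'(f\wedge e)$ and $aa'$ inside $((f\wedge e)]_l$, which collapses your proof to the paper's.
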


\begin{proof}
Let $f\in E(S)$ such that $a\in fS$ and let $a_1'\in V(a)\cap\Rcc_{a'}$. Since $aa'\in (f]_r\cap (e]_l=(f\wedge e]$ and $aa_1'\in E(S)\cap \Rcc_{aa'}$, we must have $aa_1'(f\wedge e)=aa'$ because $S$ is locally inverse. Hence $a_1'(f\wedge e)=a'$ since $a_1'a=a'a$.  
\end{proof}

\section{Presentations for locally inverse semigroups}\label{sec3}

For a nonempty set $X$, let
$$\wX=\oX\cup\{(x\wedge y)\,:\;x,y\in\oX\}\,.$$
We consider $(x\wedge y)$ as new letters added to $\oX$, and when they appear isolated we may omit the parentheses. Thus $\wX^+$ and $\wX^*$ are the free semigroup and the free monoid on the alphabet $\wX$, respectively.
For $u\in\wX^+$, let $\hat{\la}_u$ [$\hat{\tau}_u$] be the first [last] letter of $\wX$ occurring in $u$, and let $\la_u$ [$\tau_u$] be the first [last] letter of $\oX$ occurring in $u$. To make things clearer, if $u=(x\wedge y)z$ for $x,y,z\in \oX$, then $\hat{\la}_u=(x\wedge y)$, $\la_u=x$, and $\hat{\tau}_u=\tau_u=z$.

Let $\varepsilon$ be the Auinger's congruence \cite{auinger95} on $\wX^+$ that turns $\wX^+/\varepsilon$ into a model for $\bfli(X)$. We continue to denote by $\varepsilon$ the congruence on $\wX^*$ obtained by adding the pair $(\iota,\iota)$ to $\varepsilon$. Then $\wX^*/\varepsilon$ is just the bifree locally inverse semigroup $\wX^+/\varepsilon$ with an identity adjoined.

Define $x^{-1}=x'$ and $(x\wedge y)^{-1}=(y'\wedge y)(x\wedge x')$ for $x,y\in\oX$. If $u=z_1\cdots z_n\in\wX^+$ with $z_i\in\wX$ for $i=1,\cdots,n$, then set
$$u^{-1}=z_n^{-1}(\la_{z_n}\wedge \tau_{z_{n-1}})z_{n-1}^{-1}\cdots z_2^{-1}(\la_{z_2}\wedge\tau_{z_1})z_1^{-1}\,.$$
Although $(u^{-1})^{-1}\neq u$ and $uu^{-1}u\neq u$, we call $u^{-1}$ the \emph{formal inverse} of $u$ due to the following lemma:

\begin{lem}\cite[Lemma 3.6]{auinger95}\label{au_fi}
$u^{-1}\varepsilon\in V(u\varepsilon)$ and $(u^{-1})^{-1}\varepsilon=u\varepsilon$.
\end{lem}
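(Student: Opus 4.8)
The plan is to prove both assertions by induction on the length $n$ of $u=z_1\cdots z_n$, exploiting that the definition of the formal inverse unravels into a recursion. Writing $u=u'z_n$ with $u'=z_1\cdots z_{n-1}$, the displayed formula for $u^{-1}$ factors as
$$u^{-1}=z_n^{-1}\,(\la_{z_n}\wedge\tau_{u'})\,(u')^{-1},$$
since $\tau_{z_{n-1}}=\tau_{u'}$ and the factors following $z_n^{-1}(\la_{z_n}\wedge\tau_{u'})$ form exactly $(u')^{-1}$. Throughout I will use that every letter $(x\wedge y)\varepsilon$ is idempotent, because $s\wedge t\in E(S)$ in any locally inverse semigroup; in particular the sandwich letters behave as idempotents modulo $\varepsilon$.

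First I would settle the base case $n=1$. If $z_1=x\in\oX$, then $u^{-1}=x'=x^{-1}$ and both claims, $x'\varepsilon\in V(x\varepsilon)$ and $(x^{-1})^{-1}\varepsilon=x\varepsilon$, are immediate from the relations of $\varepsilon$ governing the $\oX$-letters ($xx'x\,\varepsilon\,x$, $x'xx'\,\varepsilon\,x'$, and $(x')'=x$). If $z_1=(x\wedge y)$, then $u^{-1}=(y'\wedge y)(x\wedge x')$; using that $(x\wedge y)\varepsilon$, $(y'\wedge y)\varepsilon$ and $(x\wedge x')\varepsilon$ are idempotents, the two inverse identities reduce to the pseudosemilattice relations expressing that $(x\wedge x')\varepsilon$ and $(y'\wedge y)\varepsilon$ act as the left and right identities of $(x\wedge y)\varepsilon$, while the double-inverse identity collapses (after deleting repeated idempotents) to $(x\wedge x')(x\wedge y)(y'\wedge y)\,\varepsilon=(x\wedge y)\,\varepsilon$, again a defining relation of $\varepsilon$.

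For the inductive step of the first assertion, set $a=u'\varepsilon$, $b=z_n\varepsilon$, $a'=(u')^{-1}\varepsilon$ and $b'=z_n^{-1}\varepsilon$; by the induction hypothesis and the base case, $a'\in V(a)$ and $b'\in V(b)$. The standard sandwich description of inverses of products in a locally inverse semigroup gives $b'\,g\,a'\in V(ab)$, where $g$ is the unique element of the sandwich set $S(a'a,bb')$, namely $g=bb'\wedge a'a$; indeed $g$ is characterised by $bb'\,g=g$, $g\,a'a=g$ and $a'a\,g\,bb'=a'a\,bb'$, so the first two relations give $ab\,(b'ga')\,ab=agb$, while the third, left- and right-multiplied by $a$ and $b$ (using $aa'a=a$, $bb'b=b$), gives $agb=ab$; the dual identity is analogous. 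It then remains to identify
$$(\la_{z_n}\wedge\tau_{u'})\,\varepsilon=\bigl(z_nz_n^{-1}\wedge(u')^{-1}u'\bigr)\,\varepsilon=g.$$
This is where the structure of Auinger's model enters, and it is the main obstacle. By the compatibility $e\wedge f=e_1\wedge f_1$ (valid whenever $e\,\Rc\,e_1$ and $f\,\Lc\,f_1$) noted in Section~2, it suffices to know that $\la_{z_n}\la_{z_n}^{-1}\varepsilon\,\Rc\,z_nz_n^{-1}\varepsilon$ and $\tau_{u'}^{-1}\tau_{u'}\varepsilon\,\Lc\,(u')^{-1}u'\varepsilon$; that is, in the model the first $\oX$-letter of a word governs its left boundary (its $\Rc$-class) and the last governs its right boundary (its $\Lc$-class). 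Granting this dictionary, the displayed identity holds and the induction closes.

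Finally, for $(u^{-1})^{-1}\varepsilon=u\varepsilon$, I would apply the first assertion to the word $u^{-1}$ to get $(u^{-1})^{-1}\varepsilon\in V(u^{-1}\varepsilon)$; thus $u\varepsilon$ and $(u^{-1})^{-1}\varepsilon$ are both inverses of $a:=u^{-1}\varepsilon$. Since $a\in fSe$ with $f=(u^{-1}u)\varepsilon$ and $e=(uu^{-1})\varepsilon$, Lemma~\ref{inv_lis} guarantees that $V(a)\cap eSf$ is a singleton. A direct computation with $uu^{-1}u\,\varepsilon\,u$ shows $u\varepsilon\in eSf$, so it suffices to place $(u^{-1})^{-1}\varepsilon$ in the same corner, i.e.\ to verify the boundary equalities $((u^{-1})^{-1}u^{-1})\varepsilon=(uu^{-1})\varepsilon$ and $(u^{-1}(u^{-1})^{-1})\varepsilon=(u^{-1}u)\varepsilon$; then uniqueness in Lemma~\ref{inv_lis} forces $(u^{-1})^{-1}\varepsilon=u\varepsilon$. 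These equalities rest on the combinatorial observations $\la_{(u^{-1})^{-1}}=\la_u$ and $\tau_{(u^{-1})^{-1}}=\tau_u$, read off the definition of the formal inverse, together with the explicit description of the boundary idempotents $vv^{-1}\varepsilon$, $v^{-1}v\varepsilon$ in Auinger's model. Thus both statements follow from the two inductions and the uniqueness supplied by Lemma~\ref{inv_lis}, the only substantial ingredient being the model-theoretic dictionary relating $\la_v,\tau_v$ to the idempotents $vv^{-1}\varepsilon,v^{-1}v\varepsilon$.
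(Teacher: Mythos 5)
A preliminary remark: the paper does not prove this statement at all --- it is imported from Auinger's article via the citation --- so there is no in-text argument to compare yours with, and I can only assess the proposal on its own merits. Your skeleton (induction on $|u|$, the recursion $u^{-1}=z_n^{-1}(\la_{z_n}\wedge\tau_{u'})(u')^{-1}$, the sandwich description of $V(ab)$, and Lemma~\ref{inv_lis} for the second claim) is sensible, and both the base case and the sandwich computation are correct. The gap is exactly the ``dictionary'' you grant yourself. In $\wX^+/\varepsilon$ a word is \emph{not} $\Rc$-related to its first $\oX$-letter, nor $\Lc$-related to its last: composing $\varepsilon$ with the homomorphism onto the three-element semilattice $\{p,q,pq\}$ ($p,q$ incomparable) determined by $x,x'\mapsto p$, $y,y'\mapsto q$ sends $(x\wedge y)\varepsilon$ to $pq$ and $xx'\varepsilon$ to $p$, which are not $\Rc$-related; so $z_nz_n^{-1}\varepsilon$ is not $\Rc$-related to $\la_{z_n}\la_{z_n}^{-1}\varepsilon$ already when $z_n$ is a sandwich letter, and $(u')^{-1}u'\varepsilon$ is not $\Lc$-related to $\tau_{u'}^{-1}\tau_{u'}\varepsilon$ as soon as $u'$ is not a single $\oX$-letter. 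Consequently your displayed identification $(\la_{z_n}\wedge\tau_{u'})\varepsilon=g$ is false in general: writing $e=\la_{z_n}\la_{z_n}^{-1}\varepsilon$ and $f=\tau_{u'}^{-1}\tau_{u'}\varepsilon$, the true relations are only $bb'\wr e$ and $a'a\wl f$, whence $g=bb'\wedge a'a$ lies strictly $\omega$-below $h:=(\la_{z_n}\wedge\tau_{u'})\varepsilon=e\wedge f$ in general.

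The lemma is nevertheless recoverable from these weaker relations, because the discrepancy between $h$ and $g$ is absorbed by $b'$ and $a'$: since $b'=b'(bb')$ and $a'=(a'a)a'$, one has $b'ha'=b'\bigl((bb')\,h\,(a'a)\bigr)a'$, and one checks that $(bb')\,h\,(a'a)=bb'\wedge a'a=g$ whenever $bb'\wr e$, $a'a\wl f$ and $h=e\wedge f$ (both sides are idempotents of $(bb']_r\cap(a'a]_l$, and a short computation in the right normal band $(e]_r$ and the left normal band $(f]_l$ shows each is $\omega$-below the other). Thus $b'ha'=b'ga'\in V(ab)$, which is all the induction needs; the point is that the letter-based sandwich $(\la_{z_n}\wedge\tau_{u'})$ is an \emph{upper bound} for the exact sandwich element, not the element itself. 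For the second assertion your plan via Lemma~\ref{inv_lis} does work, but it is cleaner to take the corner $eSf$ with $e=(\la_u\wedge\la_u')\varepsilon$ and $f=(\tau_u'\wedge\tau_u)\varepsilon$: the combinatorial identities $\la_{(u^{-1})^{-1}}=\la_u$ and $\tau_{(u^{-1})^{-1}}=\tau_u$ then place $u\varepsilon$ and $(u^{-1})^{-1}\varepsilon$ in $eSf$ and $u^{-1}\varepsilon$ in $fSe$ immediately, avoiding the ``boundary equalities'' you leave unproved (which, with your choice of corner, are essentially as hard as the statement being proved).
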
 

A presentation for locally inverse semigroups is a pair $P=\langle X;R\rangle$ where $R$ is a relation on $\wX^+$. Let $\mu$ be the congruence on $\wX^+$ generated by $\varepsilon\cup R$. The semigroup presented by $P$ is $S=\wX^+/\mu$. Clearly $S$ is locally inverse since it is a homomorphic image of the bifree locally inverse semigroup $\wX^+/\varepsilon$. We write $S=\LI\langle X;R\rangle$. Note that $(xx')\mu=(x\wedge x')\mu$ for all $x\in\oX$. Note further that $\mu_m=\mu\cup\{(\iota,\iota)\}$ is a congruence on $\wX^*$ since $R\subseteq \wX^+$. It is convenient to consider also the locally inverse monoid $T=\wX^*/\mu_m$, which is precisely $S$ with a new identity element adjoined. We denote $T$ by $\LI^1\langle X;R\rangle$. Without further comments, throughout this paper, $\mu$ will always represent the congruence generated by $\varepsilon\cup R$ associated with the presentation $P=\langle X;R\rangle$. Also, we denote $\mu_m$ just by $\mu$ since no ambiguity will occur.

There is also another convention we introduce to make the notation used in this paper less cumbersome. Note that $\wX^*$ acts naturally on $T$, both on the left and on the right, as follows: 
$$\wX^*\times T\to T,\; (u,a)\mapsto ua \qquad\mbox{ and }\qquad T\times\wX^* \to T,\; (a,u)\mapsto au$$
where $ua$ and $au$ represent respectively the elements $(u\mu)a$ and $a(u\mu)$ of $T$. Thus, from now on, we will write only $ua$ or $au$ to refer to the elements $(u\mu)a$ and $a(u\mu)$ of $T$.

Next, we introduce three examples of presentations for locally inverse semigroups. The first two examples will be used later to illustrate and explain some of the concepts and results obtained. The third example is the `four-spiral semigroup'. This semigroup is an important example of an idempotent generated bisimple, non-completely simple, semigroup (see \cite{bmp}). Due to its peculiar structure, it is an interesting example to look at and to see how the technique developed in this paper applies to.\vspace*{.3cm}

\noindent {\bf Example 1}: Let $X_1=\{x,y\}$, 
$$A=(\ol{X_1}\times\ol{X_1})\setminus\{(x',x'),(y',y'),(x',y'),(z,z')\,:\; z\in\ol{X_1}\}\,,$$
and $\ol{R}_0=\{(x^2,x^2z),(x^2,zx^2),(x^2,z_1z_2)\,:\;z\in\widehat{X_1},\,(z_1,z_2)\in A\}$. Set
$$\ol{R}_1=\{(x',x'^2),(y',y'^2),(x',x'(y'\wedge x')),(y',(y'\wedge x')y')\}\cup \ol{R}_0\,,$$ 
and consider the presentation $\ol{P}_1=\langle X_1;\ol{R}_1\rangle$. Let $\ol{S}_1=\LI\langle X_1;\ol{R}_1\rangle$ be the locally inverse semigroup presented by $\ol{P}_1$ and let $\ol{\mu}_1$ be the congruence on $\widehat{X_1}^+$ such that $\ol{S}_1=\widehat{X_1}^+/\ol{\mu}_1$. 

Note that $x^2\ol{\mu}_1$ works as a zero element of $\ol{S}_1$ due to the two first pairs of $\ol{R}_0$. Thus we denote it just as $0$. Also by definition of $\ol{R}_0$, $z_1z_2\in 0$ for all $(z_1,z_2)\in A$. Hence, also 
$$\{z_2\wedge z_1\,:\;(z_1,z_2)\in A\}\subseteq 0\,.$$ Analyzing now $\ol{R}_1$, we conclude that $x'\ol{\mu}_1$ and $y'\ol{\mu}_1$ are idempotents, and $(x'\ol{\mu}_1)\Lc ((y'\wedge x')\ol{\mu}_1)\Rc (y'\ol{\mu}_1)$.  Now, with a few more easy computations, it is not hard to see that $\ol{S}_1$ is the combinatorial completely 0-simple semigroup with $\Dc$-class egg-box picture depicted in Figure \ref{egg-boxa}. 
\begin{figure}[ht]
\begin{tabular}{c}
\begin{tabular}{|c|c|c|c|}
\hline $x'x\;^*$ & $x'\;^*$ & $x'y'y$ & $x'y'$ \\ \hline
$x$ & $xx'\;^*$ & $xx'y'y$ & $xx'y'$ \\ \hline
$(y'\wedge x')x$ & $y'\wedge x'\;^*$ & $y'y\;^*$ & $y'\;^*$ \\ \hline
$y(y'\wedge x')x$ & $y(y'\wedge x')$ & $y$ & $yy'\;^*$ \\ \hline
\end{tabular} \\ \\
\begin{tabular}{|c|}
\hline $0\;^*$ \\ \hline
\end{tabular}
\end{tabular}
\caption{Egg-box picture of $\ol{S}_1$.}\label{egg-boxa}
\end{figure}
Note that, in the egg-box picture, we chose a representative for each $\ol{\mu}_1$-class in the non-trivial $\Dc$-class. From now on, when referring to $\ol{S}_1$, we identify each $\ol{\mu}_1$-class with its representative indicated in the egg-box picture. Also, the symbol $^*$ next to an element of $\ol{S}_1$ indicates that element is an idempotent.\hfill\qed\vspace*{.3cm}

\noindent {\bf Example 2}: Let $\ol{S}_2$ be the locally inverse semigroup given by the presentation $\ol{P}_2=\langle X_2;\ol{R}_2\rangle$ where $X_2=\{z\}$ and $\ol{R}_2=\{(z,z^3),(z',{z'}^2)\}$. Let $\ol{\mu}_2$ be the congruence on $\widehat{X_2}^+$ such that $\ol{S}_2=\widehat{X_2}^+/\ol{\mu}_2$. Then 
$$(z'\wedge z')\;\ol{\mu}_2\; z'\;\ol{\mu}_2\;{z'}^2\qquad\mbox{ and }\qquad (z\wedge z)\;\ol{\mu}_2 \;z^2\,.$$ 
Thus $(z_1\wedge z_2)\;\ol{\mu}_2\; z_1z_2$ for each $(z_1\wedge z_2)\in\widehat{X_2}$. It is not hard to see now that the set 
$$Z=\left\{u_1u_2u_3\,:\;u_1,u_3\in\{\iota,z'\}\mbox{ and }u_2\in\{z,z^2\}\right\} $$ 
is a transversal set for the congruence $\ol{\mu}_2$. We identify each $\ol{\mu}_2$-class with its representative in $Z$ except for $(z'zz')\ol{\mu}_2$ where we choose $z'$ instead of $z'zz'$, although sometimes it will be easier if we consider $z'zz'$ instead of $z'$. The semigroup $\ol{S}_2$ is then the completely simple semigroup with $\Dc$-class egg-box picture depicted in Figure \ref{egg-boxb}. 
\begin{figure}[ht]
\begin{tabular}{|c|c|}
\hline $z'z^2$ & $z'z^2z'$ \\ [.2cm]
$z'z\,^*$ & $z'\,^*$ \\ \hline
$z^2\,^*$ & $z^2z'$ \\ [.2cm]
$z$ & $zz'\,^*$  \\ \hline 
\end{tabular}
\caption{Egg-box picture of $\ol{S}_2$.}\label{egg-boxb}
\end{figure}

For each $u\in\widehat{X_2}^+$, let $n_1(u)$ be the number of occurrences of the letter $z$ in $u$. To make it clear, we consider $n_1(z\wedge z)=2$ and $n_1(z\wedge z')=1$. Also, let $n_2(u)$ be the number of maximal nonempty substrings of $u$ with no letter $z'$, and $n(u)=n_1(u)-n_2(u)$. For example, if
$$u=(z\wedge z')z^3(z\wedge z)(z'\wedge z)z'\,,$$
then $n_1(u)=7$, $n_2(u)=3$ and $n(u)=4$. Note now that
$$u\ol{\mu}_2=\{u'\in\widehat{X_2}^+:\; \la_{u'}=\la_u,\;\tau_{u'}=\tau_u\;\mbox{ and }\; n(u')-n(u) \mbox{ even}\}$$ 
for all $u\in \widehat{X_2}^+$.\hfill\qed\vspace*{.3cm}

\noindent {\bf Example 3}: The four-spiral semigroup $Sp_4$ (see \cite{bmp}) is a semigroup generated by four idempotents, say $a$, $b$, $c$ and $d$, such that
$$a\Rc b\Lc c\Rc d\qquad\mbox{ and }\qquad d\wl a\,.$$
This semigroup is bisimple but not completely simple. It can be decomposed into the disjoint union of four copies of the bicyclic semigroup $B$ and a copy of the free monogenic semigroup $N$. The unique $\Dc$-class of $Sp_4$ is depicted in Figure \ref{F4}.
\begin{figure}[ht]
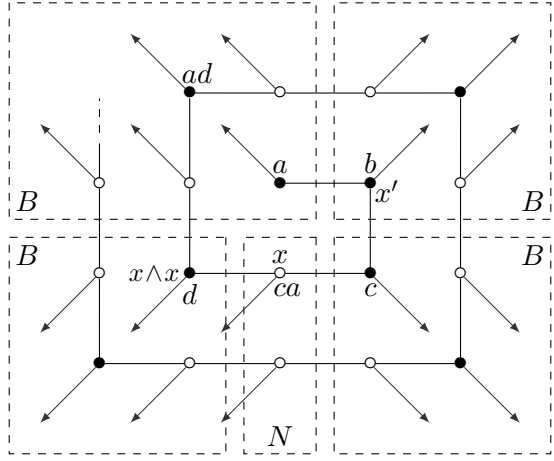

$$\tikz[scale=1.2, shorten <=2pt, shorten >=2pt, >=latex]{
\draw[dashed, shorten >=0pt, shorten <=0pt] (-.6,.4)--(-.6,-2)--(-3,-2)--(-3,.4)--(-.6,.4);
\draw[dashed, shorten >=0pt, shorten <=0pt](.4,.6)--(.4,3)--(-3,3)--(-3,.6)--(.4,.6);
\draw[dashed, shorten >=0pt, shorten <=0pt] (.6,.6)--(.6,3)--(3,3)--(3,.6)--(.6,.6);
\draw[dashed, shorten >=0pt, shorten <=0pt] (.6,.4)--(.6,-2)--(3,-2)--(3,.4)--(.6,.4);
\draw[dashed, shorten >=0pt, shorten <=0pt] (.4,.4)--(.4,-2)--(-.4,-2)--(-.4,.4)--(.4,.4);
\coordinate (1) at (0,1);
\coordinate (2) at (1,1);
\coordinate (3) at (1,0);
\coordinate (4) at (-1,0);
\coordinate (5) at (-1,2);
\coordinate (6) at (2,2);
\coordinate (7) at (2,-1);
\coordinate (8) at (-2,-1);
\draw (2.8,.8) node {$B$};
\draw (2.8,.2) node {$B$};
\draw (-2.8,.8) node {$B$};
\draw (-2.8,.2) node {$B$};
\draw (0,-1.8) node {$N$};
\draw (1) node {$\bullet$};
\draw (2) node {$\bullet$};
\draw (3) node {$\bullet$};
\draw (4) node {$\bullet$};
\draw (5) node {$\bullet$};
\draw (6) node {$\bullet$};
\draw (7) node {$\bullet$};
\draw (8) node {$\bullet$};
\draw (1)--(2);
\draw (2)--(3);
\draw (3)--(0,0);
\draw (0,0)--(4);
\draw (4)--(-1,1);
\draw (-1,1)--(5);
\draw (5)--(0,2);
\draw (0,2)--(1,2);
\draw (1,2)--(6);
\draw (6)--(2,1);
\draw (2,1)--(2,0);
\draw (2,0)--(2,-1);
\draw (2,-1)--(7);
\draw (7)--(1,-1);
\draw (1,-1)--(0,-1);
\draw (0,-1)--(-1,-1);
\draw (-1,-1)--(8);
\draw (-2,0)--(8);
\draw (-2,0)--(-2,1);
\draw (-2,1)--(-2,1.5);
\draw[dashed] (-2,1.5)--(-2,2);
\draw (-2,0) node {$\circ$};
\draw (-2,1) node {$\circ$};
\draw (-1,-1) node {$\circ$};
\draw (-1,1) node {$\circ$};
\draw (0,-1) node {$\circ$};
\draw (0,2) node {$\circ$};
\draw (1,-1) node {$\circ$};
\draw (1,2) node {$\circ$};
\draw (2,0) node {$\circ$};
\draw (2,1) node {$\circ$};
\draw[->, black!80] (0,1)--(-.7,1.7);
\draw[->, black!80] (-1,1)--(-1.7,1.7);
\draw[->, black!80] (0,2)--(-.7,2.7);
\draw[->, black!80] (-1,2)--(-1.7,2.7);
\draw[->, black!80] (-2,1)--(-2.7,1.7);
\draw[->, black!80] (1,1)--(1.7,1.7);
\draw[->, black!80] (2,1)--(2.7,1.7);
\draw[->, black!80] (1,2)--(1.7,2.7);
\draw[->, black!80] (2,2)--(2.7,2.7);
\draw[->, black!80] (1,0)--(1.7,-.7);
\draw[->, black!80] (2,0)--(2.7,-.7);
\draw[->, black!80] (1,-1)--(1.7,-1.7);
\draw[->, black!80] (2,-1)--(2.7,-1.7);
\draw[->, black!80] (0,0)--(-.7,-.7);
\draw[->, black!80] (-1,0)--(-1.7,-.7);
\draw[->, black!80] (0,-1)--(-.7,-1.7);
\draw[->, black!80] (-1,-1)--(-1.7,-1.7);
\draw[->, black!80] (-2,0)--(-2.7,-.7);
\draw[->, black!80] (-2,-1)--(-2.7,-1.7);
\draw (4) node [below] {$d$};
\draw (1) node [above] {$a$};
\draw (3) node [below] {$c$};
\draw (5) node [above] {\hspace*{.2cm}$ad$};
\draw (2) node [above] {$b$};
\draw (0,0) node {$\circ$};
\draw (0,0) node [below] {\hspace*{.2cm}$ca$};
\draw (0,0) node [above] {$x$};
\draw (-1,0) node [left] {\small$x\!\wedge\! x$};
\draw (1,1) node [below right=-5pt] {\hspace*{.1cm}$x'$};
}$$
\caption{The four-spiral semigroup $Sp_4$.}\label{F4}
\end{figure}
The elements in the same row are $\Rc$-equivalent and the elements in the same column are $\Lc$-equivalent. The arrows indicate the covers for the natural partial order and the idempotents are identified by \tikz{\draw (0,0) node {$\bullet$};}. The name of this semigroup comes from the `four-spiral' made by the placement of its idempotents. If we call $x$ the element $ca$, then $b$ is an inverse of $x$ and $d=x\wedge x$. Hence $Sp_4=LI\langle\{x\};\{(x',x'^2),(x,(x\wedge x)x)\}\rangle$.\hfill\qed\vspace*{.3cm}

Let $S=\LI\langle X;R\rangle$, $a\in S$ and $v\in \wX^+$. If $av\Rc a$, then $\varphi_v$ denotes the right translation of $\Lcc_a$ associated with $v\mu$, that is, $\varphi_v$ is the bijective mapping $\varphi_v:\Lcc_a\to\Lcc_{av}\,,\;s\mapsto sv$ which preserves the $\Rc$-related elements. If $va\Lc a$, then $\psi_v$ denotes the left translation of $\Rcc_a$ associated with $v\mu$, that is, $\psi_v$ is the bijective mapping $\psi_v:\Rcc_a\to\Rcc_{va}\,,\;s\mapsto vs$ which preserves the $\Lc$-related elements. We collect some information about the left and right translations on $S$ in the following results.

\begin{lem}\label{transl_gen}
Let $S=\LI\langle X;R\rangle$, $x\in\oX$, $s\in S(x\wedge x')$ and $v\in\wX^+$ such that $\la_v=x$. If $sv\Rc s$, then $\varphi_v:\Lcc_s\to \Lcc_{sv}$ is a right translation of $\Lcc_s$ with inverse right translation $\varphi_{v^{-1}}:\Lcc_{sv}\to\Lcc_s$.
\end{lem}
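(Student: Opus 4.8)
The plan is to collapse the whole statement onto the single identity $svv^{-1}=s$, from which everything else follows formally. Since $sv\Rc s$, the paragraph preceding the lemma already makes $\varphi_v\colon\Lcc_s\to\Lcc_{sv}$, $w\mapsto wv$, a well-defined right translation, and by Green's Lemma a bijection. Granting $svv^{-1}=s$, we get $(sv)v^{-1}=s\Rc sv$, so $\varphi_{v^{-1}}\colon\Lcc_{sv}\to\Lcc_{(sv)v^{-1}}=\Lcc_s$ is likewise a right translation; and for $w\in\Lcc_s$, writing $w=bs$ with $b\in S^1$ (possible as $w\Lc s$ and $S$ is regular), we obtain $\varphi_{v^{-1}}(\varphi_v(w))=wvv^{-1}=bsvv^{-1}=bs=w$. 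Thus $\varphi_{v^{-1}}\circ\varphi_v=\mathrm{id}_{\Lcc_s}$, and since $\varphi_v$ is a bijection this forces $\varphi_{v^{-1}}=\varphi_v^{-1}$, which is exactly the claim.

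Before proving $svv^{-1}=s$ I would record two consequences of the hypotheses. Put $f=(x\wedge x')\mu$; recall $f=(xx')\mu\in E(S)$. From $s\in S(x\wedge x')$ and idempotency of $f$ we get $sf=s$. From $\la_v=x$ I claim $fv=v$ and $v^{-1}f=v^{-1}$: inspecting the first letter $z_1$ of $v$, either $z_1=x$, whence $fz_1=(xx')x=x=z_1$, or $z_1=(x\wedge y)$ for some $y\in\oX$, in which case $(x\wedge y)\mu=(xx')\mu\wedge(y'y)\mu$ and the definition of $\wedge$ gives $(xx')(x\wedge y)=(x\wedge y)$, i.e. $fz_1=z_1$; either way $fv=v$. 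A dual look at the last letter $z_1^{-1}$ of $v^{-1}$ (which is $x'$, or ends in $(x\wedge x')$) gives $v^{-1}f=v^{-1}$. Setting $e=vv^{-1}\mu\in E(S)$ (idempotent by Lemma \ref{au_fi}), these yield $fe=(fv)v^{-1}=vv^{-1}=e$ and $ef=v(v^{-1}f)=vv^{-1}=e$, so $e\wr f$ and $e\wl f$, that is $e\,\omega\,f$, equivalently $e\le f$.

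The crux is then $svv^{-1}=s$, i.e. $se=s$, which I would obtain by pinning $se$ between two constraints that can only hold simultaneously at equality. First, $se\Rc s$: indeed $se\cdot v=s(vv^{-1}v)=sv$ by Lemma \ref{au_fi}, so $sv\ler se$, while $se=(sv)v^{-1}$ gives $se\ler sv$; hence $se\Rc sv\Rc s$ using the hypothesis $sv\Rc s$. Second, $se\le s$: as $S$ is locally inverse, its natural partial order is compatible with the multiplication, so $e\le f$ yields $se\le sf=s$. Since an element that is both below $s$ and $\Rc$-related to $s$ must equal $s$, we conclude $se=s$. The symmetric composite $\varphi_v\circ\varphi_{v^{-1}}=\mathrm{id}_{\Lcc_{sv}}$ is then immediate from the fact that $v^{-1}v$ is a right identity of $sv$ (because $v(v^{-1}v)=v$), so it is not really needed once the left composite is settled.

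I expect the genuine obstacle to be precisely this last step and the interplay it exhibits. The inequality $se\le s$ holds for \emph{every} idempotent $e\le f$ with $sf=s$ — it is just compatibility of $\le$, and by itself would typically give $se\le s$ strictly — so the entire force of the hypothesis $sv\Rc s$ is spent on upgrading $se\le s$ to $se=s$ through $se\Rc s$. Writing this cleanly requires being careful that $fv=v$ and $v^{-1}f=v^{-1}$ genuinely follow from $\la_v=x$ alone, which is where the case analysis on the first letter of $v$ and the pseudosemilattice identity $(xx')(x\wedge y)=(x\wedge y)$ enter; the remaining verifications are routine.
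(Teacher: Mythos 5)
Your proof is correct, and its skeleton coincides with the paper's: both arguments reduce everything to the single identity $s(vv^{-1})=s$ and both obtain $s(vv^{-1})\Rc s$ from $svv^{-1}v=sv$ together with the hypothesis $sv\Rc s$. Where you diverge is in the clinching step. The paper first assumes WLOG that $s$ is idempotent, observes that $s$ and $(vv^{-1})\mu$ both lie in the left normal band $((x\wedge x')\mu]_l$ (your verification that $v^{-1}f=v^{-1}$ is exactly the detail needed to justify the second membership, which the paper leaves implicit), and then quotes the fact recorded in its preliminaries that no two distinct idempotents of $(f]_l$ are $\Rc$-related. You instead keep $s$ arbitrary, upgrade $fe=e=ef$ to $e\,\omega\,f$, i.e.\ $e\leq f$, and invoke Nambooripad's theorem that the natural partial order on a locally inverse semigroup is compatible with multiplication to get $se\leq sf=s$, finishing with the standard fact that $\leq$ restricted to an $\Rc$-class is trivial. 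Both closing moves are legitimate consequences of local inverseness; the paper's has the advantage of using only facts already stated in its Section 2, whereas yours imports the order-compatibility characterization (true and classical, but nowhere recorded in the paper) while avoiding the reduction to idempotent $s$. Your observation that the hypothesis $sv\Rc s$ is spent entirely on promoting $se\leq s$ to equality is accurate and is, in disguise, the same mechanism the paper exploits via the non-$\Rc$-relatedness of distinct idempotents in $(f]_l$.
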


\begin{proof}
$\varphi_v$ is obviously a right translation of $\Lcc_s$ into $\Lcc_{sv}$. Further, without loss of generality, we can assume that $s$ is an idempotent; and so $s,\,(vv^{-1})\mu\in ((x\wedge x')\mu]_l$. Since $((x\wedge x')\mu)]_l$ is a left normal band, we conclude that $s$ and $s(vv^{-1})$ are $\Rc$-related idempotents of $((x\wedge x')\mu]_l$. Hence $s=s(vv^{-1})=(sv)v^{-1}$ and $\varphi_{v^{-1}}$ is the inverse right translation of $\varphi_v\,$.
\end{proof}

\begin{cor}\label{trans}
Let $S=\LI\langle X;R\rangle$, $x,y\in\oX$ and $s\in S(x\wedge x')$. Then:
\begin{itemize}
\item[$(i)$] $\varphi_x:\Lcc_s\to \Lcc_{sx}$ is a right translation of $\Lcc_s$ with inverse right translation $\varphi_{x'}:\Lcc_{sx}\to \Lcc_s$.
\item[$(ii)$] If $s(y\wedge y')\Rc s$, then $\varphi_{y\wedge y'}:\Lcc_s\to \Lcc_{s(y\wedge y')}$ is a right translation with inverse right translation $\varphi_{y\wedge x'}:\Lcc_{s(y\wedge y')}\to \Lcc_s$. 
\item[$(iii)$] If $s(x\wedge y')\Rc s$, then $\varphi_{x\wedge y'}:\Lcc_s\to \Lcc_{s(x\wedge y')}$ is a right translation with inverse right translation $\varphi_{x\wedge x'}:\Lcc_{s(x\wedge y')}\to \Lcc_s$.
\end{itemize}
\end{cor}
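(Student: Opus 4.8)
The three parts all reduce to Lemma~\ref{transl_gen}, applied to a word $v$ with $\la_v=x$ whose right translation coincides with the prescribed one on $\Lcc_s$, after which the formal inverse $v^{-1}$ is simplified on the target $\Lc$-class. Throughout I use that every letter $(a\wedge b)$, $a,b\in\oX$, represents an idempotent, namely $(a\wedge b)\mu=(aa')\mu\wedge(b'b)\mu$. Writing $e=(x\wedge x')\mu=(xx')\mu$ and $f=(y\wedge y')\mu=(yy')\mu$, this gives $(x\wedge y')\mu=e\wedge f$ and $(y\wedge x')\mu=f\wedge e$, and since a meet is $\wr$-below its left argument and $\wl$-below its right one,
$$(x\wedge x')(x\wedge y')=(x\wedge y'),\ (x\wedge y')(y\wedge y')=(x\wedge y'),\ (y\wedge y')(y\wedge x')=(y\wedge x'),\ (y\wedge x')(x\wedge x')=(y\wedge x')$$
hold in $S$. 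I also use $s=s(x\wedge x')$, which holds because $s\in S(x\wedge x')$ and $(x\wedge x')\mu$ is idempotent.

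For $(i)$ I apply Lemma~\ref{transl_gen} with $v=x$, so that $\la_v=x$. Here $sx\Rc s$ because $s=s(xx')=(sx)x'$ forces $s\ler sx$, while $sx\in sS$ gives $sx\ler s$. The lemma then states that $\varphi_x$ is a right translation of $\Lcc_s$ with inverse right translation $\varphi_{x^{-1}}=\varphi_{x'}$, which is exactly $(i)$.

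For $(iii)$ I apply the lemma with $v=(x\wedge y')$; now $\la_v=x$ and $s(x\wedge y')\Rc s$ is the hypothesis, so $\varphi_{(x\wedge y')}$ is a right translation with inverse right translation $\varphi_{v^{-1}}$, where $v^{-1}=(x\wedge y')^{-1}=(y\wedge y')(x\wedge x')$. It then remains to check that $\varphi_{(y\wedge y')(x\wedge x')}$ and $\varphi_{(x\wedge x')}$ coincide as maps $\Lcc_{s(x\wedge y')}\to\Lcc_s$: writing $t=c\,s(x\wedge y')$ for $t\in\Lcc_{s(x\wedge y')}$ and using $(x\wedge y')(y\wedge y')=(x\wedge y')$, one gets $t(y\wedge y')(x\wedge x')=t(x\wedge x')$, so the inverse right translation is $\varphi_{(x\wedge x')}$.

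The only genuine obstacle is $(ii)$, where the translating letter $(y\wedge y')$ has $\la_{(y\wedge y')}=y\neq x$, so Lemma~\ref{transl_gen} does not apply to it directly. The plan is to replace it by $v=(x\wedge x')(y\wedge y')$, which does satisfy $\la_v=x$. Since $s=s(x\wedge x')$, the maps $\varphi_v$ and $\varphi_{(y\wedge y')}$ coincide on $\Lcc_s$, and $sv=s(y\wedge y')\Rc s$ by hypothesis; hence the lemma applies to $v$ and $\varphi_{(y\wedge y')}$ shares the inverse right translation $\varphi_{v^{-1}}$ of $\varphi_v$. Finally $v^{-1}=(y\wedge y')^{-1}(y\wedge x')(x\wedge x')^{-1}$, which modulo $\mu$ equals $(y\wedge y')(y\wedge x')(x\wedge x')$ and collapses to $(y\wedge x')$ via $(y\wedge y')(y\wedge x')=(y\wedge x')$ and $(y\wedge x')(x\wedge x')=(y\wedge x')$. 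Thus $\varphi_{v^{-1}}=\varphi_{(y\wedge x')}$, proving $(ii)$. Apart from this reduction, everything is bookkeeping with the $\wr,\wl$ relations satisfied by the meets $e\wedge f$ and $f\wedge e$.
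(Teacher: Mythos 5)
Your proof is correct and follows essentially the same route as the paper: each part is reduced to Lemma~\ref{transl_gen} by replacing the translating letter with a word $v$ satisfying $\la_v=x$ (using $s=s(x\wedge x')$), after which the formal inverse $v^{-1}$ is identified with the claimed map via the $\wr/\wl$ identities for the meets. The only cosmetic differences are that you spell out part $(i)$ (which the paper dismisses as obvious) and that you simplify $v^{-1}$ by direct computation where the paper invokes the syntactic identity $(x\wedge x')(y\wedge y')=(y\wedge x')^{-1}$ together with Lemma~\ref{au_fi}.
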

 
\begin{proof}
$(i)$ is obvious since $s\in S(x\wedge x')$.

$(ii)$. Clearly $\varphi_{y\wedge y'}:\Lcc_s\to \Lcc_{s(y\wedge y')}$ is a right translation of $\Lcc_s$. Let $u=(x\wedge x')(y\wedge y')=(y\wedge x')^{-1}$ and note that $s(y\wedge y')=su$ because $s\in S(x\wedge x')$. Hence $\varphi_{y\wedge y'}=\varphi_u$ and, by Lemma \ref{transl_gen}, $\varphi_{y\wedge x'}=\varphi_{u^{-1}}:\Lcc_{s(y\wedge y')}\to \Lcc_s$ is the inverse right translation of $\varphi_{y\wedge y'}$.

The proof of $(iii)$ is analogous. We just need to observe that the second mapping $\varphi_{x\wedge x'}$ is precisely $\varphi_{(x\wedge y')^{-1}}$.
\end{proof}

The two previous results have their right-left duals. We indicate next only the dual of Corollary \ref{trans}.

\begin{cor}\label{trans_dual}
Let $S=\LI\langle X;R\rangle$, $x,y\in\oX$ and $s\in (x\wedge x')S$. Then:
\begin{itemize}
\item[$(i)$] $\psi_{x'}:\Rcc_s\to \Rcc_{x's}$ is a left translation of $\Rcc_s$ with inverse left translation $\psi_{x}:\Rcc_{x's}\to \Rcc_s$.
\item[$(ii)$] If $(y\wedge y')s\Lc s$, then $\psi_{y\wedge y'}:\Rcc_s\to \Rcc_{(y\wedge y')s}$ is a left translation with inverse left translation $\psi_{x\wedge y'}:\Rcc_{(y\wedge y')s}\to \Rcc_s$. 
\item[$(iii)$] If $(y\wedge x')s\Lc s$, then $\psi_{y\wedge x'}:\Rcc_s\to \Rcc_{(y\wedge x')s}$ is a left translation with inverse left translation $\psi_{x\wedge x'}:\Rcc_{(y\wedge x')s}\to \Rcc_s$.
\end{itemize}
\end{cor}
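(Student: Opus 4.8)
The plan is to derive Corollary~\ref{trans_dual} directly from Corollary~\ref{trans} by invoking the right-left duality that is already asserted in the text, rather than reproving everything from scratch. Recall that the opposite semigroup $S^{\mathrm{op}}$ of a locally inverse semigroup $S$ is again locally inverse, and that passing to $S^{\mathrm{op}}$ interchanges $\Rc$ with $\Lc$, interchanges left translations with right translations, and interchanges left-multiplication by $v\mu$ with right-multiplication. Moreover $S^{\mathrm{op}}$ carries its own presentation, and the formal-inverse machinery respects the duality. So the first thing I would do is make the duality completely precise: spell out how $S = \LI\langle X;R\rangle$ gives rise to a dual presentation so that the hypothesis $s\in(x\wedge x')S$ becomes, in $S^{\mathrm{op}}$, a hypothesis of the form $s\in S(x\wedge x')$, and so that $\psi_{v}$ in $S$ corresponds to $\varphi_{v}$ in $S^{\mathrm{op}}$.

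Next I would run each of the three items through this dictionary. For $(i)$, Corollary~\ref{trans}$(i)$ gives in $S^{\mathrm{op}}$ that $\varphi_{x}:\Lcc_s\to\Lcc_{sx}$ is a right translation with inverse $\varphi_{x'}$; translating back, this reads as $\psi_{x'}:\Rcc_s\to\Rcc_{x's}$ being a left translation with inverse left translation $\psi_{x}$, which is exactly the claim. The only subtlety to watch is the bookkeeping of the formal inverse: in $S^{\mathrm{op}}$ the roles of $\la_u$ and $\tau_u$ are swapped, and $(x\wedge y)^{-1}=(y'\wedge y)(x\wedge x')$ does not behave symmetrically, so the dual of the compound letter $(y\wedge y')$ must be computed with care. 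This is what produces the asymmetry between items $(ii)$ and $(iii)$: in $(ii)$ the inverse translation is labelled $\psi_{x\wedge y'}$ while in $(iii)$ it is $\psi_{x\wedge x'}$, mirroring the appearance of $\varphi_{y\wedge x'}$ and $\varphi_{x\wedge x'}$ in Corollary~\ref{trans}. I would verify in each case that the substring used to witness the equality $(y\wedge y')s = u\,s$ (resp.\ $(y\wedge x')s=u\,s$) for a suitable $u$, together with $s\in(x\wedge x')S$, goes through after the $\Rc$/$\Lc$ and left/right swap.

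Alternatively, if one prefers not to formalize the opposite-semigroup transfer, the whole statement can instead be proved in place as a mirror image of the proofs of Lemma~\ref{transl_gen} and Corollary~\ref{trans}. In that route, one first establishes a dual of Lemma~\ref{transl_gen}: for $x\in\oX$, $s\in(x\wedge x')S$, and $v\in\wX^+$ with $\tau_v=x$ (the condition on the \emph{last} letter, dual to $\la_v=x$), if $vs\Lc s$ then $\psi_v:\Rcc_s\to\Rcc_{vs}$ is a left translation with inverse $\psi_{v^{-1}}$. The proof reduces, as before, to the case where $s$ is idempotent, observes that $s$ and $(v^{-1}v)\mu$ both lie in $[(x\wedge x')\mu)_r$ — equivalently in a right normal band — and concludes $s=(v^{-1}v)s=v^{-1}(vs)$. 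Then $(i)$--$(iii)$ follow by choosing $v$ to be $x'$, $(y\wedge y')$, and $(y\wedge x')$ respectively, and identifying the relevant $v^{-1}$ via the formal-inverse formula.

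The main obstacle I anticipate is purely notational rather than conceptual: getting the formal inverses of the compound letters right under the duality. The definition $u^{-1}=z_n^{-1}(\la_{z_n}\wedge\tau_{z_{n-1}})z_{n-1}^{-1}\cdots z_1^{-1}$ reverses the word and interleaves new $\wedge$-letters, and the single-letter rule $(x\wedge y)^{-1}=(y'\wedge y)(x\wedge x')$ is not an involution, so one cannot simply "flip" a proof symbol-for-symbol. I would therefore be careful, in item $(ii)$, to check that the correct witness is $u=(y\wedge y')(x\wedge x')$ with $u$ read from the \emph{right} (so that its inverse supplies the label $x\wedge y'$ rather than $y\wedge x'$), and symmetrically in $(iii)$, so that the asymmetry in the labels $\psi_{x\wedge y'}$ versus $\psi_{x\wedge x'}$ emerges correctly. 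Once that bookkeeping is pinned down, the argument is a routine dualization and requires no new idea beyond Corollary~\ref{trans} and the dual of Lemma~\ref{transl_gen}.
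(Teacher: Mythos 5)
Your overall strategy---obtain Corollary~\ref{trans_dual} by dualizing Lemma~\ref{transl_gen} and Corollary~\ref{trans} rather than reproving anything---is exactly the paper's approach (the paper states this corollary with no proof at all, as ``the right-left dual'' of Corollary~\ref{trans}), and your identification of the multipliers $x'$, $(y\wedge y')(x\wedge x')$ and $(y\wedge x')$ for the three items, together with the resulting labels $\psi_x$, $\psi_{x\wedge y'}$, $\psi_{x\wedge x'}$, is right. The problem is that the auxiliary dual lemma you formulate is misstated in a way that breaks the argument. The correct dual of ``$s\in S(x\wedge x')$ and $\la_v=x$'' is ``$s\in(x\wedge x')S$ and $\tau_v=x'$'', not $\tau_v=x$. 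The reason is visible in the formal-inverse formula: $v^{-1}$ begins with $z_n^{-1}$, and for $(v^{-1}v)\mu$ to lie in the right normal band $((x\wedge x')\mu]_r$---which is what forces $s=(v^{-1}v)s=v^{-1}(vs)$---one needs $z_n\in\{x'\}\cup\{(y\wedge x'):y\in\oX\}$, i.e.\ $\tau_v=x'$. With $\tau_v=x$ the word $v^{-1}v$ begins with $x'$ or $(x'\wedge x)$, so it sits below $(x'\wedge x)\mu=(x'x)\mu$ rather than below $(x\wedge x')\mu=(xx')\mu$, and the band argument collapses. Indeed the statement with $\tau_v=x$ is false: in $\ol{S}_2$ take $x=z$, $s=(z\wedge z')\ol{\mu}_2=zz'$ and $v=z$; then $vs=z^2z'\Lc s$, but $v^{-1}vs=z'z^2z'\neq s$ (it lies in a different $\Rc$-class), so $\psi_{z'}$ is not the inverse of $\psi_z$. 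Note also that all three multipliers actually occurring in the corollary end in $x'$, so the lemma as you state it would not even be applicable to them.

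Two further points. First, the relevant band is the principal ideal $((x\wedge x')\mu]_r$, not the principal filter $[(x\wedge x')\mu)_r$ that you wrote: the hypothesis $s\in(x\wedge x')S$ with $s$ idempotent places $s$ \emph{below} $(x\wedge x')\mu$ for $\omega^r$, and it is the ideal, not the filter, that is a (right normal) band in which no two idempotents are $\Lc$-related. Second, once the hypothesis is corrected to $\tau_v=x'$, the rest of your outline goes through verbatim---reduce to $s$ idempotent, observe that $s$ and $(v^{-1}v)s$ are $\Lc$-related idempotents of the right normal band $((x\wedge x')\mu]_r$, hence equal---and items $(i)$--$(iii)$ then follow with the witnesses and labels exactly as you describe (in $(ii)$, $v=(y\wedge y')(x\wedge x')$ has $v^{-1}\varepsilon$-equivalent to $(x\wedge x')(x\wedge y')(y\wedge y')$, which acts on $\Rcc_{(y\wedge y')s}$ as $\psi_{x\wedge y'}$). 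So the route is sound, but the priming in the key hypothesis must be fixed before the proof is valid.
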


\section{Birooted locally inverse word graph}

In this section, we introduce the birooted locally inverse word graphs. These combinatorial objects will be fundamental to the characterization of the elements of a locally inverse semigroup. However, we will focus only on general properties of these graphs. We will introduce the crucial concept of language recognized by them, and study how this concept of language behaves under homomorphic images and subgraphs of those graphs.

Birooted locally inverse word graphs are special bipartite graphs with two kinds of edges and with two distinguished vertices. Some edges will have both an orientation and a label from $\oX$, while others will have neither. We will call \emph{arrows} the oriented and labeled edges, and \emph{lines} the non-oriented and non-labeled edges. The term `edges' will be used to refer to both arrows and lines. In terms of notation, we use $\vec{\ev}$ (with a vector on top) to denote an arrow and $\oev$ (with a line on top) to denote a line. However, since our graphs will have both arrows and lines simultaneously, we will often use the notation $\ev$ to refer to both of them in general. In fact, we will use the notations $\ol{\ev}$ or $\vec{\ev}$, instead of $\ev$, only when we want to emphasize the idea that $\ev$ is a line or an arrow, respectively.

We use the term graph as synonymous of a `simple graph'. In the context of this paper, it means that our graphs will not have multiple lines connecting the same vertices and will not have multiple arrows with the same initial or starting vertex, the same label and the same final or ending vertex. Note however that we admit lines and arrows with the same endpoints, and multiple arrows from the same starting vertex to the same ending vertex but with distinct labels.

Let $\Ga=(\V,\E)$ be a graph with vertices $\V$ and edges, that is, lines and arrows, $\E$. We denote by $\lE$ and $\oE$ the sets of lines and arrows of $\Ga$, respectively. We use also the notations $V(\Ga)$ for the set of vertices, $E(\Ga)$ for the set of edges, $\ol{E}(\Ga)$ for the set of lines and $\vec{E}(\Ga)$ for the set of arrows, all with respect to the graph $\Ga$. We will consider only bipartite graphs in this paper. Thus $\V$ is partitioned into two sets $\V_l$ and $\V_r$ such that all edges have one endpoint in $\V_l$ and the other endpoint in $\V_r$. We will use also the notations $V_l(\Ga)=\V_l$ and $V_r(\Ga)=\V_r$. The vertices of $\V_l$ and $\V_r$ will be designated as left and right vertices, respectively. Thus, the \emph{side} $\sb(\av)$ of a vertex $\av$ is either left or right depending on whether $\av\in \V_l$ or $\av\in\V_r$, respectively. 

An arrow $\vec{\ev}$ starting at $\av\in\V$, labeled by $x\in\oX$ and ending at $\bv\in\V$ is represented by the triple $\vec{\ev}=(\av,x,\bv)$ with the label at the center. There is no ambiguity with this notation since our graphs do not have arrows with the same initial vertex, the same final vertex, and the same label. The label of $\vec{\ev}$ is denoted by $\l(\vec{\ev})$, while the starting vertex and the ending vertex are denoted by $\lv(\vev)$ and $\rv(\vev)$, respectively. Since our graphs are bipartite with no multiple lines with the same endpoints, there is no ambiguity in using $(\av,\bv)$, with $\av\in\V_l$ and $\bv\in\V_r$, for denoting the line $\oev$ with endpoints $\av$ and $\bv$. Thus $\lE\subseteq\V_l\times\V_r$. We write $\lv(\oev)$ and $\rv(\oev)$ to denote $\av$ and $\bv$, respectively, for $\oev=(\av,\bv)$. 

A \emph{walk} in $\Ga$ is a sequence
$$p:=\; \av_0\ev_1\av_1\ev_2\cdots\av_{n-1}\ev_n\av_n$$
of alternating vertices $\av_0,\av_1,\cdots,\av_n\in\V$ and edges $\ev_1,\ev_2, \cdots,\ev_n\in\E$ such that, for all $i\in\{1,\cdots,n\}$, $(i)$ $\av_{i-1}$ and $\av_i$ are the two endpoints of $\ev_i$ and $(ii)$ if $\ev_i$ is an arrow then $\ev_i$ starts at $\av_{i-1}$ and ends at $\av_i$. The condition $(ii)$ assures that the orientation of the arrows is respected when `walking' $p$ (from $\av_0$ to $\av_n$). The vertex $\av_0$ is the \emph{initial vertex} of $p$ while the vertex $\av_n$ is the \emph{final vertex} of $p$. The walk $p$ is designated as an \emph{$\av_0-\av_n$ walk}, and the set of all $\av_0-\av_n$ walks in $\Ga$ is denoted by $P_\Ga(\av_0,\av_n)$. If no ambiguity occurs we just write $P(\av_0,\av_n)$ for $P_\Ga(\av_0,\av_n)$. The \emph{length} of $p$ is $n$, the number of edges. Often, we will indicate only the sequence $\ev_1\ev_2\cdots\ev_n$ of edges to identify the walk $p$. If no ambiguity occurs, we may also indicate only the sequence of vertices $\av_0\av_1\cdots\av_n$ to refer to the walk $p$.

Any factor $p':=\;\av_i\ev_{i+1}\cdots\ev_j\av_j$, with $j\geq i$, of $p$ is called a \emph{subwalk} of $p$, and a \emph{decomposition} of $p$ is a sequence $p_1,\cdots,p_k$ of subwalks of $p$ such that $p=p_1\cdots p_k$ (of course, omitting the initial vertex of each $p_i$ with $i>1$ since this vertex is also the final vertex of $p_{i-1}$). The paths (walks without repetition of edges) of length 1 are called \emph{elementary paths}. Thus any subpath of $p$ of length 1 is an elementary subpath. The elementary decomposition of $p$ is the unique decomposition of $p$ into its elementary subpaths. Note that $p$ has length $n$ \iff\ its elementary decomposition has $n$ subpaths. 

The graph $\Ga$ is \emph{strongly connected} if there is an $\av-\bv$ walk for all $\av,\bv\in\V$. If $\Ga_1$ is the graph obtained from $\Ga$ by replacing each arrow by a line, then $\Ga$ is \emph{connected} if $\Ga_1$ is connected. In other words, the difference between strongly connectedness and connectedness is that we do not take into account the orientation of the arrows in the case of connectedness.

An \emph{oriented bipartite graph} is a bipartite graph where all arrows start at left vertices and end at right vertices, that is, $\oE\subseteq\V_l\times\oX\times\V_r$. The \emph{content} of a vertex $\av\in\V$ is the set $\cb(\av)$ of all letters labeling arrows of $\Ga$ with $\av$ as one of its endpoints. A \emph{locally inverse word graph} (`liw-graph' for short) is a graph $\Ga=(\V,\E)$ such that
\begin{itemize}
\item[$(i)$] $\Ga$ is a connected oriented bipartite graph;
\item[$(ii)$] $\cb(\av)\neq\emptyset$ for all $\av\in\V$;
\item[$(iii)$] if $(\av,x,\bv)\in\oE$, then there are lines $(\av,\bv_1)$ and $(\av_1,\bv)$ in $\lE$ such that $(\av_1,x',\bv_1)\in\oE$.
\end{itemize}
Note that an liw-graph is strongly connected by $(iii)$ and since it is connected. Also, each vertex of $\Ga$ is the endpoint of some arrow by $(ii)$, and together with $(iii)$, it is also the endpoint of some line. 

When illustrating an liw-graph, note that we do not need to identify which vertices are left vertices and which vertices are right vertices. This is automatically done by the arrows. Whenever the drawing of an liw-graph does not become too complex, we will try to put its vertices in two columns, left column for the left vertices and right column for the right vertices. Also, to help distinguish the lines from the arrows, we draw the lines as dashed lines. The liw-graphs will have many edges and so, the labels of some arrows will have to be placed near edges. Do not forget that only the arrows have labels.  In Figure \ref{ex_liw} we depict an example of an liw-graph to illustrate our concept.  

\begin{figure}[ht]
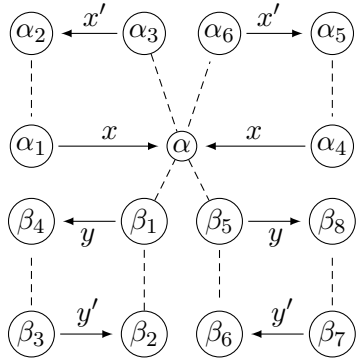

$$\tikz[shorten <=2pt, shorten >=3pt, >=latex]{
\node [circle, inner sep=1pt, draw] (1) at (3,3) {$\al$};
\node [circle, inner sep=1pt, draw] (2) at (1,4.5) {$\al_2$};
\node [circle, inner sep=1pt, draw] (3) at (1,3) {$\al_1$};
\node [circle, inner sep=1pt, draw] (4) at (2.5,4.5) {$\al_3$};
\node [circle, inner sep=1pt, draw] (5) at (5,4.5) {$\al_5$};
\node [circle, inner sep=1pt, draw] (6) at (5,3) {$\al_4$};
\node [circle, inner sep=1pt, draw] (7) at (3.5,4.5) {$\al_6$};
\node [circle, inner sep=1pt, draw] (8) at (2.5,2) {$\be_1$};
\node [circle, inner sep=1pt, draw] (9) at (2.5,0.5) {$\be_2$};
\node [circle, inner sep=1pt, draw] (10) at (1,0.5) {$\be_3$};
\node [circle, inner sep=1pt, draw] (11) at (1,2) {$\be_4$};
\node [circle, inner sep=1pt, draw] (12) at (3.5,2) {$\be_5$};
\node [circle, inner sep=1pt, draw] (13) at (3.5,.5) {$\be_6$};
\node [circle, inner sep=1pt, draw] (14) at (5,.5) {$\be_7$};
\node [circle, inner sep=1pt, draw] (15) at (5,2) {$\be_8$};
\draw[densely dashed] (4)--(1)--(7);
\draw[densely dashed] (2)--(3);
\draw[densely dashed] (5)--(6);
\draw[densely dashed] (9)--(8)--(1)--(12)--(13);
\draw[densely dashed] (10)--(11);
\draw[densely dashed] (14)--(15);
\draw[->] (3) to node [above=-1pt] {$x$} (1);
\draw[->] (4) to node [above=-1pt, pos=.4] {$x'$} (2);
\draw[->] (6) to node [above=-1pt] {$x$} (1);
\draw[->] (7) to node [above=-1pt, pos=.4] {$x'$} (5);
\draw[->] (8) to node [below=-1pt] {$y$} (11);
\draw[->] (10) to node [above=-1pt] {$y'$} (9);
\draw[->] (12) to node [below=-1pt] {$y$} (15);
\draw[->] (14) to node [above=-1pt, pos=.4] {$y'$} (13);}$$
\caption{An example of an liw-graph.}\label{ex_liw}
\end{figure}

An elementary path $q:=\,\av_0\ev_1\av_1$ is called a \emph{left} or \emph{right} elementary path depending on whether $\av_0\in\V_l$ or $\av_0\in\V_r$, respectively. Note that $\ev_1$ is a line if $q$ is a right elementary path. For the left elementary paths, we have two types: the \emph{arrow elementary paths} where $\ev_1$ is an arrow, and the \emph{line elementary paths} where $\ev_1$ is a line. To avoid ambiguity, we use the terminology `line elementary path' only for the left elementary paths. Set $\w(q)=\{\iota\}$ if $q$ is a right elementary path; $\w(q)=\{\l(\ev_1)\}\subseteq\oX$ if $q$ is an arrow elementary path; and $\w(q)=\{(x\wedge y)\in\wX\,:\; x\in\cb(\av_0)\mbox{ and } y\in \cb(\av_1)\}$ if $q$ is a line elementary path.

Let $p$ be a walk in $\Ga$ and let $p_1p_2\cdots p_k$ be its elementary decomposition. Note that the elementary subpaths $p_i$ alternate between left and right elementary subpaths. Set
$$\w(p)=\{z_1\cdots z_k\in\wX^*\,:\; z_i\in\w(p_i)\mbox{ for } 1\leq i\leq k\}\subseteq\wX^*\,.$$
Thus the $z_i$ alternate between letters from $\wX$ and the empty word. Obviously the empty words are to be omitted from the sequences $z_1\cdots z_k$. If $p$ is a trivial path with no edges, set $\w(p)=\{\iota\}$. We say that $u\in\wX^*$ is a \emph{label} for the walk $p$ if $u\in\w(p)$. By definition of $\w$ it is evident that if $q_1\cdots q_j$ is another decomposition of $p$, then 
$$\w(p)=\{u_1\cdots u_j\,:\; u_i\in\w(q_i)\mbox{ for } 1\leq i\leq j\}.$$ 

For $\av,\bv\in V(\Ga)$, set 
\begin{itemize}
\item[$(i)$] $L_{\av,\bv}(\Ga)=\cup\{\w(p)\,:\; p\in P(\av,\bv)\}$,
\item[$(ii)$] $\rvec{L_\av}(\Ga)=\cup\{\w(p)\,:\; p \mbox{ is a walk in } \Ga \mbox{ starting at } \av\}$, and
\item[$(iii)$] $\lvec{L_\bv}(\Ga)=\cup\{\w(p)\,:\; p \mbox{ is a walk in } \Ga \mbox{ ending at } \bv\}$.
\end{itemize}
The following result relates these three sets and it is an obvious consequence of the strong connectedness of $\Ga$.

\begin{lem}\label{3_lang}
For $\av,\bv\in V(\Ga)$,
\begin{itemize}
\item[$(i)$] $\rvec{L_\av}(\Ga)$ is the set of all prefixes of the words from $L_{\av,\bv}(\Ga)$;
\item[$(ii)$] $\lvec{L_\bv}(\Ga)$ is the set of all suffixes of the words from $L_{\av,\bv}(\Ga)$.
\end{itemize}
\end{lem}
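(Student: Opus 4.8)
The plan is to prove the two equalities by double inclusion, treating part $(i)$ in detail since part $(ii)$ follows by the obvious left--right symmetric argument. The two ingredients I would use are the factorization property of $\w$ recorded just before the statement --- namely that for any decomposition $p=p_1p_2$ of a walk one has $\w(p)=\{u_1u_2\,:\;u_1\in\w(p_1),\,u_2\in\w(p_2)\}$ --- together with the strong connectedness of $\Ga$ and the fact that $\w(p)\neq\emptyset$ for every walk $p$. This last point holds because every vertex of a liw-graph has nonempty content by condition $(ii)$, so every elementary path has a nonempty label set, whence $\w(p)\neq\emptyset$ for all walks $p$.

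For the inclusion ``$\rvec{L_\av}(\Ga)\subseteq$ set of prefixes of words of $L_{\av,\bv}(\Ga)$'', I would take $u\in\rvec{L_\av}(\Ga)$, so that $u\in\w(q)$ for some walk $q$ starting at $\av$ and ending at some vertex $\cv$. By strong connectedness there is a walk $r$ from $\cv$ to $\bv$, and then $p:=qr\in P(\av,\bv)$. Picking any $u''\in\w(r)$ (possible since $\w(r)\neq\emptyset$), the factorization property gives $uu''\in\w(p)\subseteq L_{\av,\bv}(\Ga)$, so $u$ is a prefix of a word of $L_{\av,\bv}(\Ga)$. Conversely, for ``prefixes of words of $L_{\av,\bv}(\Ga)\subseteq\rvec{L_\av}(\Ga)$'', I would start from a prefix $u$ of some $w\in L_{\av,\bv}(\Ga)$, say $w\in\w(p)$ with $p\in P(\av,\bv)$. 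Writing $w=z_1\cdots z_m$ according to the elementary decomposition $p=p_1\cdots p_m$ with $z_i\in\w(p_i)$, and using that the letters of $\wX$ are atomic in $\wX^*$, the prefix $u$ must be of the form $z_1\cdots z_t$ for some $t$; taking the initial subwalk $p'=p_1\cdots p_t$ (which starts at $\av$), the factorization property yields $u\in\w(p')$, hence $u\in\rvec{L_\av}(\Ga)$.

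The argument is essentially routine, as the paper signals; the only place demanding a little care is the converse inclusion, where one must verify that an arbitrary prefix of a label word genuinely corresponds to cutting the walk at one of its vertices rather than ``inside'' a single elementary-path label. This is exactly where the atomicity of the letters of $\wX$ --- each elementary path contributing either a single letter of $\wX$ or the empty word --- is used to place the cut at a vertex boundary. Part $(ii)$ is then handled identically after reversing roles: one replaces ``initial subwalk / walk starting at $\av$'' by ``final subwalk / walk ending at $\bv$'', uses strong connectedness to prepend a walk from $\av$ to the starting vertex of the given walk, and reads off suffixes instead of prefixes.
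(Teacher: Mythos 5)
Your proof is correct and follows exactly the route the paper intends: the paper gives no proof beyond remarking that the lemma is an obvious consequence of strong connectedness, and your argument simply spells out that remark, using strong connectedness together with the factorization property of $\w$ over decompositions and the nonemptiness of every label set $\w(p)$. The one point of care you flag --- that a prefix of a label word cuts the walk at a vertex boundary because the letters of $\wX$ are atomic --- is handled correctly.
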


A \emph{homomorphism} $\varphi:\Ga\to\Ga'$ between two liw-graphs $\Ga$ and $\Ga'$ is a pair $(\varphi_V,\varphi_E)$ of functions, $\varphi_V: V(\Ga)\to V(\Ga')$ and $\varphi_E:E(\Ga)\to E(\Ga')$, such that $(\av,\bv)\varphi_E=(\av \varphi_V,\bv\varphi_V)$ and $(\av,x,\bv)\varphi_E=(\av \varphi_V,x,\bv\varphi_V)$. In other words, a homomorphism between liw-graphs is a mapping $\varphi_V$ on the vertices that preserves line incidence and arrow incidence, orientation and labeling. Thus $\varphi_V$ must send left vertices into left vertices and right vertices into right vertices because each vertex of $\Ga$ is the endpoint of some arrow. We will use $\varphi$ for denoting both mappings $\varphi_V$ and $\varphi_E$ when no ambiguity occurs.

The homomorphism $\varphi$ is called a \emph{monomorphism} if $\varphi_V$ is one-to-one, an \emph{epimorphism} if $\varphi_V$ is onto, an \emph{$E$-surjective epimorphism} if both $\varphi_V$ and $\varphi_E$ are onto, and an \emph{isomorphism} if it is a monomorphism and an $E$-surjective epimorphism. Note that $\varphi_E$ is one-to-one if $\varphi$ is a monomorphism. Thus both $\varphi_V$ and $\varphi_E$ are bijections if $\varphi$ is an isomorphism. Further, observe that $\varphi^{-1}=(\varphi_V^{-1},\varphi_E^{-1})$ is an isomorphism (the inverse isomorphism) from $\Ga'$ to $\Ga$ if $\varphi$ is an isomorphism. Two liw-graphs are \emph{isomorphic} if there exists an isomorphism from one of them to the other.

Let $\varphi:\Ga\to\Ga'$ be an liw-graph homomorphism. Since $\varphi$ preserves line incidence and arrow incidence and orientation, if $p$ is a walk in $\Ga$, then $p\varphi$ is a walk in $\Ga'$. Since $\varphi$ preserves also the labels of the arrows, the following lemma is obvious:

\begin{lem}\label{hom_wp}
Let $\varphi:\Ga\to\Ga'$ be an liw-graph homomorphism, $p$ be a walk in $\Ga$ and $\av\in V(\Ga)$. Then $\cb(\av)\subseteq\cb(\av\varphi)$ and $\w(p)\subseteq\w(p\varphi)$.
\end{lem}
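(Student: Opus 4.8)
The plan is to handle the two inclusions separately, proving the content inclusion first since the walk-label inclusion relies on it.

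For $\cb(\av)\subseteq\cb(\av\varphi)$ I would argue directly from the definitions. Take $x\in\cb(\av)$; by definition $\Ga$ carries an arrow labeled $x$ incident with $\av$, that is, either $(\av,x,\bv)\in\oE$ for some $\bv$ (with $\av$ its start) or $(\cv,x,\av)\in\oE$ for some $\cv$ (with $\av$ its end). Since a homomorphism preserves arrow incidence, orientation and labeling, the respective image is $(\av\varphi,x,\bv\varphi)$ or $(\cv\varphi,x,\av\varphi)$, an arrow of $\Ga'$ labeled $x$ and incident with $\av\varphi$. Hence $x\in\cb(\av\varphi)$, and this holds for every vertex $\av$.

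For $\w(p)\subseteq\w(p\varphi)$ I would reduce to elementary paths. Let $p_1\cdots p_k$ be the elementary decomposition of $p$. Each $p_i$ has length $1$, and since $\varphi$ preserves edges together with their endpoints, each image $p_i\varphi$ is again an elementary path and $(p_1\varphi)\cdots(p_k\varphi)$ is a decomposition of $p\varphi$ into elementary subpaths. By the remark following the definition of $\w$, the set $\w(p\varphi)$ may be computed from this decomposition, so it suffices to verify $\w(p_i)\subseteq\w(p_i\varphi)$ for each $i$ and then take products $z_1\cdots z_k$. Because $\varphi$ preserves sides (left vertices go to left vertices and right to right), $p_i$ and $p_i\varphi$ are always of the same type, so I would split into the three cases. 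If $p_i$ is a right elementary path, both label sets equal $\{\iota\}$. If $p_i$ is an arrow elementary path, then $\varphi$ preserves the label of its single arrow, so $\w(p_i)=\{\l(\ev_1)\}=\w(p_i\varphi)$. The remaining case is a line elementary path $p_i=\av_0\oev_1\av_1$, where $\w(p_i)=\{(x\wedge y):x\in\cb(\av_0),\,y\in\cb(\av_1)\}$ and $\w(p_i\varphi)=\{(x\wedge y):x\in\cb(\av_0\varphi),\,y\in\cb(\av_1\varphi)\}$; here the already-proved inclusions $\cb(\av_0)\subseteq\cb(\av_0\varphi)$ and $\cb(\av_1)\subseteq\cb(\av_1\varphi)$ yield $\w(p_i)\subseteq\w(p_i\varphi)$ at once.

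The only genuine content of the argument, and the reason the two assertions are bundled into one lemma, is this line-path case: there the containment of label sets is literally the containment of contents. Everything else, including the reduction to the elementary decomposition, is routine bookkeeping about preservation of incidence, orientation, labels and sides. I expect no real obstacle, which is consistent with the statement being declared obvious.
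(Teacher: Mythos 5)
Your proposal is correct and is exactly the argument the paper has in mind: the paper gives no written proof, declaring the lemma obvious from the fact that $\varphi$ preserves incidence, orientation, labels and sides, and your write-up simply spells out those details (with the line-elementary-path case correctly identified as the one place where the content inclusion is actually used). No gaps.
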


Let $\eta$ be an equivalence relation on the vertices of an liw-graph $\Ga$ that separates left from right vertices, that is, $\sb(\av)=\sb(\bv)$ for any $(\av,\bv)\in\eta$. The quotient is the graph $\Ga/\eta$ with vertices $V(\Ga/\eta)=V(\Ga)/\eta$, lines 
$$\ol{E}(\Ga/\eta)=\{(\av\eta,\bv\eta)\,:\;(\av,\bv)\in \ol{E}(\Ga)\}$$ 
and arrows 
$$\vec{E}(\Ga/\eta)=\{(\av\eta,x,\bv\eta)\,:\;(\av,x,\bv)\in \vec{E}(\Ga)\}.$$ The graph $\Ga/\eta$ is clearly an liw-graph. If $\varphi_V:V(\Ga)\to V(\Ga/\eta)$ is the mapping defined by $\av\varphi_V=\av\eta$ and $\varphi_E:E(\Ga)\to E(\Ga/\eta)$ is the mapping defined by $(\av,\bv)\varphi_E=(\av\eta,\bv\eta)$ and $(\av,x,\bv)\varphi_E=(\av\eta,x,\bv\eta)$, then $\varphi=(\varphi_V,\varphi_E)$ is an $E$-surjective epimorphism. We call $\varphi$ the \emph{natural homomorphism} from $\Ga$ onto $\Ga/\eta$. 

Conversely, if $\varphi=(\varphi_V,\varphi_E)$ is an $E$-surjective epimorphism from $\Ga$ to $\Ga'$ and $\eta$ is the equivalence relation $\ker\varphi_V$ on $V(\Ga)$, then $\varphi'_V:V(\Ga/\eta)\to V(\Ga')$ defined by $(\av\eta)\varphi'_V=\av\varphi_V$ and $\varphi'_E:E(\Ga/\eta)\to E(\Ga')$ defined by $(\av\eta,\bv\eta)\varphi'_E=(\av,\bv)\varphi_E$ and by $(\av\eta,x,\bv\eta)\varphi'_E=(\av,x,\bv)\varphi_E$ are well-defined bijections and $\varphi'=(\varphi'_V,\varphi'_E)$ is an isomorphism from $\Ga/\eta$ onto $\Ga'$. 

If $p$ is a walk in the liw-graph $\Ga$, let $p\eta$ be the corresponding path in the quotient liw-graph $\Ga/\eta$. In face of the previous observations, the following lemma is a particular instance of the last lemma:

\begin{lem}\label{quo_wp}
Let $\Ga/\eta$ be the quotient of an liw-graph $\Ga$ under the equivalence relation $\eta$ on $V(\Ga)$ that separates left from right vertices, and let $p$ be a walk in $\Ga$. Then $\w(p)\subseteq\w(p\eta)$.
\end{lem}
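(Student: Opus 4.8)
The plan is to derive this lemma directly from Lemma \ref{hom_wp} applied to the natural homomorphism, exactly as the sentence preceding the statement suggests. The key observation is that the quotient construction produces a genuine liw-graph homomorphism: just before the statement we saw that if $\varphi_V:V(\Ga)\to V(\Ga/\eta)$ sends $\av\mapsto\av\eta$ and $\varphi_E$ sends $(\av,\bv)\mapsto(\av\eta,\bv\eta)$ and $(\av,x,\bv)\mapsto(\av\eta,x,\bv\eta)$, then $\varphi=(\varphi_V,\varphi_E)$ is an $E$-surjective epimorphism from $\Ga$ onto $\Ga/\eta$. In particular $\varphi$ is an liw-graph homomorphism in the sense required by Lemma \ref{hom_wp}.

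First I would record that, for any walk $p$ in $\Ga$, the image walk $p\varphi$ in $\Ga/\eta$ is precisely the walk denoted $p\eta$; this is immediate from how $\varphi_V$ and $\varphi_E$ act on the vertices and edges of the elementary decomposition of $p$, since each vertex $\av$ is replaced by $\av\eta$ and each edge by its $\eta$-image edge. Then I would simply invoke Lemma \ref{hom_wp} with $\Ga'=\Ga/\eta$ and this particular $\varphi$, which yields $\w(p)\subseteq\w(p\varphi)$. Combining the two observations gives $\w(p)\subseteq\w(p\varphi)=\w(p\eta)$, which is exactly the claim.

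There is essentially no analytical difficulty here; the lemma is, as the paper notes, a particular instance of the preceding result. The only point that needs care is the bookkeeping identification $p\varphi=p\eta$, i.e.\ confirming that the notation $p\eta$ for the induced walk in the quotient coincides with the homomorphic image $p\varphi$ under the natural homomorphism. Once that notational matching is made explicit, the inclusion follows at once, and no separate argument about contents $\cb(\av)$ or about the structure of $\w$ on left, right, arrow, or line elementary subpaths is required, since all of that has already been absorbed into Lemma \ref{hom_wp}.
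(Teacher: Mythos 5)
Your proposal is correct and is exactly the paper's argument: the paper states the lemma as a particular instance of Lemma \ref{hom_wp} applied to the natural $E$-surjective epimorphism $\varphi:\Ga\to\Ga/\eta$, with the identification $p\eta=p\varphi$ made in the paragraph preceding the statement. Nothing is missing.
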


An \emph{liw-subgraph} of an liw-graph $\Ga$ is another liw-graph $\Ga_1$ such that $V(\Ga_1)\subseteq V(\Ga)$ and $E(\Ga_1)\subseteq E(\Ga)$. Thus if $\av\in V(\Ga_1)$ then $\cb_{\Ga_1}(\av)\subseteq\cb_\Ga(\av)$. The next lemma is also obvious:

\begin{lem}\label{sub_wp}
Let $\Ga_1$ be an liw-subgraph of the liw-graph $\Ga$ and let $p$ be a walk in $\Ga_1$. Then $\w_{\Ga_1}(p)\subseteq\w_\Ga(p)$.
\end{lem}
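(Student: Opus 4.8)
The plan is to reduce the claim to a comparison, elementary subpath by elementary subpath, of the label sets computed in $\Ga_1$ versus in $\Ga$. The first observation is that since $V(\Ga_1)\subseteq V(\Ga)$ and $E(\Ga_1)\subseteq E(\Ga)$, every walk $p$ in $\Ga_1$ is also a walk in $\Ga$, and moreover its elementary decomposition $p=p_1\cdots p_k$ is literally the same sequence of elementary paths whether $p$ is viewed inside $\Ga_1$ or inside $\Ga$ (the edges and their endpoints are unchanged). Thus it suffices to prove that $\w_{\Ga_1}(p_i)\subseteq\w_\Ga(p_i)$ for each elementary subpath $p_i$, and then to lift this inclusion to $p$ through the definition of $\w$ on walks.

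First I would check the inclusion $\w_{\Ga_1}(p_i)\subseteq\w_\Ga(p_i)$ by running through the three types of elementary path. If $p_i$ is a right elementary path, then $\w_{\Ga_1}(p_i)=\{\iota\}=\w_\Ga(p_i)$, so equality holds. If $p_i$ is an arrow elementary path, then $\w_{\Ga_1}(p_i)=\{\l(\vev)\}=\w_\Ga(p_i)$, since the arrow $\vev$ and its label are the same edge in both graphs. The only case requiring the subgraph hypothesis is the line elementary path $p_i=\av_0\,\oev\,\av_1$ with $\av_0\in\V_l$ and $\av_1\in\V_r$: here $\w_{\Ga_1}(p_i)$ consists of the letters $(x\wedge y)$ with $x\in\cb_{\Ga_1}(\av_0)$ and $y\in\cb_{\Ga_1}(\av_1)$, and similarly for $\w_\Ga(p_i)$ with the contents computed in $\Ga$. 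As already noted in the paragraph preceding the statement, $\cb_{\Ga_1}(\av)\subseteq\cb_\Ga(\av)$ for every $\av\in V(\Ga_1)$, because $\Ga_1$ contains a subset of the arrows of $\Ga$. Applying this to $\av_0$ and $\av_1$ gives $\w_{\Ga_1}(p_i)\subseteq\w_\Ga(p_i)$.

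Finally I would combine the elementary inclusions. Take any $u\in\w_{\Ga_1}(p)$; by definition $u=z_1\cdots z_k$ with $z_i\in\w_{\Ga_1}(p_i)$ for each $i$. By the previous step each $z_i$ also lies in $\w_\Ga(p_i)$, and since the elementary decomposition of $p$ in $\Ga$ is the same sequence $p_1\cdots p_k$, the word $z_1\cdots z_k$ belongs to $\w_\Ga(p)$. This yields $\w_{\Ga_1}(p)\subseteq\w_\Ga(p)$, as required. There is no genuine obstacle here: the content of the argument is entirely concentrated in the line-elementary-path case, where the inclusion of contents $\cb_{\Ga_1}(\av)\subseteq\cb_\Ga(\av)$ does all the work, and the passage from elementary subpaths to the full walk is purely formal through the definition of $\w$.
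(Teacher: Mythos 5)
Your proof is correct and is exactly the argument the paper has in mind: the paper states the lemma as ``obvious'' immediately after noting that $\cb_{\Ga_1}(\av)\subseteq\cb_\Ga(\av)$ for $\av\in V(\Ga_1)$, which is precisely the content inclusion you isolate as the only nontrivial ingredient (in the line-elementary-path case). Your case analysis and the lifting from elementary subpaths to the full walk just make explicit what the paper leaves unsaid.
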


A \emph{birooted locally inverse word graph} (`bliw-graph' for short) is a triple $\A=(\av,\Ga,\bv)$ where $\Ga$ is an liw-graph, $\av\in V_l(\Ga)$ and $\bv\in V_r(\Ga)$. The vertices $\av$ and $\bv$ are called the \emph{roots} of $\A$, $\av$ is its \emph{left root} while $\bv$ is its \emph{right root}. We denote by $\lv(\A)$ and $\rv(\A)$ the left and right roots of $\A$ respectively.
When depicting a bliw-graph, we identify the left and right roots by a double circle or the symbol $ \tikz[baseline=-4pt,scale=0.3]{\coordinate  (1) at (0,0);\draw (1) node{$\bullet$}; \draw (1)circle (10pt);}$. Note that there is no ambiguity in using the same representation for both left and right roots since the arrows will clarify which one is the left root and which one is the right root. If $\Ga$ is the liw-graph of Figure \ref{ex_liw}, then $(\al_3,\Ga,\be_2)$ is illustrated in Figure \ref{ex_bliw}. 

\begin{figure}
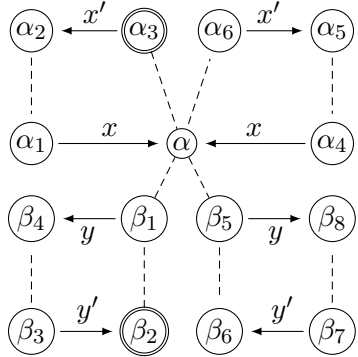

$$\tikz[shorten <=2pt, shorten >=3pt, >=latex]{
\node [circle, inner sep=1pt, draw] (1) at (3,3) {$\al$};
\node [circle, inner sep=1pt, draw] (2) at (1,4.5) {$\al_2$};
\node [circle, inner sep=1pt, draw] (3) at (1,3) {$\al_1$};
\node [circle, double, inner sep=1pt, draw] (4) at (2.5,4.5) {$\al_3$};
\node [circle, inner sep=1pt, draw] (5) at (5,4.5) {$\al_5$};
\node [circle, inner sep=1pt, draw] (6) at (5,3) {$\al_4$};
\node [circle, inner sep=1pt, draw] (7) at (3.5,4.5) {$\al_6$};
\node [circle, inner sep=1pt, draw] (8) at (2.5,2) {$\be_1$};
\node [circle, double, inner sep=1pt, draw] (9) at (2.5,0.5) {$\be_2$};
\node [circle, inner sep=1pt, draw] (10) at (1,0.5) {$\be_3$};
\node [circle, inner sep=1pt, draw] (11) at (1,2) {$\be_4$};
\node [circle, inner sep=1pt, draw] (12) at (3.5,2) {$\be_5$};
\node [circle, inner sep=1pt, draw] (13) at (3.5,.5) {$\be_6$};
\node [circle, inner sep=1pt, draw] (14) at (5,.5) {$\be_7$};
\node [circle, inner sep=1pt, draw] (15) at (5,2) {$\be_8$};
\draw[densely dashed] (4)--(1)--(7);
\draw[densely dashed] (2)--(3);
\draw[densely dashed] (5)--(6);
\draw[densely dashed] (9)--(8)--(1)--(12)--(13);
\draw[densely dashed] (10)--(11);
\draw[densely dashed] (14)--(15);
\draw[->] (3) to node [above=-1pt] {$x$} (1);
\draw[->] (4) to node [above=-1pt, pos=.4] {$x'$} (2);
\draw[->] (6) to node [above=-1pt] {$x$} (1);
\draw[->] (7) to node [above=-1pt, pos=.4] {$x'$} (5);
\draw[->] (8) to node [below=-1pt] {$y$} (11);
\draw[->] (10) to node [above=-1pt] {$y'$} (9);
\draw[->] (12) to node [below=-1pt] {$y$} (15);
\draw[->] (14) to node [above=-1pt, pos=.4] {$y'$} (13);}$$
\caption{An example of a bliw-graph.}\label{ex_bliw}
\end{figure}

The language recognized by the bliw-graph $\A=(\av,\Ga,\bv)$ is the subset
$$L(\A)=\cup\{\w(p)\,:\; p\in P(\av,\bv)\}=L_{\av,\bv}(\Ga)$$
of $\wX^+$. Note that $\iota\not\in\w(p)$ for any $p\in P(\av,\bv)$ because $\av$ is a left vertex, $\bv$ is a right vertex, and any $\av-\bv$ walk $p$ contains left elementary subpaths.

\begin{lem}\label{ext_lang}
If $\A=(\av,\Ga,\bv)$ and $\A'=(\av',\Ga',\bv')$ are two bliw-graphs such that $L(\A)\subseteq L(\A')$, then $\rvec{L_\av}(\Ga)\subseteq \rvec{L_{\av'}}(\Ga')$ and $\lvec{L_\bv}(\Ga)\subseteq \lvec{L_{\bv'}}(\Ga')$. 
\end{lem}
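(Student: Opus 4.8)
The plan is to deduce everything from Lemma \ref{3_lang}, which already expresses the one-sided languages $\rvec{L_\av}(\Ga)$ and $\lvec{L_\bv}(\Ga)$ purely in terms of the two-sided language $L_{\av,\bv}(\Ga)=L(\A)$. Recall that by that lemma $\rvec{L_\av}(\Ga)$ is exactly the set of all prefixes of words in $L(\A)$, and $\lvec{L_\bv}(\Ga)$ is exactly the set of all suffixes of words in $L(\A)$; the same statement applied to $\A'$ identifies $\rvec{L_{\av'}}(\Ga')$ with the prefix set of $L(\A')$ and $\lvec{L_{\bv'}}(\Ga')$ with the suffix set of $L(\A')$. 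Thus the whole lemma is a reformulation of the monotonicity of the prefix- and suffix-closure operations on subsets of $\wX^*$.

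With this reduction in hand, the argument is direct. To prove $\rvec{L_\av}(\Ga)\subseteq \rvec{L_{\av'}}(\Ga')$ I would take an arbitrary $u\in\rvec{L_\av}(\Ga)$; by Lemma \ref{3_lang}$(i)$ there is a word $w\in L(\A)$ of which $u$ is a prefix, say $w=uu_1$ for some $u_1\in\wX^*$. Since $L(\A)\subseteq L(\A')$ we have $w\in L(\A')$, so $u$ is a prefix of a word in $L(\A')$, and applying Lemma \ref{3_lang}$(i)$ now to $\A'$ yields $u\in\rvec{L_{\av'}}(\Ga')$. The inclusion $\lvec{L_\bv}(\Ga)\subseteq \lvec{L_{\bv'}}(\Ga')$ is obtained identically, replacing prefixes by suffixes and invoking Lemma \ref{3_lang}$(ii)$ in place of $(i)$.

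There is essentially no obstacle to overcome here: the genuine content lies in Lemma \ref{3_lang}, whose proof uses the strong connectedness of liw-graphs (guaranteed by condition $(iii)$ of the definition) to complete any walk starting at $\av$ into an $\av-\bv$ walk, and to prolong any walk ending at $\bv$ backwards to $\av$. The present lemma merely records that passing to prefix sets and to suffix sets is order-preserving under inclusion of languages, so no connectedness argument needs to be repeated. The only point requiring care is to apply Lemma \ref{3_lang} to \emph{both} bliw-graphs with their respective roots, so that the prefix (resp.\ suffix) characterization is used consistently on each side of the desired inclusion.
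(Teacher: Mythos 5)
Your proof is correct and follows exactly the route the paper takes: the paper's own proof simply states that the lemma is an obvious consequence of Lemma \ref{3_lang}, and your argument is precisely the spelled-out version of that deduction via monotonicity of prefix- and suffix-closure.
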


\begin{proof}
This lemma is an obvious consequence of Lemma \ref{3_lang}.
\end{proof}

A homomorphism $\varphi:\A\to \A'$ from a bliw-graph $\A=(\av,\Ga,\bv)$ to a bliw-graph $\A'=(\av',\Ga',\bv')$ is an liw-graph homomorphism $\varphi:\Ga\to\Ga'$ that preserves the roots, that is, $\av\varphi=\av'$ and $\bv\varphi=\bv'$. If $\varphi$ preserves only the left root [only the right root, no roots], then we say that $\varphi$ is a \emph{left} [\emph{right}, \emph{weak}] \emph{homomorphism} between two bliw-graphs. The notions of [left, right, weak] monomorphim, epimorphism, $E$-surjective epimorphism and isomorphism between bliw-graphs are now the expected ones. Thus, two bliw-graphs $\A$ and $\A'$ are [left, right, weakly] isomorphic if there exists a [left, right, weak] isomorphism $\varphi:\A\to\A'$. The quotient of a bliw-graph $\A=(\av,\Ga,\bv)$ under an equivalence relation $\eta$ on $V(\Ga)$ that separates left from right vertices is the bliw-graph $\A/\eta=(\av\eta,\Ga/\eta,\bv\eta)$. A bliw-subgraph of $\A=(\av,\Ga,\bv)$ is another bliw-graph $\A_1=(\av,\Ga_1,\bv)$ where $\Ga_1$ is an liw-subgraph of $\Ga$. The next result is a trivial consequence of Lemmas \ref{hom_wp}, \ref{quo_wp} and \ref{sub_wp}.

\begin{prop}\label{lang_bliw}
Let $\varphi:\A\to \A'$ be a homomorphism between two bliw-graphs $\A=(\av,\Ga,\bv)$ and $\A'=(\av',\Ga',\bv')$. Let $\eta$ be an equivalence relation on $V(\Ga)$ that separates left from right vertices, and let $\A_1=(\av,\Ga_1,\bv)$ be a bliw-subgraph of $\A$. Then 
$$L(\A)\subseteq L(\A')\;\mbox{ and }\; L(\A_1)\subseteq L(\A)\subseteq L(\A/\eta)\,.$$
\end{prop}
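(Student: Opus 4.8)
The plan is to reduce Proposition~\ref{lang_bliw} entirely to the three lemmas cited just before it, since each of the three claimed inclusions corresponds to exactly one of those lemmas applied at the level of languages recognized by the roots. The key observation is that all three languages $L(\A)$, $L(\A')$, $L(\A/\eta)$ and $L(\A_1)$ are by definition unions of the form $\cup\{\w(p)\,:\;p\in P(\av,\bv)\}$ taken over the appropriate $\av-\bv$ walk set, so it suffices to track how the sets $\w(p)$ behave under each of the three constructions (homomorphism, quotient, subgraph) and, crucially, to check that the relevant walk sets are correctly related.

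First I would handle the inclusion $L(\A)\subseteq L(\A')$. Given $u\in L(\A)$, there is a walk $p\in P_\Ga(\av,\bv)$ with $u\in\w(p)$. Since $\varphi$ is a homomorphism that preserves roots, $p\varphi$ is a walk in $\Ga'$ from $\av\varphi=\av'$ to $\bv\varphi=\bv'$, so $p\varphi\in P_{\Ga'}(\av',\bv')$. By Lemma~\ref{hom_wp}, $\w(p)\subseteq\w(p\varphi)$, hence $u\in\w(p\varphi)\subseteq L(\A')$. The inclusion $L(\A)\subseteq L(\A/\eta)$ is entirely analogous using the natural homomorphism $\A\to\A/\eta$ and Lemma~\ref{quo_wp} (which is itself just the instance of Lemma~\ref{hom_wp} for the natural epimorphism): a walk $p\in P(\av,\bv)$ maps to $p\eta\in P(\av\eta,\bv\eta)$, and $\w(p)\subseteq\w(p\eta)$ gives the result. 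For $L(\A_1)\subseteq L(\A)$, any walk $p\in P_{\Ga_1}(\av,\bv)$ is also a walk in $\Ga$ from $\av$ to $\bv$ since $V(\Ga_1)\subseteq V(\Ga)$ and $E(\Ga_1)\subseteq E(\Ga)$; Lemma~\ref{sub_wp} then yields $\w_{\Ga_1}(p)\subseteq\w_\Ga(p)$, and taking unions gives the inclusion.

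The only point requiring a moment's care, and what I expect to be the main (though minor) obstacle, is verifying that the image/preimage walks actually land in the correct \emph{rooted} walk sets rather than merely being walks somewhere in the target graph. This is where the root-preservation hypotheses on $\varphi$ and on the quotient and subgraph constructions are used: a homomorphism of bliw-graphs preserves both roots, the natural homomorphism sends $\av\mapsto\av\eta$ and $\bv\mapsto\bv\eta$ which are precisely the roots of $\A/\eta$, and a bliw-subgraph shares the same roots $\av,\bv$ by definition. Once these endpoint conditions are matched up, each inclusion is immediate from the corresponding lemma. Since the proposition is explicitly stated to be a trivial consequence of Lemmas~\ref{hom_wp}, \ref{quo_wp} and \ref{sub_wp}, I would keep the proof to a few lines, simply citing each lemma against the appropriate walk set and invoking that $L(\cdot)=\cup\{\w(p):p\in P(\text{roots})\}$ throughout.
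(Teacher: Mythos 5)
Your proposal is correct and follows exactly the route the paper intends: the paper dismisses this proposition as a trivial consequence of Lemmas \ref{hom_wp}, \ref{quo_wp} and \ref{sub_wp}, and your write-up simply makes explicit the walk-to-walk correspondence and root-preservation checks that justify each of the three inclusions. No gaps.
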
 

Let $\A$ and $\A'$ be two bliw-graphs. If $\varphi:\A\to\A'$ is a homomorphism, then $L(\A)\subseteq L(\A')$ as seen in the previous proposition. In general, we cannot guarantee the converse, that is, the existence of a homomorphism $\varphi:\A\to\A'$ if $L(\A)\subseteq L(\A')$. In fact, the same is true for Stephen's theory for inverse word graphs. However, in Stephen's theory, if $\Delta$ and $\Delta'$ are `reduced' inverse word graphs, then the homomorphism $\varphi:\Delta\to\Delta'$ exists if $L(\Delta)\subseteq L(\Delta')$ (see \cite[Theorem 2.5]{stephen90}). It is natural to ask now if the same occurs for `reduced' bliw-graphs. In the next section we will see that this question has a positive answer.

\section{Reduction of bliw-graphs}\label{sec5}

In this section, we introduce the reduced bliw-graphs. We prove that the inclusion $L(\A)\subseteq L(\A')$ between reduced bliw-graphs $\A$ and $\A'$, together with another condition, is sufficient to guarantee the existence of a homomorphism $\varphi:\A\to\A'$ (Proposition \ref{conv}). Fortunately, in Section \ref{sec6}, Lemma \ref{pre_adj}, we will see that this other condition is itself a consequence of the inclusion $L(\A)\subseteq L(\A')$ for the cases we are interested in: reduced bliw-graphs associated with a given locally inverse semigroup. Here, we introduce also two reduction operations to transform bliw-graphs. We then prove that each bliw-graph can be reduced into a unique reduced bliw-graph using these two operations (Proposition \ref{rbliw}).

Let $\Ga$ be an liw-graph. A \emph{basic path} in $\Ga$ is a path $q:=\,\av_0\ev_1\av_1\ev_2\av_2$ of length 2 with one arrow. Then $q$ is either a \emph{left basic path} or a \emph{right basic path} depending on whether $\av_0$ is a left or right vertex, respectively. Thus, $\ev_1\in\ol{E}(\Ga)$ and $\ev_2\in\vec{E}(\Ga)$ if $q$ is a right basic path, and $\ev_1\in\vec{E}(\Ga)$ and $\ev_2\in\ol{E}(\Ga)$ if $q$ is a left basic path. Two right [left] basic paths are equivalent if they have the same starting [ending] vertex and their arrows have the same label. Then $\Ga$ is called \emph{deterministic} [\emph{injective}] if no two right [left] basic paths are equivalent. A particular consequence of being deterministic [injective] is that no two arrows starting [ending] at $\av$ have the same label. A \emph{reduced locally inverse word graph} (rliw-graph for short) is a deterministic and injective liw-graph. 

Obviously, a bliw-graph $\A=(\av,\Ga,\bv)$ is called reduced [deterministic, injective] if $\Ga$ is reduced [deterministic, injective]. We will write rbliw-graph as a short designation for reduced bliw-graph. The importance and usefulness of rliw-graphs and rbliw-graphs will be revealed during this paper. Our first result about them is the following:

\begin{lem}\label{r_unique}
Let $\Ga$ be an rliw-graph and let $\av\in V(\Ga)$. If $w\in \rvec{L_\av}(\Ga)$ {\normalfont{[}}$w\in \lvec{L_\av}(\Ga)${\normalfont]}, then there is unique $\bv\in V_r(\Ga)$ {\normalfont[}$\bv\in V_l(\Ga)${\normalfont]} and a unique $\av-\bv$ {\normalfont[}$\bv-\av${\normalfont]} walk $p$ such that $w\in\w(p)$. 
\end{lem}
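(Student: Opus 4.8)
The plan is to \emph{read} $w$ off $\Ga$ one letter at a time, starting at $\av$, and to show that at each stage the next vertex is forced; the endpoint $\bv$ and the whole walk then emerge as the unique ``run''. I argue the case $w\in\rvec{L_\av}(\Ga)$; the case $w\in\lvec{L_\av}(\Ga)$ is its left--right dual, obtained by interchanging the roles of determinism and injectivity and of initial and final vertices. Write $w=a_1\cdots a_m$ with $a_i\in\wX$. A right elementary path contributes $\iota$, while an arrow or line elementary path contributes exactly one letter, and the elementary subpaths of any walk alternate strictly between left and right ones; hence a walk $p$ from $\av$ satisfies $w\in\w(p)$ \iff\ its left elementary subpaths, taken in order, carry $a_1,\dots,a_m$. (I may assume $m\ge 1$; the empty word labels only the trivial walk at $\av$ and single right elementary paths, and the claim is readily checked in that degenerate case.) If such a $p$ ends at a right vertex, its last elementary subpath is a left one and, when $\av\in V_l(\Ga)$, so is its first; thus $p$ has the shape $L_1R_1\cdots R_{m-1}L_m$ if $\av\in V_l(\Ga)$ and $R_0L_1R_1\cdots R_{m-1}L_m$ if $\av\in V_r(\Ga)$, where each $L_i$ carries $a_i$ and each $R_i$ is a right elementary subpath. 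Existence of such a $p$ follows from $w\in\rvec{L_\av}(\Ga)$ by taking any walk with $w$ in its label set and deleting its (at most one) trailing right elementary subpath.

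For uniqueness, I list the vertices of such a $p$ as $\av,\rv_1,\lv_1,\dots,\lv_{m-1},\rv_m$ with $\rv_i\in V_r(\Ga)$, $\lv_i\in V_l(\Ga)$ and $\rv_m=\bv$, and show by induction that they are forced. The key is the reading of one letter $a$ out of a right vertex $\rv_i$, i.e. the step $R_iL_{i+1}$, which crosses a line to some $\lv_i$ and then traverses $L_{i+1}$ to $\rv_{i+1}$. If $a=x\in\oX$, this step is exactly a right basic path from $\rv_i$ with arrow-label $x$, so determinism forces both $\lv_i$ and $\rv_{i+1}$. If $a=(x\wedge y)$, then $\lv_i$ is joined to $\rv_i$ by a line and emits an $x$-arrow (as $x\in\cb(\lv_i)$), so it is the interior vertex of the unique right basic path from $\rv_i$ with arrow-label $x$ and is forced by determinism; and $\rv_{i+1}$ is joined to $\lv_i$ by a line and receives a $y$-arrow (as $y\in\cb(\rv_{i+1})$), so it is the interior vertex of the unique left basic path ending at $\lv_i$ with arrow-label $y$ and is forced by injectivity. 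Finally, when $\av\in V_l(\Ga)$, the first step $L_1$ reads $a_1$ directly out of $\av$: if $a_1=x\in\oX$ it follows the unique $x$-arrow leaving $\av$ (no two arrows at a vertex share a label, since $\Ga$ is deterministic), and if $a_1=(x\wedge y)$ the endpoint $\rv_1$ is forced by injectivity exactly as above with $\lv_i$ replaced by $\av$. Since $\Ga$ is a simple graph, each edge is determined by its endpoints (and, for arrows, its label), so once all vertices are forced the whole walk $p$ --- and hence $\bv=\rv_m\in V_r(\Ga)$ --- is uniquely determined.

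The delicate point, which I expect to be the main obstacle, is the reading of a $\wedge$-letter $(x\wedge y)$: being carried by a line rather than an arrow, the corresponding step traverses two consecutive lines and so cannot be resolved by determinism alone. The remedy is to split the work --- determinism on the embedded right basic path through the $x$-arrow leaving $\lv_i$ fixes $\lv_i$, and injectivity on the embedded left basic path through the $y$-arrow entering $\rv_{i+1}$ then fixes $\rv_{i+1}$ --- and to check that this degenerates correctly at the initial vertex. Everything else is dictated by the strict alternation of left and right elementary subpaths, which is exactly what makes the reading process deterministic.
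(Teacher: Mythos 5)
Your proof is correct and follows essentially the same route as the paper's: both arguments reduce uniqueness to the two basic-path configurations, using determinism to fix the left vertex reached across a line (and the target of an $\oX$-letter) and injectivity to fix the right vertex completing a $\wedge$-letter, your forward ``reading'' induction being just the paper's contradiction-at-the-first-divergence argument run in the opposite direction. The splitting of the $\wedge$-step into a determinism half and an injectivity half is exactly the paper's Case 2.
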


\begin{proof}
We prove only for the $\rvec{L_\av}(\Ga)$ case since the $\lvec{L_\av}(\Ga)$ case is its dual. Let $w\in\rvec{L_\av}(\Ga)$ and $z=\hat{\la}_w$. Let $p$ and $p'$ be two walks in $\Ga$, both starting at $\av$ and ending at right vertices, such that $w\in\w(p)\cap\w(p')$. We just need to prove that $p=p'$. The uniqueness of $\bv$ is then a trivial consequence. We assume that $p\neq p'$ with the intention of getting a contradiction. Without loss of generality, we can assume further that the initial elementary subpaths $\av\ev_1\av_1$ and $\av\ev_1'\av_1'$ of $p$ and $p'$, respectively, have $\ev_1\neq\ev_1'$.

Assume first that $z=x\in\oX$. If $\av$ is a right vertex, then the initial subwalks of $p$ and $p'$ of length 2 must be equivalent right basic paths, a contradiction since $\Ga$ is reduced. If $\av$ is a left vertex, then $\ev_1$ and $\ev_1'$ are both arrows labeled by $x$ with the same starting vertex, also a contradiction.

Assume now that $z=x\wedge y\in\wX$. If $\av$ is a left vertex, then $\av_1\neq\av_1'$ and $y\in\cb(\av_1)\cap\cb(\av_1')$. Thus there are two arrows $\ev_0=(\av_0,y,\av_1)$ and $\ev_0'=(\av_0',y,\av_1')$ in $\Ga$ for some $\av_0,\av_0'\in V_l(\Ga)$; whence $\av_0\ev_0\av_1\ev_1\av$ and $\av_0'\ev_0'\av_1'\ev_1'\av$ are two distinct equivalent left basic paths in $\Ga$, once again a contradiction. If $\av$ is a right vertex, then $\av_1$ and $\av_1'$ are two distinct left vertices with $x\in\cb(\av_1)\cap\cb(\av_1')$. Thus there are two arrows $\ev_0=(\av_1,x,\av_0)$ and $\ev_0'=(\av_1',x,\av_0')$ in $\Ga$ for some $\av_0,\av_0'\in V_r(\Ga)$. Once more we get a contradiction because $\av\ev_1\av_1\ev_0\av_0$ and $\av\ev_1'\av_1'\ev_0'\av_0'$ are two distinct equivalent right basic paths in $\Ga$. Therefore $p=p'$ and we have proved this lemma.
\end{proof}

\begin{cor}\label{unique_hom}
Let $\Ga$ and $\Ga'$ be two liw-graphs such that $\Ga'$ is reduced. If $\varphi:\Ga\to \Ga'$ is a homomorphism, then $\varphi$ is completely determined by the image of a vertex. In particular, there is at most one homomorphism $\varphi:\Ga\to\Ga'$ such that $\av\varphi=\av'$ for $\av\in V(\Ga)$ and $\av'\in V(\Ga')$.
\end{cor}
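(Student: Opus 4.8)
The plan is to exploit Lemma~\ref{r_unique} to pin down the behaviour of any homomorphism on vertices once its value on a single vertex is fixed. The key observation is that an liw-graph is strongly connected (by condition $(iii)$ in the definition of an liw-graph together with connectedness), so every vertex $\bv\in V(\Ga)$ can be reached from a fixed base vertex $\av$ by some walk $p$, and hence there is a label $w\in\rvec{L_\av}(\Ga)$ with $w\in\w(p)$ and $\bv$ the final vertex of $p$.

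First I would fix a vertex $\av\in V(\Ga)$ and suppose $\av\varphi=\av'$ is prescribed. Given an arbitrary $\bv\in V(\Ga)$, I would choose (using strong connectedness) a walk $p$ in $\Ga$ from $\av$ to $\bv$ and a label $w\in\w(p)$. By Lemma~\ref{hom_wp}, the image walk $p\varphi$ is a walk in $\Ga'$ starting at $\av\varphi=\av'$ and ending at $\bv\varphi$, and $w\in\w(p)\subseteq\w(p\varphi)$; in particular $w\in\rvec{L_{\av'}}(\Ga')$. Since $\Ga'$ is reduced, Lemma~\ref{r_unique} applies to $\av'$ and the label $w$: there is a \emph{unique} vertex of $\Ga'$ and a unique walk from $\av'$ realising $w$. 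The final vertex of that unique walk must be $\bv\varphi$. Thus $\bv\varphi$ is forced by $\av\varphi$ (and the choice of $p$ and $w$, which do not affect the outcome precisely because the terminal vertex is unique), proving $\varphi_V$ is completely determined on vertices. Once $\varphi_V$ is determined, $\varphi_E$ is determined as well, since a homomorphism sends the line $(\av,\bv)$ to $(\av\varphi_V,\bv\varphi_V)$ and the arrow $(\av,x,\bv)$ to $(\av\varphi_V,x,\bv\varphi_V)$ by definition.

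For the ``in particular'' clause, I would argue uniqueness directly: if $\varphi_1,\varphi_2:\Ga\to\Ga'$ are two homomorphisms with $\av\varphi_1=\av'=\av\varphi_2$, then by the argument above both $\varphi_1$ and $\varphi_2$ send every $\bv\in V(\Ga)$ to the terminal vertex of the unique walk in $\Ga'$ from $\av'$ realising a chosen label $w$ of an $\av$--$\bv$ walk; hence $\bv\varphi_1=\bv\varphi_2$ for all $\bv$, so $\varphi_{1,V}=\varphi_{2,V}$ and therefore $\varphi_1=\varphi_2$.

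The one point that needs care—and which I expect to be the main (mild) obstacle—is checking that the forced value $\bv\varphi$ does not depend on the auxiliary choices of $p$ and $w$. This is where the strength of Lemma~\ref{r_unique} is essential: for the \emph{fixed} label $w$ and the \emph{fixed} start $\av'$ in the reduced graph $\Ga'$, both the terminal vertex and the realising walk are unique, so different choices of $w\in\rvec{L_{\av'}}(\Ga')$ arising from different walks $p$ to the same $\bv$ all terminate at the same image vertex (namely $\bv\varphi$, which exists as an honest image and hence is consistent across choices). I would phrase this so that the existence of the actual homomorphism $\varphi$ guarantees consistency, while Lemma~\ref{r_unique} guarantees there is no room for two homomorphisms to differ.
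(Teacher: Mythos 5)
Your overall strategy is the paper's: combine Lemma \ref{hom_wp} with the uniqueness statement of Lemma \ref{r_unique}. However, there is a genuine gap in how you invoke Lemma \ref{r_unique}. For a label $w\in\rvec{L_{\av'}}(\Ga')$, that lemma asserts the uniqueness of a \emph{right} vertex $\bv'\in V_r(\Ga')$ and of an $\av'-\bv'$ walk carrying $w$; it says nothing about walks from $\av'$ that terminate at a left vertex, and this restriction is essential. A walk ending at a left vertex has a right elementary path as its last elementary subpath, and right elementary paths are labelled by $\iota$, so such a walk and its truncation at the preceding right vertex have exactly the same label set. Consequently, if $\bv$ is a \emph{left} vertex of $\Ga$, the label $w$ of an $\av$--$\bv$ walk cannot by itself distinguish $\bv\varphi$ among the possibly several left vertices of $\Ga'$ joined by a line to the right vertex where the unique walk of Lemma \ref{r_unique} ends; and your sentence ``the final vertex of that unique walk must be $\bv\varphi$'' is false on its face in this case, since that unique walk ends at a right vertex while $\bv\varphi$ is a left vertex.

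The repair is exactly what the paper does: your argument correctly forces $\bv\varphi$ for every $\bv\in V_r(\Ga)$, and for $\bv\in V_l(\Ga)$ one runs the dual argument, choosing a walk from $\bv$ \emph{to} $\av$ so that its label lies in $\lvec{L_{\av'}}(\Ga')$ and the dual half of Lemma \ref{r_unique} pins down the unique \emph{initial} left vertex. (An alternative patch: every left vertex $\bv$ is the start of some arrow $(\bv,x,\cv)$, the image of $\cv$ is already determined, and injectivity of $\Ga'$ forbids two arrows with the same label ending at the same right vertex, so $\bv\varphi$ is forced; but you did not argue this.) With either repair, the remainder of your write-up — the determination of $\varphi_E$ from $\varphi_V$ and the deduction of the ``in particular'' clause — is correct.
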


\begin{proof}
Let $\varphi:\Ga\to\Ga'$ be a homomorphism and choose $\av\in V(\Ga)$. Let $\bv\in V_r(\Ga)$ and $w\in L_{\av,\bv}(\Ga)$. Then $w\in L_{\av',\bv\varphi}(\Ga')$ for $\av'=\av\varphi$ by Lemma \ref{hom_wp}. However, by Lemma \ref{r_unique}, there exists a unique $\bv'\in V_r(\Ga')$ such that $w\in L_{\av',\bv'}(\Ga')$. Hence, there is only one option for $\bv\varphi$, namely $\bv'$. We have shown that the image of a vertex automatically determines the image of all right vertices. Similarly, choosing $w\in L_{\bv,\av}(\Ga)$ for any $\bv\in V_l(\Ga)$, we conclude that the image of $\av$ determines the image of all left vertices.

The second part of this corollary is an obvious consequence of the first.
\end{proof}

The next two lemmas contain technical details needed to prove Proposition \ref{conv}.

\begin{lem}\label{lem53}
Let $\Ga$ and $\Ga'$ be two liw-graphs such that $\Ga'$ is reduced. Let $\av,\bv\in V(\Ga)$ and $\av',\bv'\in V(\Ga')$ such that $\sb(\av)=\sb(\av')$, $\sb(\bv)=\sb(\bv')$, and $L_{\av,\bv}(\Ga)\subseteq L_{\av',\bv'}(\Ga')$. For each $\ev\in \vec{E}(\Ga)$, there exists a unique $\ev'\in\vec{E}(\Ga')$ such that $\l(\ev)=\l(\ev')$, $L_{\av,\lv(\ev)}(\Ga)\subseteq L_{\av',\lv(\ev')}(\Ga')$ and $L_{\rv(\ev),\bv}(\Ga)\subseteq L_{\rv(\ev'),\bv'}(\Ga')$.
\end{lem}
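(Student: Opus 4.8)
The plan is to exhibit $\ev'$ as the arrow that the unique walk of $\Ga'$ reading a suitable word is forced to cross, and then to check that this arrow does not depend on the auxiliary choices. First I would write $\cv=\lv(\ev)$, $\dv=\rv(\ev)$ and $x=\l(\ev)$, so that $\cv\in V_l(\Ga)$ and $\dv\in V_r(\Ga)$. Since $\Ga$ is strongly connected, $L_{\av,\cv}(\Ga)$ and $L_{\dv,\bv}(\Ga)$ are non-empty, so I fix $w_1\in L_{\av,\cv}(\Ga)$ and $w_2\in L_{\dv,\bv}(\Ga)$. Concatenating a walk from $\av$ to $\cv$ labeled by $w_1$, the arrow $\ev$, and a walk from $\dv$ to $\bv$ labeled by $w_2$ yields a walk from $\av$ to $\bv$ (the arrow elementary path $\cv\,\ev\,\dv$ contributes exactly the letter $x$); hence $w_1xw_2\in L_{\av,\bv}(\Ga)\subseteq L_{\av',\bv'}(\Ga')$.

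Next I would decode the crossing arrow. By Lemma \ref{r_unique} (forward form, applicable since $\bv'\in V_r(\Ga')$) there is a unique walk $p'$ from $\av'$ to $\bv'$ with $w_1xw_2\in\w(p')$. In any walk the letters of a label correspond positionally to the left elementary subpaths, since each right subpath contributes the empty word and each left subpath contributes exactly one letter of $\wX$; thus the letter $x$ in position $|w_1|+1$ is contributed by the $(|w_1|+1)$-th left elementary subpath. As the labels produced by line elementary paths lie in $\wX\setminus\oX$ whereas $x\in\oX$, that subpath must be an arrow $\ev'$ with $\l(\ev')=x$. Writing $p'=p_1'\,\ev'\,p_2'$ and comparing lengths gives $w_1\in\w(p_1')$ and $w_2\in\w(p_2')$, so $w_1\in L_{\av',\cv'}(\Ga')$ and $w_2\in L_{\dv',\bv'}(\Ga')$, where $\cv'=\lv(\ev')$ and $\dv'=\rv(\ev')$.

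The central step is to prove that $\ev'$ is independent of $w_1$ and of $w_2$. Keeping $w_2$ fixed, the suffix $\ev'\,p_2'$ reads $xw_2$, starts at the left vertex $\cv'$ and ends at $\bv'$; by the backward form of Lemma \ref{r_unique} applied to $xw_2\in\lvec{L_{\bv'}}(\Ga')$, this walk, and in particular its first arrow $\ev'$, is determined by $xw_2$ alone, hence is independent of $w_1$. Symmetrically, keeping $w_1$ fixed, the prefix $p_1'\,\ev'$ reads $w_1x$ and ends at the right vertex $\dv'$, so by the forward form of Lemma \ref{r_unique} it is determined by $w_1x$, making $\ev'$ independent of $w_2$. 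Thus a single arrow $\ev'$ serves for all admissible pairs $(w_1,w_2)$, and letting $w_1$ range over $L_{\av,\cv}(\Ga)$ and $w_2$ over $L_{\dv,\bv}(\Ga)$ delivers the two inclusions $L_{\av,\cv}(\Ga)\subseteq L_{\av',\cv'}(\Ga')$ and $L_{\dv,\bv}(\Ga)\subseteq L_{\dv',\bv'}(\Ga')$, while $\l(\ev')=x=\l(\ev)$ holds by construction.

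For uniqueness I would take any $\ev''\in\vec{E}(\Ga')$ with $\l(\ev'')=x$, $\lv(\ev'')=\cv''$ and $\rv(\ev'')=\dv''$ satisfying the same two inclusions. Fixing $w_1\in L_{\av,\cv}(\Ga)$, the inclusions place $w_1$ in both $L_{\av',\cv'}(\Ga')$ and $L_{\av',\cv''}(\Ga')$; appending $\ev'$ and $\ev''$ respectively produces two walks from $\av'$ that read $w_1x$ and terminate at the right vertices $\dv'$ and $\dv''$. The forward form of Lemma \ref{r_unique} forces these walks to coincide, whence $\ev'=\ev''$. I expect the main obstacle to be exactly the side bookkeeping concealed here: the vertices $\cv'$ and $\dv'$ lie on the side for which the directly available uniqueness statement gives no control, and the remedy is to always cross the arrow --- reading $w_1x$ so as to land on a right vertex for the forward lemma, and reading $xw_2$ so as to start from a left vertex for the backward lemma.
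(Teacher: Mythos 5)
Your proposal is correct and follows essentially the same route as the paper's proof: both pin down the image arrow by applying Lemma \ref{r_unique} to the words $w_1x$ (forward form, landing on a right vertex) and $xw_2$ (backward form, starting from a left vertex), and both derive the two language inclusions from the forced decomposition of any walk labeled $uxw_2$ or $w_1xv$ through that unique arrow. The only difference is presentational — you phrase the key step as independence of $\ev'$ from the auxiliary choices of $w_1$ and $w_2$, while the paper phrases it as uniqueness of the subwalks $p_1'p_3'$ and $p_3'p_2'$ — but the underlying mechanism is identical.
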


\begin{proof}
Let $x=\l(\ev)$, $w_1\in L_{\av,\lv(\ev)}(\Ga)$ and $w_2\in L_{\rv(\ev),\bv}(\Ga)$. Then 
$$w=w_1xw_2\in L_{\av,\bv}(\Ga)\subseteq L_{\av',\bv'}(\Ga'),$$
and there exists $p'\in P_{\Ga'}(\av',\bv')$ such that $w\in\w(p')$. Then $p'$ has a decomposition $p_1',p_3',p_2'$ such that $w_1\in\w(p_1')$, $w_2\in\w(p_2')$ and $p_3'$ is an arrow elementary path with an arrow $\fv'\in \vec{E}(\Ga')$ labeled by $x$. 

By Lemma \ref{r_unique}, $p_1'p_3'$ is the only path of $\Ga'$ starting at $\av'$ and ending at a right vertex labeled by $w_1x$, while $p_3'p_2'$ is the only path of $\Ga'$ starting at a left vertex and ending at $\bv'$ labeled by $xw_2$. Hence, $\fv'$ is the only arrow of $\Ga'$ such that $\l(\fv')=x$, $w_1\in L_{\av',\lv(\fv')}(\Ga')$ and $w_2\in L_{\rv(\fv'),\bv'}(\Ga')$. We have shown that if the arrow $\ev'$ announced in the statement of this lemma exists, then it must be unique and equal to $\fv'$.

Let us prove now that $L_{\av,\lv(\ev)}(\Ga)\subseteq L_{\av',\lv(\fv')}(\Ga')$, and so let $u\in L_{\av,\lv(\ev)}(\Ga)$. Again $uxw_2\in L_{\av',\bv'}(\Ga')$, and there exists $q'\in P_{\Ga'}(\av',\bv')$ such that $uxw_2\in\w(q')$. By the uniqueness of $p_3'p_2'$, the path $q'$ has a decomposition $q_1',p_3',p_2'$ such that $u\in\w(q_1')$. Hence, the final vertex of $q_1'$ is $\lv(\fv')$ and $u\in L_{\av',\lv(\fv')}(\Ga')$. We have shown that $L_{\av,\lv(\ev)}(\Ga)\subseteq L_{\av',\lv(\fv')}(\Ga')$. We show that $L_{\rv(\ev),\bv}(\Ga)\subseteq L_{\rv(\fv'),\bv'}(\Ga')$ similarly.
\end{proof}

With the notation introduced in the previous lemma, the language inclusion $L_{\av,\bv}(\Ga)\subseteq L_{\av',\bv'}(\Ga')$ induces a (unique) mapping $\psi_{\oE}:\vec{E}(\Ga)\to\vec{E}(\Ga')\,,\; \ev\mapsto \ev'$ that preserves the labels. Without any further comments, we shall always use the notation $\psi_{\oE}$ for the mapping from the arrows of $\Ga$ to the arrows of $\Ga'$ induced by any such language inclusion. Note however that $\psi_{\oE}$ may not preserve incidence, that is, we cannot guaranty that $\ev\psi_{\oE}$ and $\ev_1\psi_{\oE}$ have a vertex in common if $\ev$ and $\ev_1$ are arrows in $\Ga$ with a vertex in common (see the example of Figure \ref{hom_exist} below). Nevertheless, we can prove a weaker property:

\begin{lem}\label{weak_adj}
With the notation introduced above and in the previous lemma, let $\ev,\ev_1\in\vec{E}(\Ga)$. Then:
\begin{itemize}
\item[$(i)$] If $(\lv(\ev),\rv(\ev_1))\in\ol{E}(\Ga)$, then $(\lv(\ev\psi_{\oE}),\rv(\ev_1\psi_{\oE}))\in \ol{E}(\Ga')$.
\item[$(ii)$] If $\lv(\ev)=\lv(\ev_1)$, then $(\lv(\ev\psi_{\oE}),\bv_1'),(\lv(\ev_1\psi_{\oE}),\bv_1')\in \ol{E}(\Ga')$ for some $\bv_1'\in V_r(\Ga')$. 
\item[$(iii)$] If $\rv(\ev)=\rv(\ev_1)$, then $(\av_1',\rv(\ev\psi_{\oE})),(\av_1',\rv(\ev_1\psi_{\oE}))\in \ol{E}(\Ga')$ for some $\av_1'\in V_l(\Ga')$. 
\end{itemize}
\end{lem}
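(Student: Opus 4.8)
The plan is to exploit the language inclusion $L_{\av,\bv}(\Ga)\subseteq L_{\av',\bv'}(\Ga')$ to transport carefully chosen labels from $\Ga$ into $\Ga'$, and then use the uniqueness afforded by Lemma \ref{r_unique} (available because $\Ga'$ is reduced) to pin down the endpoints of $\ev\psi_{\oE}$ and $\ev_1\psi_{\oE}$. Write $x=\l(\ev)$ and $y=\l(\ev_1)$, and recall from Lemma \ref{lem53} that $L_{\av,\lv(\ev)}(\Ga)\subseteq L_{\av',\lv(\ev\psi_{\oE})}(\Ga')$ and $L_{\rv(\ev),\bv}(\Ga)\subseteq L_{\rv(\ev\psi_{\oE}),\bv'}(\Ga')$, with the analogous inclusions for $\ev_1$. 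The recurring tool will be the following identification principle: if $\cv''$ is a left vertex of $\Ga'$ lying at the end of a walk from $\av'$ labeled $w_1$ and carrying an arrow labeled $x$, then $\cv''=\lv(\ev\psi_{\oE})$, because $w_1x$ labels two walks from $\av'$ ending at right vertices which coincide by Lemma \ref{r_unique}; a dual statement holds for right vertices via suffixes.

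For $(i)$, I would fix $w_1\in L_{\av,\lv(\ev)}(\Ga)$ and $w_2\in L_{\rv(\ev_1),\bv}(\Ga)$ (nonempty by strong connectedness). Since $\ev$ and $\ev_1$ witness $x\in\cb(\lv(\ev))$ and $y\in\cb(\rv(\ev_1))$, the line $(\lv(\ev),\rv(\ev_1))$ yields a line elementary path whose label set contains $(x\wedge y)$; concatenating walks gives $w_1(x\wedge y)w_2\in L_{\av,\bv}(\Ga)\subseteq L_{\av',\bv'}(\Ga')$. Reading a realizing walk in $\Ga'$, the middle factor $(x\wedge y)$ produces a line $(\cv'',\dv'')\in\ol{E}(\Ga')$ with $x\in\cb(\cv'')$ and $y\in\cb(\dv'')$, reached by an $\av'$-walk labeled $w_1$ and followed by a $\dv''$-walk labeled $w_2$ to $\bv'$. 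By the identification principle, $\cv''=\lv(\ev\psi_{\oE})$, and its suffix dual (using $yw_2$) gives $\dv''=\rv(\ev_1\psi_{\oE})$. Hence $(\lv(\ev\psi_{\oE}),\rv(\ev_1\psi_{\oE}))\in\ol{E}(\Ga')$.

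For $(ii)$, the common left vertex $\cv:=\lv(\ev)=\lv(\ev_1)$ satisfies $x,y\in\cb(\cv)$. Applying condition $(iii)$ of the definition of liw-graph to $\ev$ produces a line $(\cv,\bv_*)$ with $x'\in\cb(\bv_*)$. Choosing $w_1\in L_{\av,\cv}(\Ga)$ and $w_2\in L_{\bv_*,\bv}(\Ga)$, both $(x\wedge x')$ and $(y\wedge x')$ lie in the label set of the line elementary path along $(\cv,\bv_*)$, so $w_1(x\wedge x')w_2$ and $w_1(y\wedge x')w_2$ both lie in $L_{\av,\bv}(\Ga)\subseteq L_{\av',\bv'}(\Ga')$. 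As in $(i)$, the first word yields a line $(\lv(\ev\psi_{\oE}),\dv'')$ and the second a line $(\lv(\ev_1\psi_{\oE}),\dv''')$ in $\Ga'$, where $x'\in\cb(\dv'')\cap\cb(\dv''')$ and $w_2\in L_{\dv'',\bv'}(\Ga')\cap L_{\dv''',\bv'}(\Ga')$. Since both $\dv''$ and $\dv'''$ receive an arrow labeled $x'$ and start a walk labeled $w_2$ to $\bv'$, the word $x'w_2$ labels two walks ending at $\bv'$ that coincide by the dual of Lemma \ref{r_unique}; hence $\dv''=\dv'''=:\bv_1'$ is the required common right vertex. Part $(iii)$ follows by the left-right dual of $(ii)$.

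I expect the main obstacle to be precisely this endpoint-identification step. The induced map $\psi_{\oE}$ need not preserve incidence, so the line extracted from a transported label is a priori attached to unknown vertices $\cv'',\dv''$; the whole argument hinges on converting the content information $x\in\cb(\cv'')$ (and dually $y\in\cb(\dv'')$, $x'\in\cb(\dv'')$) into a prefix $w_1x$ or a suffix $yw_2$, $x'w_2$, which is exactly what lets Lemma \ref{r_unique} force the identifications. Everything else is routine bookkeeping with the definition of $\w$ for line elementary paths and with strong connectedness.
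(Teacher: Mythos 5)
Your proof is correct, but it realizes the transported line differently from the paper. For part $(i)$ the paper takes a single word $wy$ with $w\in L_{\av,\lv(\ev_1)}(\Ga)$ and $y=\l(\ev_1)$, and uses the fact that the line $(\lv(\ev),\rv(\ev_1))$ contributes only the empty label, so that $wy$ lies in both $L_{\av,\rv(\ev_1)}(\Ga)$ and $L_{\av,\lv(\ev)}(\Ga)$; after transporting via Lemma \ref{lem53}, the word $wy$ labels walks in $\Ga'$ ending at $\rv(\ev_1\psi_{\oE})$ and at $\lv(\ev\psi_{\oE})$, and Lemma \ref{r_unique} forces the second walk to be the first followed by a right elementary path, which is exactly the desired line --- no wedge-letter and no suffix argument are needed. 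You instead transport the full word $w_1(x\wedge y)w_2\in L_{\av,\bv}(\Ga)$, read off the line elementary path realizing $(x\wedge y)$ in $\Ga'$, and identify its two endpoints separately, the left one by prefix uniqueness with $w_1x$ and the right one by suffix uniqueness with $yw_2$; this is sound (the content conditions $x\in\cb(\cv'')$ and $y\in\cb(\dv'')$ do convert into the prefixes and suffixes you need, precisely because $\Ga'$ is bipartite and oriented) but costs a second invocation of Lemma \ref{r_unique}. For $(ii)$ and $(iii)$ the paper simply applies $(i)$ twice to an auxiliary arrow $\ev_2$ labeled $x'$ supplied by condition $(iii)$ of the liw-graph definition; your direct argument with $w_1(x\wedge x')w_2$ and $w_1(y\wedge x')w_2$ is an inlined version of that same reduction --- the common vertex $\bv_1'$ you construct is $\rv(\ev_2\psi_{\oE})$ --- so the two treatments coincide in substance. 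The paper's route to $(i)$ is slightly leaner; yours has the merit of making explicit how a wedge-letter in a transported word forces a line in $\Ga'$, which is the phenomenon the lemma is really about.
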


\begin{proof}
$(i)$. Let $y=\l(\ev_1)$ and choose $w\in L_{\av,\lv(\ev_1)}(\Ga)$. Then 
$wy\in L_{\av,\rv(\ev_1)}(\Ga)\subseteq L_{\av,\lv(\ev)}(\Ga)$. By Lemma \ref{lem53}, $w\in L_{\av',\lv(\ev_1\psi_{\oE})}(\Ga')$ and $wy\in L_{\av',\lv(\ev\psi_{\oE})}(\Ga')$. Hence,
$$wy\in L_{\av',\rv(\ev_1\psi_{\oE})}(\Ga')\cap L_{\av',\lv(\ev\psi_{\oE})}(\Ga').$$
By Lemma \ref{r_unique}, there is only one path $p'$ in $\Ga'$ starting at $\av'$ and ending at a right vertex such that $wy\in\w(p')$. Thus $p'\in P_{\Ga'}(\av',\rv(\ev_1\psi_{\oE}))$. Furthermore, $wy\in\w(p_1')$ for some $p_1'\in P_{\Ga'}(\av',\lv(\ev\psi_{\oE}))$. Then $p_1'$ is decomposable into $p',p_2'$, where $p_2'$ is a right elementary path, due to the uniqueness of $p'$. Hence $(\lv(\ev\psi_{\oE}),\rv(\ev_1\psi_{\oE}))\in \ol{E}(\Ga')$.

The statements $(ii)$ and $(iii)$ are immediate consequences of $(i)$. For example, to prove $(ii)$, choose another arrow $\ev_2$ in $\Ga$ such that $(\lv(\ev),\rv(\ev_2))\in\ol{E}(\Ga)$ and apply $(i)$ twice, one time for $\ev$ and $\ev_2$, and the other time for $\ev_1$ and $\ev_2$. 
\end{proof}

To help understand the statement of Lemma \ref{weak_adj}, we illustrate statement $(ii)$ in Figure \ref{ex_weak}.
\begin{figure}[ht]
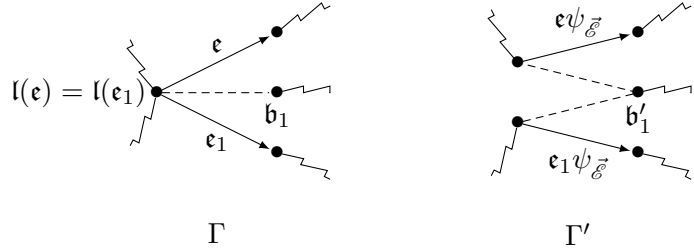

\centering $$\tikz[scale=.8, shorten <=2pt, shorten >=3pt, >=latex]{
\coordinate (1) at (1,2);
\coordinate (2) at (3,3);
\coordinate (3) at (3,1);
\coordinate (11) at (3,2);
\coordinate (4) at (7,1.5);
\coordinate (5) at (7,2.5);
\coordinate (6) at (9,1);
\coordinate (7) at (9,2);
\coordinate (8) at (9,3);
\coordinate[label=below:$\Ga$] (9) at (2,0);
\coordinate[label=below:$\Ga'$] (10) at (8,0);
\draw (1) node {$\bullet$};
\draw (1) node [left] {$\lv(\ev)=\lv(\ev_1)$};
\draw (7) node [below] {$\bv_1'$};
\draw (11) node [below] {$\bv_1$};
\draw (2) node {$\bullet$};
\draw (3) node {$\bullet$};
\draw (4) node {$\bullet$};
\draw (5) node {$\bullet$};
\draw (6) node {$\bullet$};
\draw (7) node {$\bullet$};
\draw (8) node {$\bullet$};
\draw (11) node {$\bullet$};
\draw[densely dashed] (1)--(11);
\draw[densely dashed] (4)--(7)--(5);
\draw[->] (1) to node [above] {$\ev$} (2);
\draw[->] (1) to node [below] {$\ev_1$} (3);
\draw[->] (5) to node [above] {$\ev\psi_{\oE}$} (8);
\draw[->] (4) to node [below] {$\ev_1\psi_{\oE}$} (6);
\draw[decorate,decoration=saw] (1)--(.5,3);
\draw[decorate,decoration=saw] (1)--(0.5,1);
\draw[decorate,decoration=saw] (2)--(4,3.5);
\draw[decorate,decoration=saw] (3)--(4,.5);
\draw[decorate,decoration=saw] (5)--(6.5,3.5);
\draw[decorate,decoration=saw] (4)--(6.5,.5);
\draw[decorate,decoration=saw] (6)--(10,0.5);
\draw[decorate,decoration=saw] (7)--(10,2);
\draw[decorate,decoration=saw] (11)--(4,2);
\draw[decorate,decoration=saw] (8)--(10,3.5);}$$
\caption{Illustration of Lemma \ref{weak_adj}$.(ii)$.}\label{ex_weak}
\end{figure}

Next, we give a necessary and sufficient conditions for the existence of a homomorphism $\varphi:\Ga\to\Ga'$ when $\Ga'$ is reduced. It becomes evident the importance of $\psi_{\oE}$ preserving incidence.

\begin{prop}\label{conv}
Let $\Ga$ and $\Ga'$ be two liw-graphs such that $\Ga'$ is reduced. Let $\av,\bv\in V(\Ga)$ and $\av',\bv'\in V(\Ga')$ such that $\sb(\av)=\sb(\av')$ and $\sb(\bv)=\sb(\bv')$. There exists a homomorphism $\varphi:\Ga\to\Ga'$ such that $\av\varphi=\av'$ and $\bv\varphi=\bv'$ \iff\ $L_{\av,\bv}(\Ga)\subseteq L_{\av',\bv'}(\Ga')$ and the induced mapping $\psi_{\oE}$ preserves incidence. Further, the restriction of $\varphi_E$ to $\vec{E}$ is precisely $\psi_{\oE}$.
\end{prop}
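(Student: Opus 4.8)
The plan is to prove the equivalence by establishing the two directions separately, and then to read off the final claim about $\varphi_E|_{\vec{E}}=\psi_{\oE}$ as a byproduct of the harder direction. The forward direction is the easy one: if a homomorphism $\varphi:\Ga\to\Ga'$ with $\av\varphi=\av'$ and $\bv\varphi=\bv'$ exists, then $L_{\av,\bv}(\Ga)\subseteq L_{\av',\bv'}(\Ga')$ is immediate from Lemma \ref{hom_wp} (a walk $p\in P(\av,\bv)$ maps to a walk $p\varphi\in P(\av',\bv')$ with $\w(p)\subseteq\w(p\varphi)$). For the incidence claim, I would argue that $\varphi_E$ restricted to arrows must coincide with $\psi_{\oE}$: given $\ev\in\vec{E}(\Ga)$, its image $\ev\varphi$ is an arrow with the same label, and by tracing a label $w_1\,\l(\ev)\,w_2\in L_{\av,\bv}(\Ga)$ through $\varphi$ one sees $\ev\varphi$ satisfies the two language inclusions that characterize $\ev\psi_{\oE}$ uniquely in Lemma \ref{lem53}; hence $\ev\varphi=\ev\psi_{\oE}$. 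Since $\varphi_E$ genuinely preserves incidence (it is a graph homomorphism) and equals $\psi_{\oE}$ on arrows, $\psi_{\oE}$ preserves incidence.

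The substantial direction is the converse. Assume $L_{\av,\bv}(\Ga)\subseteq L_{\av',\bv'}(\Ga')$ and that $\psi_{\oE}$ preserves incidence; I must construct $\varphi$. First I would define $\varphi_V$ on vertices. The idea is to use Lemma \ref{r_unique} in $\Ga'$: for any vertex $\cv\in V(\Ga)$, pick a label $w$ of a walk from $\av$ to $\cv$ (if $\cv$ is on the right) or from $\cv$ to $\av$ (if on the left); by the language inclusion combined with Lemma \ref{3_lang}, $w\in\rvec{L_{\av'}}(\Ga')$ (resp.\ $\lvec{L_{\av'}}(\Ga')$), and by Lemma \ref{r_unique} there is a unique endpoint in $\Ga'$ reached by $w$ from $\av'$. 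I would set $\cv\varphi_V$ to be that endpoint. The first obligation is well-definedness: that the image does not depend on the chosen walk or label. This follows because two labels $w_1,w_2$ of walks in $\Ga$ ending at the same vertex $\cv$ can be `joined' through $\bv$—extend each to a full $\av$-$\bv$ label and use the uniqueness in Lemma \ref{r_unique} on $\Ga'$ to force the same image vertex; I would phrase this using the suffix/prefix structure of Lemma \ref{3_lang} and Lemma \ref{ext_lang}. By construction $\av\varphi_V=\av'$, and a symmetric argument starting from $\bv$ (or extending labels to reach $\bv$) gives $\bv\varphi_V=\bv'$.

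Next I would define $\varphi_E$ and verify it is a homomorphism. On arrows I set $\varphi_E=\psi_{\oE}$. The key consistency check is that $\varphi_V$ and $\psi_{\oE}$ agree on endpoints of arrows: for $\ev\in\vec{E}(\Ga)$, I must show $\lv(\ev)\varphi_V=\lv(\ev\psi_{\oE})$ and $\rv(\ev)\varphi_V=\rv(\ev\psi_{\oE})$. This is exactly what Lemma \ref{lem53} delivers—the two language inclusions $L_{\av,\lv(\ev)}(\Ga)\subseteq L_{\av',\lv(\ev\psi_{\oE})}(\Ga')$ and $L_{\rv(\ev),\bv}(\Ga)\subseteq L_{\rv(\ev\psi_{\oE}),\bv'}(\Ga')$, read through the uniqueness of Lemma \ref{r_unique}, pin the images of the endpoints to the endpoints of $\ev\psi_{\oE}$. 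For lines I define $(\lv(\oev),\rv(\oev))\varphi_E=(\lv(\oev)\varphi_V,\rv(\oev)\varphi_V)$, so I must check this pair is genuinely a line in $\Ga'$. This is where the hypothesis that $\psi_{\oE}$ \emph{preserves incidence} does the essential work. Given a line $\oev=(\lv(\oev),\rv(\oev))$ in $\Ga$, by axiom $(ii)$ of liw-graphs both endpoints carry arrows; choosing arrows $\ev$ at $\lv(\oev)$ and $\ev_1$ at $\rv(\oev)$ in the configuration of Lemma \ref{weak_adj}$(i)$, incidence-preservation of $\psi_{\oE}$ forces $\ev\psi_{\oE}$ and $\ev_1\psi_{\oE}$ to sit on a common line in $\Ga'$, which is precisely the required image line.

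I expect the main obstacle to be the well-definedness of $\varphi_V$ and the verification that $\varphi_V$ is compatible with $\psi_{\oE}$ on arrow endpoints; everything downstream (preservation of labels, orientation, and the final identity $\varphi_E|_{\vec{E}}=\psi_{\oE}$) then follows mechanically, the last being true by the very definition of $\varphi_E$ on arrows. The subtle point throughout is that one never builds a vertex map directly from arrows alone—one must route everything through \emph{labels of walks} and the uniqueness guaranteed by reducedness of $\Ga'$ (Lemma \ref{r_unique}), with the incidence hypothesis supplying the only information that is not automatic, namely that lines map to lines.
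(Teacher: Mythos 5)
Your forward direction and the identification $\varphi_E|_{\vec{E}(\Ga)}=\psi_{\oE}$ match the paper's argument. The converse, however, has a genuine gap located exactly where you declare things routine: the well-definedness of your walk-based vertex map. You assert that if $w_1,w_2$ label walks of $\Ga$ between $\av$ and the same vertex $\cv$, then the corresponding (unique, by Lemma \ref{r_unique}) walks of $\Ga'$ anchored at $\av'$ reach the same vertex, and that this follows by extending both to full $\av$--$\bv$ labels and invoking uniqueness. It does not: the uniqueness in Lemma \ref{r_unique} is uniqueness of the endpoint \emph{for a fixed label}, and says nothing about two distinct labels that happen to reach the same vertex of $\Ga$. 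If you append a common suffix $u$ to $w_1$ and $w_2$, the dual of Lemma \ref{r_unique} pins down the $u$-labelled tail only from its first left vertex onward (the initial right elementary path contributes $\iota$), so the two candidate images of $\cv$ are merely two right vertices joined by lines to a common left vertex; they need not coincide. The paper's own Figure \ref{hom_exist} refutes your claim outright: there $L_{\av,\bv}(\Ga)=L_{\av',\bv'}(\Ga')$, yet the walks from $\av_1$ to $\av$ labelled $y$ and $xx'y$ would force $\av_1$ to map to $\av_1'$ and to $\av_0'$ respectively. So well-definedness of the vertex map is precisely where the hypothesis that $\psi_{\oE}$ preserves incidence must be spent --- not, as you say, the step where lines are shown to map to lines, since that step is Lemma \ref{weak_adj}$(i)$, which already holds from the language inclusion alone.

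The paper avoids the problem by never defining $\varphi_V$ through walks: since every vertex is an endpoint of some arrow (axiom $(ii)$ of liw-graphs), it sets $\av_1\varphi_V=\lv(\ev_1\psi_{\oE})$ and $\bv_1\varphi_V=\rv(\ev_1\psi_{\oE})$ for any arrow $\ev_1=(\av_1,x,\bv_1)$, and the incidence-preservation hypothesis is then, verbatim, the statement that this assignment is well defined; lines are then handled by Lemma \ref{weak_adj}$(i)$ and the root conditions $\av\varphi=\av'$, $\bv\varphi=\bv'$ come from Lemma \ref{lem53}. Your proposal becomes correct if you replace the walk-based definition of $\varphi_V$ by this arrow-based one; as written, its central step is unsupported and in fact false under your stated hypotheses.
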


\begin{proof}
Let $\varphi:\Ga\to\Ga'$ be a homomorphism such that $\av\varphi=\av'$ and $\bv\varphi=\bv'$. Then $L_{\av_1,\bv_1}(\Ga)\subseteq L_{\av_1\varphi,\bv_1\varphi}(\Ga')$ for any $\av_1,\bv_1\in V(\Ga)$ by Lemma \ref{hom_wp}. In particular, $L_{\av,\bv}(\Ga)\subseteq L_{\av',\bv'}(\Ga')$. Moreover, if $\ev\in\vec{E}(\Ga)$, then $L_{\av,\lv(\ev)}(\Ga)\subseteq L_{\av',\lv(\ev\varphi_E)}(\Ga')$ and $L_{\rv(\ev),\bv}(\Ga)\subseteq L_{\rv(\ev\varphi_E),\bv'}(\Ga')$. Thus $\ev\varphi_E=\ev\psi_{\oE}\,$ and $\psi_{\oE}$ preserves incidence since it is the restriction of $\varphi_E$ to $\vec{E}$.

Assume now that $L_{\av,\bv}(\Ga)\subseteq L_{\av',\bv'}(\Ga')$ and that the mapping $\psi_{\oE}$ preserves incidence. For each arrow $\ev_1=(\av_1,x,\bv_1)$ of $\Ga$, define $\av_1\varphi_V=\lv(\ev_1\psi_{\oE})$ and $\bv_1\varphi_V=\rv(\ev_1\psi_{\oE})$. The mapping $\varphi_V$ is well defined because $\psi_{\oE}$ preserves incidence. Further, $\ev_1\psi_{\oE}=(\av_1\varphi_V,x,\bv_1\varphi_V)$. Set $\ev_1\varphi_E=\ev_1\psi_{\oE}$. If $\ev_2=(\av_2,\bv_2)$ is a line of $\Ga$, then $(\av_2\varphi_V,\bv_2\varphi_V)$ is a line of $\Ga'$ by Lemma \ref{weak_adj}$.(i)$. Set $\ev_2\varphi_E=(\av_2\varphi_V,\bv_2\varphi_V)$. Clearly, $\varphi=(\varphi_V,\varphi_E)$ is now a homomorphism from $\Ga$ to $\Ga'$ such that $\av\varphi=\av'$ and $\bv\varphi=\bv'$ by construction.
\end{proof}

The condition of $\psi_{\oE}$ preserving incidence cannot be omitted. The Figure \ref{hom_exist} has two rliw-graphs $\Ga$ and $\Ga'$. Note that $L_{\av,\bv}(\Ga)=L_{\av',\bv'}(\Ga')=Z^+$ for
$$Z=\{(y'\wedge y)\}\cup y'Z_1^*y$$
where
$$Z_1=\{(z_1\wedge z_2')\,:\; z_1,z_2\in\{x,y\}\}\cup x(x'\wedge x)^*x'\,.$$
However, there is no homomorphism from $\Ga$ to $\Ga'$ since $\cb(\av_1)=\{x,y\}$ but no vertex of $\Ga'$ contains both $x$ and $y$ in its content. What fails here for not existing such a homomorphism is the fact that $\psi_{\oE}$ does not preserve incidence: observe that $(\av_1,y,\bv)\psi_{\oE}=(\av_1',y,\bv')$ and $(\av_1,x,\bv_2)\psi_{\oE}=(\av_0',x,\bv_2')$.

\begin{figure}[ht]
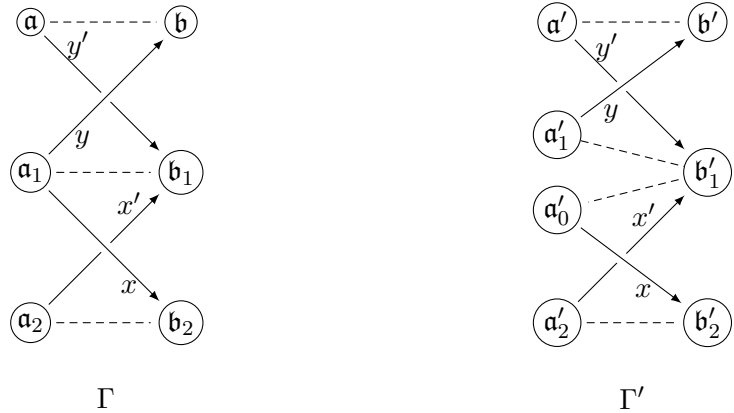

\centering $$\tikz[shorten <=2pt, shorten >=3pt, >=latex]{
\node [circle, inner sep=1pt, draw] (1) at (0,3) {$\av_1$};
\node [circle, inner sep=1pt, draw] (2) at (2,3) {$\bv_1$};
\node [circle, inner sep=1pt, draw] (3) at (0,1) {$\av_2$};
\node [circle, inner sep=1pt, draw] (4) at (2,1) {$\bv_2$};
\node [circle, inner sep=1pt, draw] (5) at (0,5) {$\av$};
\node [circle, inner sep=1pt, draw] (6) at (2,5) {$\bv$};
\node [circle, inner sep=1pt, draw] (7) at (7,3.5) {$\av_1'$};
\node [circle, inner sep=1pt, draw] (8) at (7,2.5) {$\av_0'$};
\node [circle, inner sep=1pt, draw] (9) at (9,3) {$\bv_1'$};
\node [circle, inner sep=1pt, draw] (10) at (7,1) {$\av_2'$};
\node [circle, inner sep=1pt, draw] (11) at (9,1) {$\bv_2'$};
\node [circle, inner sep=1pt, draw] (12) at (7,5) {$\av'$};
\node [circle, inner sep=1pt, draw] (13) at (9,5) {$\bv'$};
\coordinate[label=center:$\Ga$] (14) at (1,0);
\coordinate[label=center:$\Ga'$] (15) at (8,0);
\draw[densely dashed] (1)--(2);
\draw[densely dashed] (3)--(4);
\draw[densely dashed] (5)--(6);
\draw[densely dashed] (7)--(9)--(8);
\draw[densely dashed] (10)--(11);
\draw[densely dashed] (12)--(13);
\draw[->] (12) to node [above, pos=.3] {$y'$} (9);
\draw[-,line width=5pt,color=white] (7) to (13);
\draw[->] (7) to node [below, pos=.3] {$y$} (13);
\draw[->] (10) to node [above, pos=.6] {$x'$} (9);
\draw[-,line width=5pt,color=white] (8) to (11);
\draw[->] (8) to node [below, pos=.6] {$x$} (11);
\draw[->] (5) to node [above, pos=.3] {$y'$} (2);
\draw[-,line width=5pt,color=white] (1) to (6);
\draw[->] (1) to node [below, pos=.3] {$y$} (6);
\draw[->] (3) to node [above, pos=.7] {$x'$} (2);
\draw[-,line width=5pt,color=white] (1) to (4);
\draw[->] (1) to node [below, pos=.7] {$x$} (4);}$$
\caption{An example showing the necessity of $\psi_{\oE}$ preserving incidence.}\label{hom_exist}
\end{figure}

Next, we present a process to transform each bliw-graph $\A$ into a unique rbliw-graph $\B$. We begin by introducing two operations on bliw-graphs:\vspace*{.3cm}

\noindent{\bf Elementary determination}: Let $\A=(\av,\Ga,\bv)$ be a bliw-graph and let $p:=\bv_0\ol{\ev_1}\av_1\vec{{\fv}_1}\bv_1$ and $q:=\bv_0\ol{\ev_2}\av_2\vec{{\fv}_2}\bv_2$ be two equivalent right basic paths in $\Ga$. Let $\eta$ be the equivalence on $V(\Ga)$ generated by $\{(\av_1,\av_2),(\bv_1,\bv_2)\}$. The quotient bliw-graph $\A/\eta=(\av\eta,\Ga/\eta,\bv\eta)$ is called an \emph{elementary determination} of $\A$.
$$\tikz[shorten <=2pt, shorten >=3pt, >=latex]{
\node [circle, inner sep=1pt, draw] (1) at (2,1.4) {$\bv_0$};
\node [circle, inner sep=1pt, draw] (2) at (0,2.1) {$\av_1$};
\node [circle, inner sep=1pt, draw] (3) at (2,2.8) {$\bv_1$};
\node [circle, inner sep=1pt, draw] (4) at (0,.7) {$\av_2$};
\node [circle, inner sep=1pt, draw] (5) at (2,0) {$\bv_2$};
\draw[densely dashed] (1) to node [above, pos=.3] {$\ol{\ev_1}$} (2);
\draw[densely dashed] (1) to node [below, pos=.3] {$\ol{\ev_2}$} (4);
\draw[->] (2) to node [above, pos=.6] {$x$} node [above, pos=.2] {$\vec{{\fv}_1}$} (3);
\draw[->] (4) to node [below, pos=.6] {$x$} node [below, pos=.2] {$\vec{{\fv}_2}$} (5);
\draw[->] (3,1.4)--(3.5,1.4);
\node [circle, inner sep=1pt, draw] (6) at (6.5,.8) {$\bv_0'$};
\node [circle, inner sep=1pt, draw] (7) at (4.5,1.2) {$\av_1'$};
\node [circle, inner sep=1pt, draw] (8) at (6,2.6) {$\bv_1'$};
\draw[densely dashed] (6) to node [below] {$\ol{\ev_1'}$} (7);
\draw[->] (7) to node [right] {$x$} node [above, pos=.1] {$\vec{{\fv}_1'}$} (8);}$$

\noindent{\bf Elementary injection}: Let $\A=(\av,\Ga,\bv)$ be a bliw-graph and let $p:=\av_1\vec{{\fv}_1}\bv_1\ol{\ev_1}\av_0$ and $q:=\av_2\vec{{\fv}_2}\bv_2\ol{\ev_2}\av_0$ be two equivalent left basic paths in $\Ga$. Let $\eta$ be the equivalence on $V(\Ga)$ generated by $\{(\av_1,\av_2),(\bv_1,\bv_2)\}$. The quotient bliw-graph $\A/\eta=(\av\eta,\Ga/\eta,\bv\eta)$ is called an \emph{elementary injection} of $\A$.
$$\tikz[shorten <=2pt, shorten >=3pt, >=latex]{
\node [circle, inner sep=1pt, draw] (1) at (0,1.4) {$\av_0$};
\node [circle, inner sep=1pt, draw] (2) at (2,2.1) {$\bv_1$};
\node [circle, inner sep=1pt, draw] (3) at (0,2.8) {$\av_1$};
\node [circle, inner sep=1pt, draw] (4) at (2,.7) {$\bv_2$};
\node [circle, inner sep=1pt, draw] (5) at (0,0) {$\av_2$};
\draw[densely dashed] (2) to node [above, pos=.7] {$\ol{\ev_1}$} (1);
\draw[densely dashed] (4) to node [below, pos=.7] {$\ol{\ev_2}$} (1);
\draw[->] (3) to node [above, pos=.7] {$x$} node [above, pos=.3] {$\vec{{\fv}_1}$} (2);
\draw[->] (5) to node [below, pos=.7] {$x$} node [below, pos=.3] {$\vec{{\fv}_2}$} (4);
\draw[->] (3,1.4)--(3.5,1.4);
\node [circle, inner sep=1pt, draw] (6) at (4.5,.8) {$\av_0'$};
\node [circle, inner sep=1pt, draw] (7) at (6.5,1) {$\bv_1'$};
\node [circle, inner sep=1pt, draw] (8) at (5,2.6) {$\av_1'$};
\draw[densely dashed] (7) to node [below] {$\ol{\ev_1'}$} (6);
\draw[->] (8) to node [right, pos=.6] {$x$} node [above, pos=.4] {$\vec{{\fv}_1'}$} (7);}$$

We will refer simultaneously to both elementary determinations and elementary injections as \emph{elementary reductions}. A \emph{reduction} is a (possibly empty) sequence of elementary reductions. Each time we apply an elementary reduction to a bliw-graph $\A$, the number of vertices decreases. Hence, each reduction is always composed by a finite number of elementary reductions, that is, the elementary reductions have what is commonly called the \emph{noetherian property}: we cannot apply elementary reductions indefinitely. 

Let $\{p_1,q_1\}$ and $\{p_2,q_2\}$ be two sets of equivalent left and/or right basic paths in $\Ga$, and let $\A/\eta_1$ and $\A/\eta_2$ be the elementary reductions associated with $\{p_1,q_1\}$ and $\{p_2,q_2\}$, respectively. Let $\eta=\eta_1\vee\eta_2$. Note that $p_2\eta_1$ and $q_2\eta_1$ are either equivalent left or right basic paths in $\A/\eta_1$ if $\eta_1\neq\eta$, or $p_2\eta_1=q_2\eta_1$ otherwise. Thus either $\A/\eta$ is the elementary reduction of $\A/\eta_1$ associated with the basic paths $p_2\eta_1$ and $q_2\eta_1$, or $\A/\eta=\A/\eta_1$ otherwise. Similarly, either $\A/\eta$ is the elementary reduction of $\A/\eta_2$ associated with the basic paths $p_1\eta_2$ and $q_1\eta_2$, or $\A/\eta=\A/\eta_2$ otherwise. Therefore, the elementary reductions have also the \emph{local confluency property}, that is, if $\A_1$ and $\A_2$ are elementary reductions of $\A$, then  there exists a bliw-graph $\A_3$ such that $\A_3$ is a reduction of both $\A_1$ and $\A_2$. 

The next result follows from a general property of noetherian locally confluent systems of rules (see \cite{newman}):

\begin{prop}\label{rbliw}
If $\A$ is a bliw-graph, then there exists a unique rbliw-graph that can be obtained from $\A$ by a reduction.
\end{prop}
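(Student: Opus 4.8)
The plan is to invoke Newman's Lemma, exactly as the paper signals with its reference to \cite{newman}. Newman's Lemma states that an abstract rewriting system that is both \emph{noetherian} (terminating) and \emph{locally confluent} is globally confluent and therefore has the property that every element reduces to a \emph{unique} normal form. The two hypotheses have already been verified in the preceding discussion: termination follows because each elementary reduction strictly decreases the (finite) number of vertices, and local confluence was established by the explicit join argument using $\eta=\eta_1\vee\eta_2$. So the bulk of the proof is a matter of assembling these two facts and identifying the normal forms with rbliw-graphs.

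The key steps, in order, are as follows. First I would set up the rewriting system precisely: take the objects to be (isomorphism classes of) bliw-graphs and the one-step rewrite relation $\to$ to be ``$\A\to\A'$ iff $\A'$ is an elementary reduction of $\A$'', with $\overset{*}{\to}$ its reflexive-transitive closure (i.e.\ a reduction). Second, I would recall that termination plus local confluence gives confluence by Newman's Lemma, and confluence plus termination gives existence and uniqueness of normal forms. Third — and this is the conceptual step that ties the abstract machinery to the statement — I would identify the normal forms of this system with precisely the rbliw-graphs. A bliw-graph $\A$ is a normal form (admits no further elementary reduction) if and only if it has no pair of equivalent right basic paths and no pair of equivalent left basic paths, which is exactly the condition that $\Ga$ be deterministic and injective, i.e.\ reduced. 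Hence the unique normal form of $\A$ is an rbliw-graph, which is the assertion to be proved.

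The main obstacle — really the only nontrivial point beyond citing Newman — is the uniqueness claim, and here one must be careful about what ``unique'' means. The normal form delivered by Newman's Lemma is unique as an object of the rewriting system; since I have taken the objects up to isomorphism, this yields uniqueness of the rbliw-graph \emph{up to isomorphism}, which is the natural reading of the statement. I would therefore be explicit that throughout, equality of bliw-graphs is understood up to isomorphism, so that the local confluence argument (where one applies the second elementary reduction to the quotient $\A/\eta_1$ and recognizes it as a reduction of $\A/\eta_2$) and Newman's conclusion are both phrased in the same category. A minor supporting observation I would fold in is that an elementary reduction respects the roots and the bipartition (the generating equivalence $\eta$ only identifies vertices on the same side), so the result genuinely stays within the class of bliw-graphs at every step and the normal form is a \emph{birooted} reduced word graph.

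In summary: the proof is short. Termination and local confluence having been checked in the text immediately preceding the proposition, Newman's Lemma yields a unique normal form up to isomorphism; and the normal forms are characterized as exactly the deterministic, injective liw-graphs, i.e.\ the rbliw-graphs. The only care needed is to work consistently up to isomorphism so that ``unique'' is meaningful and Newman's Lemma applies verbatim.
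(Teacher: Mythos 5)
Your proposal is correct and follows exactly the paper's route: the paper gives no proof beyond observing that the noetherian property and local confluence were established in the preceding paragraphs and citing Newman's lemma, which is precisely your argument. Your additional remarks --- identifying the normal forms with the deterministic and injective (i.e.\ reduced) graphs, and noting that iterated quotients can be read as a single quotient $\A/\eta$ so that uniqueness is unambiguous --- are sound elaborations of what the paper leaves implicit.
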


The unique rbliw-graph $\B$ that can be obtained from $\A$ by a reduction is called the \emph{reduced form} of $\A$. A \emph{complete reduction} of $\A$ is a sequence of elementary reductions that led to $\B$. In Figure \ref{red_bliw} we present a complete reduction for the bliw-graph $\A=(\al_3,\Ga,\be_2)$ of Figure \ref{ex_bliw}. Note that this complete reduction folds the right side of the drawing depicting $\A$ over its left side.

\begin{figure}[ht]
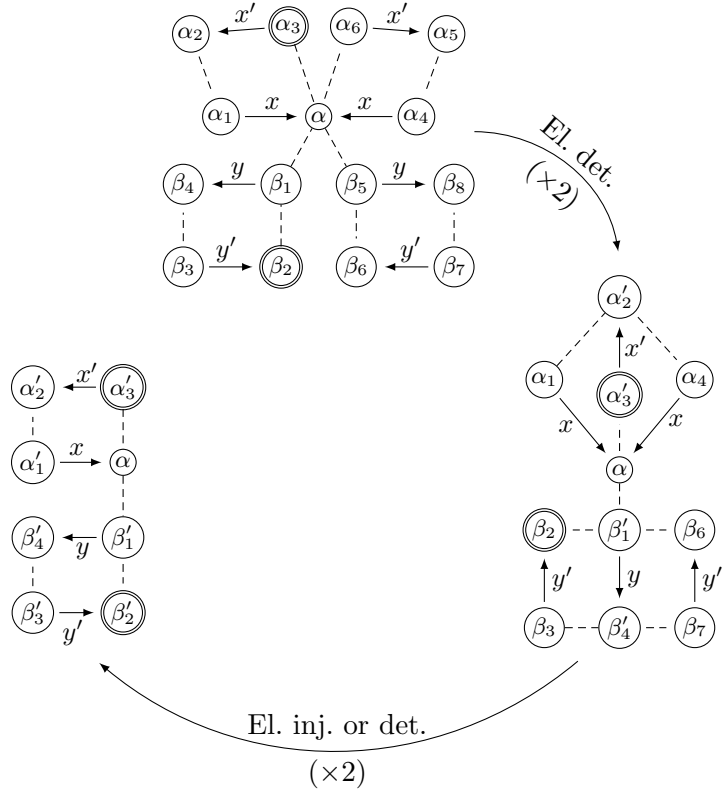

$$\tikz[shorten <=2pt, shorten >=3pt, >=latex]{
\node [circle, inner sep=1pt, draw] (1) at (7,8.2) {\footnotesize$\al$};
\node [circle, inner sep=1pt, draw] (2) at (5.3,9.3) {\footnotesize$\al_2$};
\node [circle, inner sep=1pt, draw] (3) at (5.7,8.2) {\footnotesize$\al_1$};
\node [circle, double, inner sep=1pt, draw] (4) at (6.6,9.4) {\footnotesize$\al_3$};
\node [circle, inner sep=1pt, draw] (5) at (8.7,9.3) {\footnotesize$\al_5$};
\node [circle, inner sep=1pt, draw] (6) at (8.3,8.2) {\footnotesize$\al_4$};
\node [circle, inner sep=1pt, draw] (7) at (7.4,9.4) {\footnotesize$\al_6$};
\node [circle, inner sep=1pt, draw] (8) at (6.5,7.3) {\footnotesize$\be_1$};
\node [circle, double, inner sep=1pt, draw] (9) at (6.5,6.2) {\footnotesize$\be_2$};
\node [circle, inner sep=1pt, draw] (10) at (5.2,6.2) {\footnotesize$\be_3$};
\node [circle, inner sep=1pt, draw] (11) at (5.2,7.3) {\footnotesize$\be_4$};
\node [circle, inner sep=1pt, draw] (12) at (7.5,7.3) {\footnotesize$\be_5$};
\node [circle, inner sep=1pt, draw] (13) at (7.5,6.2) {\footnotesize$\be_6$};
\node [circle, inner sep=1pt, draw] (14) at (8.8,6.2) {\footnotesize$\be_7$};
\node [circle, inner sep=1pt, draw] (15) at (8.8,7.3) {\footnotesize$\be_8$};
\draw[densely dashed] (4)--(1)--(7);
\draw[densely dashed] (2)--(3);
\draw[densely dashed] (5)--(6);
\draw[densely dashed] (9)--(8)--(1)--(12)--(13);
\draw[densely dashed] (10)--(11);
\draw[densely dashed] (14)--(15);
\draw[->] (3) to node [above=-1pt] {\small$x$} (1);
\draw[->] (4) to node [above=-1pt, pos=.4] {\small$x'$} (2);
\draw[->] (6) to node [above=-1pt] {\small$x$} (1);
\draw[->] (7) to node [above=-1pt] {\small$x'$} (5);
\draw[->] (8) to node [above=-2pt, pos=.4] {\small$y$} (11);
\draw[->] (10) to node [above=-2pt, pos=.4] {\small$y'$} (9);
\draw[->] (12) to node [above=-2pt, pos=.4] {\small$y$} (15);
\draw[->] (14) to node [above=-2pt, pos=.4] {\small$y'$} (13);
\draw[->, bend left=40] (9,8) to node [sloped, above] {El.~det.} node [sloped, below] {($\times 2$)} (11,6.3);
\node [circle, inner sep=1pt, draw] (16) at (11,3.5) {\footnotesize$\al$};
\node [circle, inner sep=1pt, draw] (17) at (11,5.8) {\footnotesize$\al_2'$};
\node [circle, inner sep=1pt, draw] (18) at (10,4.7) {\footnotesize$\al_1$};
\node [circle, double, inner sep=1pt, draw] (19) at (11,4.5) {\footnotesize$\al_3'$};
\node [circle, inner sep=1pt, draw] (20) at (12,4.7) {\footnotesize$\al_4$};
\node [circle, inner sep=1pt, draw] (21) at (11,2.7) {\footnotesize$\be_1'$};
\node [circle, double, inner sep=1pt, draw] (22) at (10,2.7) {\footnotesize$\be_2$};
\node [circle, inner sep=1pt, draw] (23) at (10,1.4) {\footnotesize$\be_3$};
\node [circle, inner sep=1pt, draw] (24) at (11,1.4) {\footnotesize$\be_4'$};
\node [circle, inner sep=1pt, draw] (25) at (12,2.7) {\footnotesize$\be_6$};
\node [circle, inner sep=1pt, draw] (26) at (12,1.4) {\footnotesize$\be_7$};
\draw[densely dashed] (18)--(17)--(20);
\draw[densely dashed] (16)--(19);
\draw[->] (18) to node [left] {\small$x$} (16);
\draw[->] (20) to node [right=-1pt, pos=.4] {\small$x$} (16);
\draw[->] (19) to node [right=-2pt] {\small$x'$} (17);
\draw[densely dashed] (16)--(21)--(22);
\draw[densely dashed] (21)--(25);
\draw[densely dashed] (23)--(24)--(26);
\draw[->] (21) to node [right=-1pt] {\small$y$} (24);
\draw[->] (23) to node [right=-1pt] {\small$y'$} (22);
\draw[->] (26) to node [right=-1pt] {\small$y'$} (25);
\draw[->, bend left=40] (10.5,1) to node [sloped, above] {El.~inj.~or det.} node [sloped, below] {($\times 2$)} (4,1);
\node [circle, inner sep=1pt, draw] (27) at (4.4,3.6) {\footnotesize$\al$};
\node [circle, inner sep=1pt, draw] (28) at (3.2,4.6) {\footnotesize$\al_2'$};
\node [circle, inner sep=1pt, draw] (29) at (3.2,3.6) {\footnotesize$\al_1'$};
\node [circle, double, inner sep=1pt, draw] (30) at (4.4,4.6) {\footnotesize$\al_3'$};
\node [circle, inner sep=1pt, draw] (31) at (4.4,2.6) {\footnotesize$\be_1'$};
\node [circle, double, inner sep=1pt, draw] (32) at (4.4,1.6) {\footnotesize$\be_2'$};
\node [circle, inner sep=1pt, draw] (33) at (3.2,1.6) {\footnotesize$\be_3'$};
\node [circle, inner sep=1pt, draw] (34) at (3.2,2.6) {\footnotesize$\be_4'$};
\draw[densely dashed] (30)--(27)--(31)--(32);
\draw[densely dashed] (29)--(28);
\draw[densely dashed] (33)--(34);
\draw[->] (29) to node [above=-1pt, pos=.4] {\small$x$} (27);
\draw[->] (30) to node [above=-2pt, pos=.3] {\small$x'$} (28);
\draw[->] (31) to node [below=-2pt, pos=.4] {\small$y$} (34);
\draw[->] (33) to node [below=-1pt, pos=.4] {\small$y'$} (32);
}$$
\caption{A complete reduction for the bliw-graph of Figure \ref{ex_bliw}.}\label{red_bliw}
\end{figure}

\begin{prop}\label{hom_red}
Let $\varphi:\A\to\A'$ be a homomorphism such that $\A'$ is reduced. Let $\B$ be the reduced form of $\A$ and let $\psi:\A\to\B$ be the natural $E$-surjective epimorphism. Then there exists a unique homomorphism $\varphi':\B\to\A'$ such that $\varphi=\psi\varphi'$.
\end{prop}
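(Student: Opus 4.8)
The plan is to factor $\varphi$ through the reduction one elementary step at a time. Fix a complete reduction realizing $\psi$, say
$$\A=\A_0\xrightarrow{\ \psi_1\ }\A_1\xrightarrow{\ \psi_2\ }\cdots\xrightarrow{\ \psi_n\ }\A_n=\B,$$
where each $\psi_i$ is the natural $E$-surjective epimorphism of one elementary reduction, so that $\psi=\psi_1\psi_2\cdots\psi_n$. I would argue by induction on $n$. The case $n=0$ is trivial ($\varphi'=\varphi$), so the whole problem reduces to a single step: given a homomorphism $\varphi\colon\A\to\A'$ into a reduced bliw-graph together with the natural epimorphism $\psi_1\colon\A\to\A_1=\A/\eta_1$ of one elementary reduction, I must produce $\varphi_1\colon\A_1\to\A'$ with $\varphi=\psi_1\varphi_1$. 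Once this is available, $\B$ is also the reduced form of $\A_1$ by Proposition \ref{rbliw}, the tail $\A_1\to\B$ is a complete reduction of length $n-1$, and the induction hypothesis applied to $\varphi_1$ yields a homomorphism $\varphi'\colon\B\to\A'$ with $\varphi_1=(\psi_2\cdots\psi_n)\varphi'$; composing gives $\varphi=\psi_1\varphi_1=\psi\varphi'$.

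The heart of the argument, and the only place where reducedness of $\A'$ enters, is this single-step factorization, which I would establish by showing that the generating pairs of $\eta_1$ already lie in $\ker\varphi_V$. Suppose $\eta_1$ is the elementary determination attached to equivalent right basic paths $p:=\bv_0\ol{\ev_1}\av_1\vec{\fv}_1\bv_1$ and $q:=\bv_0\ol{\ev_2}\av_2\vec{\fv}_2\bv_2$, so $\eta_1$ is generated by $(\av_1,\av_2)$ and $(\bv_1,\bv_2)$. Since $\varphi$ preserves sides, line and arrow incidence, orientation and labels, the images $p\varphi$ and $q\varphi$ are right basic paths in $\A'$ sharing the starting vertex $\bv_0\varphi$ and with arrows carrying the common label of $\vec{\fv}_1$ and $\vec{\fv}_2$; that is, $p\varphi$ and $q\varphi$ are equivalent right basic paths. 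As $\A'$ is deterministic, it has no two distinct equivalent right basic paths, so $p\varphi=q\varphi$; in particular $\av_1\varphi=\av_2\varphi$ and $\bv_1\varphi=\bv_2\varphi$. Hence both generators of $\eta_1$ belong to the equivalence $\ker\varphi_V$, and therefore $\eta_1\subseteq\ker\varphi_V$. If instead $\eta_1$ arises from an elementary injection, the identical reasoning applied to the two equivalent left basic paths, using \emph{injective} in place of \emph{deterministic}, again gives $\eta_1\subseteq\ker\varphi_V$.

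With the inclusion $\eta_1\subseteq\ker\varphi_V$ in hand, the standard quotient construction produces a homomorphism $\varphi_1\colon\A_1\to\A'$ defined on vertices by $(\av\eta_1)\varphi_1=\av\varphi$ and by the corresponding rules on lines and arrows; it is well defined precisely because $\eta_1\subseteq\ker\varphi_V$, it satisfies $\psi_1\varphi_1=\varphi$, and it preserves the roots because both $\varphi$ and $\psi_1$ do. This completes the induction and delivers $\varphi'$. For uniqueness, observe that any $\varphi'$ with $\varphi=\psi\varphi'$ must send the left root of $\B$, namely $\lv(\A)\psi$, to $\lv(\A)\psi\varphi'=\lv(\A)\varphi=\lv(\A')$; since $\A'$ is reduced, Corollary \ref{unique_hom} shows that a homomorphism $\B\to\A'$ is completely determined by the image of a single vertex, so $\varphi'$ is unique. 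I expect the main obstacle to be exactly the single-step factorization: everything hinges on converting the two equivalent basic paths of an elementary reduction into two equivalent basic paths of $\A'$ and then invoking determinism (respectively injectivity) of $\A'$ to force their images to coincide. The inductive bookkeeping, the quotient construction, and the uniqueness check are routine by comparison.
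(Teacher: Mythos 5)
Your proposal is correct and follows essentially the same route as the paper: reduce to a single elementary reduction, use determinism (resp.\ injectivity) of $\A'$ to force the images of the two equivalent basic paths to coincide so that the generating pairs of $\eta_1$ lie in $\ker\varphi_V$, pass to the quotient, iterate along a complete reduction, and get uniqueness from Corollary \ref{unique_hom}. No gaps.
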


\begin{proof}
Note first that if the homomorphism $\varphi':\B\to\A'$ exists, then it must be unique due to Corollary \ref{unique_hom}.

Let $p_1:=\av\ol{\ev}_1\av_1\vec{f}_1\bv_1$ and $p_2:=\av\ol{\ev}_2\av_2\vec{f}_2\bv_2$ be two equivalent right basic paths of $\A$ such that $\l(\vec{f}_1)=\l(\vec{f}_2)=x$, and let $\A_1=\A/\eta$ be the elementary reduction of $\A$ associated with the equivalent right basic paths $p_1$ and $p_2$. Since $\A'$ is reduced, then $\av_1\varphi=\av_2\varphi$ and $\bv_1\varphi=\bv_2\varphi$. Hence, $\varphi_1:\A_1\to\A'$ defined by $(\bv\eta)\varphi_1=\bv\varphi$ is a homomorphism too. Further, $\varphi=\psi_1\varphi_1$ where $\psi_1$ is the natural homomorphism from $\A$ to $\A_1$.

A similar conclusion can be obtained if $p_1$ and $p_2$ are equivalent left basic paths instead. The proof of this proposition now follows from applying sequentially the previous conclusions to all the elementary reductions of a complete reduction of $\A$. 
\end{proof}

\section{The rliw-graph characterization of a $\Dc$-class}\label{sec6}

From now on, and if nothing is said in contrary, $S$ will denote the locally inverse semigroup $\LI\langle X;R\rangle$. Our goal in this section is to construct an rliw-graph $\Ga_e$ for each idempotent $e\in E(S)$ (Proposition \ref{rliw}), and then show that $e\Dc f$ for $e,f\in E(S)$ \iff\ $\Ga_e$ and $\Ga_f$ are isomorphic (Proposition \ref{car_D_liw}). Hence, these graphs somehow characterize the $\Dc$-classes of $S$. In the path to show that $\Ga_e$ and $\Ga_f$ are isomorphic \iff\ $e\Dc f$, we describe all isomorphisms from $\Ga_e$ to $\Ga_f$. We show that these isomorphisms are in a one-to-one correspondence with the elements of $\Rcc_e\cap\Lcc_f$ (Proposition \ref{isom_D}). As a corollary, we conclude that the group of automorphisms of $\Ga_e$ is isomorphic to $\Hcc_e$ (Corollary \ref{aut_gr}).

Let $\Sl=\{\lv_a\,:\;a\in S\}$ and $\Sr=\{\rv_a\,:\;a\in S\}$, two disjoint copies of the elements of $S$. Fix an idempotent $e\in S$ and set
$$\V_l=\{\lv_a\,:\; a\Lc e\}\subseteq \Sl\qquad\mbox{ and }\qquad \V_r=\{\rv_b\,:\; b\Rc e\}\subseteq\Sr\;,$$
two disjoint copies of $\Lcc_e$ and $\Rcc_e$, respectively. Let also
$$\lE=\{(\lv_a,\rv_b)\,:\; a\in\Lcc_e\;\mbox{ and }\;b\in \Rcc_e\cap V(a)\}.$$
By Lemma \ref{inv_lis}, if $a\in \Lcc_e\cap (x\wedge x')S$ for some $x\in\oX$, then $a$ has a unique inverse $a'$ in $\Rcc_e\cap S(x\wedge x')$. Further, if $a_1'$ is another inverse of $a$ in $\Rcc_e$, then $a'=a_1'(x\wedge x')$ by Lemma \ref{inv_yy}, and so $a'x=a_1'x$. Let
$$\oE=\{(\lv_a,x,\rv_{a'x})\,:\;a\in\Lcc_e\cap (x\wedge x')S\mbox{ and } a'\in V(a)\cap \Rcc_e\}.$$
Hence, for each $a\in\Lcc_e$ and $x\in\oX$, there is at most one element in $\oE$ of the form $(\lv_a,x,\rv_b)$ with $b\in\Rcc_e$. Let $\Ga_e$ be the bipartite graph $\Ga_e=(\V,\E)$ with vertices $\V=\V_l\cup\V_r$, partitioned into the left vertices $\V_l$ and the right vertices $\V_r$, and edges $\E=\lE\cup\oE$ where $\lE$ is the set of lines and $\oE$ is the set of arrows.

Recall the semigroups $\ol{S}_1$ and $\ol{S}_2$ introduced in Section \ref{sec3}. In Figure \ref{Ga_xx} we depict $\Ga_{x'x}$ for the semigroup $\ol{S}_1$ and $\Ga_{z'z}$ for the semigroup $\ol{S}_2$. To make it easier to construct these graphs, we include also in Figure \ref{Ga_xx} the egg-box picture of the $\Dc$-class of $x'x$ in $\ol{S}_1$ and the egg-box picture of the $\Dc$-class of $z'z$ in $\ol{S}_2$. Observe that these graphs are both reduced liw-graphs. This is not a coincidence as it is proved in the next result.

\begin{figure}[ht]
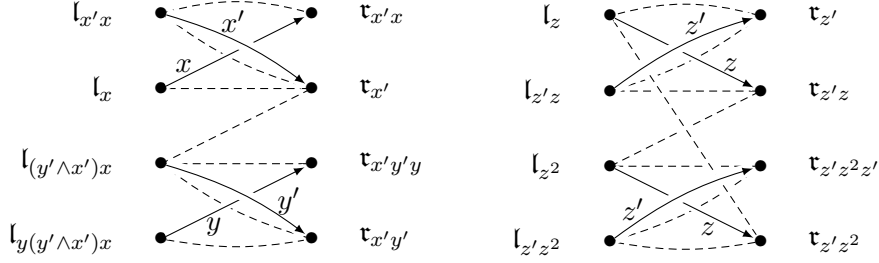

\begin{subfigure}[b]{.52\linewidth}
\centering\begin{tabular}{|c|c|c|c|}
\hline $x'x\;^*$ & $x'\;^*$ & $x'y'y$ & $x'y'$ \\ \hline
$x$ & $xx'\;^*$ & $xx'y'y$ & $xx'y'$ \\ \hline
$(y'\wedge x')x$ & $y'\wedge x'\;^*$ & $y'y\;^*$ & $y'\;^*$ \\ \hline
$\!y(y'\wedge x')x\!$ & $\!y(y'\wedge x')\!$ & $y$ & $yy'\;^*$ \\ \hline
\end{tabular}
\caption{egg-box picture of $\Dcc_{x'x}$ in $\ol{S}_1$.}
\end{subfigure}
\begin{subfigure}[b]{.47\linewidth}
\centering\begin{tabular}{|c|c|}
\hline $z'z^2$ & $z'z^2z'$ \\ [.2cm]
$z'z\,^*$ & $z'\,^*$ \\ \hline
$z^2\,^*$ & $z^2z'$ \\ [.2cm]
$z$ & $zz'\,^*$  \\ \hline 
\end{tabular}
\caption{Egg-box picture of $\Dcc_{z'z}$ in $\ol{S}_2$.}
\end{subfigure}\vspace*{.8cm}

%\begin{figure}[ht]
\begin{subfigure}[b]{.52\linewidth}
\centering $$\tikz[shorten <=2pt, shorten >=3pt, >=latex]{
\coordinate[label=left:$\lv_{y(y'\wedge x')x}$\hspace*{.5cm}] (1) at (0,0);
\coordinate[label=left:$\lv_{(y'\wedge x')x}$\hspace*{.5cm}] (2) at (0,1);
\coordinate[label=left:$\lv_{x}$\hspace*{.5cm}] (3) at (0,2);
\coordinate[label=left:$\lv_{x'x}$\hspace*{.5cm}] (4) at (0,3);
\draw (1) node {$\bullet$};
\draw (2) node {$\bullet$};
\draw (3) node {$\bullet$};
\draw (4) node {$\bullet$};
\coordinate[label=right:\hspace*{.5cm}$\rv_{x'y'}$] (5) at (2,0);
\coordinate[label=right:\hspace*{.5cm}$\rv_{x'y'y}$] (6) at (2,1);
\coordinate[label=right:\hspace*{.5cm}$\rv_{x'}$] (7) at (2,2);
\coordinate[label=right:\hspace*{.5cm}$\rv_{x'x}$] (8) at (2,3);
\draw (5) node {$\bullet$};
\draw (6) node {$\bullet$};
\draw (7) node {$\bullet$};
\draw (8) node {$\bullet$};
\draw[densely dashed] (1) to [bend right=10] (5);
\draw[densely dashed] (5) to [bend left=10] (2);
\draw[densely dashed] (6)--(2)--(7)--(3);
\draw[densely dashed] (4) to [bend left=15] (8);
\draw[densely dashed] (4) to [bend right=10] (7);
\draw[-,line width=3pt,color=white] (3) to (8);
\draw[-,line width=3pt,color=white] (1) to (6);
\draw[->] (3) to node [above=-2pt, pos=.15] {$x$} (8);
\draw[->] (1) to node [below=-2pt, pos=.35] {$y$} (6);
\draw[-,line width=5pt,color=white] (4) to [bend left=10] (7);
\draw[-,line width=5pt,color=white] (2) to [bend left=10] (5);
\draw[->] (4) to [bend left=10] node [above=-1pt, pos=0.45] {$x'$} (7);
\draw[->] (2) to [bend left=10] node [above=-1pt, pos=.85] {$y'$} (5);}$$
\caption{The graph $\Ga_{x'x}$ with respect to $\ol{S}_1$.}\label{Ga_xxa}
\end{subfigure}
\begin{subfigure}[b]{.47\linewidth}
\centering $$\tikz[shorten <=2pt, shorten >=3pt, >=latex]{
\coordinate[label=left:$\lv_{z'z^2}$\hspace*{.5cm}] (1) at (0,0);
\coordinate[label=left:$\lv_{z^2}$\hspace*{.5cm}] (2) at (0,1);
\coordinate[label=left:$\lv_{z'z}$\hspace*{.5cm}] (3) at (0,2);
\coordinate[label=left:$\lv_{z}$\hspace*{.5cm}] (4) at (0,3);
\draw (1) node {$\bullet$};
\draw (2) node {$\bullet$};
\draw (3) node {$\bullet$};
\draw (4) node {$\bullet$};
\coordinate[label=right:\hspace*{.5cm}$\rv_{z'z^2}$] (5) at (2,0);
\coordinate[label=right:\hspace*{.5cm}$\rv_{z'z^2z'}$] (6) at (2,1);
\coordinate[label=right:\hspace*{.5cm}$\rv_{z'z}$] (7) at (2,2);
\coordinate[label=right:\hspace*{.5cm}$\rv_{z'}$] (8) at (2,3);
\draw (5) node {$\bullet$};
\draw (6) node {$\bullet$};
\draw (7) node {$\bullet$};
\draw (8) node {$\bullet$};
\draw[densely dashed] (1) to [bend right=10] (6);
\draw[densely dashed] (6)--(2)--(7)--(3);
\draw[densely dashed] (3) to [bend right=10] (8);
\draw[densely dashed] (8) to [bend right=15] (4);
\draw[densely dashed] (5) to [bend left=10] (1);
\draw[-,line width=5pt,color=white] (4) to (5);
\draw[densely dashed] (4)--(5);
\draw[-,line width=3pt,color=white] (4) to (7);
\draw[-,line width=3pt,color=white] (2) to (5);
\draw[->] (4) to node [above=-2pt, pos=.8] {$z$} (7);
\draw[->] (2) to node [below=-1pt, pos=.65] {$z$} (5);
\draw[-,line width=5pt,color=white] (3) to [bend left=10] (8);
\draw[-,line width=5pt,color=white] (1) to [bend left=10] (6);
\draw[->] (3) to [bend left=10] node [above=-2pt, pos=0.6] {$z'$} (8);
\draw[->] (1) to [bend left=10] node [above=-2pt, pos=.15] {$z'$} (6);}$$
\caption{The graph $\Ga_{z'z}$ with respect to $\ol{S}_2$.}\label{Ga_xxb}
\end{subfigure}
\caption{Two examples of $\Ga_e$.}\label{Ga_xx}
\end{figure}

\begin{prop}\label{rliw}
$\Ga_e$ is a reduced liw-graph.
\end{prop}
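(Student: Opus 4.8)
The plan is to check, one at a time, the three defining properties of a locally inverse word graph---that $\Ga_e$ be a connected oriented bipartite graph, that all contents be nonempty, and that the reversal condition $(iii)$ hold---and then determinism and injectivity. That $\Ga_e$ is oriented bipartite is immediate, since $\oE\subseteq\V_l\times\oX\times\V_r$ and $\lE\subseteq\V_l\times\V_r$ by construction. Before anything else I would record two auxiliary facts. First, for $a\Lc e$ every inverse $a'\in V(a)\cap\Rcc_e$ satisfies $a'a=e$: indeed $a'a$ is an idempotent with $a'a\Lc a\Lc e$ and $a'a\Rc a'\Rc e$, so $a'a\Hc e$ and hence $a'a=e$ (an $\Hc$-class has at most one idempotent); the dual statement holds for $b\Rc e$. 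Taking $a'=ea''$ for an arbitrary inverse $a''$ of $a$ shows $V(a)\cap\Rcc_e\neq\emptyset$, so every left [right] vertex is the endpoint of a line. Second, writing $a=u\mu$ and $x=\la_u$, one has $a\in(x\wedge x')S$: this follows from $(x\wedge x')x=x$ together with $(x\wedge x')(x\wedge z)=(x\wedge z)$ (as $(x\wedge z)\in(xx']_r$), applied to the first letter of $u$, which is $x$ or some $(x\wedge z)$; dually, $b=v\mu$ with $y=\tau_v$ gives $b\in S(y'\wedge y)$.

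The contents are handled with these facts. For a left vertex $\lv_a$, the second fact gives $x=\la_u$ with $a\in(x\wedge x')S$ and the first fact gives $a'\in V(a)\cap\Rcc_e$, so the arrow $(\lv_a,x,\rv_{a'x})$ exists and $x\in\cb(\lv_a)$; moreover $a'x\Rc e$, since for the inverse $a'\in\Rcc_e\cap S(x\wedge x')$ we have $a'=(a'x)x'$, whence $a'x\Rc a'\Rc e$. For the reversal condition $(iii)$, start from an arrow $(\lv_a,x,\rv_b)$ with $b=a'x$ and $a'\in\Rcc_e\cap S(x\wedge x')$; then $bx'=a'(xx')=a'$ and $b(x'x)=b$, so $b\in\Rcc_e\cap S(x'\wedge x)$, i.e. $b\in eS(x'x)$. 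Lemma \ref{inv_lis} then produces an inverse $c$ of $b$ in $(x'\wedge x)Se$, which one checks lies in $\Lcc_e$ (with $bc=e$); then $(\lv_a,\rv_{a'})$ and $(\lv_c,\rv_b)$ are lines and $(\lv_c,x',\rv_{a'})$ is an arrow (its target is $c'x'=bx'=a'$, taking $c'=b$), which is exactly the configuration required by $(iii)$. Nonemptiness of the contents of the right vertices then follows by the dual of this computation, using the last letter $y=\tau_v$.

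For determinism and injectivity I would isolate a uniqueness statement: if $b\in\Rcc_e$, then $b$ has at most one inverse in $\Lcc_e\cap(x\wedge x')S$. To see this, let $a_1,a_2$ be two such. By the first fact $ba_1=ba_2=e$; and each $a_ib$ is an idempotent with $a_ib\Lc b$ and $a_ib=(xx')a_ib\in(xx']_r$ (since $a_i=(xx')a_i$), so $a_1b,a_2b$ are $\Lc$-related idempotents of $(xx']_r$ and are therefore equal; then $a_1=a_1ba_1=(a_1b)a_2=(a_2b)a_2=a_2$. Determinism is now immediate: two equivalent right basic paths at $\rv_b$ whose arrows carry the label $x$ have sources $a_1,a_2\in V(b)\cap\Lcc_e\cap(x\wedge x')S$, equal by the uniqueness statement, and the common target $bx$ (because $a_i'x=b(xx')x=bx$). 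Injectivity is the dual argument, replacing $(xx']_r$ by $(x'x]_l$ and ``$\Lc$-related'' by ``$\Rc$-related''.

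Connectedness is the step I expect to be the main obstacle, because the bipartite graph of inverse pairs alone is highly disconnected (for inverse $S$ it is a matching), so the arrows must do the connecting; this is where the hypothesis that $S$ is generated by $\wX$ enters. I would reduce to connecting each $\lv_a$ to $\lv_e$: since $a\Lc e$ there is $p=z_k\cdots z_1\in\wX^*$ with $e=(p\mu)\,a$, and each partial product $a_i=(z_i\cdots z_1)\mu\,a$ lies in $\Lcc_e$, being squeezed by $e\lel a_i$ and $a_i\lel e$. It then suffices to connect $\lv_{a_{i-1}}$ to $\lv_{a_i}$ when $a_i=za_{i-1}$ for a single letter $z$. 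Choosing the inverse $a_i'\in\Rcc_e\cap S(x\wedge x')$ with $x=\la_{a_i}$, a short computation gives $a_i'z\in V(a_{i-1})\cap\Rcc_e$, so $(\lv_{a_{i-1}},\rv_{a_i'z})$ is a line; when $z=x\in\oX$ one has $a_i'z=a_i'x$, and condition $(iii)$ links $\rv_{a_i'x}$ to $\rv_{a_i'}$ (through the arrow $x$ out of $\lv_{a_i}$ and its reverse $x'$), hence $\lv_{a_{i-1}}$ to $\lv_{a_i}$. The case of a meet-letter $z=(x\wedge y)$ is the genuinely delicate one: here $a_i'z\in\Rcc_e\cap S(y'\wedge y)$, and one connects $\rv_{a_i'z}$ to $\rv_{a_i'}$ by the same mechanism, now applied to the right and left translations $\varphi$ and $\psi$ of Corollaries \ref{trans} and \ref{trans_dual}. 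Transitivity connects every left vertex to $\lv_e$, the dual connects every right vertex to $\rv_e$, and the line $(\lv_e,\rv_e)$ (valid since $e\in V(e)\cap\Rcc_e$) joins the two sides, yielding connectedness and completing the proof that $\Ga_e$ is a reduced liw-graph.
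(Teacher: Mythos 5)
Your proof is correct and follows essentially the same route as the paper: verify the liw-graph axioms via Lemma \ref{inv_lis} and the uniqueness of $\Lc$-related ($\Rc$-related) idempotents in $(f]_r$ ($(f]_l$), prove connectedness by walking along the letters of a word and splitting into the $\oX$-letter and meet-letter cases, and deduce determinism and injectivity from the uniqueness of inverses in the relevant principal ideals. The only differences are cosmetic: you connect left vertices to $\lv_e$ via left multiplication where the paper connects right vertices to $\rv_e$ via right multiplication (dual arguments), and you make explicit some auxiliary facts (e.g.\ $a=u\mu\Rightarrow a\in(\la_u\wedge\la_u')S$) that the paper absorbs into the construction of $\Ga_e$.
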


\begin{proof}
$\Ga_e$ is clearly an oriented bipartite graph. Let $a\in\Lcc_e$ and $x\in\oX$ such that $a\in (x\wedge x')S$. Then $(\lv_a,x,\rv_{a'x})\in\oE$ for $a'\in V(a)\cap \Rcc_e$, and $x\in\cb(\lv_a)$. If $b\in\Rcc_e\cap S(y'\wedge y)$ for some $y\in\oX$, then let $a$ be the unique inverse of $by'$ in $\Lcc_e\cap (y\wedge y')S$ given by Lemma \ref{inv_lis}. Note that $(\lv_a,y,\rv_b)\in\oE$ and $y\in\cb(\rv_b)$. Assume now that $(\lv_a,x,\rv_b)\in\oE$ and let $a'$ be the unique inverse of $a$ in $\Rcc_e\cap S(x\wedge x')$. Then $(\lv_a,\rv_{a'})\in\lE$ and $b=a'x$. Further, $(\lv_{x'a},\rv_b)\in\lE$ and $(\lv_{x'a},x',\rv_{a'})\in\oE$ because $x'a\in \Lcc_e\cap V(b)$ and $a'=bx'$. Thus, both conditions $(ii)$ and $(iii)$ of the definition of liw-graph are satisfied. 

We need to prove now that $\Ga_e$ is connected to conclude that $\Ga_e$ is an liw-graph. Observe that it is enough to show that $\rv_e$ is connected to each right vertex $\rv_a$ of $\Ga_e$ because each left vertex of $\Ga_e$ is connected to some right vertex.

Let $u=z_1\cdots z_n$, with $n\geq 0$ and $z_i\in\wX$ for $i=1,\cdots,n$, such that $a=eu$ ($eu=e$ if $n=0$). Set $u_i=z_1\cdots z_i$ and $a_i=eu_i$ for $i=1,\cdots,n$, and set $a_0=e$. All $a_i$ are $\Rc$-related with $e$ because $a$ is $\Rc$-related with $e$. We are done once we show that each $\rv_{a_{i-1}}$ is connected with $\rv_{a_i}$.  We need to consider two cases: $z_i=x\in\oX$ and $z_i=x\wedge y\in\wX$. In both cases, let $c_i=a_{i-1}(x\wedge x')\Rc a_i$, let $b_i$ be the inverse of $c_i$ belonging to $\Lcc_e\cap (x\wedge x')S$ and let $d_i\in S$ such that $c_id_i=a_{i-1}$. Observe that $b_i\in V(a_{i-1})$ because
$$b_ia_{i-1}b_i=b_ic_ib_i=b_i\quad\mbox{ and }\quad  a_{i-1}b_ia_{i-1}=c_ib_ic_id_i=c_id_i=a_{i-1}.$$
Hence $\ol{\ev}_i=(\lv_{b_i},\rv_{a_{i-1}})\in\lE$.

Case 1: $z_i=x\in\oX$. By definition of $b_i$, $\vec{\fv}_i=(\lv_{b_i},x,\rv_{a_i})\in\oE$ and $\rv_{a_{i-1}}$ connects to $\rv_{a_i}$ by the right basic path $p_i=\rv_{a_{i-1}}\,\ol{\ev}_i\,\lv_{b_i}\,\vec{\fv}_i\,\rv_{a_i}$.

Case 2: $z_i=x\wedge y\in\wX$. By Corollary \ref{trans}.$(iii)$, $\varphi_{x\wedge y}:\Lcc_{c_i}\to\Lcc_{a_i}$ is a right translation with inverse right translation $\varphi_{x\wedge x'}:\Lcc_{a_i}\to\Lcc_{c_i}$. Hence $c_i=a_i(x\wedge x')$. We can now easily conclude that $b_i\in V(a_i)$:
$$b_ia_ib_i=b_ia_i(x\wedge x')b_i=b_ic_ib_i=b_i\quad\mbox{ and }\quad  a_ib_ia_i=c_ib_ic_i(x\wedge y)=a_i.$$
Thus $\ol{\fv}_i=(\lv_{b_i},\rv_{a_i})\in\lE$ and $p_i=\rv_{a_{i-1}}\,\ol{\ev}_i\,\lv_{b_i}\,\ol{\fv}_i\,\rv_{a_i}$ is a walk of length 2 connecting $\rv_{a_{i-1}}$ to $\rv_{a_i}$.

Now that we have shown that $\Ga_e$ is an liw-graph, we need to prove that $\Ga_e$ is reduced. Let $\rv_b\ol{\ev}\lv_a\vec{\fv}\rv_{a'x}$ and $\rv_b\ol{\ev_1}\lv_{a_1}\vec{\fv_1}\rv_{a_1'x}$ be two equivalent right basic paths with $\l(\vec{\fv})=\l(\vec{\fv_1})=x$. Then both $a$ and $a_1$ are $\Lc$-related inverses of $b$ belonging to $((x\wedge x')\mu]_r$; and both $ab$ and $a_1b$ are $\Lc$-related idempotents belonging to $(x\wedge x')S$. Thus $ab=a_1b$ and $a=a_1$. We have shown that $\Ga_e$ is deterministic. 

Let $\lv_a\vec{\ev}\rv_{a'x}\ol{\fv}\lv_b$ and $\lv_{a_1}\vec{\ev_1}\rv_{a_1'x}\ol{\fv_1}\lv_{b}$ be two equivalent left basic paths with $\l(\vec{\ev})=\l(\vec{\ev_1})=x$. Note that we can choose $a'$ and $a_1'$ both in $S(x\wedge x')$. Then $a'x$ and $a_1'x$ are $\Rc$-related inverses of $b$ in $S(x'\wedge x)$; and so $ba'x$ and $ba_1'x$ are $\Rc$-related idempotents in $((x'\wedge x)\mu]_l$. Thus $ba'x=ba_1'x$, $a'x=a_1'x$ and $a'=a_1'$. Hence, $aa'$ and $a_1a_1'$ must be $\Lc$-related idempotents in $((x\wedge x')\mu]_r$, and consequently $aa'=a_1a_1'$ and $a=a_1$. We have shown that $\Ga_e$ is injective, and therefore $\Ga_e$ is a reduced liw-graph. 
\end{proof}

Let us look more carefully at the proof of the connectivity of $\Ga_e$ since it contains more information than what is stated in the previous proposition. We began by choosing an arbitrary $a\in\Rcc_e$ and assuming that $eu=a$ for some $u=z_1\cdots z_n$ with $n\geq 0$ and $z_i\in \wX$. Then, we defined a sequence $a_0=e$ and $a_i=ez_1\cdots z_i$, for $i=1,\cdots,n$, of elements from $\Rcc_e$, and constructed a walk $p_i$ in $\Gamma_e$ from $\rv_{a_{i-1}}$ to $\rv_{a_i}$. By construction of $p_i$, observe that $z_i\in\w(p_i)$. Thus,  $p:=\,p_1\cdots p_n$ is a walk in $\Ga_e$ with initial vertex $\rv_e$, final vertex $\rv_a$, and such that $u\in\w(p)$. By Lemma \ref{r_unique}, $p$ is the unique walk $q$ in $\Ga_e$ starting at $\rv_e$ and ending at a right vertex of $\Ga_e$ such that $u\in\w(q)$. We can now conclude that, for each $a\in\Rcc_e$, if $eu=a$ for some $u\in\wX^*$, then there exists a unique path $p$ in $\Ga_e$ starting at $\rv_e$, ending at a right vertex, and such that $u\in\w(p)$. Furthermore, $p$ must end at $\rv_a$. This conclusion is just part of a particular case of the following lemma.

\begin{lem}\label{lem62}
Let $a,b\in\Rcc_e$ and $u\in\wX^*$. Then $au=b$ \iff\ $u\in L_{\rv_a,\rv_b}(\Ga_e)$.
\end{lem}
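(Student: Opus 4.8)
~The statement to prove is Lemma~\ref{lem62}: for $a,b\in\Rcc_e$ and $u\in\wX^*$, we have $au=b$ \iff\ $u\in L_{\rv_a,\rv_b}(\Ga_e)$.

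\textbf{Overall approach.} The plan is to prove both implications by induction on the length of the word $u$, exploiting the fact that a walk from $\rv_a$ to $\rv_b$ decomposes into basic steps, each reading one letter of $\wX$, and that each such step corresponds exactly to right-multiplying the current element by that letter while staying inside $\Rcc_e$. The whole point is that the arrows and lines of $\Ga_e$ were \emph{constructed} to encode precisely the partial right-multiplication action on $\Rcc_e$ by generators, so the two sides of the equivalence should match step by step. The case $u=\iota$ is the base: $\iota\in L_{\rv_a,\rv_b}(\Ga_e)$ means there is an $\rv_a-\rv_b$ walk labeled by the empty word, which (since $\Ga_e$ is bipartite and $\rv_a,\rv_b$ are right vertices, and empty-word-labeled subwalks only come from right elementary paths, i.e.\ lines) forces $\rv_a=\rv_b$ hence $a=b=a\iota$; conversely $a\iota=b$ gives $a=b$ and the trivial walk has $\iota$ in its label set.

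\textbf{Forward direction ($\Leftarrow$).} Suppose $u\in L_{\rv_a,\rv_b}(\Ga_e)$, so there is a walk $q\in P(\rv_a,\rv_b)$ with $u\in\w(q)$. I would write $u=u_1 z u_2$ where the walk factors as $q=q_1 q' q_2$ with $q'$ a basic path (a line followed by an arrow, or the relevant elementary pieces) reading the single relevant letter $z\in\wX$, and then apply the induction hypothesis to the initial segment. Concretely, the key local claim is: if there is a walk of length $2$ from $\rv_c$ to $\rv_d$ (with $\rv_c,\rv_d$ right vertices) whose label is the single letter $z$, then $cz=d$ in $S$. This local claim is exactly what the proof of connectivity in Proposition~\ref{rliw} established in its Case~1 and Case~2 computations: there, starting from $a_{i-1}$ and the letter $z_i$, one builds $b_i\in V(a_{i-1})$ with $(\lv_{b_i},\rv_{a_{i-1}})\in\lE$ and an arrow giving $a_{i-1}z_i=a_i$, with $\rv_{a_{i-1}}$ connected to $\rv_{a_i}$ by a walk labeled $z_i$. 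I would reuse that computation, but read in the forward direction: given the line $(\lv_{c'},\rv_c)$ and arrow $(\lv_{c'},x,\rv_d)$ (for $z=x\in\oX$) or the corresponding length-two configuration (for $z=x\wedge y$), the definition of $\oE$ forces $d=c'^{\,\prime}x$ where $c'$ is an inverse of $c$ in $\Lcc_e$, and Lemma~\ref{inv_yy} together with Corollary~\ref{trans} yields $cz=d$. Chaining these local equalities along the decomposition of $q$ and invoking the induction hypothesis on $q_1$ gives $au=b$.

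\textbf{Reverse direction ($\Rightarrow$).} Suppose $au=b$ with $u=z_1\cdots z_n$. This is essentially the content already extracted from Proposition~\ref{rliw}: setting $u_i=z_1\cdots z_i$ and $a_i=au_i$, all $a_i$ lie in $\Rcc_e$ (since $a\Rc e$ and right-multiplication inside a fixed $\Rc$-class by these letters stays in the class, which is guaranteed because $a_n=b\Rc e$ and the $\leq_{\Rc}$ chain $a\ger a_1\ger\cdots\ger a_n=b$ with $a\Rc b$ forces all $a_i\Rc e$), and the connectivity argument produces for each $i$ a walk $p_i$ from $\rv_{a_{i-1}}$ to $\rv_{a_i}$ with $z_i\in\w(p_i)$. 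Concatenating, $p=p_1\cdots p_n$ is an $\rv_a-\rv_b$ walk with $u\in\w(p)$, so $u\in L_{\rv_a,\rv_b}(\Ga_e)$. The only subtlety is confirming each $a_i\in\Rcc_e$ so that $\rv_{a_i}$ is genuinely a vertex of $\Ga_e$; I would argue this by noting $b=a_i(z_{i+1}\cdots z_n)$ shows $b\ler a_i$ while $a_i\ler a$ and $a\Rc b$ together force $a_i\Rc a\Rc e$.

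\textbf{Main obstacle.} The delicate point is the local claim in the forward direction: matching a single-letter label on a length-two walk in $\Ga_e$ with the algebraic product $cz=d$, correctly across both the $z\in\oX$ and $z=(x\wedge y)\in\wX$ cases, including verifying that the inverse chosen (via Lemma~\ref{inv_lis}) and the particular representative of the arrow target coincide with the intended product. Since $\w$ of a line elementary path is a \emph{set} of wedge-letters $\{(x\wedge y): x\in\cb,\ y\in\cb\}$, I must check that reading a specific $(x\wedge y)$ off the walk pins down the product unambiguously; this is where Corollary~\ref{trans}$(ii)$--$(iii)$ and the uniqueness guaranteed by Lemma~\ref{inv_yy} (giving $a'x=a_1'x$ independent of the chosen inverse) do the work. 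Handling this letter-to-product correspondence cleanly, rather than the routine induction bookkeeping, is the heart of the proof.
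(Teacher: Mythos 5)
Your proposal is correct and follows essentially the same route as the paper: the implication $u\in L_{\rv_a,\rv_b}(\Ga_e)\Rightarrow au=b$ is handled exactly as in the paper's proof, by cutting the walk into length-two subwalks and matching each one to a product $a_{i-1}z_i=a_i$ via the definition of the arrows of $\Ga_e$ and Lemma \ref{inv_yy}. The only (harmless) divergence is in the converse direction, where you rerun the connectivity construction of Proposition \ref{rliw} starting at $\rv_a$ after checking that every intermediate $a_i$ stays in $\Rcc_e$, whereas the paper writes $a=ev$, takes the walks from $\rv_e$ labelled $v$ and $vu$, and peels off the common prefix using the uniqueness statement of Lemma \ref{r_unique}.
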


\begin{proof}
Assume that $au=b$ and let $v\in\wX^*$ such that $ev=a$. By the conclusion made prior to this lemma, there are two walks $p$ and $q$ starting at $\rv_e$ and ending at $\rv_a$ and $\rv_b$, respectively, such that $v\in\w(p)$ and $vu\in\w(q)$. By Lemma \ref{r_unique}, $q$ has a decomposition $p,q_1$. Thus $q_1$ is a walk from $\rv_a$ to $\rv_b$ such that $u\in\w(q_1)$. We have shown that $u\in L_{\rv_a,\rv_b}(\Ga_e)$.

Conversely, assume that $u\in L_{\rv_a,\rv_b}(\Ga_e)$ and let $p$ be the (unique) $\rv_a-\rv_b$ walk such that $u\in\w(p)$. Note that $p$ has even length because it starts and ends at right vertices. Consider the decomposition of $p$ into subwalks, all of length 2, say $p_1,\cdots,p_n$. Thus $\w(p_i)\subseteq \wX$ for $i=1,\cdots,n$, and $u=z_1\cdots z_n$ with $z_i\in\w(p_i)$. Let $a_0,a_1,\cdots,a_n\in\Rcc_e$ such that $a_0=a$ and $\rv_{a_i}$ is the final vertex of the walk $p_i$ for $i=1,\cdots,n$. Then $u_i=z_1\cdots z_i\in L_{\rv_a,\rv_{a_i}}(\Ga_e)$. Also $u_0=\iota\in L_{\rv_a,\rv_a}(\Ga_e)$ since it labels the trivial path from $\rv_a$ to itself. We prove next that $au_i=a_i$ by induction, thus ending the proof of this lemma since $a_n=b$ and $u_n=u$.

Clearly $au_0=a=a_0$. So, assume that $au_{i-1}=a_{i-1}$ and let us prove that $au_i=a_i$, or equivalently, that $a_{i-1}z_i=a_i$. If $z_i=y$, then $p_i=\rv_{a_{i-1}}\ol{e_i}\lv_b\vec{f_i}\rv_{a_i}$ where $a_{i-1}\in V(b)$, $b\in \Lcc_e\cap (y\wedge y')S$ and $a_i=b'y$ for any $b'\in V(b)\cap\Rcc_e$. In particular, for $b'=a_{i-1}$, we get $a_i=a_{i-1}y$ as desired. Now, if $z_i=y\wedge y_1$, then $p_i=\rv_{a_{i-1}}\ol{e_i}\lv_b\ol{f_i}\rv_{a_i}$ where $a_{i-1}, a_i\in V(b)\cap \Rcc_e$, $b\in \Lcc_e\cap (y\wedge y')S$ and $a_i=S(y_1'\wedge y_1)$. By Lemma \ref{inv_yy}, $a_{i-1}(y\wedge y_1)=a_i$ once again as desired.
\end{proof}

Let us illustrate Lemma \ref{lem62} by considering the rliw-graph $\Ga_{x'x}$ of Figure \ref{Ga_xxa} and the elements $a=(x'x)\ol{\mu}_1\in\ol{S}_1$ and $b=(x'y')\ol{\mu}_1\in\ol{S}_1$. Our plan is to find all $u\in\widehat{X_1}^+$ such that $au=b$. We need to consider four sets of walks first. If $P_1$ is the set of all walks from $\rv_{x'x}$ to $\rv_{x'}$ avoiding crossing the vertex $\rv_{x'}$, then
$$W_1=\{\w(p):\, p\in P_1\}=\{(x'\wedge x)\}^*\cdot\{x',(x'\wedge x')\}\,.$$
If $P_2$ is the set of all walks from $\lv_{(y'\wedge x')x}$ to $\rv_{x'y'}$ avoiding crossing the vertex $\lv_{(y'\wedge x')x}$, then
$$W_2=\{\w(p):\, p\in P_2\}=\{y',(y'\wedge y')\}\cdot\{(y\wedge y')\}^*\,.$$
If $P_3$ is the set of all walks from $\rv_{x'}$ to itself avoiding crossing the vertices $\rv_{x'}$ and $\lv_{(y'\wedge x')x}$, then
$$W_3=\{\w(p):\,p\in P_3\}=\{(x\wedge x'),\iota\}\cup\{\iota,x\}\cdot\{(x'\wedge x)\}^*\cdot\{x',(x'\wedge x')\}\,.$$
If $P_4$ is the set of all walks from $\lv_{(y'\wedge x')x}$ to itself avoiding crossing the vertices $\rv_{x'}$ and $\lv_{(y'\wedge x')x}$, then 
$$W_4=\{\w(p):\,p\in P_4\}=\{(y'\wedge y),\iota\}\cup\{y',(y'\wedge y')\}\cdot\{(y\wedge y')\}^*\cdot\{y,\iota\}\,.$$
Now, observe that
$$L_{\rv_{x'x},\rv_{x'y'}}(\Ga_{x'x})=W=W_1\cdot (W_3^*\cdot W_4^*\cdot (y'\wedge x'))^*\cdot W_3^*\cdot W_4^*\cdot W_2\,.$$
Hence, by Lemma \ref{lem62}, $au=b$ \iff\ $u\in W$.

There is a right-left dual result corresponding to Lemma \ref{lem62}. However, there is a peculiarity in this dual result, the roles of $a$ and $b$ must be switched. For that reason, we include a brief proof of it.

\begin{lem}\label{lem63}
Let $a,b\in\Lcc_e$ and $u\in\wX^*$. Then $ua=b$ \iff\ $u\in L_{\lv_b,\lv_a}(\Ga_e)$.
\end{lem}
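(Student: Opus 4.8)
The plan is to dualize the proof of Lemma~\ref{lem62}, the only genuinely new feature being the exchange of the roles of $a$ and $b$. First I record the left-handed analogue of the observation made just before Lemma~\ref{lem62}. Given $a\in\Lcc_e$, write $a=ue$ with $u=z_1\cdots z_n\in\wX^*$ (so $a=e$ when $n=0$), and set $b_j=z_j\cdots z_n e$ for $1\le j\le n+1$, so that $b_1=a$, $b_{n+1}=e$ and $b_j=z_jb_{j+1}$. Exactly as in Proposition~\ref{rliw}, the relations $a\lel b_j\lel e$ together with $a\Lc e$ force $b_j\Lc e$ for every $j$. Mirroring Cases~1 and~2 there, but now invoking Corollary~\ref{trans_dual} in place of Corollary~\ref{trans}, one builds for each $j$ a walk $q_j$ from $\lv_{b_j}$ to $\lv_{b_{j+1}}$ of length at most $2$ with $z_j\in\w(q_j)$. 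Concatenating, $q:=q_1\cdots q_n$ is a walk from $\lv_a$ to $\lv_e$ with $u\in\w(q)$, and by the $\lvec{L}$ case of Lemma~\ref{r_unique} it is the unique walk ending at $\lv_e$ that carries the label $u$.

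For the forward implication, suppose $ua=b$ and pick $v\in\wX^*$ with $ve=a$; then $(uv)e=u(ve)=ua=b$. By the observation above there is a walk $p$ from $\lv_a$ to $\lv_e$ with $v\in\w(p)$ and a walk $q$ from $\lv_b$ to $\lv_e$ with $uv\in\w(q)$. Since both walks end at $\lv_e$, the $\lvec{L}$ case of Lemma~\ref{r_unique} forces the $v$-reading tail of $q$ to coincide with $p$; hence $q$ decomposes as $q_1,p$, where $q_1$ runs from $\lv_b$ to $\lv_a$ and satisfies $u\in\w(q_1)$. Thus $u\in L_{\lv_b,\lv_a}(\Ga_e)$. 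It is precisely the stripping of the factor $v$ from the right-hand end of the label that makes the walk run from $\lv_b$ to $\lv_a$ rather than the reverse, which is the announced peculiarity.

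Conversely, assume $u\in L_{\lv_b,\lv_a}(\Ga_e)$ and let $p$ be a $\lv_b-\lv_a$ walk with $u\in\w(p)$. Split $p$ into its consecutive length-$2$ subwalks $p_1,\dots,p_n$ joining the left vertices $c_0=b,c_1,\dots,c_n=a$, and write $u=z_1\cdots z_n$ with $z_i\in\w(p_i)$. Everything then reduces to the single-step identity $z_ic_i=c_{i-1}$: when $p_i$ uses an arrow (so $z_i\in\oX$) this is the dual of Case~1 of Lemma~\ref{lem62}, obtained by identifying the arrow through the left translation of Corollary~\ref{trans_dual}; when $p_i$ uses two lines (so $z_i=(x\wedge w)\in\wX$, with $c_{i-1}$ and $c_i$ two $\Lc$-related inverses of the common right vertex) it follows from the left--right dual of Lemma~\ref{inv_yy}, just as Case~2 used Lemma~\ref{inv_yy} itself. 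Telescoping these identities from the inside out gives $ua=z_1\cdots z_nc_n=c_0=b$, as required.

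The main obstacle is bookkeeping the direction of multiplication. Because left multiplication feeds the letters of $u$ in from the right, the partial products run in the opposite order to those in the right-handed Lemma~\ref{lem62}; one must check that the correct single-step identity is $z_ic_i=c_{i-1}$ (and not $c_{i-1}z_i=c_i$) and that it is the tail, not the head, of the label that is pinned down by uniqueness, so that the walk is genuinely a $\lv_b-\lv_a$ walk. Once the direction is fixed consistently, the two implications mirror those of Lemma~\ref{lem62} almost verbatim.
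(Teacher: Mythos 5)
Your outer architecture is sound and is essentially complementary to the paper's: the paper proves the single-letter case $u\in\wX$ of Lemma~\ref{lem63} in full detail and then says the general case ``follows similarly to the proof of Lemma~\ref{lem62}'', whereas you carry out the general-case scaffolding in detail (the left-handed analogue of the observation preceding Lemma~\ref{lem62}, the peeling-off of the suffix $v$ via the $\lvec{L}$ half of Lemma~\ref{r_unique}, the telescoping of the single-step identities $z_ic_i=c_{i-1}$ --- all of which is correct, including your identification of where the role-reversal of $a$ and $b$ enters) and delegate the single-letter step to ``duality''. The difficulty is that the single-letter step is exactly where the content of this lemma lives, and it does not dualize formally, because the arrows of $\Ga_e$ are defined right-handedly: $(\lv_a,x,\rv_b)\in\vec{E}(\Ga_e)$ means $b=a'x$ for $a'\in V(a)\cap\Rcc_e$. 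Case~1 of Lemma~\ref{lem62} is immediate from this definition (there, $a_i=a_{i-1}y$ is literally how the terminal vertex of the arrow is defined), but its ``dual'' --- that the configuration arrow $(\lv_{c_{i-1}},x,\rv_d)$ followed by line $(\lv_{c_i},\rv_d)$ forces $xc_i=c_{i-1}$ --- is not. Corollary~\ref{trans_dual} only gives $x(x'c_{i-1})=c_{i-1}$ for $c_{i-1}\in(x\wedge x')S$; it does not identify $c_i$ with $x'c_{i-1}$, and since $\rv_d$ may be joined by lines to several left vertices, that identification is precisely what has to be proved. The paper does it by an explicit computation showing that $xc_i$ and $c_{i-1}$ are $\Lc$-related inverses of the same element lying in $(x\wedge x')S$, hence equal by local inverseness. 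The same caveat applies to the forward direction: the existence of your length-$2$ walks $q_j$ with $z_j\in\w(q_j)$ when $b_j=z_jb_{j+1}$ and $z_j=x\in\oX$ requires verifying that $b_j'x\in V(b_{j+1})$ (the paper's computations $b_j'x\,b_{j+1}\,b_j'x=b_j'x$ and $b_{j+1}b_j'x\,b_{j+1}=b_{j+1}$, which use $b_j'b_j=e$), and this is a genuine semigroup argument rather than a mirror image of Cases~1 and~2 of Proposition~\ref{rliw}.

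By contrast, your two-line case is adequately justified: the left--right dual of Lemma~\ref{inv_yy}, applied with $a:=d$, $a':=c_{i-1}$, $a_1':=c_i$, $f:=(x\wedge x')\mu$ and $e:=(y'\wedge y)\mu$, does yield $(x\wedge y)c_i=c_{i-1}$, exactly parallel to the use of Lemma~\ref{inv_yy} in Case~2 of Lemma~\ref{lem62}. So the gap is localized but real: you must supply the explicit computations for the arrow case in both directions --- which is, in effect, almost the entirety of the paper's own proof of Lemma~\ref{lem63}.
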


\begin{proof}
We prove this lemma only for the case where $u\in\wX$. The general case follows similarly to the proof of Lemma \ref{lem62}. Assume that $xa=b$ for $x\in\oX$. Then $\ev_1=(\lv_b,x,\rv_{b'x})\in\vec{E}(\Ga_e)$ for $b'\in V(b)\cap\Rcc_e$. Since
$$b'xab'x=b'bb'x=b'x\quad\mbox{ and }\quad ab'xa=ab'b=ae=a\,,$$
we conclude that $\ev_2=(\lv_a,\rv_{b'x})\in\ol{E}(\Ga_e)$. Thus $x\in L_{\lv_b,\lv_a}(\Ga_e)$. Conversely, if $x\in L_{\lv_b,\lv_a}(\Ga_e)$, then there exists $b'\in V(b)\cap\Rcc_e$ such that $b'x\in V(a)$ and $b\in (x\wedge x')S$. Let $c\in S$ such that $b'xc=b'$. Note that $c$ exists because $b'\Rc b'x$. Then 
$$b'xab'=b'xab'xc=b'xc=b'\quad\mbox{ and }\quad xab'xa=xa\,.$$
We have shown that $xa$ and $b$ are $\Lc$-related inverses of $b'$ belonging to $(x\wedge x')S$; whence $b=xa$ since $S$ is locally inverse.

Assume now that $(x\wedge y)a=b$ for $x,y\in\oX$. Since $b\in (x\wedge y)S$, there exists a unique inverse $b'$ of $b$ in $\Rcc_e$ such that $b'\in S(x\wedge y)$. Then $\ev_1=(\lv_b,\rv_{b'})\in\ol{E}(\Ga_e)$. Also $b'ab'=b'bb'=b$ and $ab'a=cbb'b=cb=a$ for some $c\in S$ such that $a=cb$. Thus $\ev_2=(\lv_a,\rv_{b'})\in\ol{E}(\Ga_e)$ and $x\wedge y\in L_{\lv_b,\lv_a}(\Ga_e)$. Conversely, if $x\wedge y\in L_{\lv_b,\lv_a}(\Ga_e)$, then $x\in\cb(\lv_b)$ and there exists $c\in\Rcc_e$ such that $a,b\in V(c)$ and $y\in\cb(\rv_c)$. Now
$$bc\in ((x\wedge x')\mu]_r\cap((y'\wedge y)\mu]_l=((x\wedge y)\mu]$$
and $c(x\wedge y)=c$. Hence $(x\wedge y)a\in V(c)$ because $c(x\wedge y)ac=cac=c$ and $(x\wedge y)ac(x\wedge y)a=(x\wedge y)aca=(x\wedge y)a$. Finally, we conclude that $(x\wedge y)a=b$ because they are $\Lc$-related inverses of $c$ in $(x\wedge x')S$.
\end{proof}

We use now the rliw-graph $\Ga_{z'z}$ of Figure \ref{Ga_xxb} (with respect to the semigroup $\ol{S}_2$) to illustrate Lemma \ref{lem63}. Recall the characterization of $u\ol{\mu}_2$ given in Example 2, and observe also that 
$$uz'z^2\,\ol{\mu}_2\, z \qquad\Leftrightarrow\qquad u\,\ol{\mu}_2\,z^2\;\mbox{ or }\; u\,\ol{\mu}_2\,z^2z'\,.$$
Hence $L_{\lv_z,\lv_{z'z^2}}(\Ga_{z'z})=\{u\in\widehat{X_2}^+\,:\; \la_u=z\;\;\mbox{ and }\;\; n(u)\mbox{ odd}\}$. Note that this conclusion is not easy to get directly from the rliw-graph $\Ga_{zz'}$.

In Proposition \ref{conv} we showed that, in general, if $\Ga'$ is an rliw-graph, then there exists an liw-graph homomorphism $\varphi:\Ga\to\Ga'$ such that $\av\varphi=\av'$ and $\bv\varphi=\bv'$, for $\av,\bv\in V(\Ga)$ and $\av',\bv'\in V(\Ga')$ with $\sb(\av)=\sb(\av')$ and $\sb(\bv)=\sb(\bv')$, \iff\ $L_{\av,\bv}(\Ga)\subseteq L_{\av',\bv'}(\Ga')$ and the associated mapping $\psi_{\oE}$ preserves incidence. We saw in Figure \ref{hom_exist} that we cannot omit, in general, the condition of $\psi_{\oE}$ preserving incidence. However, for the cases we are interested in, the cases where the liw-graphs are associated with a locally inverse semigroup presentation, we will see next that this latter condition is always satisfied.

\begin{lem}\label{pre_adj}
Let $e,f\in E(S)$, $\av,\bv\in V(\Ga_e)$ and $\av_1,\bv_1\in V(\Ga_f)$ such that $\sb(\av)=\sb(\av_1)$ and $\sb(\bv)=\sb(\bv_1)$. If $L_{\av,\bv}(\Ga_e)\subseteq L_{\av_1,\bv_1}(\Ga_f)$, then the induced mapping $\psi_{\oE}$ always preserves incidence.
\end{lem}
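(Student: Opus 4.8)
The plan is to unwind the definition of incidence into two dual cases and, in each, to upgrade the weak incidence already furnished by Lemma~\ref{weak_adj} to a genuine coincidence of endpoints, using the semigroup meaning of the languages supplied by Lemmas~\ref{lem62} and \ref{lem63}. Two arrows $\ev,\ev_1\in\vec{E}(\Ga_e)$ share a vertex exactly when $\lv(\ev)=\lv(\ev_1)$ or $\rv(\ev)=\rv(\ev_1)$, and since $\Ga_e$ is reduced such arrows then carry distinct labels; as the two situations are right--left dual (compare Corollaries~\ref{trans} and \ref{trans_dual}), I would treat the first and only indicate the changes for the second. So suppose $\ev=(\lv_a,x,\rv_b)$ and $\ev_1=(\lv_a,y,\rv_c)$ with $x\neq y$, write $\ev\psi_{\oE}=(\lv_{a_1},x,\rv_{b_1})$ and $\ev_1\psi_{\oE}=(\lv_{a_2},y,\rv_{c_2})$, and aim to prove $a_1=a_2$. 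Lemma~\ref{weak_adj}$.(ii)$ already produces a right vertex of $\Ga_f$ line-adjacent to both $\lv_{a_1}$ and $\lv_{a_2}$, so the real content is to turn this common neighbour into the equality $\lv_{a_1}=\lv_{a_2}$.

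By Lemma~\ref{lem53} the hypothesis gives $L_{\av,\lv_a}(\Ga_e)\subseteq L_{\av_1,\lv_{a_1}}(\Ga_f)\cap L_{\av_1,\lv_{a_2}}(\Ga_f)$. The device I would use is to travel out of $\lv_a$ along $\ev$ and return by the reverse path guaranteed by axiom~$(iii)$ of an liw-graph: if $a'\in V(a)\cap\Rcc_e$ is the inverse with $b=a'x$, then the walk $\lv_a\to\rv_b\to\lv_{x'a}\to\rv_{a'}\to\lv_a$ has label $xx'$, so that $w_0xx'\in L_{\av,\lv_a}(\Ga_e)$ for every $w_0\in L_{\av,\lv_a}(\Ga_e)$. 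Reading $w_0xx'$ in $\Ga_f$ from $\av_1$, Lemma~\ref{r_unique} gives a unique walk labelled $w_0xx'$ ending at a right vertex, terminating with an $x'$-arrow at some $\rv_g$; the two walks reaching $\lv_{a_1}$ and $\lv_{a_2}$ are this walk followed by a line to $\lv_{a_1}$, respectively $\lv_{a_2}$. Comparing the incoming left basic path at $\lv_{a_1}$ with arrow label $x'$ to the reverse path attached to $\ev\psi_{\oE}$ by axiom~$(iii)$, injectivity of $\Ga_f$ identifies $\rv_g$ with the liw-partner $\rv_{a_1'}$ of $\ev\psi_{\oE}$. Hence both $\lv_{a_1}$ and $\lv_{a_2}$ are line-adjacent to $\rv_{a_1'}$, i.e. $a_1,a_2\in V(a_1')\cap\Lcc_f$.

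It then remains to see that $a_1=a_2$. Since every right vertex of $\Ga_f$ lies in $\Rcc_f$ and every left vertex in $\Lcc_f$, the two candidates are automatically $\Lc$-related, and the previous paragraph exhibits them as common inverses of $a_1'\in\Rcc_f$; the one missing ingredient is that they carry the same content, say $y\in\cb(\lv_{a_1})$. Once this holds, $\lv_{a_1}$ is line-adjacent to $\rv_{a_1'}$ and also carries a $y$-arrow, so determinism of $\Ga_f$ (which makes the $y$-labelled right basic path issuing from $\rv_{a_1'}$ unique) forces $\lv_{a_1}=\lv_{a_2}$, and the uniqueness of the inverse of $a_1'$ in the corner $(x\wedge x')Sf$ from Lemma~\ref{inv_lis} pins the element down. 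To obtain the content equality I would use the \emph{full} inclusion $L_{\av,\lv_a}(\Ga_e)\subseteq L_{\av_1,\lv_{a_1}}(\Ga_f)$ rather than a single word: translated through Lemmas~\ref{lem62} and \ref{lem63} (according to the sides of $\av$ and $\bv$), it asserts that $a_1$, and likewise $a_2$, is a common inverse of the image of every right vertex $\rv_m$ of $\Ga_e$ with $a\in V(m)$, so the richness of this family forces $a_1$ and $a_2$ to be inverses of the same elements of $\Rcc_f$ and hence to coincide, again by Lemma~\ref{inv_lis} (together with Lemma~\ref{inv_yy} to normalise the inverses used).

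The step I expect to be the main obstacle is precisely this last one: upgrading ``line-adjacent to a common vertex'' (the output of Lemma~\ref{weak_adj}) to equality of the two image vertices. A single back-and-forth word only certifies that $\lv_{a_1}$ and $\lv_{a_2}$ share line-neighbours; what genuinely forces their coincidence is the locally inverse structure of $S$ — concretely the uniqueness of inverses in a fixed local corner (Lemmas~\ref{inv_lis} and \ref{inv_yy}) combined with the determinism and injectivity of $\Ga_f$ and the liw-axiom pairing of each arrow with its reverse — applied to the whole language inclusion rather than to finitely many walks. The dual argument, with the roles of the lines reversed and Corollary~\ref{trans_dual} replacing Corollary~\ref{trans}, settles the case $\rv(\ev)=\rv(\ev_1)$ and completes the proof.
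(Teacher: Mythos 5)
Your skeleton coincides with the paper's: reduce to two arrows with a common starting vertex (the common--ending--vertex case being dual), call on Lemma~\ref{weak_adj}$.(ii)$ for a shared line-neighbour, and finish by determinism of $\Ga_f$. You also isolate the crux correctly: everything hinges on showing that the label of the \emph{second} arrow lies in $\cb(\lv_{a_1})$, i.e.\ that $a_1\in (y\wedge y')S$. But this is exactly the step your argument does not supply. Your closing move replaces it by the claim that $a_1$ and $a_2$, being ``inverses of the same elements of $\Rcc_f$'', must coincide by Lemma~\ref{inv_lis}. That lemma gives uniqueness of the inverse only inside a prescribed corner $eSf$, i.e.\ only after the contents of $a_1$ and $a_2$ are already known to agree --- which is precisely the content equality you set out to establish, so the argument is circular. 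Nor is the weaker assertion enough by itself: in a $2\times 2$ rectangular band every element of an $\Lc$-class is an inverse of every element of the corresponding $\Rc$-class, so two distinct $\Lc$-related elements can share all their inverses in $\Rcc_f$. Likewise Lemma~\ref{inv_yy} only normalises an inverse by multiplying by $(f\wedge e)$ for a \emph{known} $f$ with $a\in fS$, so it cannot detect the missing content either.

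The paper closes this gap with a genuinely semigroup-theoretic computation that has no counterpart in your proposal. Writing $\ev_1=(\lv_s,x_1,\rv_{t_1})$, $\ev_2=(\lv_s,x_2,\rv_{t_2})$ and $\ev_1\psi_{\oE}=(\lv_{s_1},x_1,\rv_{t_3})$, it picks $s'\in V(s)\cap\Rcc_e\cap S(x_1\wedge x_1')$ and $s_1'\in V(s_1)\cap\Rcc_f\cap S(x_1\wedge x_1')$, pushes the single word $vu$ (with $v\mu=a$, $u\mu=s'$) through the inclusion $L_{\av,\lv_s}(\Ga_e)\subseteq L_{\av_1,\lv_{s_1}}(\Ga_f)$ and Lemma~\ref{lem63} to obtain $as's_1=a_1$, and deduces $ss's_1s_1'\;\Lc\;s_1s_1'$; since both are idempotents of the semilattice $((x_1\wedge x_1')\mu]$ they are equal, whence $s_1=ss's_1s_1's_1\in (x_2\wedge x_2')S$. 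Only then does determinism at the common neighbour force $\lv_{s_1}=\lv_{s_2}$. Your $xx'$ round trip along $\ev$ merely re-derives the shared line-neighbour that Lemma~\ref{weak_adj} already provides; it reads only the label $x_1$ and can carry no information about $x_2$, and no purely combinatorial reading of labels can, because a walk ending at a left vertex is determined by its label only up to the final line. You need an argument that transports the membership $s\in(x_2\wedge x_2')S$ across $\psi_{\oE}$, and that argument is missing.
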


\begin{proof}
Let $\ev_1=(\lv_s,x_1,\rv_{t_1})$ and $\ev_2=(\lv_s,x_2,\rv_{t_2})$ be two arrows of $\Ga_e$ with the same starting vertex. Then $\ev_1\psi_{\oE}=(\lv_{s_1},x_1,\rv_{t_3})$ and $\ev_2\psi_{\oE}=(\lv_{s_2},x_2,\rv_{t_4})$ for some $s_1,s_2\in\Lcc_f$ and $t_3,t_4\in\Rcc_f$. By Lemma \ref{weak_adj}.$(ii)$ there exists also a right vertex $\bv\in V_r(\Ga_f)$ such that $(\lv_{s_1},\bv),\,(\lv_{s_2},\bv)\in\ol{E}(\Ga_f)$. We will prove that $s_1=s_2$. By duality, we conclude also that if $\ev_1'$ and $\ev_2'$ are two arrows with the same ending vertex, then $\ev_1'\psi_{\oE}$ and $\ev_2'\psi_{\oE}$ have the same ending vertex too; and this lemma becomes proved.

Let $s'\in V(s)\cap\Rcc_e\cap S(x_1\wedge x_1')$ and $s_1'\in V(s_1)\cap\Rcc_f\cap S(x_1\wedge x_1')$. Note that $s'$ and $s_1'$ exist because $s,s_1\in (x_1\wedge x_1')S$. Further, $ss's_1s_1'$ is an idempotent since $ss'$ and $s_1s_1'$ belong to the semilattice $((x_1\wedge x_1')\mu]$. We prove next that $ss's_1s_1'=s_1s_1'$.

Let $a\in\Lcc_e$ such that
$$\av=\left\{\begin{array}{ll}
\lv_a &\mbox{ if }\av\in V_l(\Ga_e) \\
\rv_{a'} &\mbox{ for some } a'\in V(a)\cap\Rcc_e \mbox{ if } \av\in V_r(\Ga_e)\, .
\end{array}\right.$$
Let also $u,v\in\wX^+$ such that $s'=u\mu$ and $a=v\mu$. Then
$$vu\in L_{\lv_a,\lv_s}(\Ga_e)\subseteq L_{\av,\lv_s}(\Ga_e)\subseteq L_{\av_1,\lv_{s_1}}(\Ga_f)$$
where the last inclusion follows from the definition of $\psi_{\oE}$. Note there is $a_1\in\Lcc_f$ such that $vu\in L_{\lv_{a_1},\lv_{s_1}}(\Ga_f)$. More precisely, $\av_1=\lv_{a_1}$ if $\av_1\in V_l(\Ga_f)$ or $a_1\in V(c_1)\cap\Lcc_f$ if $\av_1\in V_r(\Ga_f)$ and $\av_1=\rv_{c_1}$. Then $vus_1=a_1$ by Lemma \ref{lem63}, and
$$ss's_1s_1'\,\Lc\, s's_1s_1'=us_1s_1'\,\Lc\, vus_1s_1'=a_1s_1'\,\Lc\, s_1s_1'\,.$$
Hence $ss's_1s_1'=s_1s_1'$ since they are $\Lc$-related idempotents in the semilattice $((x_1\wedge x_1')\mu]$.

Note that $s\in (x_2\wedge x_2')S$ because $(\lv_s,x_2,\rv_{t_2})$ is an arrow of $\Ga_e$. Hence $s_1=ss's_1s_1's_1\in (x_2\wedge x_2')S$ and there is an arrow in $\Ga_f$ starting at $\lv_{s_1}$ and labeled by $x_2$. Since $\Ga_f$ is reduced, we must have $s_1=s_2$ as otherwise we would have two distinct equivalent right basic paths starting at $\bv$.
\end{proof}

Combining Proposition \ref{conv} with the previous lemma, we obtain:

\begin{cor}\label{conv_liw}
Let $e,f\in E(S)$, $\av,\bv\in V(\Ga_e)$ and $\av_1,\bv_1\in V(\Ga_f)$ such that $\sb(\av)=\sb(\av_1)$ and $\sb(\bv)=\sb(\bv_1)$. There exists a homomorphism $\varphi:\Ga_e\to\Ga_f$ such that $\av\varphi=\av_1$ and $\bv\varphi=\bv_1$ \iff\ $L_{\av,\bv}(\Ga_e)\subseteq L_{\av_1,\bv_1}(\Ga_f)$.
\end{cor}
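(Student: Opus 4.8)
The plan is to deduce this corollary directly from Proposition \ref{conv}, using Lemma \ref{pre_adj} to eliminate the extra hypothesis that appears there. First I would record that $\Ga_f$ is reduced by Proposition \ref{rliw}, so that Proposition \ref{conv} is applicable with $\Ga=\Ga_e$ and $\Ga'=\Ga_f$, and note that the side conditions $\sb(\av)=\sb(\av_1)$ and $\sb(\bv)=\sb(\bv_1)$ assumed in the statement match verbatim those required by both Proposition \ref{conv} and Lemma \ref{pre_adj}.

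For the forward implication, suppose a homomorphism $\varphi:\Ga_e\to\Ga_f$ with $\av\varphi=\av_1$ and $\bv\varphi=\bv_1$ exists. Then the inclusion $L_{\av,\bv}(\Ga_e)\subseteq L_{\av_1,\bv_1}(\Ga_f)$ is immediate: by Lemma \ref{hom_wp} we have $\w(p)\subseteq\w(p\varphi)$ for every $\av-\bv$ walk $p$ of $\Ga_e$, and $p\varphi$ is an $\av_1-\bv_1$ walk of $\Ga_f$ since $\varphi$ preserves the chosen vertices. (This is exactly the first half of the proof of Proposition \ref{conv}, so it may alternatively be quoted from there.)

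For the converse, assume $L_{\av,\bv}(\Ga_e)\subseteq L_{\av_1,\bv_1}(\Ga_f)$. The ``if'' half of Proposition \ref{conv} produces the desired homomorphism provided the induced mapping $\psi_{\oE}$ preserves incidence; but in the present setting, where the two liw-graphs arise from the presentation via the construction of Section \ref{sec6}, this is precisely the conclusion of Lemma \ref{pre_adj}. Thus both hypotheses of Proposition \ref{conv} are met, and the homomorphism $\varphi:\Ga_e\to\Ga_f$ with $\av\varphi=\av_1$ and $\bv\varphi=\bv_1$ exists.

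Since the argument is simply the conjunction of two previously established results, there is no genuine obstacle to overcome here; the corollary is essentially a clean restatement of Proposition \ref{conv} for the graphs $\Ga_e$. The only point requiring any care is to confirm that the incidence-preservation condition of Proposition \ref{conv}—shown in general to be indispensable by the example of Figure \ref{hom_exist}—becomes redundant in this case, and that redundancy is exactly what Lemma \ref{pre_adj} supplies. In other words, all the substantive work lies in Lemma \ref{pre_adj}, and this corollary merely harvests it.
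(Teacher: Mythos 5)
Your proposal is correct and is exactly the paper's argument: the author states the corollary with the single line ``Combining both the Proposition \ref{conv} with the previous lemma, we obtain,'' i.e.\ Proposition \ref{conv} supplies the equivalence modulo the incidence condition on $\psi_{\oE}$, and Lemma \ref{pre_adj} shows that condition is automatic for the graphs $\Ga_e$, $\Ga_f$. Nothing further is needed.
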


In Figure \ref{Ga_y} we include again the rliw-graph $\Ga_{x'x}$ of Figure \ref{Ga_xxa} and also the rliw-graph $\Ga_{y'}$, both with respect to the semigroup $\ol{S}_1$. We order the vertices in $\Ga_{y'}$ so that it becomes evident that the two rliw-graphs are isomorphic. 
\begin{figure}[ht]
$$\tikz[shorten <=2pt, shorten >=3pt, >=latex]{
\coordinate[label=left:$\lv_{y(y'\wedge x')x}$\hspace*{.3cm}] (1) at (0,0);
\coordinate[label=left:$\lv_{(y'\wedge x')x}$\hspace*{.3cm}] (2) at (0,1);
\coordinate[label=left:$\lv_{x}$\hspace*{.3cm}] (3) at (0,2);
\coordinate[label=left:$\lv_{x'x}$\hspace*{.3cm}] (4) at (0,3);
\draw (1) node {$\bullet$};
\draw (2) node {$\bullet$};
\draw (3) node {$\bullet$};
\draw (4) node {$\bullet$};
\draw (1,-1) node {$\Ga_{x'x}$};
\coordinate[label=right:\hspace*{.3cm}$\rv_{x'y'}$] (5) at (2,0);
\coordinate[label=right:\hspace*{.3cm}$\rv_{x'y'y}$] (6) at (2,1);
\coordinate[label=right:\hspace*{.3cm}$\rv_{x'}$] (7) at (2,2);
\coordinate[label=right:\hspace*{.3cm}$\rv_{x'x}$] (8) at (2,3);
\draw (5) node {$\bullet$};
\draw (6) node {$\bullet$};
\draw (7) node {$\bullet$};
\draw (8) node {$\bullet$};
\draw[densely dashed] (1) to [bend right=10] (5);
\draw[densely dashed] (5) to [bend left=10] (2);
\draw[densely dashed] (6)--(2)--(7)--(3);
\draw[densely dashed] (4) to [bend left=15] (8);
\draw[densely dashed] (4) to [bend right=10] (7);
\draw[-,line width=3pt,color=white] (3) to (8);
\draw[-,line width=3pt,color=white] (1) to (6);
\draw[->] (3) to node [above=-2pt, pos=.15] {$x$} (8);
\draw[->] (1) to node [below=-2pt, pos=.35] {$y$} (6);
\draw[-,line width=5pt,color=white] (4) to [bend left=10] (7);
\draw[-,line width=5pt,color=white] (2) to [bend left=10] (5);
\draw[->] (4) to [bend left=10] node [above=-1pt, pos=0.45] {$x'$} (7);
\draw[->] (2) to [bend left=10] node [above=-1pt, pos=.85] {$y'$} (5);
\coordinate[label=left:$\lv_{yy'}$\hspace*{.3cm}] (9) at (6,0);
\coordinate[label=left:$\lv_{y'}$\hspace*{.3cm}] (10) at (6,1);
\coordinate[label=left:$\lv_{xx'y'}$\hspace*{.3cm}] (11) at (6,2);
\coordinate[label=left:$\lv_{x'y'}$\hspace*{.3cm}] (12) at (6,3);
\draw (9) node {$\bullet$};
\draw (10) node {$\bullet$};
\draw (11) node {$\bullet$};
\draw (12) node {$\bullet$};
\coordinate[label=right:\hspace*{.3cm}$\rv_{y'}$] (13) at (8,0);
\coordinate[label=right:\hspace*{.3cm}$\rv_{y'y}$] (14) at (8,1);
\coordinate[label=right:\hspace*{.3cm}$\rv_{y'\wedge x'}$] (15) at (8,2);
\coordinate[label=right:\hspace*{.3cm}$\rv_{(y'\wedge x')x}$] (16) at (8,3);
\draw (13) node {$\bullet$};
\draw (14) node {$\bullet$};
\draw (15) node {$\bullet$};
\draw (16) node {$\bullet$};
\draw (7,-1) node {$\Ga_{y'}$};
\draw[densely dashed] (9) to [bend right=10] (13);
\draw[densely dashed] (13) to [bend left=10] (10);
\draw[densely dashed] (14)--(10)--(15)--(11);
\draw[densely dashed] (12) to [bend left=15] (16);
\draw[densely dashed] (12) to [bend right=10] (15);
\draw[-,line width=3pt,color=white] (11) to (16);
\draw[-,line width=3pt,color=white] (9) to (14);
\draw[->] (11) to node [above=-2pt, pos=.15] {$x$} (16);
\draw[->] (9) to node [below=-2pt, pos=.35] {$y$} (14);
\draw[-,line width=5pt,color=white] (12) to [bend left=10] (15);
\draw[-,line width=5pt,color=white] (10) to [bend left=10] (13);
\draw[->] (12) to [bend left=10] node [above=-1pt, pos=0.45] {$x'$} (15);
\draw[->] (10) to [bend left=10] node [above=-1pt, pos=.85] {$y'$} (13);}$$
\caption{The rliw-graphs $\Ga_{x'x}$ and $\Ga_{y'}$ of $\ol{S}_1$.}\label{Ga_y}
\end{figure}
This fact is not a coincidence. Next, we show that the rliw-graph $\Ga_e$ is an invariant of the $\Dc$-class $\Dcc_e$. In other words, we show that if $f\in\Dcc_e\cap E(S)$, then $\Ga_e$ and $\Ga_f$ are always isomorphic.

\begin{prop}\label{isom_D}
Let $e,f\in E(S)$ such that $e\Dc f$. Then $\Ga_e$ and $\Ga_f$ are isomorphic. More precisely, for each $a\in \Rcc_e\cap\Lcc_f$, there exists a unique isomorphism $\varphi:\Ga_e\to\Ga_f$ such that $\lv_e\varphi =\lv_a$, and all isomorphisms from $\Ga_e$ to $\Ga_f$ are of this form.
\end{prop}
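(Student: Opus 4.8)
The plan is to build the isomorphism explicitly from Green's Lemma translations, and then to show that \emph{every} isomorphism is forced to arise from this same recipe. For existence, fix $a\in\Rcc_e\cap\Lcc_f$. Since $a\Rc e$ and $a\Lc f$, the element $a$ lies in $eSf$, so the standard location of inverses in a $\Dc$-class (Lemma~\ref{inv_lis}) supplies the unique inverse $a'\in V(a)\cap\Rcc_f\cap\Lcc_e$, and then $aa'\in\Hcc_e$, $a'a\in\Hcc_f$ are idempotents, forcing $aa'=e$ and $a'a=f$. Because $ea=a\Rc e$, the right translation $u\mapsto ua$ is a bijection $\Lcc_e\to\Lcc_{ea}=\Lcc_f$; dually, since $a'e=a'\Lc e$, the left translation $v\mapsto a'v$ is a bijection $\Rcc_e\to\Rcc_{a'e}=\Rcc_f$. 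I would therefore set $\lv_u\varphi=\lv_{ua}$ and $\rv_v\varphi=\rv_{a'v}$, so that $\lv_e\varphi=\lv_a$.

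Checking that $\varphi$ is an liw-graph homomorphism is the first piece of bookkeeping. For a line $(\lv_u,\rv_v)$ with $v\in V(u)\cap\Rcc_e$, a short computation using $aa'=e$, $ev=v$, $vu\in\Lcc_e$ and $vuv=v$ gives $a'v\in V(ua)$, so $(\lv_{ua},\rv_{a'v})\in\lE(\Ga_f)$. For an arrow $(\lv_u,x,\rv_{u'x})$ the same computation yields $a'u'\in V(ua)\cap\Rcc_f$, and since $u\in(x\wedge x')S$ forces $ua\in(x\wedge x')S$ there is an arrow out of $\lv_{ua}$ labelled $x$; by the well-definedness of arrow targets guaranteed by Lemma~\ref{inv_yy}, it ends at $\rv_{(a'u')x}=\rv_{u'x}\varphi$, so labels and incidence are preserved.

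To see $\varphi$ is an isomorphism I would write down the candidate inverse by running the identical construction with $(f,a')$ in place of $(e,a)$: $\lv_w\varphi'=\lv_{wa'}$, $\rv_z\varphi'=\rv_{az}$. Then $\lv_u\varphi\varphi'=\lv_{uaa'}=\lv_{ue}=\lv_u$ and $\rv_v\varphi\varphi'=\rv_{aa'v}=\rv_{ev}=\rv_v$, and symmetrically $\varphi'\varphi=\mathrm{id}$, so both vertex and edge maps are bijections and $\varphi$ is an isomorphism. Uniqueness among isomorphisms sending $\lv_e$ to $\lv_a$ is then immediate from Corollary~\ref{unique_hom}, since $\Ga_f$ is reduced and the image of $\lv_e$ is prescribed.

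The substantive step is the final clause, that there are \emph{no other} isomorphisms. Given an arbitrary isomorphism $\varphi:\Ga_e\to\Ga_f$, write $\lv_e\varphi=\lv_a$ with $a\in\Lcc_f$; I must show $a\Rc e$. The idea is to convert the graph-theoretic hypothesis into algebra via Lemma~\ref{lem63}: since $\varphi$ preserves languages in both directions, the induced bijection $\pi:\Lcc_e\to\Lcc_f$ (with $\lv_p\varphi=\lv_{\pi(p)}$) satisfies $up=q\iff u\pi(p)=\pi(q)$ for all $p,q\in\Lcc_e$ and $u\in\wX^*$. Taking $u=p=q=e$ gives $ea=a$, and writing $p=u_0e$ gives $\pi(p)=u_0a=pa$; thus $\pi$ is right multiplication by $a$. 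Applying the same to $\varphi^{-1}$ shows $\pi^{-1}$ is right multiplication by $a^*:=\pi^{-1}(f)$, and evaluating $\pi^{-1}\pi=\mathrm{id}$ at $e$ gives $e(aa^*)=e$; but $e(aa^*)=(ea)a^*=aa^*$, so $aa^*=e$. Hence $a\in eS$ (from $ea=a$) and $e\in aS$ (from $e=aa^*$), i.e.\ $a\Rc e$, so $a\in\Rcc_e\cap\Lcc_f$ and $\varphi=\varphi_a$ by the uniqueness above. I expect the main obstacle to be exactly this passage from "$\varphi$ is an isomorphism" to "$\pi$ is right translation by $a$"; Lemmas~\ref{lem62} and~\ref{lem63} are the precise bridge, making the completeness argument structurally dual to the existence construction.
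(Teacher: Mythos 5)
Your proof is correct, but the existence half takes a genuinely different route from the paper's. The paper obtains the isomorphism abstractly: it observes that $se=a'$ holds exactly when $sa=f$, deduces $L_{\lv_{a'},\lv_e}(\Ga_e)=L_{\lv_f,\lv_a}(\Ga_f)$ from Lemma~\ref{lem63}, and then invokes Corollary~\ref{conv_liw} (which rests on Proposition~\ref{conv} together with the incidence-preservation Lemma~\ref{pre_adj}) to manufacture mutually inverse homomorphisms, with Corollary~\ref{unique_hom} forcing them to compose to the identity. You instead write the map down explicitly as the pair of Green translations $\lv_u\mapsto\lv_{ua}$, $\rv_v\mapsto\rv_{a'v}$ and verify by hand that lines and arrows are preserved (your computations $a'v\in V(ua)$ and $(a'u')x=a'(u'x)$ are exactly what is needed, with Lemma~\ref{inv_yy} covering the well-definedness of arrow targets). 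This is more elementary in that it bypasses the language-inclusion machinery entirely, at the cost of some bookkeeping; it also delivers Proposition~\ref{isom_descr} (the explicit description of $\varphi_a$ as a pair of translations) for free, whereas the paper proves that as a separate subsequent result. For the final clause your argument and the paper's are essentially the same: both funnel the hypothesis through Lemma~\ref{lem63} to show that the image $a$ of $\lv_e$ satisfies $ea=a$ and $e\in aS$, hence $a\Rc e$; the paper witnesses $e\in aS$ via $bce=e$ with $c=\rv_e\varphi$ an inverse of $b$, while you witness it via $aa^*=e$ with $a^*=\pi^{-1}(f)$ — a cosmetic difference.
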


\begin{proof}
Let $a\in \Rcc_e\cap\Lcc_f$ and let $a'$ be the inverse of $a$ belonging to $\Lcc_e\cap\Rcc_f$. Observe that $se=a'$ \iff\ $sa=f$ for any $s\in S$. Thus $L_{\lv_f,\lv_a}(\Ga_f)=L_{\lv_{a'},\lv_e}(\Ga_e)$ by Lemma \ref{lem63}. By the previous corollary, there are two homomorphisms $\varphi:\Ga_e\to\Ga_f$ and $\varphi':\Ga_f\to\Ga_e$ such that $\lv_e\varphi=\lv_a$ and $\lv_a\varphi'=\lv_e$. Thus $\varphi:\Ga_e\to\Ga_f$ is an isomorphism with inverse isomorphism $\varphi'$ due to Corollary \ref{unique_hom}. Further, $\varphi$ is the only isomorphism from $\Ga_e$ to $\Ga_f$ such that $\lv_e\varphi=\lv_a$.

Finally, we must show there are no other isomorphisms from $\Ga_e$ to $\Ga_f$. So, let $\varphi:\Ga_e\to\Ga_f$ be an isomorphism such that $\lv_e\varphi=\lv_b$ and $\rv_e\varphi=\rv_{c}$ for $b\in\Lcc_f$ and $c\in\Rcc_f$. Then $L_{\rv_e,\lv_e}(\Ga_e)=L_{\rv_c,\lv_b}(\Ga_f)$, and $c\in V(b)$ because $\iota\in L_{\rv_e,\lv_e}(\Ga_e)$. Let $u,v\in\wX^*$ such that $u\mu=e$ and $v\mu=bc$. Then
$$u\in L_{\lv_e,\lv_e}(\Ga_e)\quad\mbox{ and }\quad v\in L_{\lv_b,\lv_b}(\Ga_f)$$
by Lemma \ref{lem63}. Since $L_{\lv_e,\lv_e}(\Ga_e)=L_{\lv_b,\lv_b}(\Ga_f)$, and again by Lemma \ref{lem63}, we conclude that $eb=ub=b$ and $bce=ve= e$. Thus $b\in\Rcc_e\cap\Lcc_f$ as wanted.
\end{proof}

The next result describes in more detail the isomorphisms of the previous proposition.

\begin{prop}\label{isom_descr}
Let $a\in S$, $a'\in V(a)$, $e=aa'$ and $f=a'a$, and let $\varphi_a$ be the isomorphism from $\Ga_e$ to $\Ga_f$ such that $\lv_e\varphi_a=\lv_a$ whose existence is guaranteed by the previous proposition. Then $\lv_s\varphi_a=\lv_{sa}$ for any $s\in\Lcc_e$ and $\rv_t\varphi_a=\rv_{a't}$ for any $t\in \Rcc_e$. In particular, $\rv_e\varphi_a=\rv_{a'}$.
\end{prop}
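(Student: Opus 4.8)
The plan is to produce the isomorphism $\varphi_a$ explicitly as the vertex map
$$\psi\colon \lv_s\mapsto \lv_{sa}\ (s\in\Lcc_e),\qquad \rv_t\mapsto \rv_{a't}\ (t\in\Rcc_e),$$
to verify that $\psi$ is an liw-graph homomorphism $\Ga_e\to\Ga_f$ sending $\lv_e$ to $\lv_a$, and then to invoke the uniqueness in Corollary \ref{unique_hom} to conclude $\psi=\varphi_a$. First note that, since $aa'=e$ and $a'a=f$, we have $a\in\Rcc_e\cap\Lcc_f$, so Proposition \ref{isom_D} indeed yields the isomorphism $\varphi_a$. The two formulas defining $\psi$ make sense by the Green's Lemma translations recalled in Section 2: as $ea=a\Rc e$, the right translation $s\mapsto sa$ is a bijection $\Lcc_e\to\Lcc_a=\Lcc_f$, and as $a'e=a'\Lc e$, the left translation $t\mapsto a't$ is a bijection $\Rcc_e\to\Rcc_{a'}=\Rcc_f$. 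Thus $\psi$ is a well-defined bijection on vertices that sends left to left and right to right. Moreover $ea=a$ and $a'e=a'$, which already give $\lv_e\psi=\lv_a$ and the particular case $\rv_e\psi=\rv_{a'}$.

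Next I would check that $\psi$ preserves lines. Let $(\lv_s,\rv_t)\in\ol{E}(\Ga_e)$, that is, $s\in\Lcc_e$ and $t\in V(s)\cap\Rcc_e$; I must show $a't\in V(sa)\cap\Rcc_f$. The membership $a't\in\Rcc_f$ is the left translation above. For $a't\in V(sa)$ I use the one-sided absorptions $se=s$ (from $s\Lc e$) and $et=t$ (from $t\Rc e$), together with $aa'=e$ and the inverse identities $sts=s$, $tst=t$. A short computation gives
$$(sa)(a't)(sa)=s(aa')tsa=setsa=stsa=sa,\qquad (a't)(sa)(a't)=a'ts(aa')t=a'tset=a'tst=a't,$$
so $a't\in V(sa)$ and hence $(\lv_{sa},\rv_{a't})\in\ol{E}(\Ga_f)$.

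Then I would check arrows. Let $(\lv_s,x,\rv_{s'x})\in\vec{E}(\Ga_e)$, where $s\in\Lcc_e\cap(x\wedge x')S$ and $s'\in V(s)\cap\Rcc_e$. Because the element $(x\wedge x')$ is idempotent we have $(x\wedge x')s=s$, whence $sa=(x\wedge x')(sa)\in(x\wedge x')S$; so $\Ga_f$ carries an arrow labelled $x$ starting at $\lv_{sa}$, and by Lemma \ref{inv_yy} its endpoint is $\rv_{cx}$ for \emph{any} $c\in V(sa)\cap\Rcc_f$. Choosing $c=a's'$ (which lies in $V(sa)\cap\Rcc_f$ by the line computation just done, applied to $t=s'$), this endpoint is $\rv_{a's'x}$, which is exactly $\psi(\rv_{s'x})$. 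Thus $\psi$ sends the arrow $(\lv_s,x,\rv_{s'x})$ to the arrow $(\lv_{sa},x,\rv_{a's'x})$ of $\Ga_f$, preserving the label. Consequently $\psi$ is an liw-graph homomorphism with $\lv_e\psi=\lv_a$, and since $\Ga_f$ is reduced, Corollary \ref{unique_hom} forces $\psi=\varphi_a$, which is precisely the claimed description.

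The only delicate point is keeping the bookkeeping of the one-sided absorption identities straight (which idempotent absorbs on which side, and in which order the letters $a,a',s,t$ appear) in the two inverse verifications of the line step; once those are laid out correctly, the arrow step reuses them and everything else is routine.
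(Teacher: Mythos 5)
Your proposal is correct, and the computations check out: the two inverse identities $(sa)(a't)(sa)=sa$ and $(a't)(sa)(a't)=a't$ use exactly the absorptions $et=t$, $se=s$ available from $t\Rc e$ and $s\Lc e$, the arrow step correctly uses $(x\wedge x')s=s$ to get $sa\in (x\wedge x')S$ and the independence of the arrow's endpoint from the choice of inverse in $\Rcc_f$, and Corollary \ref{unique_hom} then pins $\psi$ down as $\varphi_a$. However, your route is genuinely different from the paper's. The paper does not re-verify that the translation map is a graph homomorphism; instead it exploits the fact that the isomorphism $\varphi_a$ preserves languages exactly, together with Lemmas \ref{lem62} and \ref{lem63}: for a left vertex it picks $u$ with $u\mu=s$, notes $u\in L_{\lv_s,\lv_e}(\Ga_e)=L_{\lv_s\varphi_a,\lv_a}(\Ga_f)$, and reads off $\lv_s\varphi_a=\lv_{ua}=\lv_{sa}$; for $\rv_e\varphi_a=\rv_c$ it deduces $c\in V(a)$, $ce=c$ and $eac=e$ from the equalities of loop languages, forcing $c=a'$, and then treats the remaining right vertices analogously. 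Your approach buys a self-contained, purely algebraic verification at the level of semigroup identities (essentially re-deriving, for this special case, that the Green translations induce a graph morphism), at the cost of redoing incidence checks that the construction of $\Ga_e$ already encodes; the paper's approach is shorter because it reuses the language machinery of Section 6, but it is less explicit about why the translations respect the edge structure. Both are valid, and both ultimately rest on the uniqueness statement of Corollary \ref{unique_hom} (the paper implicitly, since an isomorphism is determined by where it sends $\lv_e$).
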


\begin{proof}
Let $s\in \Lcc_e$ and $u\in\wX^+$ such that $u\mu=s$. Then $ue=s$ and $u\in L_{\lv_s,\lv_e}(\Ga_e)$. So, $u\in L_{\lv_{s_1},\lv_a}(\Ga_f)$ for $\lv_{s_1}=\lv_s\varphi_a$, and $s_1=ua=sa$. The proof of $\rv_t\varphi_a=\rv_{a't}$ for $t\in\Rcc_e$ is similar once we show that $\rv_e\varphi_a=\rv_{a'}$. Let $\rv_e\varphi_a=\rv_c$ for some $c\in\Rcc_f$. Then $c\in V(a)$ because $\iota\in L_{\rv_e,\lv_e}(\Ga_e)=L_{\rv_c,\lv_a}(\Ga_f)$. If $u_1,u_2\in\wX^+$ are such that $u_1\mu=e$ and $u_2\mu=ac$, then 
$$u_1\in L_{\rv_e,\rv_e}(\Ga_e)\qquad\mbox{ and }\qquad u_2\in L_{\rv_c,\rv_c}(\Ga_f)$$
by Lemma \ref{lem62}. Since $L_{\rv_e,\rv_e}(\Ga_e)=L_{\rv_c,\rv_c}(\Ga_f)$, we have
$$ce=cu_1=c\qquad\mbox{ and }\qquad eac=eu_2=e\,.$$ 
Hence $c=a'$ because $c\in V(a)\cap \Lcc_e\cap\Rcc_f$.
\end{proof}

Observe that the isomorphism $\varphi_a$ is just the combination of both the right translation of $\Lcc_e$ associated with $a$ and the left translation of $\Rcc_e$ associated with $a'$.

An automorphism of an liw-graph $\Ga$ is an isomorphism from $\Ga$ into itself. Let $\Aut(\Ga)$ be the group of automorphisms of $\Ga$.

\begin{cor}\label{aut_gr}
Let $e\in E(S)$. Then $\Hcc_e$ and $\Aut(\Ga_e)$ are isomorphic groups. Further, for $b,b_1\in\Lcc_e$ and $c,c_1\in\Rcc_e$, there exists $\varphi\in\Aut (\Ga_e)$ such that
\begin{itemize}
\item[$(i)$] $\lv_b\varphi=\lv_{b_1}$ \iff\ $b\Hc b_1$;
\item[$(ii)$] $\rv_c\varphi=\rv_{c_1}$ \iff\ $c\Hc c_1$;
\item[$(iii)$] $\lv_b\varphi=\lv_{b_1}$ and $\rv_c\varphi=\rv_{c_1}$ \iff\ $bc=b_1c_1$.
\end{itemize}
\end{cor}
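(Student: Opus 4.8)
The plan is to build an explicit group isomorphism $\Hcc_e\to\Aut(\Ga_e)$ and then read off $(i)$–$(iii)$ from how the corresponding automorphisms move the roots. Since $e$ is idempotent, $\Hcc_e$ is the maximal subgroup of $S$ at $e$, with identity $e$; for $a\in\Hcc_e$ the group inverse $a^{-1}\in\Hcc_e$ satisfies $aa^{-1}=a^{-1}a=e$ and $a^{-1}\in V(a)$. Applying Proposition~\ref{isom_D} with $f=e$ (so that $\Rcc_e\cap\Lcc_e=\Hcc_e$), for each $a\in\Hcc_e$ there is a unique automorphism $\varphi_a$ of $\Ga_e$ with $\lv_e\varphi_a=\lv_a$, and every automorphism of $\Ga_e$ arises this way. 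Thus $a\mapsto\varphi_a$ is a bijection $\Hcc_e\to\Aut(\Ga_e)$: injectivity is immediate since $\lv_a=\lv_e\varphi_a$ recovers $a$, and surjectivity is the ``no other isomorphisms'' clause of Proposition~\ref{isom_D}.

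First I would check this bijection is a homomorphism. Using Proposition~\ref{isom_descr} with $a'=a^{-1}$, the automorphism $\varphi_a$ acts by $\lv_s\varphi_a=\lv_{sa}$ for $s\in\Lcc_e$ and $\rv_t\varphi_a=\rv_{a^{-1}t}$ for $t\in\Rcc_e$. Hence, for $a,b\in\Hcc_e$ and $s\in\Lcc_e$, $\lv_s(\varphi_a\varphi_b)=(\lv_{sa})\varphi_b=\lv_{(sa)b}=\lv_{s(ab)}$; in particular $\lv_e(\varphi_a\varphi_b)=\lv_{ab}$. Since $\varphi_a\varphi_b\in\Aut(\Ga_e)$ while $\varphi_{ab}$ is the unique automorphism sending $\lv_e$ to $\lv_{ab}$ (Corollary~\ref{unique_hom}), we conclude $\varphi_a\varphi_b=\varphi_{ab}$, so $a\mapsto\varphi_a$ is a group isomorphism and the first assertion holds.

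Next I would prove $(i)$ together with its dual $(ii)$. By the bijection there is $\varphi\in\Aut(\Ga_e)$ with $\lv_b\varphi=\lv_{b_1}$ exactly when there is $a\in\Hcc_e$ with $ba=b_1$; that is, the left-vertex action is right translation by $\Hcc_e$ on $\Lcc_e$. If $ba=b_1$ with $a\in\Hcc_e$, then $b_1\ler b$ and $b=b_1a^{-1}\ler b_1$, so $b\Rc b_1$; together with $b\Lc b_1$ (both lie in $\Lcc_e$) this gives $b\Hc b_1$. Conversely, if $b\Hc b_1$, pick an inverse $b'\in V(b)\cap\Rcc_e$ with $b'b=e$ (such a $b'$ exists for any element of $\Lcc_e$, e.g.\ $b'=eb_0$ for $b_0\in V(b)$, by a standard computation), and set $a=b'b_1$. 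A short Green's-lemma calculation shows $a\Rc b'\Rc e$ and $a\Lc b_1\Lc e$, so $a\in\Hcc_e$, while $ba=(bb')b_1=b_1$ because $bb'$ is an idempotent $\Rc$-related to $b_1$; hence $\varphi_a$ sends $\lv_b$ to $\lv_{b_1}$. Statement $(ii)$ is the left–right dual, using $\rv_c\varphi_a=\rv_{a^{-1}c}$ and the fact that $a\mapsto a^{-1}$ permutes $\Hcc_e$, so the right-vertex action is left translation by $\Hcc_e$ on $\Rcc_e$, again with the $\Hc$-classes as orbits.

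Finally, for $(iii)$ the forward direction is immediate: if $\varphi_a$ satisfies $\lv_b\varphi_a=\lv_{b_1}$ and $\rv_c\varphi_a=\rv_{c_1}$, then $ba=b_1$ and $a^{-1}c=c_1$, so $b_1c_1=ba\,a^{-1}c=bec=bc$. For the converse, suppose $bc=b_1c_1$. Since $b\in\Lcc_e$ and $c\in\Rcc_e$, the intersection $\Lcc_b\cap\Rcc_c=\Lcc_e\cap\Rcc_e=\Hcc_e$ contains the idempotent $e$, so by Miller–Clifford $bc\in\Rcc_b\cap\Lcc_c$; comparing with $b_1c_1\in\Rcc_{b_1}\cap\Lcc_{c_1}$ forces $b\Rc b_1$ and $c\Lc c_1$, whence $b\Hc b_1$ and $c\Hc c_1$. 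By $(i)$ there is a (unique) $a\in\Hcc_e$ with $ba=b_1$, and it remains to check that this \emph{same} $a$ realizes $a^{-1}c=c_1$. From $bac_1=b_1c_1=bc$, left-cancelling by $b'$ (with $b'b=e$) yields $ac_1=c$, and multiplying by $a^{-1}$ gives $c_1=a^{-1}c$; thus $\varphi_a$ witnesses $(iii)$. I expect this last matching step to be the main obstacle: showing that the automorphism already forced by the left roots automatically aligns the right roots. The key is that the cancellation $b'b=e$ converts the relation $bc=b_1c_1$ into the group relation $ac_1=c$ inside $\Hcc_e$.
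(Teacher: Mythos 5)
Your proposal is correct and follows essentially the same route as the paper: the bijection $a\mapsto\varphi_a$ from Proposition \ref{isom_D}, the explicit action $\lv_s\varphi_a=\lv_{sa}$, $\rv_t\varphi_a=\rv_{a^{-1}t}$ from Proposition \ref{isom_descr} to get $\varphi_a\varphi_b=\varphi_{ab}$, and the same translation/cancellation arguments for $(i)$--$(iii)$. The only (harmless) variation is in the converse of $(iii)$, where you left-cancel by $b'$ and invoke the group inverse instead of the paper's two-sided cancellation yielding $aa_1=e$; you also usefully fill in details the paper leaves implicit, such as the construction of $b'$ with $b'b=e$.
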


\begin{proof}
For each $a\in\Hcc_e$, let $\varphi_a$ be the automorphism of $\Ga_e$ such that $\lv_e\varphi_a=\lv_a$ given by Proposition \ref{isom_D}. Then the mapping $a\to\varphi_a$ is a bijection from $\Hcc_e$ to $\Aut(\Ga_e)$ also by Proposition \ref{isom_D}. Clearly $\varphi_{a_1}\varphi_{a_2}=\varphi_{a_1a_2}$ for $a_1,a_2\in \Hcc_e$ by the previous proposition. Hence, the mapping $a\to\varphi_a$ from $\Hcc_e$ to $\Aut(\Ga_e)$ is a group isomorphism. 

Let $b,b_1\in\Lcc_e$. Then $b\Hc b_1$ \iff\ there exists some $a\in\Hcc_e$ such that $ba=b_1$, that is, \iff\ there exists some $\varphi_a\in\Aut(\Ga_e)$ such that $\lv_b\varphi_a=\lv_{b_1}$. We have proved $(i)$ and note that $(ii)$ is the dual of $(i)$. Let us prove $(iii)$ now.

Suppose first that $\lv_b\varphi_a=\lv_{b_1}$ and $\rv_c\varphi_a=\rv_{c_1}$ for some $a\in\Hcc_e$. Then $b_1=ba$ and $c_1=a^{-1}c$ for $a^{-1}$ the inverse of $a$ in the group $\Hcc_e$. Hence $b_1c_1=baa^{-1}c=bec=bc$. Suppose now that $b_1c_1=bc$. Since $b,b_1\in\Lcc_e$ and $c,c_1\in\Rcc_e$, we must have $b\Hc b_1$ and $c\Hc c_1$. Thus, there are $a,a_1\in\Hcc_e$ such that $b_1=ba$ and $c_1=a_1c$. Let $s,s_1\in S$ such that $sb=e$ and $cs_1=e$. Then
$$bc=b_1c_1=baa_1c\quad\Rightarrow\quad sbcs_1=sbaa_1cs_1\quad\Rightarrow\quad e=aa_1\,,$$
and $a_1=a^{-1}$, the inverse of $a$ in $\Hcc_e$. Therefore $\lv_b\varphi_a=\lv_{b_1}$ and $\rv_c\varphi_a=\rv_{c_1}$. We have proved $(iii)$. 
\end{proof}

The conclusions of Corollary \ref{aut_gr} are corroborated by the two examples we are using. Note that $\ol{S}_1$ is an aperiodic semigroup (see Figure \ref{egg-boxa}) and only the identity mapping is an automorphism of the rliw-graph $\Ga_{x'x}$ of Figure \ref{Ga_xxa}. Now, about $\ol{S}_2$, the group $\Hcc_{z'z}$ is isomorphic to $\mathbb{Z}_2$. If we look carefully to the rliw-graph $\Ga_{z'z}$ of Figure \ref{Ga_xxb}, we see there are two automorphisms of $\Ga_{z'z}$: the identity mapping and another automorphism that sends $\lv_z$ into $\lv_{z^2}$. This latter automorphism permutes the upper part of $\Ga_{z'z}$ with its lower part.

We already know that the rliw-graphs $\Ga_e$ are an invariant of the $\Dc$-classes of $S$. The next result tells us even more. It shows us that two distinct $\Dc$-classes of $S$ have non-isomorphic rliw-graphs $\Ga_e$. Thus, the graphs $\Ga_e$ completely characterize and distinguish the $\Dc$-classes of $S$.

\begin{prop}\label{car_D_liw}
Let $e,f\in E(S)$. Then $e\Dc f$ \iff\ $\Ga_e$ and $\Ga_f$ are isomorphic. 
\end{prop}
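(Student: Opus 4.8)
The forward implication is exactly Proposition \ref{isom_D}, so the plan is to concentrate on the converse. Suppose that $\varphi\colon\Ga_e\to\Ga_f$ is an isomorphism. Since in any liw-graph every vertex is the endpoint of some arrow, $\varphi$ must carry left vertices to left vertices and right vertices to right vertices; hence $\lv_e\varphi=\lv_b$ for some $b\in\Lcc_f$ and $\rv_e\varphi=\rv_c$ for some $c\in\Rcc_f$. Because both $\varphi$ and its inverse are homomorphisms, Lemma \ref{hom_wp} applied in each direction upgrades the language inclusions to equalities, so that $L_{\av,\bv}(\Ga_e)=L_{\av\varphi,\bv\varphi}(\Ga_f)$ for all $\av,\bv\in V(\Ga_e)$. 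This language equality is the only property of $\varphi$ that I shall use.

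Next I would extract the algebraic data encoded by these equalities, exactly as in the last paragraph of the proof of Proposition \ref{isom_D}. The line $(\lv_e,\rv_e)\in\lE$ of $\Ga_e$ (present since $e\in V(e)\cap\Rcc_e$), read as a right elementary path, shows $\iota\in L_{\rv_e,\lv_e}(\Ga_e)=L_{\rv_c,\lv_b}(\Ga_f)$; hence $(\lv_b,\rv_c)$ is a line of $\Ga_f$ and therefore $c\in V(b)$. Put $g=bc$. Then $g$ is an idempotent, and the identity $bcb=b$ gives $gb=b$, whence $gS=bS$, that is, $g\Rc b$. Now choose $u,v\in\wX^+$ with $u\mu=e$ and $v\mu=g$. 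Since $ue=e$ and $vb=gb=b$, Lemma \ref{lem63} yields $u\in L_{\lv_e,\lv_e}(\Ga_e)$ and $v\in L_{\lv_b,\lv_b}(\Ga_f)$.

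Feeding these back through the equality $L_{\lv_e,\lv_e}(\Ga_e)=L_{\lv_b,\lv_b}(\Ga_f)$ and applying Lemma \ref{lem63} once more, I would obtain $eb=ub=b$ and $ge=ve=e$. The relation $eb=b$ gives $b\ler e$, while $ge=e$ together with $g\Rc b$ gives $e\ler g$ and $g\ler b$, hence $e\ler b$; combining the two yields $e\Rc b$. As $b\in\Lcc_f$ means $b\Lc f$, we conclude $e\Rc b\Lc f$, so $e\Dc f$, completing the converse. The only real subtlety, and the step I would take most care over, is that Lemma \ref{lem63} reverses the roles of its two arguments relative to Lemma \ref{lem62}, so one must keep track of which vertex is the source and which is the target when translating between the loop languages $L_{\lv_e,\lv_e}$, $L_{\lv_b,\lv_b}$ and the products $eb$, $ge$. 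It is worth emphasising that, although this argument reproduces the computation closing Proposition \ref{isom_D}, that computation never actually invoked the hypothesis $e\Dc f$; it establishes $e\Rc b$ outright, which is precisely what the converse requires.
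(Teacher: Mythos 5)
Your proof is correct. The forward direction is, as you say, Proposition \ref{isom_D}, and every step of your converse checks out: the empty-word argument forcing $(\lv_b,\rv_c)$ to be a line (hence $c\in V(b)$), the two applications of Lemma \ref{lem63} through the equality $L_{\lv_e,\lv_e}(\Ga_e)=L_{\lv_b,\lv_b}(\Ga_f)$ yielding $eb=b$ and $bce=e$, and the quasiorder bookkeeping $b\ler e$, $e\ler bc\ler b$ giving $e\Rc b\Lc f$. The paper's own converse uses the same engine (isomorphisms give language equalities, which Lemma \ref{lem63} converts into multiplicative identities) but tracks a different pair of vertices: it chooses $a\in\Lcc_e$ with $\lv_a\varphi=\lv_f$ and $b\in\Lcc_f$ with $\lv_e\varphi=\lv_b$, reads off $ab=f$ and $ba=e$ directly, and concludes $b\in V(a)$ with $e=ba\Dc ab=f$ — two applications of the lemma and no detour through $c$, $g=bc$, or the quasiorders. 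Your route is slightly longer but buys a clean observation that the paper leaves implicit: the computation closing the proof of Proposition \ref{isom_D} never uses the hypothesis $e\Dc f$, so it already establishes $b\in\Rcc_e\cap\Lcc_f$ from the bare existence of an isomorphism, and the converse of this proposition is in that sense contained in the earlier one. Your caution about the argument order in Lemma \ref{lem63} (the product is the first subscript, the right factor the second) is exactly the right place to be careful, and you have applied it correctly throughout.
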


\begin{proof}
By Proposition \ref{isom_D} we need to show only that $e\Dc f$ if $\varphi:\Ga_e\to\Ga_f$ is an isomorphism. Let $a\Lc e$ and $b\Lc f$ such that $\lv_e\varphi=\lv_b$ and $\lv_a\varphi=\lv_f$. If $u,v\in\wX^+$ are such that $u\mu=a$ and $v\mu=b$, then
$$u\in L_{\lv_a,\lv_e}(\Ga_e)=L_{\lv_f,\lv_b}(\Ga_f)\qquad\mbox{ and }\qquad v\in L_{\lv_b,\lv_f}(\Ga_f)=L_{\lv_e,\lv_a}(\Ga_e)\,.$$
Thus $ab=ub=f$ and $ba=va=e$ by Lemma \ref{lem63}, and consequently
$$aba=ae=a\qquad\mbox{ and }\qquad bab=bf=b\,.$$
Hence $b\in V(a)$ and $e=ba\Dc ab=f$ as wanted.
\end{proof}

\section{The rbliw-graph of an element with respect to a presentation}

In this section, we associate an rbliw-graph $\A_s=(\av,\Ga,\bv)$ to each element $s$ of $S$.  Of course, the rliw-graph $\Ga$ will be one of the graphs $\Ga_e$, with $e\Dc s$, discussed in the previous section. We then go into the description of the major concepts, such as the idempotents, the inverses of an element, the Green's relations, and the natural partial order, in terms of these rbliw-graphs. We also analyze how the semigroup product and the sandwich operation $\wedge$ on $S$ translate into operations on these rbliw-graphs.

Before defining $\A_s$, let us prove first the following result, which is crucial to guarantee that $\A_s$ is well defined.

\begin{prop}\label{iso_bliw}
Let $e,f\in E(S)$ and $s,t,s_1,t_1\in S$ such that $s\Lc e\Rc t$ and $s_1\Lc f\Rc t_1$. Then $\A=(\lv_s,\Ga_e,\rv_t)$ and $\A_1=(\lv_{s_1},\Ga_f,\rv_{t_1})$ are isomorphic \iff\ $e\Dc f$ and $st=s_1t_1$.
\end{prop}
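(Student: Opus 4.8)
The plan is to translate the statement into the structural language already set up for the graphs $\Ga_e$. By definition a bliw-graph isomorphism $\A\to\A_1$ is an liw-graph isomorphism $\psi\colon\Ga_e\to\Ga_f$ that in addition carries the roots correctly, i.e.\ $\lv_s\psi=\lv_{s_1}$ and $\rv_t\psi=\rv_{t_1}$. So the whole proof should be assembled from four earlier facts: Proposition \ref{car_D_liw} ($\Ga_e\cong\Ga_f$ \iff\ $e\Dc f$), Proposition \ref{isom_D} (every isomorphism $\Ga_e\to\Ga_f$ is some $\varphi_a$ with $a\in\Rcc_e\cap\Lcc_f$), the explicit formulas of Proposition \ref{isom_descr} ($\lv_s\varphi_a=\lv_{sa}$ and $\rv_t\varphi_a=\rv_{a't}$), and Corollary \ref{aut_gr}$(iii)$ describing which automorphisms of $\Ga_f$ exist.

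For the forward direction I would start from an isomorphism $\psi\colon\A\to\A_1$. Forgetting the roots gives $\Ga_e\cong\Ga_f$, hence $e\Dc f$ by Proposition \ref{car_D_liw}. Writing $\lv_e\psi=\lv_a$, Proposition \ref{isom_D} forces $a\in\Rcc_e\cap\Lcc_f$ and $\psi=\varphi_a$; taking the inverse $a'$ located in $\Lcc_e\cap\Rcc_f$, so that $aa'=e$ and $a'a=f$, Proposition \ref{isom_descr} evaluates $\psi$ on the roots as $\lv_s\psi=\lv_{sa}$ and $\rv_t\psi=\rv_{a't}$. Root preservation then reads $sa=s_1$ and $a't=t_1$, and the one-line computation $s_1t_1=(sa)(a't)=s(aa')t=set=st$ (using $se=s$, valid since $s\Lc e$) closes this half.

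For the converse I would construct the isomorphism in two stages. Since $e\Dc f$, pick any bridge element $c\in\Rcc_e\cap\Lcc_f$ with inverse $c'$ satisfying $cc'=e$ and $c'c=f$; by Propositions \ref{isom_D} and \ref{isom_descr}, $\varphi_c\colon\Ga_e\to\Ga_f$ is an isomorphism sending the roots of $\A$ to $\lv_{sc}$ and $\rv_{c't}$, with $sc\in\Lcc_f$ and $c't\in\Rcc_f$. These need not be the roots of $\A_1$, so the second stage corrects them by an automorphism of $\Ga_f$. This is exactly where the hypothesis $st=s_1t_1$ enters: the same collapse as above gives $(sc)(c't)=s(cc')t=set=st=s_1t_1$, which is precisely the condition of Corollary \ref{aut_gr}$(iii)$ guaranteeing an automorphism $\varphi$ of $\Ga_f$ with $\lv_{sc}\varphi=\lv_{s_1}$ and $\rv_{c't}\varphi=\rv_{t_1}$. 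Then $\varphi_c\varphi\colon\Ga_e\to\Ga_f$ is a root-preserving isomorphism $\A\to\A_1$.

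The routine content is the bookkeeping identity $se=s$ together with the reductions of the two products. The one genuinely load-bearing step, and the place I would be most careful, is recognizing that the product $st$ is the invariant read off the roots, so that the condition $st=s_1t_1$ becomes available to feed Corollary \ref{aut_gr}$(iii)$ after rewriting it through the bridge element as $(sc)(c't)$. I would therefore double-check that both $(sa)(a't)$ and $(sc)(c't)$ genuinely collapse to the same value $st$ regardless of the chosen inverse or bridge element, since this independence is what makes the two-stage construction legitimate.
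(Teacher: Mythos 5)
Your proof is correct and follows essentially the same route as the paper: the forward direction is identical (classify the isomorphism as some $\varphi_a$ via Propositions \ref{car_D_liw}, \ref{isom_D} and \ref{isom_descr}, then compute $s_1t_1=saa't=set=st$). The only difference is in the converse, where the paper chooses the bridge element $a$ so that $sa=s_1$ outright and then deduces $a't=t_1$ by cancellation, whereas you take an arbitrary bridge $c$ and correct the roots afterwards with the automorphism supplied by Corollary \ref{aut_gr}$(iii)$ --- a cosmetic reorganization of the same argument, since that corollary is itself built from the same two propositions.
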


\begin{proof}
Assume first that $\varphi:\A\to\A_1$ is an isomorphism. Then $\varphi:\Ga_e\to\Ga_f$ is an rliw-graph isomorphism such that $\lv_s\varphi=\lv_{s_1}$ and $\rv_t\varphi=\rv_{t_1}$. Thus $e\Dc f$ by Proposition \ref{car_D_liw}. By Proposition \ref{isom_descr}, there are $a\in\Rcc_e\cap\Lcc_f$ and $a'\in V(a)\cap\Lcc_e\cap\Rcc_f$ such that $\lv_b\varphi=\lv_{ba}$ for any $b\in\Lcc_e$ and $\rv_{b_1}\varphi=\rv_{a'b_1}$ for any $b_1\in\Rcc_e$. In particular, $s_1=sa$ and $t_1=a't$. Hence $s_1t_1=saa't=set=st$.

Assume now $e\Dc f$ and $st=s_1t_1$. Then $s\Rc s_1$ and $t\Lc t_1$. Let $a\in\Rcc_e\cap\Lcc_f$ such that $sa=s_1$ and consider the isomorphism $\varphi_a:\Ga_e\to\Ga_f$ given by Proposition \ref{isom_D}. Then $\lv_s\varphi_a=\lv_{s_1}$ by Proposition \ref{isom_descr}. Let $a'$ be the inverse of $a$ in $\Lcc_e\cap\Rcc_f$. Then $a't\Hc t_1$ and $s_1a't=saa't=st=s_1t_1$. Hence $t_1=a't$ and $\rv_t\varphi_a=\rv_{t_1}$ again by Proposition \ref{isom_descr}. We have shown that $\varphi_a$ is an rbliw-graph isomorphism from $\A$ to $\A_1$.
\end{proof}

For each $s\in S$, choose $e\in\Dcc_s$, $a\in\Lcc_e$ and $b\in\Rcc_e$ such that $s=ab$. Define $\A_s=(\lv_a,\Ga_e,\rv_b)$. The previous proposition tells us that the definition of $\A_s$ is, up to isomorphism, independent of the choice of $e$, $a$ and $b$, that is, if we choose different $f\in E(S)$, $a_1\in\Lcc_f$ and $b_1\in\Rcc_f$ such that $a_1b_1=s$, then $(\lv_a,\Ga_e,\rv_b)$ and $(\lv_{a_1},\Ga_f,\rv_{b_1})$ are isomorphic. Further, the previous proposition also tells us that $\A_s$ and $\A_t$ are non-isomorphic if $s\neq t$.

\begin{cor}\label{ig_bliw}
The following conditions are equivalent for $s,t\in S$:
\begin{itemize}
\item[$(i)$] $s=t$;
\item[$(ii)$] $\A_s$ and $\A_t$ are isomorphic;
\item[$(iii)$] $L(\A_s)=L(\A_t)$.
\end{itemize}
\end{cor}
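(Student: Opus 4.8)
The plan is to prove the cycle of implications by treating $(i)\Leftrightarrow(ii)$ as a direct consequence of Proposition \ref{iso_bliw}, deriving $(ii)\Rightarrow(iii)$ from the language‑preservation properties of homomorphisms, and reserving the real work for $(iii)\Rightarrow(ii)$. Throughout I would write $\A_s=(\lv_a,\Ga_e,\rv_b)$ with $s=ab$, $a\in\Lcc_e$, $b\in\Rcc_e$, and $\A_t=(\lv_{a_1},\Ga_f,\rv_{b_1})$ with $t=a_1b_1$, $a_1\in\Lcc_f$, $b_1\in\Rcc_f$, as in the definition of $\A_s$.

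For $(i)\Leftrightarrow(ii)$ I would simply invoke Proposition \ref{iso_bliw}, which states that $\A_s$ and $\A_t$ are isomorphic \iff\ $e\Dc f$ and $ab=a_1b_1$, that is, \iff\ $e\Dc f$ and $s=t$. If $s=t$ then $\Dcc_s=\Dcc_t$ forces $e\Dc f$, giving $(i)\Rightarrow(ii)$; conversely an isomorphism yields $s=ab=a_1b_1=t$, giving $(ii)\Rightarrow(i)$. For $(ii)\Rightarrow(iii)$, an isomorphism $\varphi:\A_s\to\A_t$ is in particular a homomorphism, so $L(\A_s)\subseteq L(\A_t)$ by Proposition \ref{lang_bliw}; applying the same to the inverse isomorphism $\varphi^{-1}:\A_t\to\A_s$ gives $L(\A_t)\subseteq L(\A_s)$, whence $L(\A_s)=L(\A_t)$.

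The crux is $(iii)\Rightarrow(ii)$. Assuming $L(\A_s)=L(\A_t)$, that is $L_{\lv_a,\rv_b}(\Ga_e)=L_{\lv_{a_1},\rv_{b_1}}(\Ga_f)$, I would read off the two inclusions separately. Since $\Ga_e$ and $\Ga_f$ are both reduced liw-graphs (Proposition \ref{rliw}) attached to the presentation, Corollary \ref{conv_liw} applies directly: the inclusion $L_{\lv_a,\rv_b}(\Ga_e)\subseteq L_{\lv_{a_1},\rv_{b_1}}(\Ga_f)$ produces a homomorphism $\varphi:\Ga_e\to\Ga_f$ with $\lv_a\varphi=\lv_{a_1}$ and $\rv_b\varphi=\rv_{b_1}$, and the reverse inclusion produces a homomorphism $\varphi':\Ga_f\to\Ga_e$ with $\lv_{a_1}\varphi'=\lv_a$ and $\rv_{b_1}\varphi'=\rv_b$. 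Then $\varphi\varphi':\Ga_e\to\Ga_e$ fixes $\lv_a$, and since $\Ga_e$ is reduced, Corollary \ref{unique_hom} forces $\varphi\varphi'$ to equal the identity (which also fixes $\lv_a$); symmetrically $\varphi'\varphi$ is the identity on $\Ga_f$. Hence $\varphi$ is an isomorphism $\Ga_e\to\Ga_f$ carrying $\lv_a\mapsto\lv_{a_1}$ and $\rv_b\mapsto\rv_{b_1}$, i.e. an rbliw-graph isomorphism $\A_s\to\A_t$, which is $(ii)$.

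The main obstacle is precisely the step of turning a language inclusion into a homomorphism. As the example of Figure \ref{hom_exist} shows, language inclusion alone is in general insufficient: one also needs the induced map $\psi_{\oE}$ on arrows to preserve incidence. What makes the argument go through here is that the graphs involved are the $\Ga_e$ associated with a locally inverse semigroup, where Lemma \ref{pre_adj} guarantees this incidence condition automatically; this is exactly what is packaged into Corollary \ref{conv_liw}, so I can apply it without any extra verification. I expect everything else to be routine bookkeeping with the roots and the sides $\sb(\av)$ of the vertices involved.
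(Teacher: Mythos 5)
Your proposal is correct and follows essentially the same route as the paper: $(i)\Leftrightarrow(ii)$ from Proposition \ref{iso_bliw}, $(ii)\Rightarrow(iii)$ from the language-preservation of homomorphisms, and $(iii)\Rightarrow(ii)$ from Corollary \ref{conv_liw}. The only difference is that you spell out the standard back-and-forth argument (two homomorphisms whose composites fix a vertex, hence are identities by Corollary \ref{unique_hom}) that the paper leaves implicit with the word ``immediate,'' exactly as it is carried out in the proof of Proposition \ref{isom_D}.
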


\begin{proof}
$(i)$ and $(ii)$ are equivalent as observed above, and $(ii)$ implies $(iii)$ obviously. From Corollary \ref{conv_liw} it is also immediate that $(iii)$ implies $(ii)$.
\end{proof}

We will work with the rbliw-graphs $\A_s$ up to isomorphism and we will never be stuck to a particular representation $(\lv_a,\Ga_e,\rv_b)$ for $\A_s$. In every particular situation we will choose a representation for $\A_s$ that best fits our purposes. Usually, but not always, if $e\in E(S)$, then we choose the representation $(\lv_e,\Ga_e,\rv_e)$ for $\A_e$; and if $s\Rc e$, then we choose the representation $(\lv_e,\Ga_e,\rv_s)$ for $\A_s$.

Next, we characterize the Green's relations on $S$ using the rbliw-graphs $\A_s$.

\begin{prop}\label{Gclas}
If $s,t\in S$, then:
\begin{itemize}
\item[$(i)$] $s\Rc t$ \iff\ $\A_s$ and $\A_t$ are left isomorphic.
\item[$(ii)$] $s\Lc t$ \iff\ $\A_s$ and $\A_t$ are right isomorphic.
\item[$(iii)$] $s\Hc t$ \iff\ $\A_s$ and $\A_t$ are left and right isomorphic.
\item[$(iv)$] $s\Dc t$ \iff\ $\A_s$ and $\A_t$ are weakly isomorphic.
\item[$(v)$] $s\Jc t$ \iff\ there exist weak homomorphisms $\varphi:\A_s\to\A_t$ and $\psi:\A_t\to\A_s$.
\end{itemize}
\end{prop}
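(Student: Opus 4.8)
The plan is to prove each of the five characterizations by translating the relevant Green's relation into a statement about the associated rbliw-graphs, relying on the representation freedom established in Proposition \ref{iso_bliw} and Corollary \ref{ig_bliw}, together with the translation results of Section \ref{sec6}. The key observation is that we may choose convenient representatives for $\A_s$ and $\A_t$: by Proposition \ref{iso_bliw} the graph $\A_s=(\lv_a,\Ga_e,\rv_b)$ is independent, up to isomorphism, of the factorization $s=ab$ with $a\in\Lcc_e$, $b\in\Rcc_e$, so whenever two elements lie in a common $\Lc$-, $\Rc$-, $\Hc$- or $\Dc$-class we can arrange their graphs to share the same underlying rliw-graph $\Ga_e$.

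For item $(iv)$, I would argue first, since it underpins the others. If $s\Dc t$, pick an idempotent $e\in\Dcc_s=\Dcc_t$ and write $s=ab$, $t=a_1b_1$ with $a,a_1\in\Lcc_e$ and $b,b_1\in\Rcc_e$; then $\A_s=(\lv_a,\Ga_e,\rv_b)$ and $\A_t=(\lv_{a_1},\Ga_e,\rv_{b_1})$ share the liw-graph $\Ga_e$, so the identity on $\Ga_e$ is a weak isomorphism. Conversely, a weak isomorphism $\A_s\to\A_t$ is in particular an rliw-graph isomorphism $\Ga_e\to\Ga_f$, whence $e\Dc f$ by Proposition \ref{car_D_liw}, and thus $s\Dc t$. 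For $(i)$, if $s\Rc t$ then $s,t\in\Rcc_e$ for a common idempotent $e$, and I represent $\A_s=(\lv_e,\Ga_e,\rv_s)$, $\A_t=(\lv_e,\Ga_e,\rv_t)$; the identity fixes the left root, giving a left isomorphism. Conversely, a left isomorphism $\varphi:\A_s\to\A_t$ is an rliw-graph isomorphism $\Ga_e\to\Ga_f$ with $\lv_a\varphi=\lv_{a_1}$; by Proposition \ref{isom_descr} this forces $a_1=ac$ for some $c\in\Rcc_e\cap\Lcc_f$, and combined with $e\Dc f$ one checks $s=ab\Rc a_1b_1=t$ because $s\Rc e\Dc f\Rc t$ and the roots match up; the detailed verification uses $s_1t_1=st$ from Proposition \ref{iso_bliw} applied after reconstructing the full isomorphism. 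Item $(ii)$ is the exact dual, fixing the right root, and $(iii)$ is the conjunction $(i)\wedge(ii)$ since $\Hc\,=\,\Rc\cap\Lc$.

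The genuinely delicate item is $(v)$, characterizing the $\Jc$-relation. Since $\Jc\,=\,\mathrel{{\le}_{\Jc}}\cap\mathrel{{\ge}_{\Jc}}$, it suffices to show that $s\leq_{\Jc}t$ corresponds to the existence of a weak homomorphism $\psi:\A_t\to\A_s$ (in the direction that sends the $\Jc$-larger graph to the $\Jc$-smaller one). I would prove: $s\leq_{\Jc}t$ if and only if there is a weak homomorphism $\A_t\to\A_s$. For the forward direction, $s\leq_{\Jc}t$ means $s=utv$ for some $u,v\in S^1$; one then shows that the rliw-graph $\Ga_e$ of $s$ embeds homomorphically into the rliw-graph $\Ga_f$ of $t$ by exhibiting a language inclusion $L_{\av,\bv}(\Ga_f)\subseteq L_{\av_1,\bv_1}(\Ga_e)$ between suitably chosen vertices and invoking Corollary \ref{conv_liw}, whose applicability is exactly guaranteed by Lemma \ref{pre_adj}. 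For the converse, a weak homomorphism $\A_t\to\A_s$ induces a language inclusion, hence, via Lemmas \ref{lem62} and \ref{lem63}, a factorization witnessing $s\leq_{\Jc}t$. Applying both directions gives the biconditional for $\Jc$.

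The main obstacle I anticipate is in $(v)$: correctly matching the directions of the weak homomorphisms with the directions of the $\leq_{\Jc}$-inequalities, and verifying that the language-inclusion hypothesis of Corollary \ref{conv_liw} genuinely holds. The subtlety is that a weak homomorphism need not respect either root, so one cannot directly read off an element equation; instead the argument must route through the languages $L_{\av,\bv}$ and the translation Lemmas \ref{lem62}--\ref{lem63}, which relate products $au=b$ and $ua=b$ to membership in these languages. Pinning down precisely which pair of vertices and which language inclusion encodes $s=utv$, and confirming that Lemma \ref{pre_adj} removes the incidence-preservation side condition, is where the care is required; once that is in place, items $(i)$--$(iv)$ follow smoothly from the root-preservation bookkeeping and Proposition \ref{iso_bliw}.
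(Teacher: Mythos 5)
Your proposal is correct and follows essentially the same route as the paper: convenient representations plus Proposition \ref{iso_bliw} for $(i)$--$(iii)$, Proposition \ref{car_D_liw} for $(iv)$, and for $(v)$ the one-sided characterization of $\leq_{\Jc}$ via language inclusions, Lemmas \ref{lem62}--\ref{lem63} and Corollary \ref{conv_liw} (with Lemma \ref{pre_adj} disposing of the incidence condition), exactly as in the paper's proof and its companion Proposition \ref{Gdes}$.(v)$. One wording slip to fix: in the forward direction of $(v)$, $s\leq_{\Jc}t$ yields a homomorphism from $\Ga_f$ (the graph of $t$) \emph{to} $\Ga_e$ (the graph of $s$), which is what your stated inclusion $L_{\av,\bv}(\Ga_f)\subseteq L_{\av_1,\bv_1}(\Ga_e)$ actually produces via Corollary \ref{conv_liw}, not an embedding of $\Ga_e$ into $\Ga_f$ as your sentence says.
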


\begin{proof}
$(ii)$ is the dual of $(i)$,  $(iii)$ follows from $(i)$ and $(ii)$ together, and $(iv)$ follows from Proposition \ref{car_D_liw}. Next, we prove $(i)$. So, let $s\Rc t$ and let $e\in\Rcc_s\cap E(S)$. Then $\A_s=(\lv_e,\Ga_e,\rv_s)$ and $\A_t=(\lv_e,\Ga_e,\rv_t)$, and $\A_s$ and $\A_t$ are clearly left isomorphic. Conversely, let $\varphi:\A_s\to\A_t$ be a left isomorphism and let $e\in\Rcc_s\cap E(S)$ and $f\in\Rcc_t\cap E(S)$. Then $\A_s=(\lv_e,\Ga_e,\rv_s)$ and $\A_t=(\lv_f,\Ga_f,\rv_t)$. Note that $\varphi:\A_s\to (\lv_f,\Ga_f,\rv_{t_1})$ is an isomorphism for $t_1\in\Rcc_t$ such that $\rv_s\varphi=\rv_{t_1}$. By proposition \ref{iso_bliw} we must have $s=es=ft_1=t_1\Rc t$.

Finally, let us prove $(v)$. We start by assuming there are weak homomorphisms $\varphi:\A_s\to\A_t$ and $\psi:\A_t\to\A_s$. Let $e\in E(S)\cap\Rcc_s$, $f\in E(S)\cap\Rcc_t$ and set
$$\A_s=(\lv_e,\Ga_e,\rv_s)\qquad\mbox{ and }\qquad \A_t=(\lv_f,\Ga_f,\rv_t)\,.$$ 
Let $a,b\in\Rcc_f$ such that $\rv_a=\rv_e\varphi$ and $\rv_b=\rv_s\varphi$. Consider also $u\in\wX^+$ such that $u\mu=s$. Then $u\in L_{\rv_e,\rv_s}(\Ga_e)\subseteq L_{\rv_a,\rv_b}(\Ga_f)$. If $u_1\in L_{\rv_f,\rv_a}(\Ga_f)$ and $u_2\in L_{\rv_b,\rv_t}(\Ga_f)$, then $u_1uu_2\in L_{\rv_f,\rv_t}(\Ga_f)$ and 
$$t=f(u_1uu_2)=fu_1su_2\,.$$
Thus $t\in SsS$. Using $\psi$ now, we conclude by symmetry that $s\in StS$, and consequently $t\Jc s$.

Assume now that $s\Jc t$, and let $e\in E(S)\cap\Rcc_s$ and $f\in E(S)\cap\Rcc_t$. Thus $t=t_1st_2$ for some $t_1,t_2\in S$. Further, we can assume that $t_1\Rc t$, and so $t\Rc t_1s\Rc t_1e$. If $u\in L_{\rv_e,\rv_s}(\Ga_e)$, then $t_1eu=t_1s$. Hence
$$L_{\rv_e,\rv_s}(\Ga_e)\subseteq L_{\rv_{t_1e},\rv_{t_1s}}(\Ga_f)$$
and there is a homomorphism $\varphi:\Ga_e\to\Ga_f$ such that $\rv_e\varphi=\rv_{t_1e}$ and $\rv_s\varphi=\rv_{t_1s}$. Note now that $\varphi$ is a weak homomorphism from $\A_s=(\lv_e,\Ga_e,\rv_s)$ to $\A_t=(\lv_f,\Ga_f,\rv_t)$. By symmetry, there exists also a weak homomorphism $\psi:\A_t\to\A_s$.
\end{proof}

The next result is now a trivial consequence of the previous proposition using Corollary \ref{conv_liw}.

\begin{cor}\label{Gclas_lang}
Let $\A_s=(\av,\Ga,\bv)$ and $\A_t=(\av_1,\Ga_1,\bv_1)$ for $s,t\in S$. Then:
\begin{itemize}
\item[$(i)$] $s\Rc t$ \iff\ $L_{\av,\av}(\Ga)=L_{\av_1,\av_1}(\Ga_1)$.
\item[$(ii)$] $s\Lc t$ \iff\ $L_{\bv,\bv}(\Ga)=L_{\bv_1,\bv_1}(\Ga_1)$.
\item[$(iii)$] $s\Hc t$ \iff\ $L_{\av,\av}(\Ga)=L_{\av_1,\av_1}(\Ga_1)$ and $L_{\bv,\bv}(\Ga)=L_{\bv_1,\bv_1}(\Ga_1)$.
\item[$(iv)$] $s\Dc t$ \iff\ there is $\av'\in V_l(\Ga)$ such that $L_{\av',\av'}(\Ga)=L_{\av_1,\av_1}(\Ga_1)$.
\item[$(v)$] $s\Jc t$ \iff\ there are $\av'\in V_l(\Ga)$ and $\av_1'\in V_l(\Ga_1)$ such that $L_{\av',\av'}(\Ga)\subseteq L_{\av_1,\av_1}(\Ga_1)$ and $L_{\av_1',\av_1'}(\Ga_1)\subseteq L_{\av,\av}(\Ga)$.
\end{itemize}
\end{cor}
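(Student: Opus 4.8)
The plan is to obtain each clause directly from Proposition \ref{Gclas} by converting its isomorphism/homomorphism assertions into the stated language conditions via Corollary \ref{conv_liw}, using Corollary \ref{unique_hom} to promote pairs of mutually inverse homomorphisms into isomorphisms. For $(i)$, recall that $s\Rc t$ means $\A_s$ and $\A_t$ are left isomorphic, i.e.\ there is an liw-graph isomorphism $\Ga\to\Ga_1$ carrying the left root $\av$ to the left root $\av_1$. If such an isomorphism exists, applying Lemma \ref{hom_wp} to it and to its inverse gives $L_{\av,\av}(\Ga)=L_{\av_1,\av_1}(\Ga_1)$. Conversely, from this equality Corollary \ref{conv_liw} (taking the second root equal to the first in each graph) produces homomorphisms $\varphi\colon\Ga\to\Ga_1$ with $\av\varphi=\av_1$ and $\psi\colon\Ga_1\to\Ga$ with $\av_1\psi=\av$; since $\Ga,\Ga_1$ are reduced, $\varphi\psi$ and $\psi\varphi$ fix a vertex, so by Corollary \ref{unique_hom} they are the identities, whence $\varphi$ is a left isomorphism and $s\Rc t$. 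Clause $(ii)$ is the exact dual, anchored at the right roots $\bv,\bv_1$, and $(iii)$ is the conjunction of $(i)$ and $(ii)$ because $\Hc=\Rc\cap\Lc$.

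For $(iv)$ the same scheme works, the new ingredient being that a weak isomorphism is onto. If $s\Dc t$, then Proposition \ref{Gclas}$.(iv)$ gives an isomorphism $\varphi\colon\Ga\to\Ga_1$ ignoring the roots; as $\varphi$ is bijective and preserves sides, $\av':=\av_1\varphi^{-1}$ lies in $V_l(\Ga)$ and Lemma \ref{hom_wp} applied to $\varphi$ and $\varphi^{-1}$ yields $L_{\av',\av'}(\Ga)=L_{\av_1,\av_1}(\Ga_1)$. Conversely, such an $\av'$ feeds into Corollary \ref{conv_liw} and Corollary \ref{unique_hom} exactly as in $(i)$, producing mutually inverse isomorphisms between $\Ga$ and $\Ga_1$, i.e.\ a weak isomorphism, whence $s\Dc t$.

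Clause $(v)$ is where the work lies. By Proposition \ref{Gclas}$.(v)$ it suffices to translate ``there is a weak homomorphism $\A_s\to\A_t$'' into ``there is $\av'\in V_l(\Ga)$ with $L_{\av',\av'}(\Ga)\subseteq L_{\av_1,\av_1}(\Ga_1)$'', and symmetrically. One implication is immediate: such an $\av'$ gives, by Corollary \ref{conv_liw}, a homomorphism $\Ga\to\Ga_1$ sending $\av'$ to $\av_1$, which is a weak homomorphism. The main obstacle is the converse, and it is genuinely different from $(iv)$: a weak homomorphism need not be onto, so one cannot pull the fixed root $\av_1$ back along it, and the naive reading of Corollary \ref{conv_liw} anchors the \emph{source} at a root while letting the target range, whereas the statement anchors the \emph{target} at $\av_1$. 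To bridge this I would exploit the element-level description $L_{\mathfrak{l}_c,\mathfrak{l}_c}(\Ga_f)=\{u\in\wX^*:uc=c\}$ coming from Lemma \ref{lem63}, together with its two consequences: this loop language depends only on the $\Rc$-class of $c$ (if $c\Rc c'$ then $c'=cr$, $c=c'r'$, so $uc=c\Leftrightarrow uc'=c'$), and, testing $u$ with $u\mu=cc'$ for $c'\in V(c)$, one checks $L_{\mathfrak{l}_a,\mathfrak{l}_a}(\Ga_e)\subseteq L_{\mathfrak{l}_c,\mathfrak{l}_c}(\Ga_f)$ holds exactly when $c\ler a$. Because the left root $\av_1$ of $\A_t$ satisfies its element $\Rc t$, and the left vertices of $\Ga=\Ga_e$ meet every $\Rc$-class of $\Dcc_s$, I would re-trace the explicit homomorphism built in the proof of Proposition \ref{Gclas}$.(v)$ (which is prescribed on right vertices through a translation hitting $\Rcc_t$) so as to read it off at a left vertex lying over the $\Rc$-class of $\av_1$; matching these root-anchored $\ler$-comparabilities against $\Jc$-comparability is the delicate structural step and the true heart of the clause.
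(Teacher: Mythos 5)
Your handling of clauses $(i)$--$(iv)$ is correct and is essentially the paper's argument: the paper dismisses the whole corollary in one line as a consequence of Proposition \ref{Gclas} via Corollary \ref{conv_liw}, and your use of Corollary \ref{unique_hom} to promote two mutually inverse root-fixing homomorphisms between reduced graphs into an isomorphism is exactly the intended mechanism.

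Clause $(v)$ is where your proof has a genuine gap, and you have in fact located it precisely: your argument stops at ``matching these root-anchored $\ler$-comparabilities against $\Jc$-comparability,'' and that step cannot be carried out for the inclusions as printed. Translating with Lemma \ref{lem63}, for $\av'=\lv_c$ ($c\in\Lcc_e\subseteq\Dcc_s$) and $\av_1=\lv_{a_1}$ ($a_1\Rc t$), the inclusion $L_{\av',\av'}(\Ga)\subseteq L_{\av_1,\av_1}(\Ga_1)$ says $\{u:uc=c\}\subseteq\{u:ua_1=a_1\}$, which (testing with $u\mu=cc'$) is equivalent to $t\ler c$; so the first condition reads ``$t\ler c$ for some $c\in\Dcc_s$.'' A weak homomorphism $\A_s\to\A_t$, however, only delivers the other comparability, ``$d\ler s$ for some $d\in\Dcc_t$'' (push the root $\av$ forward; this is exactly Corollary \ref{Gdes_lang}$.(v)$). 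The two conditions differ whenever $\Jc\neq\Dc$: in a simple but non-bisimple inverse monoid $S$ (realizable as $\LI\langle X;R\rangle$ by the remark closing Section 7), take $t=1$ and $s$ with $s\Jc 1$ but $s\not\Dc 1$; then $t\ler c$ forces $c\Rc 1$, hence $c\Dc 1$, contradicting $c\in\Dcc_s$, so the first condition fails although $s\Jc t$. Thus the forward implication of $(v)$ as printed is false, and no re-tracing of the homomorphism of Proposition \ref{Gclas}$.(v)$ can hit the root $\av_1$, since its image need not meet that vertex. The version that does follow trivially from Proposition \ref{Gclas}$.(v)$ and Corollary \ref{conv_liw}, and is consistent with Corollary \ref{Gdes_lang}$.(v)$, anchors the roots at the sources of the inclusions: $L_{\av,\av}(\Ga)\subseteq L_{\av_1',\av_1'}(\Ga_1)$ and $L_{\av_1,\av_1}(\Ga_1)\subseteq L_{\av',\av'}(\Ga)$. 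In short, the missing step in your proposal is not a missing trick: it is evidence that the printed inclusions in $(v)$ are reversed.
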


The idempotents and the inverses of elements of $S$ can also be characterized using the rbliw-graphs $\A_s$:

\begin{prop}\label{id_inv}
Let $s,t\in S$. Then:
\begin{itemize}
\item[$(i)$] $s\in E(S)$ \iff\ $(\lv(\A_s),\rv(\A_s))\in\ol{E}(\A_s)$, and \iff\ $x\wedge y\in L(\A_s)$ for some $x,y\in\oX$.
\item[$(ii)$] $t\in V(s)$ \iff\ there is a weak isomorphism $\varphi:\A_s\to\A_t$ such that $(\lv(\A_s)\varphi,\rv(\A_t))$ and $(\lv(\A_t),\rv(\A_s)\varphi)$ are edges of $\A_t$.
\end{itemize}
\end{prop}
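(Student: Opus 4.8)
The plan is to prove the two equivalences in $(i)$ as a short cycle of implications, to handle the forward direction of $(ii)$ by exhibiting an explicit witnessing weak isomorphism, and to reduce the converse of $(ii)$ to a computation inside a single graph $\Ga_e$. Throughout I will use Lemmas \ref{lem62} and \ref{lem63} together with Propositions \ref{isom_D}, \ref{isom_descr}, \ref{car_D_liw} and Corollary \ref{aut_gr}, and the elementary fact that an $\Hc$-class contains at most one idempotent.

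For $(i)$, if $s\in E(S)$ I take the representation $\A_s=(\lv_s,\Ga_s,\rv_s)$ (with $e=a=b=s$, so $s=s\cdot s$); since $s\in V(s)\cap\Rcc_s$, the pair $(\lv_s,\rv_s)$ is a line of $\Ga_s$, giving the line between the roots. Conversely, if $(\lv(\A_s),\rv(\A_s))=(\lv_a,\rv_b)$ is a line of $\A_s=(\lv_a,\Ga_e,\rv_b)$, then $b\in V(a)$ by definition of $\lE$, so $s=ab$ satisfies $s^2=a(bab)b=ab=s$ and $s\in E(S)$. For the second equivalence I use the definition of $\w$ on elementary paths: the line between the roots is a line elementary path whose label set is $\{(x\wedge y):x\in\cb(\lv(\A_s)),\,y\in\cb(\rv(\A_s))\}$, which is nonempty since contents are nonempty in an liw-graph, so some $x\wedge y\in L(\A_s)$. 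In the other direction, given a single-letter word $x\wedge y\in L(\A_s)$ realized by a walk $p$ from the left root to the right root, I analyze the elementary decomposition of $p$: its subpaths alternate, right subpaths contribute $\iota$, and every left subpath contributes a nonempty letter of $\wX$; hence $x\wedge y$ forces $p$ to be a single line elementary path, so the roots are joined by a line. This closes the cycle.

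For the forward direction of $(ii)$, given $t\in V(s)$ I set $e=st$ and $f=ts$, both idempotent, with $s\in\Rcc_e\cap\Lcc_f$ and $t\in\Lcc_e\cap\Rcc_f$. I represent $\A_s=(\lv_e,\Ga_e,\rv_s)$ and $\A_t=(\lv_f,\Ga_f,\rv_t)$ and take $\varphi=\varphi_s:\Ga_e\to\Ga_f$ from Propositions \ref{isom_D} and \ref{isom_descr} applied with element $s$ and inverse $t$. Then $\lv_e\varphi=\lv_{es}=\lv_s$ and $\rv_s\varphi=\rv_{ts}=\rv_f$, and the two required edges are $(\lv_s,\rv_t)$, a line because $t\in V(s)\cap\Rcc_f$ and $s\in\Lcc_f$, and $(\lv_f,\rv_f)$, a line because $f$ is idempotent; this yields the desired weak isomorphism.

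The converse of $(ii)$ is where the real work lies and will be the main obstacle. From the weak isomorphism I get $\Ga_e\cong\Ga_f$, hence $e\Dc f$ and $s\Dc t$ by Proposition \ref{car_D_liw}. The key simplifying move is to put both graphs over a common $\Ga_e$: since $\Dcc_t=\Dcc_e$ I may write $\A_t=(\lv_{a_1},\Ga_e,\rv_{b_1})$ with $a_1\in\Lcc_e$, $b_1\in\Rcc_e$, $t=a_1b_1$, and $\A_s=(\lv_a,\Ga_e,\rv_b)$ with $s=ab$; transporting $\varphi$ through the representation isomorphisms preserves the two edge conditions (they are incidences, preserved by graph isomorphisms), so $\varphi$ becomes an automorphism of $\Ga_e$, i.e. $\varphi=\varphi_h$ for some $h\in\Hcc_e$ by Corollary \ref{aut_gr}, acting by $\lv_a\varphi=\lv_{ah}$ and $\rv_b\varphi=\rv_{h^{-1}b}$. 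The edge conditions then read $b_1\in V(ah)$ and $h^{-1}b\in V(a_1)$. Now the crux: since $b_1\in V(ah)$ with $ah\in\Lcc_e$, $b_1\in\Rcc_e$, the idempotent $b_1(ah)$ lies in $\Hcc_e$, so $b_1ah=e$ and thus $b_1a=h^{-1}$; symmetrically $h^{-1}ba_1=e$ gives $ba_1=h$. Finally
\[
sts=a(ba_1)(b_1a)b=a\,h\,h^{-1}\,b=aeb=ab=s,\qquad tst=a_1(b_1a)(ba_1)b_1=a_1\,h^{-1}h\,b_1=a_1eb_1=a_1b_1=t,
\]
using $eb=b$ and $eb_1=b_1$, so $t\in V(s)$. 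The delicate points are organizing the passage to a common graph so that $\varphi$ is realized by an invertible element $h\in\Hcc_e$, and recognizing that uniqueness of the idempotent in $\Hcc_e$ is exactly what collapses $ba_1$ and $b_1a$ into the mutually inverse pair $h,h^{-1}$.
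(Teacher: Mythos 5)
Your proof is correct, and part $(i)$ together with the forward half of $(ii)$ follows essentially the paper's own route (the paper takes the single graph $\Ga_{st}$ and the identity automorphism as the witnessing weak isomorphism, where you use $\varphi_s:\Ga_{st}\to\Ga_{ts}$; this is only a cosmetic difference of representation). The converse of $(ii)$ is where you genuinely diverge. The paper never invokes the automorphism group: after moving everything to $\Ga_e$ for $e\in E(S)\cap\Rcc_t$ and writing $s=ab$ via Corollary \ref{ig_bliw}, it reads the two hypothesised lines as saying that $at$ and $b$ are idempotents (by part $(i)$ applied to $\A_{at}=(\lv_a,\Ga_e,\rv_t)$ and $\A_b=(\lv_e,\Ga_e,\rv_b)$), and then finishes with a short Green's-relations argument from $s\Lc b\Rc t\Lc at\Rc s$ and $st=abt=at$. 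You instead realize the transported weak isomorphism as $\varphi_h$ for some $h\in\Hcc_e$ via Corollary \ref{aut_gr} and Proposition \ref{isom_descr}, convert the two lines into $b_1a=h^{-1}$ and $ba_1=h$ using uniqueness of the idempotent in $\Hcc_e$, and verify $sts=s$ and $tst=t$ by direct computation; all the intermediate claims (that the edge conditions survive the change of representation, that $b_1(ah)$ and $(h^{-1}b)a_1$ land in $\Hcc_e$ and hence equal $e$) check out. Both arguments are sound: the paper's is shorter and needs only part $(i)$ plus standard facts about idempotents, while yours uses heavier machinery but makes the inverse pair completely explicit through the group element $h$ witnessing the weak isomorphism. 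One trivial slip: in $(i)$ you wrote $s^2=a(bab)b$; the product $abab$ collapses to $ab$ via $bab=b$, so the extra factor is a typo with no effect on the argument.
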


\begin{proof}
$(i)$. If $s=e\in E(S)$, then $\A_s=(\lv_e,\Ga_e,\rv_e)$ and $(\lv_e,\rv_e)$ is an edge of $\Ga_e$ by construction of $\Ga_e$ and since $e$ is an inverse of itself. Conversely, let $(\lv(\A_s),\rv(\A_s))$ be an edge of $\A_s$, and let $e\in E(S)\cap\Rcc_s$. Then $\A_s=(\lv_e,\Ga_e,\rv_s)$, and $e$ is an inverse of $s$ since $(\lv_e,\rv_s)$ is an edge of $\Ga_e$. Hence $s^2=ses=s$ and $s\in E(S)$. It is obvious from the definition of $\w(\cdot)$ that $(\lv(\A_s),\rv(\A_s))\in\ol{E}(\A_s)$ \iff\ $x\wedge y\in L(\A_s)$ for some $x,y\in\oX$, namely $x\in\cb(\lv(\A_s))$ and $y\in\cb(\rv(\A_s))$.

$(ii)$. Let $t\in V(s)$. Then $e=st$ and $f=ts$ are idempotents and
$$\A_e=(\lv_e,\Ga_e,\rv_e),\;\;\A_s=(\lv_e,\Ga_e,\rv_s),\;\; \A_t=(\lv_t,\Ga_e,\rv_e)\;\mbox{ and }\; \A_f=(\lv_t,\Ga_e,\rv_s)\,.$$
Further $(\lv_e,\rv_e)$ and $(\lv_t,\rv_s)$ are edges of $\Ga_e$ because $e$ and $f$ are idempotents respectively. If $\varphi$ is the identity automorphism of $\Ga_e$, then $\varphi:\A_s\to\A_t$ is a weak isomorphism such that $(\lv(\A_s)\varphi,\rv(\A_t))$ and $(\lv(\A_t),\rv(\A_s)\varphi)$ are edges of $\Ga_e$, as desired.

Conversely, let $\varphi:\A_s\to\A_t$ be a weak isomorphism such that 
$$(\lv(\A_s)\varphi,\rv(\A_t))\,,\;(\lv(\A_t),\rv(\A_s)\varphi)\in\ol{E}(\A_t)\,.$$
Choose $e\in E(S)\cap\Rcc_t$ and set $\A_t=(\lv_e,\Ga_e,\rv_t)$. Let $a\in\Lcc_e$ and $b\in\Rcc_e$ such that $\lv_a=\lv(\A_s)\varphi$ and $\rv_b=\rv(\A_s)\varphi$. Then $\varphi:\A_s\to (\lv_a,\Ga_e,\rv_b)$ is an isomorphism, and $s=ab$ by Corollary \ref{ig_bliw}. Note that
$$\A_{at}=(\lv_a,\Ga_e,\rv_t)\qquad\mbox{ and }\qquad \A_{b}=(\lv_e,\Ga_e,\rv_b)\,.$$
By hypothesis, $(\lv_a,\rv_t)\in \ol{E}(\A_{at})$ and $(\lv_e,\rv_b)\in\ol{E}(\A_b)$, and so $at,b\in E(S)$ by $(i)$. Further 
$$s\Lc b\Rc t\Lc at\Rc s\quad\mbox{ and }\quad st=abt=at\,,$$
whence $t\in V(s)$.
\end{proof}

Let us look now to the relations $\omega$, $\wr$ and $\wl$ on $E(S)$.

\begin{prop}\label{wr_liw}
The following conditions are equivalent for $e,f\in E(S)$:
\begin{itemize}
\item[$(i)$]  $f\wr e$ {\normalfont [}$f\wl e${\normalfont ]};
\item[$(ii)$] There is a left {\normalfont [}right{\normalfont ]} homomorphism $\varphi:\A_e\to\A_f$;
\item[$(iii)$] $L_{\lv_e,\lv_e}(\Ga_e)\subseteq L_{\lv_f,\lv_f}(\Ga_f)\;$ {\normalfont [}\,$L_{\rv_e,\rv_e}(\Ga_e)\subseteq L_{\rv_f,\rv_f}(\Ga_f)\,${\normalfont ]}.
\end{itemize}
\end{prop}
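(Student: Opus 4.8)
The plan is to prove the unbracketed statements — those involving $\wr$, left homomorphisms, and $L_{\lv_e,\lv_e}$ — and to obtain the bracketed statements about $\wl$, right homomorphisms, and $L_{\rv_e,\rv_e}$ by the right-left dual argument, in which Lemma \ref{lem63} is everywhere replaced by Lemma \ref{lem62}. Fixing the representations $\A_e=(\lv_e,\Ga_e,\rv_e)$ and $\A_f=(\lv_f,\Ga_f,\rv_f)$, I would first dispatch $(ii)\Leftrightarrow(iii)$. A left homomorphism $\varphi:\A_e\to\A_f$ is by definition an liw-graph homomorphism $\Ga_e\to\Ga_f$ carrying $\lv_e$ to $\lv_f$, with no condition imposed on the right root. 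Applying Corollary \ref{conv_liw} with $\av=\bv=\lv_e$ and $\av_1=\bv_1=\lv_f$ (all left vertices, so the side hypotheses are met) gives such a homomorphism precisely when $L_{\lv_e,\lv_e}(\Ga_e)\subseteq L_{\lv_f,\lv_f}(\Ga_f)$; the homomorphism so produced preserves the left root by construction and is therefore a left homomorphism, and by Corollary \ref{unique_hom} it is the only homomorphism sending $\lv_e$ to $\lv_f$. This yields $(ii)\Leftrightarrow(iii)$ at once.

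The heart of the proof is the passage between the language inclusion of $(iii)$ and the idempotent relation of $(i)$. The key step is to compute the two loop languages explicitly using Lemma \ref{lem63} with $a=b=e$ (respectively $a=b=f$), which gives
$$L_{\lv_e,\lv_e}(\Ga_e)=\{u\in\wX^*\,:\; ue=e\}\qquad\mbox{and}\qquad L_{\lv_f,\lv_f}(\Ga_f)=\{u\in\wX^*\,:\; uf=f\}.$$
Thus $(iii)$ says exactly that $ue=e$ implies $uf=f$ for every $u\in\wX^*$, while $(i)$, i.e. $f\wr e$, means $ef=f$. For $(i)\Rightarrow(iii)$ I would argue that if $ef=f$ and $ue=e$, then $uf=u(ef)=(ue)f=ef=f$. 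For $(iii)\Rightarrow(i)$ I would choose any $u\in\wX^+$ with $u\mu=e$; since $e$ is idempotent, $ue=e^2=e$, so $u$ lies in the left-hand language, whence $(iii)$ forces $uf=f$; but $uf=ef$, so $ef=f$, that is, $f\wr e$.

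I do not expect a genuine obstacle here, since everything reduces to Corollaries \ref{conv_liw} and \ref{unique_hom} together with the walk-to-product dictionary already established in Lemmas \ref{lem62} and \ref{lem63}. The points demanding care are purely organisational: one must unwind the relation $\wr$ correctly ($f\wr e\iff ef=f$) and keep the two sides apart, so that in the dual statement $f\wl e$ ($fe=f$) is paired with $L_{\rv_e,\rv_e}$ via Lemma \ref{lem62} rather than Lemma \ref{lem63}. The single mildly non-formal idea is the choice of test word in $(iii)\Rightarrow(i)$: probing the language inclusion with a word representing $e$ itself, which is the one element of the source loop language that carries the full information about $e$.
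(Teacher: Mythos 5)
Your proposal is correct and follows essentially the same route as the paper: $(ii)\Leftrightarrow(iii)$ via Corollary \ref{conv_liw}, and $(i)\Leftrightarrow(iii)$ by translating both loop languages through Lemma \ref{lem63} (Lemma \ref{lem62} for the dual), testing with a word $u$ representing $e$ for one direction and using $uf=(ue)f$ for the other. The only difference is presentational — you state the loop languages as explicit sets before comparing them, whereas the paper argues the two inclusions directly — and your handling of the dual case matches the paper's.
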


\begin{proof}
We prove this result only for the $\wr$ case since the $\wl$ case is the dual. Further, the equivalence between $(ii)$ and $(iii)$ follows from Corollary \ref{conv_liw}. If $L_{\lv_e,\lv_e}(\Ga_e)\subseteq L_{\lv_f,\lv_f}(\Ga_f)$ and $u\mu=e$ for some $u\in\wX^+$, then $u\in L_{\lv_f,\lv_f}(\Ga_f)$ and $ef=uf=f$.  If $f\wr e$, then $vf=f$ whenever $ve=e$ for $v\in\wX^*$, and $L_{\lv_e,\lv_e}(\Ga_e)\subseteq L_{\lv_f,\lv_f}(\Ga_f)$. Thus $(i)$ and $(iii)$ are equivalent.
\end{proof}

\begin{cor}\label{w_liw}
The following conditions are equivalent for $e,f\in E(S)$:
\begin{itemize}
\item[$(i)$]  $f\,\omega\,e$;
\item[$(ii)$] There is a homomorphism $\varphi:\A_e\to\A_f$;
\item[$(iii)$] $L_{\lv_e,\rv_e}(\Ga_e)\subseteq L_{\lv_f,\rv_f}(\Ga_f)$.
\end{itemize}
\end{cor}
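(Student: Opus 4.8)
The plan is to run the cycle $(i)\Rightarrow(ii)\Rightarrow(i)$ together with $(ii)\Leftrightarrow(iii)$, leaning on the one-sided Proposition \ref{wr_liw} and on Corollary \ref{conv_liw}. The equivalence $(ii)\Leftrightarrow(iii)$ is immediate from Corollary \ref{conv_liw} applied with $\av=\lv_e$, $\bv=\rv_e$, $\av_1=\lv_f$, $\bv_1=\rv_f$ (the sides match, $\lv_e,\lv_f$ being left and $\rv_e,\rv_f$ right). For $(ii)\Rightarrow(i)$, a homomorphism $\varphi\colon\A_e\to\A_f$ is in particular a homomorphism $\Ga_e\to\Ga_f$ sending $\lv_e\mapsto\lv_f$ and $\rv_e\mapsto\rv_f$; by Lemma \ref{hom_wp} every walk from $\lv_e$ to $\lv_e$ maps to a walk from $\lv_f$ to $\lv_f$ carrying at least the same labels, so $L_{\lv_e,\lv_e}(\Ga_e)\subseteq L_{\lv_f,\lv_f}(\Ga_f)$, and likewise $L_{\rv_e,\rv_e}(\Ga_e)\subseteq L_{\rv_f,\rv_f}(\Ga_f)$. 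Proposition \ref{wr_liw} then gives $f\wr e$ and $f\wl e$, that is $f\,\omega\,e$ since $\omega=\wr\cap\wl$. Thus the whole content of the corollary sits in the implication $(i)\Rightarrow(ii)$.

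For $(i)\Rightarrow(ii)$ I would start from $f\,\omega\,e$, which unpacks as $f\wr e$ and $f\wl e$. Proposition \ref{wr_liw} converts these into $L_{\lv_e,\lv_e}(\Ga_e)\subseteq L_{\lv_f,\lv_f}(\Ga_f)$ and $L_{\rv_e,\rv_e}(\Ga_e)\subseteq L_{\rv_f,\rv_f}(\Ga_f)$. Feeding the first inclusion into Corollary \ref{conv_liw} (with $\av=\bv=\lv_e$ and $\av_1=\bv_1=\lv_f$) produces a homomorphism $\varphi_1\colon\Ga_e\to\Ga_f$ with $\lv_e\varphi_1=\lv_f$, while the second produces $\varphi_2\colon\Ga_e\to\Ga_f$ with $\rv_e\varphi_2=\rv_f$. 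The remaining task, and the heart of the argument, is to show that $\varphi_1$ already respects the right root, that is $\rv_e\varphi_1=\rv_f$; once this is established, $\varphi_1$ is a homomorphism $\A_e\to\A_f$ and $(ii)$ holds.

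The main obstacle is exactly this promotion of a one-sided homomorphism to a two-sided one, and I expect to clear it using contents together with the reducedness of $\Ga_f$. By Lemma \ref{hom_wp}, $\cb(\lv_e)\subseteq\cb(\lv_e\varphi_1)=\cb(\lv_f)$ and $\cb(\rv_e)\subseteq\cb(\rv_e\varphi_2)=\cb(\rv_f)$. Since $e$ is idempotent, $e\in\Lcc_e$ and $e\in\Rcc_e\cap V(e)$, so $(\lv_e,\rv_e)\in\ol{E}(\Ga_e)$; choosing $x\in\cb(\lv_e)$ and $y\in\cb(\rv_e)$ (both nonempty by the content axiom for liw-graphs), the line elementary path from $\lv_e$ to $\rv_e$ gives $(x\wedge y)\in L_{\lv_e,\rv_e}(\Ga_e)$. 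Applying $\varphi_1$ and Lemma \ref{hom_wp} yields $(x\wedge y)\in L_{\lv_f,\rv_e\varphi_1}(\Ga_f)$; on the other hand $x\in\cb(\lv_f)$, $y\in\cb(\rv_f)$ and $(\lv_f,\rv_f)\in\ol{E}(\Ga_f)$ give $(x\wedge y)\in L_{\lv_f,\rv_f}(\Ga_f)$ as well. Because $\Ga_f$ is reduced, Lemma \ref{r_unique} guarantees a unique right vertex reachable from $\lv_f$ by a walk labelled $(x\wedge y)$, which forces $\rv_e\varphi_1=\rv_f$. This finishes $(i)\Rightarrow(ii)$ and hence the corollary.
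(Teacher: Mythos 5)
Your proof is correct and follows essentially the same route as the paper: $(ii)\Leftrightarrow(iii)$ via Corollary \ref{conv_liw}, $(ii)\Rightarrow(i)$ via Proposition \ref{wr_liw}, and $(i)\Rightarrow(ii)$ by taking the left homomorphism supplied by $f\wr e$ and using the reducedness of $\Ga_f$ together with a word $(x\wedge y)$ labelling the root line to force $\rv_e\varphi_1=\rv_f$. The only cosmetic difference is in how $y\in\cb(\rv_f)$ is secured: you pull it through the dual homomorphism $\varphi_2$ via Lemma \ref{hom_wp}, whereas the paper obtains it from the computation $f(x\wedge y')=fe(x\wedge y')=f$ and then invokes injectivity of $\Ga_f$ directly rather than through Lemma \ref{r_unique}.
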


\begin{proof}
Once again, $(ii)$ and $(iii)$ are equivalent due to Corollary \ref{conv_liw}. Also $(ii)$ implies $(i)$ by Proposition \ref{wr_liw}, using both $\wr$ and $\wl$. Assume $f\,\omega\,e$ and let $\varphi:\Ga_e\to\Ga_f$ be the homomorphism such that $\lv_e\varphi=\lv_f$ (Proposition \ref{wr_liw} again). Let $b\in\Rcc_f$ and $x,y\in\oX$ such that $\rv_e\varphi=\rv_b$ and 
$$e\in ((x\wedge x')\mu]_r\cap ((y\wedge y')\mu]_l=((x\wedge y')\mu].$$
Then $x\wedge y'\in L_{\lv_e,\rv_e}(\Ga_e)\subseteq L_{\lv_f,\rv_b}(\Ga_f)$. Since $f(x\wedge y')=fe(x\wedge y')=f$, we must have $b=f$ as otherwise $\Ga_f$ would not be injective ($y'\in\cb(\rv_b)\cap\cb(\rv_f)$ and $(\lv_f,\rv_b)$ and $(\lv_f,\rv_f)$ are two edges of $\Ga_f$). Hence $\rv_e\varphi=\rv_f$ as wanted.
\end{proof}

Now that we have a characterization of the relations $\wr$, $\wl$ and $\omega$ on $E(S)$, let us describe the natural partial order and the quasiorders $\ler$, $\lel$, $\leh$ and $\leq_{\Jc}$ on $S$ using the rbliw-graphs $\A_s$.

\begin{prop}\label{Gdes}
If $s,t\in S$, then:
\begin{itemize}
\item[$(i)$] $t\leq s$ \iff\ there is a homomorphism $\varphi:\A_s\to\A_t$.
\item[$(ii)$] $t\ler s$ \iff\ there is a left homomorphism $\varphi:\A_s\to\A_t$.
\item[$(iii)$] $t\lel s$ \iff\ there is a right homomorphism $\varphi:\A_s\to\A_t$.
\item[$(iv)$] $t\leh s$ \iff\ there are a left homomorphism $\varphi:\A_s\to\A_t$ and a right homomorphism $\varphi':\A_s\to\A_t$.
\item[$(v)$] $t\leq_{\Jc} s$ \iff\ there is a weak homomorphism $\varphi:\A_s\to\A_t$.
\end{itemize}
\end{prop}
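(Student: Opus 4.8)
The plan is to reduce all five equivalences to statements about inclusions of the languages read off the roots, exactly as Corollary \ref{Gclas_lang} does for Proposition \ref{Gclas}. The engine is Corollary \ref{conv_liw}: since the graphs $\Ga_e$ are associated with a presentation, Lemma \ref{pre_adj} makes the incidence hypothesis automatic, so a homomorphism between them exists \iff\ the corresponding language inclusion holds. Concretely, a (full) homomorphism $\A_s\to\A_t$ exists \iff\ $L(\A_s)\subseteq L(\A_t)$; a left [right] homomorphism exists \iff\ the return language at the left [right] root of $\A_s$ is contained in that of $\A_t$ (by Corollary \ref{unique_hom} such a map is determined by the image of the single preserved root); and a weak homomorphism exists \iff\ there is a homomorphism of the underlying rliw-graphs with no constraint on the roots. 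With this dictionary, parts $(ii)$--$(v)$ are short, and $(i)$ is the substantial one.

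For $(ii)$ and $(iii)$ I would take the representations $\A_s=(\lv_e,\Ga_e,\rv_s)$ with $e\in E(S)\cap\Rcc_s$ and $\A_t=(\lv_f,\Ga_f,\rv_t)$ with $f\in E(S)\cap\Rcc_t$. Putting $a=b=e$ in Lemma \ref{lem63} identifies the return language at the left root, $L_{\lv_e,\lv_e}(\Ga_e)=\{u\in\wX^*:ue=e\}$. I then check $t\ler s\iff ef=f\iff L_{\lv_e,\lv_e}(\Ga_e)\subseteq L_{\lv_f,\lv_f}(\Ga_f)$: for one implication evaluate at $u$ with $u\mu=e$, and for the other note $ue=e$ forces $uf=u(ef)=(ue)f=ef=f$. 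By Lemma \ref{hom_wp} and Corollary \ref{conv_liw} this inclusion is precisely the existence of a left homomorphism $\A_s\to\A_t$, giving $(ii)$; part $(iii)$ is the exact dual, using Lemma \ref{lem62} and the right root. Part $(iv)$ is then immediate, since $\leh=\ler\cap\lel$ and the two one-sided homomorphisms in the statement both run $\A_s\to\A_t$. Part $(v)$ I would obtain directly from the proof of Proposition \ref{Gclas}$(v)$: a weak homomorphism $\A_s\to\A_t$ already yields $t\in S^1sS^1$, i.e.\ $t\leq_{\Jc}s$, and conversely writing $t=t_1st_2$ with $t_1\Rc t$ produces the weak homomorphism; only the one-directional half of that argument is used here.

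The crux is $(i)$, which by the dictionary becomes $t\le s\iff L(\A_s)\subseteq L(\A_t)$, and the heart of the matter is the semantic description
$$L(\A_s)=\{\,u\in\wX^+:\; s\le u\mu\,\}.$$
For the inclusion ``$\subseteq$'' I would write $\A_s=(\lv_a,\Ga_e,\rv_b)$ with $s=ab$, take a walk $\lv_a\to\rv_b$, and split off its last elementary path into $\rv_b$, say from a left vertex $\lv_c$; the prefix gives $u_1c=a$ by Lemma \ref{lem63}, so $s=u_1\mu\cdot cb$. If the last edge is a line with label $(x\wedge y)$ (so $x\in\cb(\lv_c)$, $y\in\cb(\rv_b)$), then $cb\in((x\wedge x')\mu]_r\cap((y'\wedge y)\mu]_l=((x\wedge y)\mu]$, whence $cb\le(x\wedge y)\mu$; if it is an arrow labeled $x$, then $c\in(x\wedge x')S$ and $c'\in S(x\wedge x')$ force $cc'\le(x\wedge x')\mu$ while $(x\wedge x')\mu\,x=x$. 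In both cases the compatibility of $\le$ with multiplication (valid because $S$ is locally inverse) yields $s\le u\mu$. For the reverse inclusion ``$\supseteq$'' I would argue by induction on $|u|$, constructing a walk labeled $u$ from $\lv_a$ to $\rv_b$ out of the data $s\le u\mu$, using the translation Corollaries \ref{trans} and \ref{trans_dual} and the connectivity construction of Proposition \ref{rliw}, in the same spirit as the converse halves of Lemmas \ref{lem62} and \ref{lem63}. Once the description is established, $(i)$ closes quickly: $t\le s$ gives $\{u:s\le u\mu\}\subseteq\{u:t\le u\mu\}$ by transitivity, hence $L(\A_s)\subseteq L(\A_t)$; conversely evaluating at $u$ with $u\mu=s$ (so $u\in L(\A_s)\subseteq L(\A_t)$) gives $t\le u\mu=s$. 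I expect the main obstacle to be exactly this semantic lemma: pinning down the correct order direction in the two edge cases (the arrow case in particular needs the observation that $c'\in S(x\wedge x')$ upgrades $cc'\wr(x\wedge x')\mu$ to $cc'\le(x\wedge x')\mu$) and carrying out the inductive walk-construction for the reverse inclusion; everything after that is bookkeeping through Corollary \ref{conv_liw}.
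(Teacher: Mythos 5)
Your overall strategy is sound and all five equivalences would follow from it, but for part $(i)$ you take a genuinely different and substantially longer route than the paper. The paper never needs the semantic identity $L(\A_s)=\{u\in\wX^+: s\le u\mu\}$: for the forward direction it writes $t=fs$ with $f\in(e]\cap\Rcc_t$ and $e\in\Rcc_s\cap E(S)$, invokes Corollary \ref{w_liw} to get a homomorphism $(\lv_e,\Ga_e,\rv_e)\to(\lv_f,\Ga_f,\rv_f)$, and then uses Lemma \ref{lem62} only to check that $\rv_s$ is sent to $\rv_{fs}=\rv_t$; for the converse it tracks $\rv_e\varphi=\rv_g$, notes $g\in E(S)$ because $(\lv_e,\rv_e)\varphi_E$ is a line, and reads off $gs=t$ (plus the dual) from the language inclusion. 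That is a few lines and requires no analysis of individual walks. Your route instead proves the stronger statement that $L(\A_s)$ is exactly the $\mu$-preimage of the filter $[s)_\le$, which is a nice characterization the paper does not record, and then gets $(i)$ by transitivity of $\le$ plus Corollary \ref{conv_liw}. Your ``$\subseteq$'' argument is essentially complete and correct (the split at the last elementary path, Lemma \ref{lem63} for the prefix, the two edge cases, and compatibility of $\le$ with multiplication in locally inverse semigroups, which you rightly flag as the needed external fact). The one place where your proposal is not yet a proof is the ``$\supseteq$'' direction, which you leave as ``induction on $|u|$'': it does go through, but the natural argument is not really an induction — peel off only the \emph{first} letter $z$ of $u$, use $s\ler u\mu$ to see that the required arrow or line out of $\lv_a$ exists (taking $c=a'(x\wedge y)$ in the line case), and then finish the whole tail in one step with Lemma \ref{lem62} after showing $cu_2=b$, which follows from $b\le cu_2$ together with $b\Rc cu_2$ and the fact that $\le$ restricted to an $\Rc$-class is trivial. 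Until that step is written out, $(i)$ is only sketched. Your direct proofs of $(ii)$--$(iii)$ via the return languages at the roots are fine and essentially reprove Proposition \ref{wr_liw} for arbitrary elements (the paper instead derives $(ii)$ from $(i)$ and Proposition \ref{Gclas}$.(i)$ via $t\ler s\Leftrightarrow t_1\le s$ for some $t_1\Rc t$), and your treatment of $(iv)$ and $(v)$ coincides with the paper's.
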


\begin{proof}
$(i)$. Assume first that $t\leq s$ and let $e\in\Rcc_s\cap E(S)$. It is well known that there exists an $f\in (e]\cap\Rcc_t$ such that $fs=t$. Then 
$$\A_s=(\lv_e,\Ga_e,\rv_s)\qquad\mbox{ and }\qquad \A_t=(\lv_f,\Ga_f,\rv_t)\,$$
and, by Corollary \ref{w_liw}, there is a homomorphism $\varphi:(\lv_e,\Ga_e,\rv_e)\to(\lv_f,\Ga_f, \rv_f)$. Let $t_1\in \Rcc_f$ such that $\rv_s\varphi=\rv_{t_1}$ and let $u\in\wX^+$ such that $u\mu=s$. Then
$$u\in L_{\rv_e,\rv_s}(\Ga_e)\subseteq L_{\rv_f,\rv_{t_1}}(\Ga_f)$$
and $t_1=fu=fs=t$. Hence, $\varphi$ is also a homomorphism from $\A_s$ to $\A_t$.

Assume now that $\varphi:\A_s\to\A_t$ is a homomorphism. Let $e\in\Rcc_s\cap E(S)$ and $f\in\Rcc_t\cap E(S)$. Then $\A_s=(\lv_e,\Ga_e,\rv_s)$ and $\A_t=(\lv_f,\Ga_f,\rv_t)$, and so $\lv_e\varphi=\lv_f$ and $\rv_s\varphi=\rv_t$. Let $g\in\Rcc_f$ such that $\rv_e\varphi=\rv_g$. Then $\varphi$ is a homomorphism from $\A_e=(\lv_e,\Ga_e,\rv_e)$ to $\A_g=(\lv_f,\Ga_f,\rv_g)$, and $g\in E(S)$ since $(\lv_f,\rv_g)=(\lv_e,\rv_e)\varphi_E\in\ol{E}(\Ga_f)$. If $u\mu=s$ for some $u\in\wX^+$, then $u\in L_{\rv_e,\rv_s}(\Ga_e)\subseteq L_{\rv_g,\rv_t}(\Ga_f)$ and $gs=gu=t$. Similarly, there is a $g_1\in E(S)\cap\Lcc_t$ such that $sg_1=t$. Hence $t\leq s$.

$(ii)$ follows from $(i)$ and Proposition \ref{Gclas}$.(i)$ using the following observation: 
$$t\ler s\quad\Leftrightarrow\quad t_1\leq s\mbox{ for some } t_1\Rc t.$$
$(iii)$ is the dual of $(ii)$, and $(iv)$ is just $(ii)$ and $(iii)$ put together. If we look carefully to the proof of Proposition \ref{Gclas}$.(v)$, we realize that we have proved, in fact, the statement $(v)$ of the present result (and used it for both $s\leq_{\Jc} t$ and $t\leq_{\Jc} s$).
\end{proof}

Once more, due to Corollary \ref{conv_liw}, the following result is trivial.

\begin{cor}\label{Gdes_lang}
Let $\A_s=(\av,\Ga,\bv)$ and $\A_t=(\av_1,\Ga_1,\bv_1)$ for $s,t\in S$. Then:
\begin{itemize}
\item[$(i)$] $t\leq s$ \iff\ $L(\A_s)\subseteq L(\A_t)$.
\item[$(ii)$] $t\ler s$ \iff\ $L_{\av,\av}(\Ga)\subseteq L_{\av_1,\av_1}(\Ga_1)$.
\item[$(iii)$] $t\lel s$ \iff\ $L_{\bv,\bv}(\Ga)\subseteq L_{\bv_1,\bv_1}(\Ga_1)$.
\item[$(iv)$] $t\leh s$ \iff\ $L_{\av,\av}(\Ga)\subseteq L_{\av_1,\av_1}(\Ga_1)$ and $L_{\bv,\bv}(\Ga)\subseteq L_{\bv_1,\bv_1}(\Ga_1)$.
\item[$(v)$] $t\leq_{\Jc} s$ \iff\ there is $\av_1'\in V_l(\Ga_1)$ such that $L_{\av,\av}(\Ga)\subseteq L_{\av_1',\av_1'}(\Ga_1)$.
\end{itemize}
\end{cor}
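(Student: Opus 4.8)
The plan is to read off each equivalence by composing two results already available: Proposition \ref{Gdes}, which rephrases each order relation as the existence of a homomorphism (respectively a left, right, or weak homomorphism) $\varphi\colon\A_s\to\A_t$, and Corollary \ref{conv_liw}, which converts the existence of such a homomorphism into a language inclusion between the graphs $\Ga_e$. The only bookkeeping needed is to match the type of homomorphism with the correct pair of source/target vertices fed into Corollary \ref{conv_liw}, keeping in mind that by definition $L(\A_s)=L_{\av,\bv}(\Ga)$ and $L(\A_t)=L_{\av_1,\bv_1}(\Ga_1)$.

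For $(i)$, by Proposition \ref{Gdes}$.(i)$ we have $t\leq s$ \iff\ there is a homomorphism $\varphi\colon\A_s\to\A_t$, that is, a homomorphism of the underlying liw-graphs with $\av\varphi=\av_1$ and $\bv\varphi=\bv_1$. Applying Corollary \ref{conv_liw} with these two prescribed images, such a $\varphi$ exists \iff\ $L_{\av,\bv}(\Ga)\subseteq L_{\av_1,\bv_1}(\Ga_1)$, which is exactly $L(\A_s)\subseteq L(\A_t)$.

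For $(ii)$ and $(iii)$ the key observation is how to use the two-vertex form of Corollary \ref{conv_liw} when only one root is to be preserved. A left homomorphism is a homomorphism of liw-graphs with $\av\varphi=\av_1$ and no constraint on $\bv$; since $\Ga_1$ is reduced, Corollary \ref{unique_hom} guarantees that fixing $\av\varphi=\av_1$ already determines $\varphi$ uniquely, so a left homomorphism exists \iff\ some liw-graph homomorphism sends $\av$ to $\av_1$. I test this by applying Corollary \ref{conv_liw} with the left root used in both coordinates, namely with source pair $(\av,\av)$ and target pair $(\av_1,\av_1)$ (the side condition $\sb(\av)=\sb(\av_1)$ holds since both are left vertices). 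This yields: a left homomorphism exists \iff\ $L_{\av,\av}(\Ga)\subseteq L_{\av_1,\av_1}(\Ga_1)$. Together with Proposition \ref{Gdes}$.(ii)$ this proves $(ii)$, and $(iii)$ follows dually by replacing the left root with the right root throughout, giving $L_{\bv,\bv}(\Ga)\subseteq L_{\bv_1,\bv_1}(\Ga_1)$.

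Statement $(iv)$ is then immediate, since $\leh\,=\,\ler\cap\lel$ and Proposition \ref{Gdes}$.(iv)$ asks for both a left and a right homomorphism; combining $(ii)$ and $(iii)$ gives the conjunction of the two inclusions. For $(v)$, Proposition \ref{Gdes}$.(v)$ says $t\leq_{\Jc} s$ \iff\ there is a weak homomorphism $\varphi\colon\A_s\to\A_t$, a homomorphism of liw-graphs with no root preserved; such a homomorphism sends the left vertex $\av$ to some left vertex $\av_1'\in V_l(\Ga_1)$ and, again by Corollary \ref{unique_hom}, is determined by this image. Applying Corollary \ref{conv_liw} with source pair $(\av,\av)$ and target pair $(\av_1',\av_1')$ shows that a homomorphism with $\av\varphi=\av_1'$ exists \iff\ $L_{\av,\av}(\Ga)\subseteq L_{\av_1',\av_1'}(\Ga_1)$, and quantifying over the admissible images $\av_1'$ gives the stated existential condition. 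There is no genuine obstacle here; the one point to get right is the reduction of the single-root (left, right, weak) cases to the two-vertex form of Corollary \ref{conv_liw} by doubling the relevant root, which is legitimate precisely because reducedness of the target forces uniqueness of the homomorphism through Corollary \ref{unique_hom}.
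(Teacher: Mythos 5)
Your proposal is correct and follows exactly the route the paper intends: the paper dismisses this corollary as a trivial consequence of Corollary \ref{conv_liw} applied to Proposition \ref{Gdes}, and your write-up simply makes explicit the root-doubling step (feeding the pair $(\av,\av)$, resp.\ $(\bv,\bv)$, into Corollary \ref{conv_liw}) that handles the left, right and weak cases. No gaps; this is the paper's argument spelled out in detail.
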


Before we continue, let us see how the theory developed so far applies to an example, the example of the four-spiral semigroup $Sp_4$.\vspace*{.3cm}

\noindent{\bf Example of the four-spiral semigroup}: As observed in Example 3, the four-spiral semigroup $Sp_4$ is given by the presentation 
$$Sp_4=LI\langle\{x\};\{(x',x'^2),(x,(x\wedge x)x)\}\rangle.$$ 
We begin constructing the rliw-graph $\Ga_{x'x}$. For that purpose, we represent the (unique) $\Dc$-class of $Sp_4$ in Figure \ref{SpD}, indicating also the elements of $\Rcc_{x'x}$ and of $\Lcc_{x'x}$. We abbreviate and write only $z$ for the element $x'(x\wedge x)$. An inspection of this figure immediately tells us that $V(x^i)=\{z^i,z^{i-1}x'\}$ and $V(x'x^{i+1})=\{z^i,z^ix'\}$ for $i\in\mathbb{N}$, where $z^0$ is the empty word, and $V(x'x)=\{x'x,x'\}$. Thus $\ol{E}(\Ga_{x'x})$ is the set
$$\{(\lv_{x^i},\rv_{z^i}),(\lv_{x^i},\rv_{z^{i-1}x'}),(\lv_{x'x^{i+1}},\rv_{z^i}),(\lv_{x'x^{i+1}},\rv_{z^ix'}),(\lv_{x'x},\rv_{x'x}),(\lv_{x'x},\rv_{x'})\}.$$
Further, since $zx=x'x$ and $z^{i+1}x=z^i$ for $i\in\mathbb{N}$, then $\vec{E}(\Ga_{x'x})$ is the set
$$\{(\lv_{x^{i+1}},x,\rv_{z^i}),(\lv_x,x,\rv_{x'x}),(\lv_{x'x^{i+1}},x',\rv_{z^ix'}),(\lv_{x'x},x',\rv_{x'})\}.$$
We depict the rliw-graph $\Ga_{x'x}$ in Figure \ref{Sp_xx}.

\begin{figure}[ht]
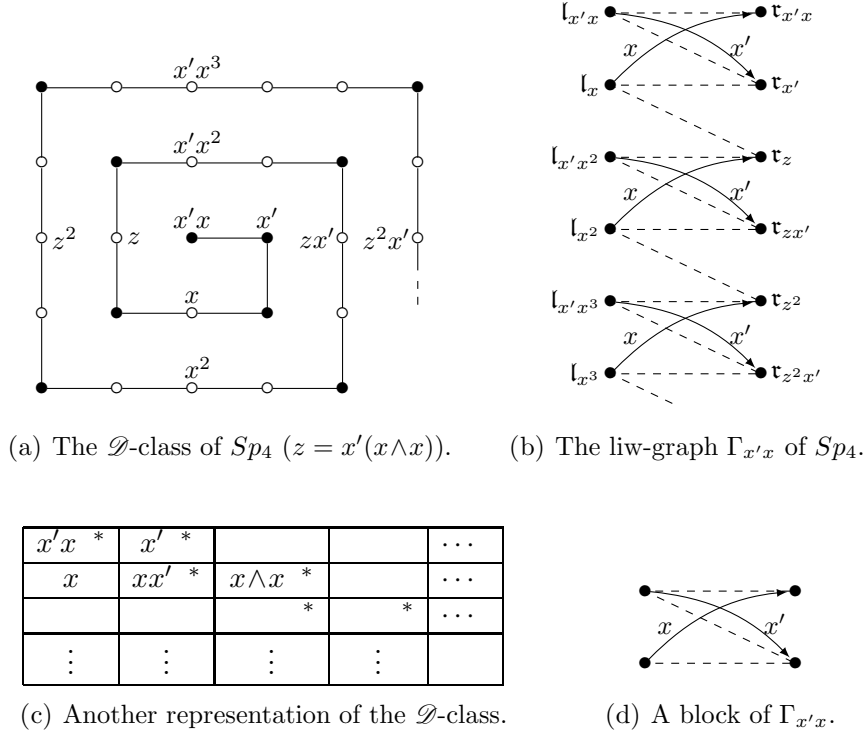

\begin{subfigure}[b]{.5\linewidth}
$$\tikz[scale=1, shorten <=2pt, shorten >=2pt, >=latex]{
\coordinate (1) at (0,1);
\coordinate (2) at (1,1);
\coordinate (3) at (1,0);
\coordinate (4) at (-1,0);
\coordinate (5) at (-1,2);
\coordinate (6) at (2,2);
\coordinate (7) at (2,-1);
\coordinate (8) at (-2,-1);
\draw (1) node {$\bullet$};
\draw (2) node {$\bullet$};
\draw (3) node {$\bullet$};
\draw (4) node {$\bullet$};
\draw (5) node {$\bullet$};
\draw (6) node {$\bullet$};
\draw (7) node {$\bullet$};
\draw (8) node {$\bullet$};
\draw (-2,3) node {$\bullet$};
\draw (3,3) node {$\bullet$};
\draw (1)--(2);
\draw (2)--(3);
\draw (3)--(0,0);
\draw (0,0)--(4);
\draw (4)--(-1,1);
\draw (-1,1)--(5);
\draw (5)--(0,2);
\draw (0,2)--(1,2);
\draw (1,2)--(6);
\draw (6)--(2,1);
\draw (2,1)--(2,0);
\draw (2,0)--(2,-1);
\draw (2,-1)--(7);
\draw (7)--(1,-1);
\draw (1,-1)--(0,-1);
\draw (0,-1)--(-1,-1);
\draw (-1,-1)--(8);
\draw (-2,0)--(8);
\draw (-2,0)--(-2,1);
\draw (-2,2)--(-2,1);
\draw (-2,2)--(-2,3);
\draw (-2,3)--(-1,3);
\draw (0,3)--(-1,3);
\draw (0,3)--(1,3);
\draw (2,3)--(1,3);
\draw (2,3)--(3,3);
\draw (3,2)--(3,3);
\draw (3,2)--(3,1);
\draw (3,.5)--(3,1);
\draw[dashed] (3,.5)--(3,0);
\draw (-2,2) node {$\circ$};
\draw (-1,3) node {$\circ$};
\draw (0,3) node {$\circ$};
\draw (1,3) node {$\circ$};
\draw (2,3) node {$\circ$};
\draw (3,2) node {$\circ$};
\draw (3,1) node {$\circ$};
\draw (-2,0) node {$\circ$};
\draw (-2,1) node {$\circ$};
\draw (-1,-1) node {$\circ$};
\draw (-1,1) node {$\circ$};
\draw (0,-1) node {$\circ$};
\draw (0,2) node {$\circ$};
\draw (1,-1) node {$\circ$};
\draw (1,2) node {$\circ$};
\draw (2,0) node {$\circ$};
\draw (2,1) node {$\circ$};
\draw (0,0) node {$\circ$};
\draw (1) node [above] {\small$x'x$};
\draw (0,2) node [above] {\hspace*{.15cm}\small$x'x^2$};
\draw (0,3) node [above] {\hspace*{.15cm}\small$x'x^3$};
\draw (2) node [above] {\small$x'$};
\draw (0,0) node [above] {\small$x$};
\draw (-1,1) node [right] {\small$z$};
\draw (-2,1) node [right] {\small$z^2$};
\draw (2,1) node [left=-1pt] {\small$zx'$};
\draw (3,1) node [left=-1pt] {\small$z^2x'$};
\draw (0,-1) node [above] {\hspace*{.15cm}\small$x^2$};
}$$
\caption{The $\Dc$-class of $Sp_4$ ($z=x'(x\!\wedge\! x)$).}\label{SpD}
\end{subfigure}
\begin{subfigure}[b]{.44\linewidth}
$$\tikz[scale=.8, shorten <=2pt, shorten >=3pt, >=latex]{
\coordinate[label=right:\small$\rv_{x'x}$] (1) at (2.5,7.2);
\coordinate[label=left:\small$\lv_{x'x}$] (2) at (0,7.2);
\coordinate[label=right:\small$\rv_{x'}$] (3) at (2.5,6);
\coordinate[label=left:\small$\lv_{x}$] (4) at (0,6);
\coordinate[label=right:\small$\rv_{z}$] (5) at (2.5,4.8);
\coordinate[label=left:\small$\lv_{x'x^2}$] (6) at (0,4.8);
\coordinate[label=right:\small$\rv_{zx'}$] (7) at (2.5,3.6);
\coordinate[label=left:\small$\lv_{x^2}$] (8) at (0,3.6);
\coordinate[label=right:\small$\rv_{z^2}$] (9) at (2.5,2.4);
\coordinate[label=left:\small$\lv_{x'x^3}$] (10) at (0,2.4);
\coordinate[label=right:\small$\rv_{z^2x'}$] (11) at (2.5,1.2);
\coordinate[label=left:\small$\lv_{x^3}$] (12) at (0,1.2);
\draw (1) node {$\bullet$};
\draw (2) node {$\bullet$};
\draw (3) node {$\bullet$};
\draw (4) node {$\bullet$};
\draw (5) node {$\bullet$};
\draw (6) node {$\bullet$};
\draw (7) node {$\bullet$};
\draw (8) node {$\bullet$};
\draw (9) node {$\bullet$};
\draw (10) node {$\bullet$};
\draw (11) node {$\bullet$};
\draw (12) node {$\bullet$};
\draw[dashed] (1)--(2)--(3)--(4)--(5)--(6)--(7)--(8)--(9)--(10)--(11)--(12)--(1.25,.6);
\draw[->, bend left=20] (2) to node [above, pos=.85] {\small$x'$} (3);
\draw[->, bend left=20] (4) to node [above, pos=.15] {\small$x$} (1);
\draw[->, bend left=20] (6) to node [above, pos=.85] {\small$x'$} (7);
\draw[->, bend left=20] (8) to node [above, pos=.15] {\small$x$} (5);
\draw[->, bend left=20] (10) to node [above, pos=.85] {\small$x'$} (11);
\draw[->, bend left=20] (12) to node [above, pos=.15] {\small$x$} (9);
}$$
\caption{The liw-graph $\Ga_{x'x}$ of $Sp_4$.}\label{Sp_xx}
\end{subfigure}\vspace*{.8cm}

\begin{subfigure}[b]{.57\linewidth}
\centering
\begin{tabular}{|c|c|c|c|c|}
\hline 
$x'x\;\;^*$ & $x'\;\;^*$ & & &$\cdots\hspace*{.2cm}$ \\ \hline
$x$& $xx'\;\;^*$ & $x\!\wedge\!x\;\;^*$ & &$\cdots\hspace*{.2cm}$  \\ \hline
&&\hspace*{1cm}$^*$& \hspace*{.8cm}$^*$&$\cdots\hspace*{.2cm}$ \\ \hline
$\vdots$ &$\vdots$ &$\vdots$ &$\vdots$ & \\ \hline
\end{tabular}
\caption{Another representation of the $\Dc$-class.}\label{Sp_stair}
\end{subfigure}
\begin{subfigure}[b]{.37\linewidth}
$$\tikz[scale=.8, shorten <=2pt, shorten >=3pt, >=latex]{
\coordinate (1) at (2.5,1.2);
\coordinate (2) at (0,1.2);
\coordinate (3) at (2.5,0);
\coordinate (4) at (0,0);
\draw (1) node {$\bullet$};
\draw (2) node {$\bullet$};
\draw (3) node {$\bullet$};
\draw (4) node {$\bullet$};
\draw[dashed] (1)--(2)--(3)--(4);
\draw[->, bend left=20] (2) to node [above, pos=.85] {\small$x'$} (3);
\draw[->, bend left=20] (4) to node [above, pos=.15] {\small$x$} (1);
}$$
\caption{A block of $\Ga_{x'x}$.}\label{Sp_block}
\end{subfigure}
\caption{The four-spiral semigroup $Sp_4$.}\label{Sp4}
\end{figure}

Each of the left vertices $\lv_a$ of $\Ga_{x'x}$ represents one of the $\Rc$-classes of $Sp_4$, namely the $\Rc$-class $\Rcc_a$, while each of the right vertices $\rv_b$ of $\Ga_{x'x}$ represents one of the $\Lc$-classes of $Sp_4$, namely the $\Lc$-class $\Lcc_b$. Note that only the identity mapping is an automorphism of $\Ga_{x'x}$ since $\rv_{x'x}$ is the only vertex of degree 2 (number of edges). This observation corroborates the fact that the $\Hc$-classes of $Sp_4$ are trivial.

The lines of $\Ga_{x'x}$ also form an infinite descending `zigzag' path starting at $\rv_{x'x}$ and passing through all the vertices. This reflects the fact that we can reorder the rows and the columns of the egg-box picture of $Sp_4$ so that the idempotents form an infinite descending `stair' (see Figure \ref{Sp_stair}).

Although $\Ga_{x'x}$ has no non-trivial automorphism, it has plenty of endomorphisms. Note that $\Ga_{x'x}$ is an infinite descending chain of copies of the `block' depicted in Figure \ref{Sp_block}. The endomorphisms of $\Ga_{x'x}$ are then obtained by `sliding down' these blocks. Thus, the set of endomorphisms of $\Ga_{x'x}$, with the usual composition of mappings, is isomorphic to the free monogenic monoid. By Proposition \ref{Gdes}, for each $a\in Sp_4$, $(a]_{\leq}$ is an infinite descending chain for the natural partial order $\leq$. This is a well-known fact about $Sp_4$ which was already expressed in Figure \ref{F4} by its arrows. \hfill\qed\vspace*{.3cm}

Finally, let us return to the general case of a locally inverse semigroup $S$ given by a presentation and pay some attention to the two operations defined on $S$, namely the semigroup product and the sandwich operation. We begin by analyzing the sandwich operation $\wedge$.

Given two bliw-graphs $\A_1=(\av_1,\Ga_1,\bv_1)$ and $\A_2=(\av_2,\Ga_2,\bv_2)$, set
$$\A_1\owedge \A_2=(\av_1,\Ga_1\cup\{(\av_1,\bv_2)\}\cup\Ga_2,\bv_2)\,,$$ 
that is, take the union of $\Ga_1$ and $\Ga_2$, add the edge $(\lv(\A_1),\rv(\A_2))$ for the new graph to become connected, and keep the left root of $\Ga_1$ and the right root of $\Ga_2$. Clearly $\A_1\owedge\A_2$ is a bliw-graph too. If $\A_1$ and $\A_2$ are reduced, set also $\A_1\wedge\A_2$ as the reduced form of $\A_1\owedge\A_2$. Let $\phi:\A_1\owedge\A_2\to\A_1\wedge\A_2$ be the natural $E$-surjective epimorphism. Although $\A_1$ and $\A_2$ are reduced, we cannot guarantee that the left homomorphism $\phi_{|\A_1}:\A_1\to\A_1\wedge\A_2$ and the right homomorphism $\phi_{|\A_2}:\A_2\to\A_1\wedge\A_2$ are monomorphims. The complete reduction of $\A_1\owedge\A_2$ into $\A_1\wedge\A_2$ may identify some vertices (and edges) of $\A_1$ or of $\A_2$ even when $\A_1$ and $\A_2$ are reduced. We will write only $\phi$ also to refer to both restrictions $\phi_{|\A_1}$ and $\phi_{|\A_2}$.

Any homomorphism $\ol{\psi}:\A_1\owedge\A_2\to\B$ is a combination of a left homomorphism $\ol{\psi}_1=\ol{\psi}_{|\A_1}:\A_1\to\B$ with a right homomorphism $\ol{\psi}_2=\ol{\psi}_{|\A_2}:\A_2\to\B$ that satisfy
$$(\lv(\A_1)\ol{\psi}_1,\rv(\A_2)\ol{\psi}_2)\in \ol{E}(\B)\,.$$
Once again, if no ambiguity occurs, we will write $\ol{\psi}$ also to refer to both $\ol{\psi}_1$ and $\ol{\psi}_2$. If $\B$ is reduced, then $\ol{\psi}$ induces a (unique) homomorphism $\psi:\A_1\wedge\A_2\to\B$ such that $\ol{\psi}=\phi\psi$ by Proposition \ref{hom_red}.

Conversely, if $\psi:\A_1\wedge\A_2\to \B$ is a homomorphism, then we denote by $\ol{\psi}$ the homomorphism $\phi\psi:\A_1\owedge\A_2\to \B$.
Hence, we can look at $\psi$ as a combination of a left homomorphism $\psi_1=\psi_{|\A_1\phi}$ with a right homomorphism $\psi_2=\psi_{|\A_2\phi}$ that coincide on $\A_1\phi\cap\A_2\phi$ and satisfy 
$$(\lv(\A_1)\phi\psi_1,\rv(\A_2)\phi\psi_2)\in\ol{E}(\B)\,.$$ 

Let $s,t\in S$. Although $\A_s\wedge\A_t$ and $\A_{s\wedge t}$ are not isomorphic in general, there is a sort of `Universal Property' between them that gives us some insight into the operation $\wedge$ on $S$. 

\begin{prop}\label{wedge_car}
Let $s,t,a,b\in S$. There is a homomorphism $\varphi:\A_s\wedge\A_t\to\A_a$ \iff\  $a\in (s\wedge t]$. In particular, $a\in E(S)$ and there is a homomorphism $\hat{\varphi}:\A_s\wedge\A_t\to\A_{s\wedge t}$. Further, for any homomorphism $\varphi:\A_s\wedge\A_t\to\A_b$, there exists a unique homomorphism $\psi:\A_{s\wedge t}\to\A_b$ such that $\varphi=\hat{\varphi}\psi$.
\end{prop}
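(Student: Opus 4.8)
The plan is to convert every statement about homomorphisms among the graphs $\A_s\wedge\A_t$, $\A_{s\wedge t}$ and $\A_a$ into an order/divisibility statement in $S$, using the dictionary already assembled: Proposition \ref{Gdes} relates left/right/plain homomorphisms to $\ler$, $\lel$ and $\leq$; the ``combination'' description preceding the proposition splits a homomorphism out of $\A_s\owedge\A_t$ into a left homomorphism out of $\A_s$ and a right homomorphism out of $\A_t$ subject to a single incidence condition; and Proposition \ref{hom_red} lets me pass between $\A_s\owedge\A_t$ and its reduced form $\A_s\wedge\A_t$. Throughout I would fix $s'\in V(s)$, $t'\in V(t)$, put $e=ss'$ and $f=t't$ (so $s\Rc e$, $t\Lc f$ and $s\wedge t=e\wedge f$), and use the representations $\A_s=(\lv_e,\Ga_e,\rv_s)$ and $\A_t=(\lv_t,\Ga_f,\rv_f)$.

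First I would build $\hat\varphi$. Writing $g=e\wedge f$, the definition of $\wedge$ on $E(S)$ gives $g\in(e]_r$ and $g\in(f]_l$, that is $eg=g$ and $gf=g$; hence $g=ss'g\in sS$ and $g=gt't\in St$, so $g\ler s$ and $g\lel t$. By Proposition \ref{Gdes}$(ii)$ and $(iii)$ there are a left homomorphism $\A_s\to\A_g$ and a right homomorphism $\A_t\to\A_g$, and since $g\in E(S)$ the line $(\lv_g,\rv_g)$ lies in $\Ga_g$ (Proposition \ref{id_inv}$(i)$); thus the incidence condition holds and the two homomorphisms combine into a homomorphism $\A_s\owedge\A_t\to\A_g$. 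As $\A_g$ is reduced this factors through $\phi$ and yields $\hat\varphi\colon\A_s\wedge\A_t\to\A_{s\wedge t}$. For an arbitrary $a\in(s\wedge t]$, i.e. $a\leq g$, Proposition \ref{Gdes}$(i)$ supplies a homomorphism $\A_g\to\A_a$, and composing it with $\hat\varphi$ proves the ``if'' direction.

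For the converse, given $\varphi\colon\A_s\wedge\A_t\to\A_a$ I would set $\ol\varphi=\phi\varphi\colon\A_s\owedge\A_t\to\A_a$ and read off from the combination description a left homomorphism $\A_s\to\A_a$ and a right homomorphism $\A_t\to\A_a$ with $(\lv(\A_s)\ol\varphi,\rv(\A_t)\ol\varphi)\in\ol E(\A_a)$; Proposition \ref{Gdes}$(ii)$ and $(iii)$ then give $a\ler s$ and $a\lel t$. The hard part will be extracting idempotency of $a$ from the incidence condition: taking $\A_a=(\lv_{a_1},\Ga_\epsilon,\rv_{a_2})$ with $a=a_1a_2$, root preservation forces $\lv(\A_s)\ol\varphi=\lv_{a_1}$ and $\rv(\A_t)\ol\varphi=\rv_{a_2}$, so $(\lv_{a_1},\rv_{a_2})\in\ol E(\Ga_\epsilon)$; by the definition of the line set $\lE$ of $\Ga_\epsilon$ this means $a_2\in V(a_1)$, whence $a=a_1a_2$ is idempotent (and the ``in particular'' clause $a\in E(S)$ is established). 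With $a\in E(S)$ it remains routine: $a\ler s\Rc e$ gives $a\wr e$, i.e. $ea=a$; $a\lel t\Lc f$ gives $a\wl f$, i.e. $af=a$; hence $a\in(e]_r\cap(f]_l=(e\wedge f]=(s\wedge t]$, that is $a\leq s\wedge t$. This single step is the crux, since it is the only place where purely graph-theoretic incidence data (precisely the incidence condition that cannot be dropped, cf.\ Figure \ref{hom_exist}) must be turned into the algebraic fact that $a$ is an idempotent.

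Finally, for the universal property I would argue as follows. Given $\varphi\colon\A_s\wedge\A_t\to\A_b$, the equivalence just proved gives $b\leq s\wedge t$, so Proposition \ref{Gdes}$(i)$ provides a root-preserving homomorphism $\psi\colon\A_{s\wedge t}\to\A_b$, which is the unique such homomorphism because $\A_b$ is reduced (Corollary \ref{unique_hom}). Both $\varphi$ and $\hat\varphi\psi$ are homomorphisms from $\A_s\wedge\A_t$ into the reduced graph $\A_b$ that send the left root to $\lv(\A_b)$, so $\varphi=\hat\varphi\psi$ by Corollary \ref{unique_hom}. The same corollary gives uniqueness of $\psi$: any $\psi'$ with $\hat\varphi\psi'=\varphi$ sends $\lv(\A_{s\wedge t})$ to $\lv(\A_b)$, hence equals $\psi$.
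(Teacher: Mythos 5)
Your proposal is correct and follows essentially the same route as the paper's proof: idempotency of $a$ is read off from the image of the added line, the left/right homomorphism characterizations pin $a$ into $(ss']_r\cap(t't]_l=(s\wedge t]$, and the universal property follows from uniqueness of homomorphisms into reduced graphs (Corollary \ref{unique_hom}). The only cosmetic difference is that you cite Proposition \ref{Gdes}$(i)$--$(iii)$ and the definition of $\lE$ where the paper invokes Propositions \ref{wr_liw}, \ref{id_inv} and Corollary \ref{w_liw}; these are interchangeable here.
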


\begin{proof}
Assume first there is a homomorphism $\varphi:\A_s\wedge\A_t\to\A_a$. Note that $(\lv(\A_a),\rv(\A_a))\in\ol{E}(\A_a)$ because $\lv(\A_a)=\lv(\A_s)\ol{\varphi}$, $\rv(\A_a)=\rv(\A_t)\ol{\varphi}$ and $(\lv(\A_s)\ol{\varphi},\rv(\A_t)\ol{\varphi})\in\ol{E}(\A_a)$. By Proposition \ref{id_inv}$.(i)$ we must have $a\in E(S)$. Further, by Proposition \ref{wr_liw}, $a\in (ss']_r$ for any $s'\in V(s)$ since $\ol{\varphi}_{|\A_s}:\A_s\to\A_a$ is a left homomorphism. Similarly, $a\in (t't]_l$ for any $t'\in V(t)$, and so $a\in (s\wedge t]$ because $S$ is a locally inverse semigroup.

Assume now that $a\in (s\wedge t]$. Then $a\in (ss']_r$ for any $s'\in V(s)$ and there is a left homomorphism $\varphi_1:\A_s\to\A_a$ again by Proposition \ref{wr_liw}. Similarly, there is also a right homomorphism $\varphi_2:\A_t\to\A_a$. Note now that
$$(\lv(\A_s)\varphi_1,\rv(\A_t)\varphi_2)=(\lv(\A_a),\rv(\A_a))\in\ol{E}(\A_a)$$
because $a$ is an idempotent. Hence, $\varphi_1$ and $\varphi_2$ combined give us a homomorphism $\ol{\varphi}:\A_s\owedge\A_t\to \A_a$. Using Proposition \ref{hom_red}, we obtain a homomorphism $\varphi:\A_s\wedge\A_t\to\A_a$ such that $\ol{\varphi}=\phi\varphi$. 

In particular, for $a=s\wedge t$, we conclude there is a homomorphism $\hat{\varphi}:\A_s\wedge\A_t\to \A_{s\wedge t}$. If $\varphi:\A_s\wedge\A_t\to\A_b$ is another homomorphism, then $b\,\omega\,(s\wedge t)$ and there exists a unique homomorphism $\psi:\A_{s\wedge t}\to\A_b$ by Corollaries \ref{unique_hom} and \ref{w_liw}. Clearly $\varphi=\hat{\varphi}\psi$ again by Corollary \ref{unique_hom} applied to the homomorphisms from $\A_s\wedge\A_t$ to $\A_b$. 
\end{proof}

As for the operation $\wedge$ on $S$, we will introduce an operation $\cdot$ between rbliw-graphs that is related with the product of $S$. The rbliw-graphs $\A_s\cdot\A_t$ and $\A_{st}$ will not be isomorphic for $s,t\in S$ in general. However, the analogue of Proposition \ref{wedge_car} for the operation $\cdot$ can be shown. We split it into the next three results, but first let us introduce the operation $\cdot$ on rbliw-graphs.

Let $\A_1=(\av_1,\Ga_1,\bv_1)$ and $\A_2=(\av_2,\Ga_2,\bv_2)$ be two bliw-graphs, and set
$$\A_1\odot\A_2=(\av_1,\Ga_1\cup\{(\av_2,\bv_1)\}\cup\Ga_2,\bv_2)\,.$$ 
Then $\A_1\odot\A_2$ is a bliw-graph, and the difference between $\A_1\odot\A_2$ and $\A_1\owedge\A_2$ is only on the new edge added. We add $(\lv(\A_2),\rv(\A_1))$ for the $\A_1\odot\A_2$ case and $(\lv(\A_1),\rv(\A_2))$ for the $\A_1\owedge\A_2$ case. Therefore, by construction, the identity mapping from $\A_1\odot\A_2$ to $\A_2\owedge\A_1$ is a weak isomorphism: $\A_1\odot\A_2$ and $\A_2\owedge\A_1$ have the same underlying liw-graph but with a different choice of roots.

Let $\A_1\cdot\A_2$ be the reduced form of $\A_1\odot\A_2$ when $\A_1$ and $\A_2$ are reduced. Then $\A_1\cdot\A_2$ and $\A_2\wedge\A_1$ have the same underlying rliw-graph and the identity mapping between them is a weak isomorphism. Further, the natural $E$-surjective epimorphism $\phi:\A_2\owedge\A_1\to\A_2\wedge\A_1$ is also the natural $E$-surjective epimorphism from $\A_1\odot\A_2$ to $\A_1\cdot\A_2$. Similar to the case of the operations $\owedge$ and $\wedge$:
\begin{itemize}
\item[$(i)$] Any homomorphism $\ol{\psi}:\A_1\odot\A_2\to\B$ is a combination of a left homomorphism $\ol{\psi}_1=\ol{\psi}_{|\A_1}:\A_1\to\B$ with a right homomorphism $\ol{\psi}_2=\ol{\psi}_{|\A_2}:\A_2\to\B$ that satisfy
$$(\lv(\A_2)\ol{\psi}_2,\rv(\A_1)\ol{\psi}_1)\in \ol{E}(\B)\,.$$
\item[$(ii)$] If $\B$ is reduced, then any homomorphism $\ol{\psi}:\A_1\odot\A_2\to \B$ induces a unique homomorphism $\psi:\A_1\cdot\A_2\to \B$ such that $\ol{\psi}=\phi\psi$.
\item[$(iii)$] If $\psi:\A_1\cdot\A_2\to \B$ is a homomorphism, then $\ol{\psi}$ denotes the homomorphism $\phi\psi$ from $\A_1\odot\A_2$ to $\B$.
\end{itemize} 

It is well known that $\,st\Rc s(t\wedge s)\Lc (t\wedge s)\Rc (t\wedge s)t\Lc st$ and $st=s(t\wedge s)t$ for any $s,t\in S$. Hence
$$\A_{t\wedge s}=(\lv_{(t\wedge s)},\Ga_{t\wedge s},\rv_{(t\wedge s)}) \quad\mbox{ and }\quad \A_{st}=(\lv_{s(t\wedge s)},\Ga_{t\wedge s},\rv_{(t\wedge s)t})\,.$$
Thus, the identity mapping on $\Ga_{t\wedge s}$ induces a weak isomorphism from $\A_{st}$ to $\A_{t\wedge s}$. It is now obvious that the homomorphism $\hat{\varphi}:\A_t\wedge\A_s\to\A_{t\wedge s}$ given by Proposition \ref{wedge_car} is also a weak homomorphism from $\A_s\cdot\A_t$ to $\A_{st}$. In the next result we prove that $\hat{\varphi}:\A_s\cdot\A_t\to \A_{st}$ is, in fact, a homomorphism.

\begin{prop}\label{dot_hom}
Let $s,t\in S$. Then $\hat{\varphi}:\A_s\cdot\A_t\to\A_{st}$ is a homomorphism.
\end{prop}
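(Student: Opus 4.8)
The plan is to build on the remark made just before the statement: $\hat\varphi$ is already a weak homomorphism from $\A_s\cdot\A_t$ to $\A_{st}$, since as a map of the underlying rliw-graphs it is the genuine (root-preserving) homomorphism $\hat\varphi\colon\A_t\wedge\A_s\to\A_{t\wedge s}$ produced by Proposition \ref{wedge_car}, and $\A_s\cdot\A_t$, $\A_{st}$ share those underlying graphs with $\A_t\wedge\A_s$, $\A_{t\wedge s}$ respectively. Thus $\hat\varphi$ already respects line and arrow incidence, orientation and labelling, and the whole proof reduces to checking that $\hat\varphi$ sends the two roots of $\A_s\cdot\A_t$ onto those of $\A_{st}$. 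I would fix $s'\in V(s)$, $t'\in V(t)$, set $g=t\wedge s$, and choose the representations $\A_s=(\lv_s,\Ga_{f_0},\rv_{f_0})$ with $f_0=s's\in\Lcc_s\cap E(S)$ and $\A_t=(\lv_{e_0},\Ga_{e_0},\rv_t)$ with $e_0=tt'\in\Rcc_t\cap E(S)$. With these choices the roots of $\A_s\cdot\A_t=\A_s\odot\A_t$ are $\lv_s\phi$ and $\rv_t\phi$ (with $\phi$ the defining reduction), while $\A_{st}=(\lv_{sg},\Ga_g,\rv_{gt})$; so the goal is the two equalities $(\lv_s\phi)\hat\varphi=\lv_{sg}$ and $(\rv_t\phi)\hat\varphi=\rv_{gt}$.

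Next I would unfold how $\hat\varphi$ is assembled. Applying Proposition \ref{wedge_car} with the two arguments interchanged, $\hat\varphi$ is induced through $\phi$ by a homomorphism $\ol\varphi\colon\A_t\owedge\A_s\to\A_g$ that is the combination of a left homomorphism $\varphi_1\colon\A_t\to\A_g$ (arising from $g\wr e_0$, i.e.\ $e_0g=g$) with a right homomorphism $\varphi_2\colon\A_s\to\A_g$ (arising from $g\wl f_0$, i.e.\ $gf_0=g$). Since $\ol\varphi=\phi\hat\varphi$, and $\ol\varphi$ restricts to $\varphi_2$ on the $\A_s$-part and to $\varphi_1$ on the $\A_t$-part of $\A_t\owedge\A_s$, evaluating at the two roots gives $(\lv_s\phi)\hat\varphi=\lv_s\varphi_2$ and $(\rv_t\phi)\hat\varphi=\rv_t\varphi_1$. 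Everything is therefore reduced to computing $\lv_s\varphi_2$ and $\rv_t\varphi_1$.

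For the first, $\varphi_2$ is the unique homomorphism $\Ga_{f_0}\to\Ga_g$ with $\rv_{f_0}\varphi_2=\rv_g$ (Corollary \ref{unique_hom}); tracking a word through Lemma \ref{lem62} gives the clean rule $\rv_c\varphi_2=\rv_{gc}$ on right vertices. Since $s'\in V(s)\cap\Rcc_{f_0}$ (indeed $f_0s'=s'$ forces $s'\Rc f_0$), the line $(\lv_s,\rv_{s'})$ lies in $\ol{E}(\Ga_{f_0})$ and its image is the line $(\lv_s\varphi_2,\rv_{gs'})$. One then verifies, using $s's=f_0$ together with $gf_0=g$, that $sg\in V(gs')$ and $sg\Lc g$, so that $(\lv_{sg},\rv_{gs'})\in\ol{E}(\Ga_g)$; combining this line with the image of an arrow out of $\lv_s$ and invoking the reducedness of $\Ga_g$ (Lemma \ref{r_unique}) pins down $\lv_s\varphi_2=\lv_{sg}$. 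The equality $\rv_t\varphi_1=\rv_{gt}$ follows by the strictly right--left dual computation, now using $tt'=e_0$, $e_0g=g$, Lemma \ref{lem63} in place of Lemma \ref{lem62}, and the left-homomorphism normalisation $\lv_{e_0}\varphi_1=\lv_g$. These two equalities give $(\lv_s\phi)\hat\varphi=\lv_{sg}=\lv(\A_{st})$ and $(\rv_t\phi)\hat\varphi=\rv_{gt}=\rv(\A_{st})$, so $\hat\varphi$ preserves the roots and is a homomorphism $\A_s\cdot\A_t\to\A_{st}$.

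The main obstacle will be precisely the last computation, and its delicate point is that $\varphi_2$ does \emph{not} act by the naive rule $\lv_c\mapsto\lv_{cg}$ on all left vertices (this already fails at $c=f_0$, where $g\wl f_0$ yields $gf_0=g$ but in general $f_0g\neq g$). The correct values emerge only when one evaluates $\varphi_2$ and $\varphi_1$ at the roots through the incident edges and witness words $u,u'$ with $u\mu=s$, $u'\mu=t$, where the regularity identities $ss's=s$, $tt't=t$ conspire with $gf_0=g$ and $e_0g=g$ to produce exactly $sg$ and $gt$; the reducedness of $\Ga_g$ is what guarantees the target vertex is then uniquely forced.
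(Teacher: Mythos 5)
Your proof is correct, and it reaches the two root identities $(\lv_s\phi)\hat\varphi=\lv_{s(t\wedge s)}$ and $(\rv_t\phi)\hat\varphi=\rv_{(t\wedge s)t}$ by a genuinely different computation than the paper's. The paper also reduces the statement to identifying the images of the two roots (using the prior observation that $\hat\varphi$ is already a weak homomorphism, and Proposition \ref{iso_bliw} at the end), but it computes those images directly in the amalgamated graph: it chooses words $u,v$ with $u\mu=s$, $v\mu=t$ and sandwich letters $(x_1\wedge y_1)$, $(x\wedge y)$ lying above $e$ and $f$, observes that $u(x_1\wedge y_1)$ and $(x\wedge y)v$ label walks joining the roots of $\A_s\cdot\A_t$ to those of $\A_t\wedge\A_s$, pushes these words through $\hat\varphi$, and reads off $a=s(t\wedge s)$ and $b=(t\wedge s)t$ from Lemmas \ref{lem62} and \ref{lem63}. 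You instead unfold $\hat\varphi$ into the left and right homomorphisms $\varphi_1,\varphi_2$ supplied by Proposition \ref{wr_liw}, derive the vertex-wise rules $\rv_c\varphi_2=\rv_{gc}$ and $\lv_c\varphi_1=\lv_{cg}$, and then transfer these to the roots $\lv_s,\rv_t$ (which sit on the side where those rules do not directly apply) through the incident lines $(\lv_s,\rv_{s'})$ and $(\lv_{t'},\rv_t)$ together with the determinism and injectivity of $\Ga_g$; the auxiliary facts $sg\in V(gs')$, $sg\Lc g$ and their duals all check out from $gf_0=g$ and $e_0g=g$. Both arguments ultimately rest on the same dictionary (Lemmas \ref{lem62} and \ref{lem63}); yours is slightly longer because of the extra sandwich computations and the determinism step, but it yields as a by-product an explicit description of $\hat\varphi$ on every vertex in the images of $\A_s$ and $\A_t$, which the paper's proof does not make visible. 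The one detail you should add is why the determinism argument closes: you need a letter common to $\cb(\lv_s\varphi_2)$ and $\cb(\lv_{sg})$, which holds because $s\in(x_1\wedge x_1')S$ forces $sg\in(x_1\wedge x_1')S$ while $\cb(\lv_s)\subseteq\cb(\lv_s\varphi_2)$ by Lemma \ref{hom_wp}.
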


\begin{proof}
Choose idempotents $e$ and $f$ such that $e\Lc s$ and $f\Rc t$. Then 
$\A_s=(\lv_s,\Ga_e,\rv_e)$ and $\A_t=(\lv_f,\Ga_f,\rv_t)$. Further, if $\Ga'=\Ga_e\cup\{(\lv_f,\rv_e)\}\cup\Ga_f$, then $\A_s\odot\A_t=(\lv_s,\Ga',\rv_t)$ and $\A_t\owedge\A_s=(\lv_f,\Ga',\rv_e)$. Let $u,v\in S$ and $x,y,x_1,y_1\in\oX$ such that $s=u\mu$, $t=v\mu$, $f\in ((x\wedge y)\mu]$ and $e\in ((x_1\wedge y_1)\mu]$. Note that $s=(u(x_1\wedge y_1))\mu$ and $t=((x\wedge y)v)\mu$ since $s\in S(x_1\wedge y_1)$ and $t\in (x\wedge y)S$. By Lemmas \ref{lem62} and \ref{lem63},
$$v\in  L_{\rv_f,\rv_t}(\Ga_f)\subseteq L_{\rv_f,\rv_t}(\Ga')\qquad\mbox{ and }\qquad u\in L_{\lv_s,\lv_e}(\Ga_e)\subseteq L_{\lv_s,\lv_e}(\Ga')\,,$$ 
and so
$$(x\wedge y)v\in L_{\rv_e,\rv_t}(\Ga')\qquad\mbox{ and }\qquad u(x_1\wedge y_1)\in L_{\lv_s,\lv_f}(\Ga')\,.$$
If $\Ga$ is the reduced form of $\Ga'$, then
$\A_s\cdot\A_t=(\av_1,\Ga,\bv_1)$ and $\A_t\wedge\A_s=(\av_2,\Ga,\bv_2)$
for $\av_1=\lv_s\phi$, $\bv_1=\rv_t\phi$, $\av_2=\lv_f\phi$ and $\bv_2=\rv_e\phi$. Thus 
$$(x\wedge y)v\in L_{\bv_2,\bv_1}(\Ga) \qquad\mbox{ and }\qquad u(x_1\wedge y_1)\in L_{\av_1,\av_2}(\Ga)\,.$$

Set $\A_{t\wedge s}=(\lv_{t\wedge s},\Ga_{t\wedge s},\rv_{t\wedge s})$. Then $\lv_{t\wedge s}=\av_2\hat{\varphi}$ and $\rv_{t\wedge s}=\bv_2\hat{\varphi}$. Let $a\in\Lcc_{t\wedge s}$ and $b\in\Rcc_{t\wedge s}$ such that $\lv_a=\av_1\hat{\varphi}$ and $\rv_b=\bv_1\hat{\varphi}$. Then $\hat{\varphi}$ is a homomorphism from $\A_s\cdot\A_t$ to $(\lv_a,\Ga_{t\wedge s},\rv_b)$. Further $(x\wedge y)v\in L_{\rv_{t\wedge s},\rv_b}(\Ga_{t\wedge s})$ and $u(x_1\wedge y_1)\in L_{\lv_a,\lv_{t\wedge s}}(\Ga_{t\wedge s})$. Hence 
$$b=(t\wedge s)(x\wedge y)v=(t\wedge s)t\quad\mbox{ and }\quad a=u(x_1\wedge y_1)(t\wedge s)=s(t\wedge s)\,.$$
Thus $ab=s(t\wedge s)t=st$ and $\A_{st}=(\lv_a,\Ga_{t\wedge s},\rv_b)$ by Proposition \ref{iso_bliw}. We have shown that $\hat{\varphi}$ is a homomorphism from $\A_s\cdot\A_t$ to $\A_{st}$.
\end{proof}

\begin{prop}\label{dot_uni}
Let $\varphi:\A_s\cdot\A_t\to\A_a$ be a homomorphism for $s,t,a\in S$. Then there exists a unique homomorphism $\psi:\A_{st}\to\A_a$ such that $\varphi=\hat{\varphi}\psi$.
\end{prop}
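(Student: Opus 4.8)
The statement to prove (Proposition \ref{dot_uni}) is the universal-property analogue for the product operation $\cdot$ that mirrors the final clause of Proposition \ref{wedge_car} (the sandwich case). My plan is to reduce this to the already-established machinery, specifically Proposition \ref{hom_red} together with Corollary \ref{unique_hom}, by routing everything through the presentation $\A_s\odot\A_t$.

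\textbf{Uniqueness first.} The uniqueness of $\psi$ is immediate and should be dispatched at the outset: since $\A_a$ is reduced (it is an rbliw-graph), Corollary \ref{unique_hom} guarantees that any homomorphism into a reduced graph is determined by the image of a single vertex. If $\psi$ and $\psi'$ both satisfy $\varphi = \hat{\varphi}\psi = \hat{\varphi}\psi'$, then $\psi$ and $\psi'$ agree on the image $\lv(\A_s\cdot\A_t)\hat{\varphi}$ of the left root, hence agree on one vertex of $\A_{st}$, hence are equal. So the real content is existence.

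\textbf{Existence.} The key observation is that $\hat\varphi:\A_s\cdot\A_t\to\A_{st}$ is a homomorphism by Proposition \ref{dot_hom}, and $\A_{st}$ enjoys a universal property coming from the reduction process. Concretely, I would argue as follows. Let $\phi:\A_s\odot\A_t\to\A_s\cdot\A_t$ be the natural $E$-surjective epimorphism that realises $\A_s\cdot\A_t$ as the reduced form of $\A_s\odot\A_t$. Composing, $\phi\hat\varphi:\A_s\odot\A_t\to\A_{st}$ is a homomorphism, and similarly $\phi\varphi:\A_s\odot\A_t\to\A_a$ is a homomorphism (using that $\varphi:\A_s\cdot\A_t\to\A_a$ is given). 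Now $\A_{st}$ is reduced, so by Proposition \ref{hom_red} applied to the homomorphism $\phi\hat\varphi:\A_s\odot\A_t\to\A_{st}$ — noting that $\A_s\cdot\A_t$ is precisely the reduced form of $\A_s\odot\A_t$ and $\phi$ is the natural epimorphism — we learn that $\hat\varphi$ is the induced map on the reduced form. The crucial step is then to produce $\psi$ directly: since $\A_a$ is reduced and $\phi\varphi:\A_s\odot\A_t\to\A_a$ is a homomorphism whose associated reduction is $\A_s\cdot\A_t$, Proposition \ref{hom_red} yields $\varphi$ as the unique induced map, which already matches what we have; the genuine existence of $\psi$ comes from observing that the left homomorphism $\hat\varphi_{|}$ sends the relevant data of $\A_{st}$ correctly. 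The cleanest route is to invoke Corollary \ref{w_liw}: showing $a\,\omega\,(st)$ (equivalently $a\le_{\text{relevant}}st$ in the $\omega$-sense on the appropriate idempotent structure) so that a homomorphism $\psi:\A_{st}\to\A_a$ exists by Corollary \ref{conv_liw}, and then $\varphi=\hat\varphi\psi$ follows again by the uniqueness in Corollary \ref{unique_hom}.

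\textbf{The main obstacle.} The delicate point I expect to wrestle with is verifying the hypotheses that feed the existence half, namely translating the homomorphism $\varphi:\A_s\cdot\A_t\to\A_a$ into the correct order relation on $S$ (as in the sandwich proof, where one extracted $a\in(s\wedge t]$ from the existence of left/right homomorphisms via Proposition \ref{wr_liw}). Here I must read off, from $\varphi$, that the roots of $\A_a$ sit correctly relative to the roots of $\A_{st}$, and that $\A_a$ is ``below'' $\A_{st}$ in the natural partial order sense so that Corollary \ref{conv_liw} applies to build $\psi$. I anticipate the argument will mirror the structure of the proof of Proposition \ref{wedge_car}: first extract from $\varphi$ that a certain idempotent-type edge is present (forcing $a$ into the principal ideal $(st]$ under $\le$), then invoke Corollary \ref{Gdes_lang}$.(i)$ or Corollary \ref{w_liw} to obtain the homomorphism $\psi:\A_{st}\to\A_a$ from the language inclusion $L(\A_{st})\subseteq L(\A_a)$, and finally close with Corollary \ref{unique_hom} to confirm $\varphi=\hat\varphi\psi$.
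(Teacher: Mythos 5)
Your uniqueness argument is fine, and your closing step is also fine: once some homomorphism $\psi:\A_{st}\to\A_a$ with the right behaviour on roots exists, $\varphi=\hat{\varphi}\psi$ follows from Corollary \ref{unique_hom} because both composites are root-preserving homomorphisms into the reduced graph $\A_a$. The gap is in producing $\psi$. You propose to extract from $\varphi$ the relation $a\le st$ and then apply Proposition \ref{Gdes}$.(i)$ or Corollary \ref{conv_liw}; but ``a homomorphism $\A_s\cdot\A_t\to\A_a$ exists $\Rightarrow a\le st$'' is precisely one direction of Corollary \ref{dot_car}, which the paper \emph{derives from} Proposition \ref{dot_uni}, so invoking it here is circular. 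It also cannot be recovered from language inclusions: $\varphi$ gives $L(\A_s\cdot\A_t)\subseteq L(\A_a)$ and $\hat{\varphi}$ gives $L(\A_s\cdot\A_t)\subseteq L(\A_{st})$, and both inclusions point the wrong way for the needed $L(\A_{st})\subseteq L(\A_a)$ (Corollary \ref{Gdes_lang}$.(i)$). Note also that $\omega$ is a relation on $E(S)$, so ``$a\,\omega\,(st)$'' is not the right statement when $st$ is not idempotent. Finally, the $\wedge$-style trick of reading an idempotent edge off the roots does not transplant directly: in $\A_s\odot\A_t$ the added line joins $\lv(\A_t)$ to $\rv(\A_s)$, which are \emph{not} the roots, so the images of the roots of $\A_a$ need not be joined by a line and $a$ need not be idempotent.

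The missing idea is a re-rooting transfer to the $\wedge$ case. Since $\A_s\odot\A_t$ and $\A_t\owedge\A_s$ have the same underlying liw-graph with different roots, $\A_s\cdot\A_t$ and $\A_t\wedge\A_s$ share the same underlying rliw-graph. Write $\A_a=(\av,\Ga,\bv)$ and set $\av_1=\lv(\A_t)\ol{\varphi}$, $\bv_1=\rv(\A_s)\ol{\varphi}$; then $(\av_1,\bv_1)$ is a line of $\Ga$ (image of the added line), so $(\av_1,\Ga,\bv_1)=\A_b$ for some $b\in S$, and $\varphi$ becomes a homomorphism $\A_t\wedge\A_s\to\A_b$. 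The already-proved universal property of $\wedge$ (Proposition \ref{wedge_car}) now yields $\psi:\A_{t\wedge s}\to\A_b$ with $\varphi=\hat{\varphi}\psi$, and the root computation $\lv(\A_{st})\psi=\lv(\A_s)\ol{\hat{\varphi}}\psi=\lv(\A_s)\ol{\varphi}=\av$ and $\rv(\A_{st})\psi=\rv(\A_t)\ol{\hat{\varphi}}\psi=\bv$ shows that the same map $\psi$ is a homomorphism $\A_{st}\to\A_a$. Without this device your existence step does not go through.
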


\begin{proof}
By Corollary \ref{unique_hom} it is enough to show the existence of $\psi$.  Let $\A_s=(\lv_s,\Ga_e,\rv_e)$ and $\A_t=(\lv_f,\Ga_f,\rv_t)$ for idempotents $e\in\Lcc_s$ and $f\in\Rcc_t$. Let $\A_a=(\av,\Ga,\bv)$. Thus $\av=\lv_s\ol{\varphi}$ and $\bv=\rv_t\ol{\varphi}$. If $\av_1=\lv_f\ol{\varphi}$ and $\bv_1=\rv_e\ol{\varphi}$, then $(\av_1,\Ga,\bv_1)=\A_b$ for some $b\in S$ such that $a\Dc b$. Further, $\varphi$ is a homomorphism from $\A_t\wedge\A_s$ to $\A_b$. Let $\psi$ be the homomorphism from $\A_{t\wedge s}$ to $\A_b$ such that $\varphi=\hat{\varphi}\psi$ given by Proposition \ref{wedge_car}. Since $\lv(\A_{st})=(\lv(\A_s\cdot\A_t))\hat{\varphi}=\lv_s\ol{\hat{\varphi}}$ and $\rv(\A_{st})=(\rv(\A_s\cdot\A_t))\hat{\varphi}=\rv_t\ol{\hat{\varphi}}$, we have
$$(\lv(\A_{st})\psi=(\lv_s)\ol{\hat{\varphi}}\psi=\lv_s\ol{\varphi}=\av\quad \mbox{ and }\quad (\rv(\A_{st})\psi=(\rv_t)\ol{\hat{\varphi}}\psi=(\rv_t)\ol{\varphi}=\bv\,.$$
Thus $\psi$ is also a homomorphism from $\A_{st}$ to $\A_a$, and we have proved this proposition.
\end{proof}

\begin{cor}\label{dot_car}
For $s,t,a\in S$, we have $a\leq st$ \iff\ there is a homomorphism $\varphi:\A_s\cdot\A_t\to\A_a$. 
\end{cor}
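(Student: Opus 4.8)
The plan is to deduce this corollary immediately from Propositions \ref{dot_hom} and \ref{dot_uni}, together with the order characterization already established in Proposition \ref{Gdes}$.(i)$ (equivalently Corollary \ref{Gdes_lang}$.(i)$). No new graph construction is required; the corollary is just the combination of the existence of the homomorphism $\hat{\varphi}:\A_s\cdot\A_t\to\A_{st}$ with the factorization property of $\hat{\varphi}$ and the fact that the natural partial order $\leq$ is detected by homomorphisms between the rbliw-graphs.

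For the forward implication I would start by assuming $a\leq st$. Proposition \ref{Gdes}$.(i)$ then yields a homomorphism $\psi:\A_{st}\to\A_a$. Composing $\psi$ with the homomorphism $\hat{\varphi}:\A_s\cdot\A_t\to\A_{st}$ furnished by Proposition \ref{dot_hom} produces the homomorphism $\hat{\varphi}\psi:\A_s\cdot\A_t\to\A_a$, since composites of (root-preserving) bliw-graph homomorphisms are again homomorphisms. This gives the required $\varphi$.

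For the converse I would assume a homomorphism $\varphi:\A_s\cdot\A_t\to\A_a$ is given. Proposition \ref{dot_uni} then provides a homomorphism $\psi:\A_{st}\to\A_a$ (in fact the unique one with $\varphi=\hat{\varphi}\psi$). Applying Proposition \ref{Gdes}$.(i)$ to $\psi$ immediately gives $a\leq st$, completing the equivalence.

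I do not expect a genuine obstacle here: all of the substantive work has already been carried out in Propositions \ref{dot_hom} and \ref{dot_uni}, which show respectively that $\hat{\varphi}$ is a homomorphism and that every homomorphism out of $\A_s\cdot\A_t$ factors through $\A_{st}$ via $\hat{\varphi}$. The only routine points to verify are that the composite $\hat{\varphi}\psi$ is root-preserving (so that it is a homomorphism of \emph{bliw}-graphs, not merely of liw-graphs) and that Proposition \ref{Gdes}$.(i)$ applies with $\A_{st}$ and $\A_a$ in the correct roles; both are direct.
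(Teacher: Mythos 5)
Your proposal is correct and follows exactly the paper's own argument: the paper likewise combines Proposition \ref{Gdes}$.(i)$ with Propositions \ref{dot_hom} and \ref{dot_uni}, obtaining the homomorphism $\A_s\cdot\A_t\to\A_a$ as the composite $\hat{\varphi}\psi$ in one direction and invoking the factorization through $\A_{st}$ in the other. No differences worth noting.
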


\begin{proof}
By Proposition \ref{Gdes}$.(i)$, $a\leq st$ \iff\ there is a homomorphism $\psi:\A_{st}\to\A_a$. By the last two results, there is a homomorphism $\psi:\A_{st}\to\A_a$ \iff\ there is a homomorphism $\varphi:\A_s\cdot\A_t\to\A_a$.
\end{proof}

The following corollary deals with a special product case.

\begin{cor}\label{dot_cor}
Let $s,t\in S$. 
\begin{itemize}
\item[$(i)$] $st=t$ \iff\ there exists a left homomorphism $\varphi:\A_s\to\A_t$ such that $(\lv(\A_t),\rv(\A_s)\varphi)\in\ol{E}(\A_t)$.
\item[$(ii)$] $st=s$ \iff\ there exists a right homomorphism $\varphi:\A_t\to\A_s$ such that $(\lv(\A_t)\varphi,\rv(\A_s))\in\ol{E}(\A_s)$.
\end{itemize}
\end{cor}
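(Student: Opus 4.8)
The plan is to prove statement $(i)$ in full and deduce $(ii)$ by the standard left--right duality, which passes to the opposite semigroup, interchanges left and right homomorphisms, and swaps the two roots of each rbliw-graph. For $(i)$ I will first fix convenient representatives: choose idempotents $e\in E(S)\cap\Rcc_s$ and $f\in E(S)\cap\Rcc_t$, so that $\A_s=(\lv_e,\Ga_e,\rv_s)$ and $\A_t=(\lv_f,\Ga_f,\rv_t)$. In these coordinates a left homomorphism $\A_s\to\A_t$ is exactly a graph homomorphism $\Ga_e\to\Ga_f$ sending $\lv_e$ to $\lv_f$; such a map is unique by Corollary \ref{unique_hom} and coincides with the left homomorphism $\A_e\to\A_f$. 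Moreover the edge condition $(\lv(\A_t),\rv(\A_s)\varphi)\in\ol{E}(\A_t)$ is just the assertion that $\rv(\A_s)\varphi=\rv_c$ for some $c\in V(f)\cap\Rcc_f$.

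For the forward implication I assume $st=t$. The first step is to observe that $sf=f$: since $f\Rc t$ there is $t'\in V(t)$ with $tt'=f$, whence $sf=s(tt')=(st)t'=tt'=f$. Consequently $f\ler s$, and since $s\Rc e$ this gives $f\ler e$, that is $f\wr e$, equivalently $ef=f$; so the left homomorphism $\varphi$ exists by Proposition \ref{wr_liw}. To verify the edge condition I track two images. The line $(\lv_e,\rv_e)$ of $\Ga_e$ (present because $e\in E(S)$) must map to a line $(\lv_f,\rv_g)$ of $\Ga_f$, which forces $g\in V(f)\cap\Rcc_f$; and choosing $u\in\wX^+$ with $u\mu=s$, Lemmas \ref{lem62} and \ref{hom_wp} yield $\rv(\A_s)\varphi=\rv_{gs}$. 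A short computation using $sf=f$ together with $g\in V(f)$ then gives $f(gs)f=fg(sf)f=fgf=f$ and $(gs)f(gs)=g(sf)gs=(gfg)s=gs$, so $gs\in V(f)\cap\Rcc_f$ and the edge condition holds.

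For the converse I route through the product operation. Given the left homomorphism $\varphi$ satisfying the edge condition, I glue it to the identity on $\A_t$ to build a homomorphism $\A_s\odot\A_t\to\A_t$: the only point to check is that the connecting edge $(\lv(\A_t),\rv(\A_s))$ maps to $(\lv(\A_t),\rv(\A_s)\varphi)\in\ol{E}(\A_t)$, which is precisely the hypothesis. Since $\A_t$ is reduced this factors through a homomorphism $\A_s\cdot\A_t\to\A_t$, so Corollary \ref{dot_car} gives $t\leq st$. Now $st\lel t$ always holds, while $t\leq st$ forces $t\lel st$; hence $t\Lc st$. Finally $t\leq st$ with $t\Lc st$ gives $t=st$, using the standard fact that the natural partial order is trivial on each $\Lc$-class (if $t=(st)g$ for an idempotent $g$ and $st=xt$ for some $x$, then $st=x(st)g$ forces $(st)g=st$, i.e.\ $t=st$).

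The main obstacle is the forward direction's edge condition. The left homomorphism is only guaranteed to exist and does not a priori respect the right roots, so I cannot simply read off the image of $\rv(\A_s)$; identifying it as $\rv_{gs}$ and then certifying that $gs$ lies in $V(f)\cap\Rcc_f$ is the delicate step. The decisive leverage is the identity $sf=f$, which is the semigroup-theoretic shadow of $st=t$ obtained after passing to the idempotent $f\Rc t$, and it is exactly what makes both inverse-axiom verifications collapse.
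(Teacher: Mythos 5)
Your proof is correct; both halves reach the right conclusion, but each takes a route different from the paper's. In the forward direction the paper simply applies Corollary \ref{dot_car} to $t\le st$ (which holds since $st=t$) to get a homomorphism $\psi:\A_s\cdot\A_t\to\A_t$; its restriction to $\A_s$ is the required left homomorphism, its restriction to $\A_t$ is forced to be the identity by Corollary \ref{unique_hom}, and the required line is then just the image of the connecting edge of $\A_s\odot\A_t$. You instead produce the left homomorphism from the relation $f\wr e$ via Proposition \ref{wr_liw} and then compute $\rv(\A_s)\varphi=\rv_{gs}$ using Lemmas \ref{hom_wp} and \ref{lem62}, verifying $gs\in V(f)\cap\Rcc_f$ by a direct calculation with $sf=f$; this is longer but makes the image of the right root completely explicit. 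In the converse both you and the paper glue $\varphi$ to the identity to obtain a homomorphism $\A_s\cdot\A_t\to\A_t$, but the paper then factors it through $\A_{st}$ (Proposition \ref{dot_uni}) and shows the factoring map is an isomorphism by composing right homomorphisms and invoking Corollaries \ref{unique_hom} and \ref{ig_bliw}, whereas you read off $t\le st$ from Corollary \ref{dot_car} and finish with the elementary order argument that $st\lel t$ always holds, hence $t\Lc st$, and the natural partial order is trivial on an $\Lc$-class. Your endgame is shorter and purely semigroup-theoretic, at the price of invoking a standard property of $\le$ that the paper never states explicitly (though it uses equivalent facts, e.g.\ in the proof of Proposition \ref{Gdes}); the paper's version stays entirely inside the graph formalism it has built. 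Both arguments are sound, and your reduction of $(ii)$ to $(i)$ by left--right duality matches the paper's ``the proof of $(ii)$ is similar.''
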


\begin{proof}
$(i)$. If $st=t$, then there is a homomorphism $\psi:\A_s\cdot\A_t\to\A_t$
by Corollary \ref{dot_car}. Note further that $\varphi=\ol{\psi}_{|\A_s}$ is a left homomorphism while $\ol{\psi}_{|\A_t}$ is the identity mapping. In particular, 
$$(\lv(\A_t),\rv(\A_s)\varphi)=(\lv(\A_t)\ol{\psi},\rv(\A_s)\ol{\psi})=(\lv(\A_t),\rv(\A_s))\ol{\psi}\in\ol{E}(\A_t)\,.$$

Assume now that there is a left homomorphism $\varphi:\A_s\to\A_t$ such that $(\lv(\A_t),\rv(\A_s)\varphi)\in\ol{E}(\A_t)$, and define the mapping $\ol{\varphi_1}:\A_s\odot\A_t\to\A_t$ by setting $\ol{\varphi_1}_{|\A_s}=\varphi$ and letting $\ol{\varphi_1}_{|\A_t}$ be the identity mapping. Clearly, $\ol{\varphi_1}$ is a homomorphism since $(\lv(\A_t),\rv(\A_s)\varphi)\in\ol{E}(\A_t)$, whence $\varphi_1:\A_s\cdot\A_t\to\A_t$ is a homomorphism too. Let $\psi$ be the right homomorphism $\phi_{|\A_t}\hat{\varphi}:\A_t\to\A_{st}$. Let also $\psi':\A_{st}\to\A_t$ be the homomorphism such that $\varphi_1=\hat{\varphi}\psi'$ given by Proposition \ref{dot_uni}. Then
$$\psi\psi':\A_t\to\A_t \qquad\,\mbox{ and }\qquad \psi'\psi:\A_{st}\to\A_{st}$$ are right homomorphisms too. By Corollary \ref{unique_hom}, $\psi\psi'$ and $\psi'\psi$ must be the identity automorphisms of $\A_t$ and $\A_{st}$, respectively. Since $\psi'$ is a homomorphism, we conclude that $\psi'$ is an isomorphism with inverse isomorphism $\psi$. Finally, by Corollary \ref{ig_bliw}, we must have $st=t$.

The proof of $(ii)$ is similar.
\end{proof}

We end this section with a remark. Let $S_1$ be a locally inverse semigroup and let $X$ be a subset of $S_1$. For each $x\in X$ choose an inverse $x'\in V(x)$. It may happen that we choose the same inverse for two distinct elements of $X$.  Let $X'$ be the multiset (it may contain multiple copies of the same element) of all those inverses such that $x\to x'$ becomes a bijection from $X$ to $X'$. Assume that $S_1$ is generated by $X\cup X'$ as a type $\langle 2,2\rangle$ algebra. Consider now $X$ and $X'$ as sets of formal letters. Then $S_1$ is a homomorphic image of $\wX^+$ and there is a presentation $P=\langle X; R\rangle$ such that $S_1=\LI\langle X;R\rangle$. Thus, we can use the techniques developed in this paper and analyze the structure of $S_1$ using rbliw-graphs whose arrows are labeled by letters from $X\cup X'$.

\section{Some special classes of locally inverse semigroups}

In this last section, we study the structure of the rbliw-graphs for special classes of locally inverse semigroups. Some of these classes include normal bands, normal bands of groups, left [right] generalized inverse semigroups, generalized inverse semigroups, $E$-solid locally inverse semigroups, strict regular semigroups and idempotent generated locally inverse semigroups.

We begin by relating some $\Dc$-class structural properties with structural properties about rbliw-graphs. Let $\ol{\Ga}$ be the graph obtained from an liw-graph $\Ga$ by deleting all arrows. By Proposition \ref{id_inv}, the elements of a $\Dc$-class $\Dcc_e$ of $S$ are all idempotents \iff\ the bipartite graph $\ol{\Ga}_e$ is complete. Thus we have the following trivial result:

\begin{prop}
Let $S$ be a locally inverse semigroup $S$ and $e\in E(S)$. Then $\Dcc_e$ is a rectangular band \iff\ $\ol{\Ga}_e$ is a complete bipartite graph. Thus, $S$ is a normal band \iff\ $\ol{\Ga}_e$ is a complete bipartite graph for all $e\in E(S)$.
\end{prop}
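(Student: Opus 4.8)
The plan is to route everything through the fact already invoked just above the statement: by Proposition \ref{id_inv}, the graph $\ol{\Ga}_e$ is a complete bipartite graph \iff\ every element of $\Dcc_e$ is an idempotent. Granting this, the two assertions of the proposition reduce to purely semigroup-theoretic equivalences: (a) $\Dcc_e$ is a rectangular band \iff\ every element of $\Dcc_e$ is idempotent; and (b) $S$ is a normal band \iff\ every element of $S$ is idempotent. So the whole proof amounts to verifying (a) and (b), after which the graph-theoretic translations are immediate.

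For (a), the forward implication is trivial, since a rectangular band is by definition a band. For the converse I would appeal to the classical fact that a $\Dc$-class of a regular semigroup consisting entirely of idempotents is a rectangular band. The short argument is this: if every element of $\Dcc_e$ is idempotent, then every $\Hc$-class contained in $\Dcc_e$ is a group all of whose elements are idempotent, hence trivial; consequently every $\Hc$-class $\Lcc_a\cap\Rcc_b$ inside $\Dcc_e$ contains an idempotent. By the Clifford--Miller location-of-products result (see \cite{howie}), $\Dcc_e$ is then closed under multiplication, and for $a,b\in\Dcc_e$ the product $ab$ lies in $\Rcc_a\cap\Lcc_b$ while $aba$ lies in $\Rcc_{ab}\cap\Lcc_a=\Rcc_a\cap\Lcc_a=\Hcc_a=\{a\}$. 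Thus $\Dcc_e$ is a band satisfying $aba=a$, i.e.\ a rectangular band.

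For (b), I would first observe that every $\ol{\Ga}_e$ being complete bipartite means every element of every $\Dc$-class is idempotent, i.e.\ $S$ is a band. It then remains to see that, for the \emph{locally inverse} semigroup $S$, being a band is equivalent to being a normal band. Here I would use that each local submonoid $eSe$ is inverse by hypothesis: if $S$ is a band then $eSe$ is an inverse band, hence a semilattice, for every $e\in E(S)$. A band all of whose local submonoids are semilattices is exactly a normal band, and conversely a normal band has commutative local submonoids and so is locally inverse. (The forward direction here is the only slightly technical point: from $eSe$ commutative one checks that, for $\Dc$-classes $\beta\le\alpha$ and $e\in\Dcc$-class $\alpha$, the element $exe$ is independent of $x$ in the lower class by a rectangular-band coordinate argument, which exhibits $S$ as a strong semilattice of rectangular bands and hence as a normal band.) Therefore $S$ is a normal band \iff\ $S$ is a band \iff\ every element of $S$ is idempotent, which is the required translation.

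I expect the only non-formal points to be the two classical lemmas invoked in (a) and (b)---that an idempotent $\Dc$-class is a rectangular band, and that a locally inverse band coincides with a normal band; both are standard, so the main obstacle is really just citing or briefly justifying them rather than carrying out any genuinely new computation.
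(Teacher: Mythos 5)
Your proposal is correct and takes essentially the same route as the paper, which states the proposition without proof as an immediate consequence of Proposition \ref{id_inv} (completeness of $\ol{\Ga}_e$ being equivalent to $\Dcc_e$ consisting entirely of idempotents). The two classical facts you then verify---that a $\Dc$-class of a regular semigroup consisting of idempotents is a rectangular band, and that a locally inverse band is a normal band---are precisely the standard ingredients the paper implicitly invokes in calling the result trivial.
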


We denote by $d_l(\av)$ the number of lines of $\Ga$ with $\av$ as one of its endpoints. Note that if $a\Lc e\Rc b$ for $e\in E(S)$, then $d_l(\lv_a)$ and $d_l(\rv_b)$ give the number of idempotents in $\Rcc_a$ and $\Lcc_b$ respectively. The following proposition and its corollary are now obvious.

\begin{prop}
Let $D$ be a $\Dc$-class of $S$ and let $e\in D$ be an idempotent. Then each $\Rc$-class of $D$ has only one idempotent \iff\ $d_l(\av)=1$ for all $\av\in V_l(\Ga_e)$; and each $\Lc$-class of $D$ has only one idempotent \iff\ $d_l(\av)=1$ for all $\av\in V_r(\Ga_e)$.
\end{prop}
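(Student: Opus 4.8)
The plan is to flesh out the parenthetical observation recorded just before the statement and then translate the quantifier over vertices into a quantifier over Green's classes. Fix the idempotent $e$, so that $D=\Dcc_e$, and recall that the left vertices of $\Ga_e$ are exactly the $\lv_a$ with $a\in\Lcc_e$, while by the definition of $\lE$ the lines incident to $\lv_a$ run precisely to those $\rv_b$ with $b\in V(a)\cap\Rcc_e$. Hence $d_l(\lv_a)=|V(a)\cap\Rcc_e|$, and the first thing I would do is establish the bijection
$$V(a)\cap\Rcc_e\;\longrightarrow\;E(S)\cap\Rcc_a,\qquad b\mapsto ab.$$
For $b\in V(a)\cap\Rcc_e$ one checks that $ab$ is idempotent and $\Rc$-related to $a$, while $ba$ is an idempotent of $\Lcc_a\cap\Rcc_b=\Hcc_e$, which forces $ba=e$; this gives well-definedness, and injectivity follows from $b_1=b_1(ab_1)=b_1(ab_2)=(b_1a)b_2=eb_2=b_2$ (using $b_1a=e$ and $eb_2=b_2$). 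Surjectivity is the only point that calls on the general theory of regular semigroups: given an idempotent $g\Rc a$ and any $a^*\in V(a)$, the element $b=ea^*g$ is an inverse of $a$ with $ab=g$ and $ba=e$, so $b\in V(a)\cap\Rcc_e$ maps to $g$. Consequently $d_l(\lv_a)$ equals the number of idempotents of $\Rcc_a$.

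With the observation in hand the proposition becomes a counting statement. As $\lv_a$ ranges over $V_l(\Ga_e)$ the element $a$ ranges over all of $\Lcc_e$, and since every $\Rc$-class of $D$ meets $\Lcc_e$ (the egg-box structure of a $\Dc$-class), the family $\{\Rcc_a:\lv_a\in V_l(\Ga_e)\}$ is exactly the set of all $\Rc$-classes of $D$. Every such $\Rc$-class contains at least one idempotent — equivalently, each left vertex of $\Ga_e$ is the endpoint of at least one line, as already noted for liw-graphs — so $d_l(\lv_a)\geq 1$ always, and $d_l(\lv_a)=1$ holds exactly when $\Rcc_a$ has a single idempotent. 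Therefore ``$d_l(\av)=1$ for all $\av\in V_l(\Ga_e)$'' is equivalent to ``every $\Rc$-class of $D$ has exactly one idempotent'', which is the first biconditional. The second biconditional is the left-right dual: the right vertices are the $\rv_b$ with $b\in\Rcc_e$, the analogous bijection $a\mapsto ab$ on $V(b)\cap\Lcc_e$ shows $d_l(\rv_b)$ counts the idempotents of $\Lcc_b$, and the classes $\Lcc_b$ exhaust the $\Lc$-classes of $D$.

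I expect the only genuine work to lie in the surjectivity half of the bijection, namely producing, for each idempotent $g\Rc a$, an inverse of $a$ that sits in $\Rcc_e$ and maps to $g$; everything else is bookkeeping with the $\Rc$- and $\Lc$-orderings. The routine verifications that $ea^*g\in V(a)$ with $ab=g$ and $ba=e$ (using $ga=a$, $ae=a$, and that idempotents act as one-sided identities within their own Green's classes) I would compress into a single displayed computation rather than spell out in full.
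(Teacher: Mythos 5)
Your proposal is correct and follows exactly the route the paper intends: the paper disposes of this proposition by the remark that $d_l(\lv_a)$ and $d_l(\rv_b)$ count the idempotents of $\Rcc_a$ and $\Lcc_b$, declares the rest obvious, and you have simply supplied the verification of that remark (the bijection $b\mapsto ab$ from $V(a)\cap\Rcc_e$ onto $E(S)\cap\Rcc_a$, with $ea^*g$ witnessing surjectivity) together with the routine observation that the $\Rc$-classes $\Rcc_a$, $a\in\Lcc_e$, exhaust the $\Rc$-classes of $D$ and each contains an idempotent by regularity. All the computations check out, so there is nothing to add.
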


A \emph{left [right] generalized inverse semigroup} is a regular semigroup whose idempotents constitute a left [right] normal band. They can be described also as the locally inverse semigroups whose $\Rc$-classes [$\Lc$-classes] have only one idempotent. Thus, inverse semigroups are the semigroups which are simultaneously left and right generalized inverse semigroups. A \emph{generalized inverse semigroup} is a regular semigroup whose idempotents constitute a normal band. These latter semigroups cannot be described by conditions on the number of idempotents in their $\Rc$-classes and/or $\Lc$-classes.

\begin{cor}
The locally inverse semigroup $S$ is
\begin{itemize}
\item[$(i)$] a left generalized inverse semigroup \iff\ $d_l(\av)=1$ for all $e\in S$ and $\av\in V_l(\Ga_e)$;
\item[$(ii)$] a right generalized inverse semigroup \iff\ $d_l(\av)=1$ for all $e\in S$ and $\av\in V_r(\Ga_e)$;
\item[$(iii)$] an inverse semigroup \iff\ $d_l(\av)=1$ for all $e\in S$ and $\av\in V(\Ga_e)$.
\end{itemize}
\end{cor}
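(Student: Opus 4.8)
The plan is to deduce all three equivalences from the immediately preceding Proposition together with the two classical characterizations quoted in the text: that a locally inverse semigroup is a left [right] generalized inverse semigroup precisely when each of its $\Rc$-classes [$\Lc$-classes] contains a single idempotent. Since $S=\LI\langle X;R\rangle$ is locally inverse and $\Ga_e$ is only defined for idempotents, I would read ``for all $e\in S$'' in the statement as ``for all $e\in E(S)$''; the task is then purely to globalize the per-$\Dc$-class Proposition over all $\Dc$-classes of $S$.

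First I would settle $(i)$. By the quoted characterization, $S$ is a left generalized inverse semigroup \iff\ every $\Rc$-class of $S$ contains exactly one idempotent (at least one always exists since $S$ is regular). Every $\Rc$-class lies in a unique $\Dc$-class, and, $S$ being regular, every $\Dc$-class is of the form $\Dcc_e$ for some $e\in E(S)$; thus letting $e$ range over $E(S)$ exhausts all $\Dc$-classes. For a fixed $e$, I would observe that as $a$ runs through $\Lcc_e$ the classes $\Rcc_a$ run through exactly the $\Rc$-classes of $\Dcc_e$ (each $\Rc$-class of a $\Dc$-class meets the $\Lc$-class $\Lcc_e$), that $V_l(\Ga_e)=\{\lv_a\,:\;a\Lc e\}$, and that $d_l(\lv_a)$ counts the idempotents of $\Rcc_a$. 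Hence ``every $\Rc$-class of $\Dcc_e$ has a single idempotent'' is literally ``$d_l(\av)=1$ for all $\av\in V_l(\Ga_e)$'', which is the content of the preceding Proposition. Combining, $S$ is left generalized inverse \iff\ $d_l(\av)=1$ for all $e\in E(S)$ and $\av\in V_l(\Ga_e)$. Part $(ii)$ is the exact left-right dual, obtained by replacing $\Rc$ by $\Lc$, $\Lcc_e$ by $\Rcc_e$, and $V_l(\Ga_e)$ by $V_r(\Ga_e)$.

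Finally, for $(iii)$ I would invoke the standard fact (see \cite{howie}) that a regular semigroup is inverse \iff\ every $\Rc$-class and every $\Lc$-class contains exactly one idempotent; equivalently, \iff\ $S$ is simultaneously a left and a right generalized inverse semigroup. Applying $(i)$ and $(ii)$, this holds \iff\ $d_l(\av)=1$ for all $e\in E(S)$ and all $\av\in V_l(\Ga_e)\cup V_r(\Ga_e)=V(\Ga_e)$, which is exactly $(iii)$. I expect no serious obstacle: the only points needing care are the bookkeeping observation that for fixed $e$ the family $\{\Rcc_a\,:\;a\Lc e\}$ is precisely the set of $\Rc$-classes of $\Dcc_e$ (so that the per-class Proposition really captures every $\Rc$-class of $S$), and flagging that a single-sided condition does not force inverseness, so that both sides are genuinely needed in $(iii)$ — as left-zero semigroups, which are left normal bands but far from inverse, already illustrate.
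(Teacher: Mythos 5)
Your proof is correct and is exactly the deduction the paper intends: the corollary is stated without proof as an "obvious" globalization of the preceding Proposition over all $\Dc$-classes, using the quoted characterizations of left/right generalized inverse semigroups and the standard fact that inverse $=$ left and right generalized inverse. Your bookkeeping (every $\Dc$-class is some $\Dcc_e$ with $e\in E(S)$, and $\{\Rcc_a : a\Lc e\}$ exhausts the $\Rc$-classes of $\Dcc_e$) and your reading of "$e\in S$" as "$e\in E(S)$" are both the right way to fill in what the paper leaves implicit.
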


Let $\Ga$ be an liw-graph. The graph $\ol{\Ga}$ is not connected in general. The maximal connected subgraphs of $\ol{\Ga}$ are called the \emph{line-connected components} of $\Ga$. Two vertices of $\Ga$ are \emph{line-connected} if they belong to the same line-connected component. We denote by $C(\Ga)$ the set of all line-connected components of $\Ga$, and by $\Omega_\Ga(\av)$ (or simply by $\Omega(\av)$ if no ambiguity occurs) the line-connected component of $\Ga$ containing $\av\in V(\Ga)$. Thus $S$ is a left [right] generalized inverse semigroup \iff\ $\Omega(\av)$ has only one right [left] vertex, for all $e\in E(S)$ and $\av\in\Ga_e$; and $S$ is an inverse semigroup \iff\ $\Omega(\av)$ is the trivial connected bipartite graph with only two vertices and one line, for all $e\in E(S)$ and $\av\in\Ga_e$.

Before we characterize the generalized inverse semigroups using the line-connected components, we should remark the following about inverse semigroups. If $S$ is, in fact, an inverse semigroup and $s\in S$, then we can represent $\A_s$ by $\A_s=(\lv_e,\Ga_e,\rv_s)$ where $e$ is the unique idempotent in the $\Rc$-class of $s\in S$. If we contract each line-connected component of $\Ga_e$ into a single vertex, we obtain precisely the \emph{Sch\"{u}tzenberger graph} of the $\Rc$-class $\Rcc_s$ (see \cite{stephen90} for the definition of these latter graphs). 

\begin{prop}\label{gen_inv}
The locally inverse semigroup $S$ is a generalized inverse semigroup \iff\ $\Omega(\av)$ is a complete bipartite graph for all $e\in E(S)$ and $\av\in\Ga_e$.
\end{prop}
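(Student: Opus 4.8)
The plan is to prove the two implications directly, reducing each to the idempotency test of Proposition~\ref{id_inv}$.(i)$ --- an element $ab$ with $a\in\Lcc_e$, $b\in\Rcc_e$ is idempotent \iff\ $(\lv_a,\rv_b)\in\lE(\Ga_e)$, equivalently \iff\ $b\in V(a)$ --- together with the standard fact that, because $S$ is locally inverse, $E(S)$ being merely a band already forces it to be normal. Indeed, if $E(S)$ is a band then $eE(S)e=E(eSe)$ is a semilattice for every $e\in E(S)$ (since $eSe$ is inverse), and a band all of whose local submonoids are semilattices is normal. Hence it suffices to prove: every $\Omega(\av)$ is complete bipartite \iff\ $E(S)$ is closed under multiplication.

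For the implication ``$\Leftarrow$'' I would take $g,h\in E(S)$ and set $k=h\wedge g$, so that $hk=k$ and $kg=k$ (as $k\wr h$ and $k\wl g$). Put $p=gk$ and $q=kh$. A short computation using $hk=k$ and $kg=k$ shows $p,q\in E(S)$, $p\Lc k$ and $q\Rc k$ (explicitly $pk=p,\ kp=k,\ kq=q,\ qk=k$), so $p,q\in\Dcc_k$, and moreover $k\in V(p)$ and $k\in V(q)$. Consequently $(\lv_k,\rv_k)$, $(\lv_p,\rv_k)$ and $(\lv_k,\rv_q)$ are all lines of $\Ga_k$, and they place $\lv_p$ and $\rv_q$ in a single line-connected component $\Omega$ of $\Ga_k$. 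Completeness of $\Omega$ then yields the line $(\lv_p,\rv_q)$, i.e.\ $q\in V(p)$, so $pq\in E(S)$; and $pq=gk\cdot kh=g(h\wedge g)h=gh$ by the identity $st=s(t\wedge s)t$. Thus $gh\in E(S)$ and $E(S)$ is a band.

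For ``$\Rightarrow$'' assume $E(S)$ is a band and fix a line-connected component $\Omega$ of some $\Ga_e$; I must show $b\in V(a)$ for every left vertex $\lv_a$ and right vertex $\rv_b$ of $\Omega$. I would induct on the length of a line-path from $\lv_a$ to $\rv_b$. In the inductive step such a path ends $\cdots\rv_{b'}\,\lv_{a'}\,\rv_b$ with $(\lv_a,\rv_{b'})$ a line by the induction hypothesis and $(\lv_{a'},\rv_{b'})$, $(\lv_{a'},\rv_b)$ lines; hence $ab'$, $a'b'$, $a'b$ are idempotents and, crucially, $b'a'=e$ (the elementary fact that for a line $(\lv_{a'},\rv_{b'})$ of $\Ga_e$ one has $b'a'=e$, since $b'a'\in\Hcc_e$ is idempotent). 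Therefore $(ab')(a'b)=a(b'a')b=aeb=ab$, a product of two idempotents, which is idempotent because $E(S)$ is a band. As $ab\in\Rcc_a\cap\Lcc_b$ is idempotent, it is a left identity on $\Rcc_a$ and a right identity on $\Lcc_b$, giving $aba=a$ and $bab=b$; thus $b\in V(a)$ and $(\lv_a,\rv_b)$ is a line. This proves $\Omega$ complete bipartite.

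I expect the only real subtlety --- the main obstacle --- to be the bookkeeping of the $\Hc$-classes: a line and an idempotent are \emph{not} in bijection (over each idempotent there may sit several lines, one for each element of an $\Hc$-class), so one must phrase everything through ``$(\lv_a,\rv_b)$ is a line $\Leftrightarrow b\in V(a)$'' rather than through a naive lines/idempotents correspondence. Both arguments above are written so as to sidestep this pitfall. The remaining ingredients --- that line-connectedness lets the induction proceed, and the reduction of ``generalized inverse'' to ``$E(S)$ is a band'' via local inverseness --- are routine.
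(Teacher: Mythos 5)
Your proof is correct and follows essentially the same route as the paper: the forward direction collapses a length-3 line-path $\lv_a\,\rv_{b'}\,\lv_{a'}\,\rv_b$ via $ab=(ab')(a'b)$ (you merely make explicit the induction the paper dismisses as ``graph arguments''), and the reverse direction uses exactly the paper's decomposition $gh=g(h\wedge g)\cdot(h\wedge g)h$ with the two factors sitting in the line-connected component of $\lv_{h\wedge g}$ in $\Ga_{h\wedge g}$. The only cosmetic difference is that you sketch the reduction ``band $\Rightarrow$ normal band for locally inverse $S$'' where the paper simply cites it as well known.
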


\begin{proof}
Assume that $S$ is a generalized inverse semigroup and let $e\in E(S)$ and $\av\in\Ga_e$. Let $a_1,a_2\in \Lcc_e$ and $b_1,b_2\in\Rcc_e$ such that 
$$p\,:=\;\lv_{a_1}\,\ev_1\,\rv_{b_1}\,\fv_1\,\lv_{a_2}\,\ev_2\,\rv_{b_2}$$
is a path in $\Omega(\av)$. Then $a_1,a_2\in V(b_1)$ and $b_2\in V(a_2)$. Hence $a_1b_2=a_1eb_2=a_1b_1a_2b_2$ is the product of two idempotents, and so it is an idempotent since $E(S)$ is a normal band. Thus there is an edge with endpoints $\lv_{a_1}$ and $\rv_{b_2}$ in $\Ga_e$ by Proposition \ref{id_inv}. From the previous observation, it follows trivially by graph arguments that the bipartite graph $\Omega(\av)$ is complete.

Assume now that the bipartite graphs $\Omega(\av)$ are complete for all $e\in E(S)$ and $\av\in\Ga_e$. Let $e,f\in E(S)$. Then 
$$ef\;\Rc\; e_1\;\Lc\; (f\wedge e)\;\Rc\; f_1\;\Lc\; ef$$
for $e_1=e(f\wedge e)\in E(S)$ and $f_1=(f\wedge e)f\in E(S)$, and $ef=e_1f_1$. Consider the representation $(\lv_{e_1},\Ga_{f\wedge e},\rv_{f_1})$ of $\A_{ef}$ and note that $\lv_{e_1},\rv_{f_1}\in \Omega(\lv_{f\wedge e})$. Since $\Omega(\lv_{f\wedge e})$ is complete, there is an edge connecting $\lv_{e_1}$ and $\rv_{f_1}$ in $\Ga_{f\wedge e}$. Hence $ef$ is an idempotent. We have shown that $E(S)$ is a subsemigroup of $S$. It is well known that the set of idempotents of a locally inverse semigroup is a subsemigroup precisely when $S$ is a generalized inverse semigroup.
\end{proof}

The \emph{core} of a semigroup $S_1$ is the subsemigroup $C(S_1)$ generated by all idempotents. It is well known that $s\in C(S_1)$ \iff\ there are idempotents $e_1,\cdots,e_n,f_1,\cdots,f_{n-1}\in E(S_1)$ such that
$$s\Rc e_1\Lc f_1\Rc e_2\Lc\cdots \Lc f_{n-1}\Rc e_n\Lc s\;\;\mbox{ and }\;\; s=e_1f_1e_2\cdots f_{n-1}e_n\,.$$
Further, the core of a locally inverse [regular] semigroup is known to be locally inverse [regular]. The next result establishes the relationship between the core of $S$ and the line-connected components of $\Ga_e$ for $e\in E(S)$. 

\begin{prop}\label{core}
Let $(\lv_a,\Ga_e,\rv_b)$ be a representation of $\A_s$ for some $s\in S$. Then $s\in C(S)$ \iff\ $\lv_a$ and $\rv_b$ are line-connected in $\Ga_e$. 
\end{prop}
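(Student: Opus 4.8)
The plan is to prove the two implications separately. Throughout I would fix the given representation $\A_s=(\lv_a,\Ga_e,\rv_b)$, so that $a\in\Lcc_e$, $b\in\Rcc_e$ and $s=ab$, and I would use freely that the lines of $\Ga_e$ are exactly the pairs $(\lv_c,\rv_d)$ with $c\in\Lcc_e$, $d\in\Rcc_e$ and $d\in V(c)$.

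\emph{Line-connected $\Rightarrow s\in C(S)$.} Assume $\lv_a$ and $\rv_b$ lie in the same line-connected component and fix a walk built only from lines. Since every line joins a left to a right vertex, such a walk reads $\lv_{c_1}\,\rv_{d_1}\,\lv_{c_2}\,\rv_{d_2}\cdots\lv_{c_n}\,\rv_{d_n}$ with $c_1=a$, $d_n=b$, each $c_i\in\Lcc_e$, each $d_i\in\Rcc_e$, and with $(\lv_{c_i},\rv_{d_i})$ and $(\lv_{c_{i+1}},\rv_{d_i})$ lines of $\Ga_e$; by the description of the lines this says $d_i\in V(c_i)\cap V(c_{i+1})$. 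The key observation I would make is that for \emph{any} line $(\lv_c,\rv_d)$ the element $dc$ is an idempotent with $dc\Rc d\Rc e$ and $dc\Lc c\Lc e$, hence $dc\in\Hcc_e$ and therefore $dc=e$, the unique idempotent of $\Hcc_e$. Applied to our lines this gives $d_ic_{i+1}=e$ for $1\le i\le n-1$, while each $c_id_i$ is an idempotent because $d_i\in V(c_i)$. A one-line induction using $c_1e=c_1$ then yields
$$(c_1d_1)(c_2d_2)\cdots(c_nd_n)=c_1d_n=ab=s,$$
exhibiting $s$ as a product of idempotents, i.e. $s\in C(S)$.

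\emph{$s\in C(S)\Rightarrow$ line-connected.} Here I would start from $s=g_1g_2\cdots g_k$ with each $g_i\in E(S)$, and represent each factor by $\A_{g_i}=(\lv_{g_i},\Ga_{g_i},\rv_{g_i})$. Since $g_i$ is idempotent, Proposition \ref{id_inv}$.(i)$ ensures the root pair $(\lv_{g_i},\rv_{g_i})$ is a line of $\A_{g_i}$. I would then form the iterated combination $\Delta=\A_{g_1}\odot\A_{g_2}\odot\cdots\odot\A_{g_k}$, whose graph is the disjoint union of (copies of) the $\Ga_{g_i}$ together with the connecting lines $(\lv_{g_{i+1}},\rv_{g_i})$ supplied by $\odot$, with left root $\lv_{g_1}$ and right root $\rv_{g_k}$. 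Splicing the root-lines of the factors with these connecting lines produces a walk
$$\lv_{g_1}\,\rv_{g_1}\,\lv_{g_2}\,\rv_{g_2}\cdots\lv_{g_k}\,\rv_{g_k}$$
from the left to the right root of $\Delta$ that uses only lines, so the two roots of $\Delta$ are line-connected. To transport this into $\A_s$ I would produce a root-preserving homomorphism $\Psi\colon\Delta\to\A_s$ by iterating Proposition \ref{dot_hom}: writing $p_j=g_1\cdots g_j$ (so $p_k=s$), at the $j$-th stage I combine the homomorphism already built on $\A_{g_1}\odot\cdots\odot\A_{g_{j-1}}$ with the identity on $\A_{g_j}$ and with $\hat\varphi\colon\A_{p_{j-1}}\cdot\A_{g_j}\to\A_{p_j}$, checking only that the single new connecting line is sent to a line of $\A_{p_j}$ — which is precisely what $\hat\varphi$ does to the edge added by $\A_{p_{j-1}}\odot\A_{g_j}$. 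Since a bliw-graph homomorphism sends lines to lines and preserves both roots, $\Psi$ carries the above line-walk to a line-walk in $\Ga_e$ from $\lv_{g_1}\Psi=\lv_a$ to $\rv_{g_k}\Psi=\rv_b$, so $\lv_a$ and $\rv_b$ are line-connected.

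I expect the first implication to be routine once the identity $d_ic_{i+1}=e$ is recognized. The delicate step, and the main obstacle, is the converse: namely the careful assembly of the root-preserving homomorphism $\Psi\colon\Delta\to\A_s$ from the product homomorphism of Proposition \ref{dot_hom}, and the verification at each stage that the newly added connecting line of $\odot$ really lands on a line of the target. The conceptual point unifying both directions is that idempotent factors contribute line-edges, that $\odot$ splices these together with further lines, and that all of this survives under homomorphisms.
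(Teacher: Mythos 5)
Your proof is correct. The implication ``line-connected $\Rightarrow s\in C(S)$'' is essentially the paper's own argument: the paper likewise reads off from a line-walk the elements $a_i,a_{i+1}\in V(b_i)$, observes $b_ia_i=e=b_ia_{i+1}$ (your identity $dc=e$ for a line $(\lv_c,\rv_d)$, which holds because $dc$ is an idempotent in $\Hcc_e$), and telescopes the product $a_1b_1\cdot a_2b_1\cdot a_2b_2\cdots$ down to $ab=s$. For the converse the two arguments genuinely diverge. The paper invokes the sharper ``well known'' description of core elements as products $e_1f_1e_2\cdots f_{n-1}e_n$ of idempotents chained by $s\Rc e_1\Lc f_1\Rc e_2\cdots\Lc s$; this lets it manufacture the elements $a_i\in\Lcc_e$, $b_i\in\Rcc_e$ directly and hence a line-walk inside $\Ga_e$ itself, in a few lines. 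You instead start from an arbitrary factorization $s=g_1\cdots g_k$ into idempotents, splice the root-lines $(\lv_{g_i},\rv_{g_i})$ (Proposition \ref{id_inv}$.(i)$) with the connecting lines of the iterated $\odot$-product, and push the resulting line-walk into $\A_s$ through a root-preserving homomorphism assembled inductively from $\phi\hat{\varphi}:\A_{p_{j-1}}\odot\A_{g_j}\to\A_{p_j}$ of Proposition \ref{dot_hom}; since bliw-homomorphisms carry lines to lines and fix both roots, line-connectivity of $\lv_a$ and $\rv_b$ follows. Your inductive step is sound (the only point to watch is that $\odot$ must be read as a disjoint union of copies so that ``$\Psi_{j-1}$ on the first factor, identity on $\A_{g_j}$'' is well defined, and you flag this). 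What each approach buys: the paper's is shorter and stays entirely inside $\Ga_e$, but leans on the refined characterization of $C(S)$; yours needs only the definition of $C(S)$ as the idempotent-generated subsemigroup, at the price of invoking the heavier product machinery of Section 7.
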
 

\begin{proof}
Assume that $s\in C(S)$ and let $e_1,e_2, \cdots, e_n$ and $f_1,\cdots,f_{n-1}$ be idempotents of $S$ such that $s\Rc e_1\Lc f_1\Rc e_2\Lc\cdots\Lc f_{n-1}\Rc e_n\Lc s$ and $s=e_1f_1e_2\cdots f_{n-1}e_n\,$. Then there are $a_1,\cdots,a_n\in\Lcc_e$ and $b_1,\cdots,b_n\in\Rcc_e$ such that
$$a_1=a,\quad a_ib_i=e_i,\quad a_{i+1}b_i=f_i\quad\mbox{ and }\quad b_ia_i=e=b_ia_{i+1}.$$
Note that $(\lv_{a_i},\Ga_e,\rv_{b_i})$ and $(\lv_{a_{i+1}},\Ga_e,\rv_{b_i})$ are representations of $\A_{e_i}$ and $\A_{f_i}$, respectively, and that $\ev_{i}=(\lv_{a_i},\rv_{b_i})$ and $\fv_i=(\lv_{a_{i+1}},\rv_{b_i})$ are edges of $\Ga_e$. Further 
$$ab=s=e_1f_1e_2\cdots f_{n-1}e_n=a_1b_1a_2b_1a_2b_2\cdots a_nb_{n-1}a_nb_n=a_1eb_n=ab_n$$
and $b=b_n$. Hence, there is a walk in $\Ga_e$ from $\lv_a$ and $\rv_b$ with no arrows, and $\lv_a$ and $\rv_b$ belong to the same line-connected component of $\Ga_e$.

Assume now that $\lv_a$ and $\rv_b$ belong to the same line-connected component of $\Ga_e$. Let
$$p:=\;\lv_{a_1}\,\ev_1\,\rv_{b_1}\,\fv_1\,\lv_{a_2}\,\ev_2\,\rv_{b_2}\cdots \lv_{a_n}\,\ev_n\,\rv_{b_n}$$
be a walk from $\lv_a=\lv_{a_1}$ to $\rv_b=\rv_{b_n}$ with no arrows. Thus $\ev_1,\cdots,\ev_n$ and $\fv_1,\cdots,\fv_{n-1}$ are edges of $\Ga_e$, and $a_i,a_{i+1}\in V(b_i)$. Then $e_i=a_ib_i$ and $f_i=a_{i+1}b_i$ are idempotents, and $b_ia_i=e=b_ia_{i+1}$. Hence
$$e_1f_1e_2\cdots f_{n-1}e_n=a_1b_1a_2b_1a_2b_2\cdots a_nb_{n-1}a_nb_n=a_1eb_n=ab=s\,,$$
and $s\in C(S)$.
\end{proof}

A characterization for the idempotent generated locally inverse semigroups using liw-graphs is now obvious.

\begin{cor}\label{id_gen}
The locally inverse semigroup $S$ is idempotent generated \iff\ $\ol{\Ga}_e$ is connected for each $e\in E(S)$.
\end{cor}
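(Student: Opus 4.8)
The plan is to read off Corollary~\ref{id_gen} directly from Proposition~\ref{core}, which already carries the essential content. Recall that $S$ is idempotent generated exactly when $S=C(S)$, i.e.\ when every element of $S$ lies in the core, and that $\ol{\Ga}_e$ is $\Ga_e$ with all arrows deleted, so $\ol{\Ga}_e$ is connected precisely when all vertices of $\Ga_e$ lie in a single line-connected component. Thus both sides of the claimed equivalence are statements about line-connectedness, which is exactly what Proposition~\ref{core} controls.

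For the forward implication, I would assume $S$ is idempotent generated and fix $e\in E(S)$. The key observation is that every vertex of $\Ga_e$ is a root of some $\A_s$ with $s\in C(S)$. Concretely, for a left vertex $\lv_a$ with $a\in\Lcc_e$, I take $s=ae=a$, so that $\A_a=(\lv_a,\Ga_e,\rv_e)$; since $a\in S=C(S)$, Proposition~\ref{core} gives that $\lv_a$ and $\rv_e$ are line-connected. Dually, for a right vertex $\rv_b$ with $b\in\Rcc_e$, I take $s=eb=b$, so $\A_b=(\lv_e,\Ga_e,\rv_b)$, and $b\in C(S)$ forces $\lv_e$ and $\rv_b$ to be line-connected; together with the line $(\lv_e,\rv_e)\in\ol{E}(\Ga_e)$ (present because $e\in V(e)$), this places $\rv_b$ in the line-connected component of $\rv_e$. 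Hence every vertex of $\Ga_e$ lies in the component of $\rv_e$, so $\ol{\Ga}_e$ is connected.

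For the converse, I would assume $\ol{\Ga}_e$ is connected for every $e\in E(S)$ and take an arbitrary $s\in S$ with a representation $\A_s=(\lv_a,\Ga_e,\rv_b)$. Connectedness of $\ol{\Ga}_e$ says in particular that the roots $\lv_a$ and $\rv_b$ are line-connected, so Proposition~\ref{core} yields $s\in C(S)$. As $s$ was arbitrary, $S=C(S)$ and $S$ is idempotent generated.

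I do not expect a serious obstacle here: the corollary is essentially a restatement of Proposition~\ref{core} once one checks that each vertex of $\Ga_e$ is realisable as a root of some $\A_s$. The only points needing a little care are the routine facts that $ae=a$ for $a\in\Lcc_e$ and $eb=b$ for $b\in\Rcc_e$ (standard properties of Green's relations relative to the idempotent $e$), used to identify the representations $\A_a=(\lv_a,\Ga_e,\rv_e)$ and $\A_b=(\lv_e,\Ga_e,\rv_b)$, and recalling that the line $(\lv_e,\rv_e)$ belongs to $\Ga_e$.
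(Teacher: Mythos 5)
Your proof is correct and follows exactly the route the paper intends: the paper gives no explicit argument, stating only that the corollary is "now obvious" from Proposition~\ref{core}, and your write-up supplies precisely the routine details (realising each vertex of $\Ga_e$ as a root of some $\A_s$ via $\A_a=(\lv_a,\Ga_e,\rv_e)$ and $\A_b=(\lv_e,\Ga_e,\rv_b)$, and using the line $(\lv_e,\rv_e)$) needed to make that deduction rigorous.
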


An liw-graph $\Ga$ is \emph{line-transitive} if for each pair of lines $(\ev,\fv)$ of $\Ga$, there exists $\varphi\in \Aut(\Ga)$ such that $\ev\varphi=\fv$. Thus, any line-transitive liw-graph is also \emph{left vertex-transitive} [\emph{right vertex-transitive}], that is, for each pair $(\av,\bv)$ of left [right] vertices of $\Ga$, there exists $\varphi\in \Aut(\Ga)$ such that $\av\varphi=\bv$. Note also that if $\Ga$ is reduced, then any one of these three transitivity properties implies the \emph{arrow-transitivity} property, that is, for each pair $(\ev,\fv)$ of arrows of $\Ga$ with the same label, there exists $\varphi\in \Aut(\Ga)$ such that $\ev\varphi=\fv$.

The \emph{line-distance} between two vertices $\av$ and $\bv$ of $\Ga$ is their distance in $\ol{\Ga}$, that is, is the minimal length between all paths in $\ol{\Ga}$ connecting those vertices. If there is no path between $\av$ and $\bv$ in $\ol{\Ga}$, then their line-distance is infinite. Note that if $\Ga$ is reduced and the line-distance between $\av$ and $\bv$ is 2, then these vertices have the same side but there is no $\varphi\in\Aut(\Ga)$ such that $\av\varphi=\bv$. Hence, any left and right vertex-transitive rliw-graph has no adjacent lines. So, for rliw-graphs, edge-transitivity is equivalent to left and right vertex-transitivity put together.

\begin{prop}
Let $e\in E(S)$. Then
\begin{itemize}
\item[$(i)$] $\Dcc_e$ is a group \iff\ $\Ga_e$ is line-transitive.
\item[$(ii)$] $\Dcc_e$ is a left group (that is, a direct product of a group with a left zero semigroup) \iff\ $\Ga_e$ is right vertex-transitive.
\item[$(iii)$] $\Dcc_e$ is a right group (that is, a direct product of a group with a right zero semigroup) \iff\ $\Ga_e$ is left vertex-transitive.
\end{itemize}
\end{prop}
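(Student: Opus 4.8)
The plan is to translate the three transitivity properties of the rliw-graph $\Ga_e$ (which is reduced by Proposition \ref{rliw}) into statements about the Green's relations restricted to $\Dcc_e$, using the description of $\Aut(\Ga_e)$ supplied by Corollary \ref{aut_gr}. Recall from that corollary that $\Aut(\Ga_e)\cong\Hcc_e$ and, crucially, that for $a,a_1\in\Lcc_e$ and $c,c_1\in\Rcc_e$ there is an automorphism carrying $\lv_a$ to $\lv_{a_1}$ [resp.\ $\rv_c$ to $\rv_{c_1}$] \iff\ $a\Hc a_1$ [resp.\ $c\Hc c_1$]. These two equivalences are exactly what is needed to read off the vertex-transitivities.

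First I would treat $(iii)$, obtaining $(ii)$ by the left--right dual. By Corollary \ref{aut_gr}$.(i)$, $\Ga_e$ is left vertex-transitive \iff\ $a\Hc a_1$ for every pair $a,a_1\in\Lcc_e$; since all such elements already lie in $\Lcc_e$, this says $\Lcc_e=\Hcc_e$. I would then check that $\Lcc_e=\Hcc_e$ holds \iff\ $\Dcc_e$ consists of a single $\Rc$-class: if $\Lcc_e=\Hcc_e$ and $x\in\Dcc_e$, pick $y$ with $x\Rc y\Lc e$; then $y\in\Lcc_e=\Hcc_e$ forces $y\Rc e$, whence $x\Rc e$; the converse is immediate since $\Dcc_e=\Rcc_e$ gives $\Lcc_e\subseteq\Rcc_e$. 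Finally, a regular $\Dc$-class consisting of a single $\Rc$-class is a right group: for $a,b\in\Dcc_e$ we have $\Rcc_b=\Dcc_e$, so $\Lcc_a\cap\Rcc_b=\Lcc_a$ contains an idempotent (every $\Lc$-class of a regular $\Dc$-class does), whence $ab\in\Dcc_e$ by the location theorem; thus $\Dcc_e$ is completely simple, and a completely simple semigroup with a single $\Rc$-class is a right group. This chain gives $(iii)$, and the dual chain --- replacing Corollary \ref{aut_gr}$.(i)$ by $(ii)$, $\Lcc_e$ by $\Rcc_e$, and `single $\Rc$-class'/right group by `single $\Lc$-class'/left group --- gives $(ii)$.

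For $(i)$ I would argue both implications directly. If $\Dcc_e$ is a group then $\Dcc_e=\Hcc_e$, so $V(a)\cap\Rcc_e=\{a^{-1}\}$ for each $a\in\Lcc_e=\Hcc_e$ and the lines of $\Ga_e$ are exactly the pairs $(\lv_a,\rv_{a^{-1}})$; given two such lines $(\lv_a,\rv_{a^{-1}})$ and $(\lv_c,\rv_{c^{-1}})$, Corollary \ref{aut_gr}$.(iii)$ supplies an automorphism sending the first to the second, since $aa^{-1}=e=cc^{-1}$, so $\Ga_e$ is line-transitive. Conversely, as every vertex of $\Ga_e$ is an endpoint of some line, line-transitivity immediately yields both left and right vertex-transitivity; then $(ii)$ and $(iii)$ make $\Dcc_e$ simultaneously a left group and a right group, i.e.\ a single $\Lc$-class and a single $\Rc$-class, hence the group $\Hcc_e$.

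The routine parts are the Green's-relation bookkeeping and the applications of Corollary \ref{aut_gr}. The one point that deserves care --- and the only genuine obstacle --- is the passage from `$\Dcc_e$ is a single $\Rc$-class' (resp.\ $\Lc$-class) to `$\Dcc_e$ is a right (resp.\ left) group', that is, verifying that such a $\Dc$-class is actually a subsemigroup of $S$; this is where the location theorem and the presence of idempotents in every $\Lc$- and $\Rc$-class of a regular $\Dc$-class are invoked. A secondary thing to keep straight is the bookkeeping of sides: here the left vertices correspond to $\Lcc_e$ and the right vertices to $\Rcc_e$, so left vertex-transitivity matches the right group and vice versa.
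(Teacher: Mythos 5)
Your proof is correct and follows essentially the same route as the paper's: both reduce ``left group'' and ``right group'' to the single-$\Lc$-class and single-$\Rc$-class conditions and then invoke the established correspondence between automorphisms of $\Ga_e$ and the relation $\Hc$ (you via Corollary \ref{aut_gr}, the paper via Proposition \ref{Gclas}), with $(i)$ tied to $(ii)$ and $(iii)$. The only differences are expository: you spell out the location-theorem step showing that a regular $\Dc$-class with one $\Rc$-class (resp.\ $\Lc$-class) is actually a right (resp.\ left) group, and you give a direct forward argument for $(i)$ via Corollary \ref{aut_gr}$.(iii)$, both of which the paper leaves implicit.
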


\begin{proof}
Note that $(ii)$ follows from Proposition \ref{Gclas}$.(ii)$ since $\Dcc_e$ is a left group \iff\ it has only one $\Lc$-class. Similarly, $(iii)$ follows from Proposition \ref{Gclas}$.(i)$. Finally, $(i)$ follows from $(ii)$ and $(iii)$ together. 
\end{proof}

\emph{Clifford semigroups}, also known as semilattices of groups, have many different characterizations. One of them is as regular semigroups with $\Hc=\Dc$. Thus each $\Dc$-class of a Clifford semigroup is a subgroup. Similarly, \emph{left} [\emph{right}] \emph{Clifford semigroups} can be defined as regular semigroups with $\Lc=\Dc$ [$\Rc=\Dc$]. The following result is now an obvious consequence of the previous proposition.

\begin{cor}
The locally inverse semigroup $S$ is 
\begin{itemize}
\item[$(i)$] a Clifford semigroup \iff\ $\Ga_e$ is line-transitive for all $e\in E(S)$.
\item[$(ii)$] a left [right] Clifford semigroup \iff\ $\Ga_e$ is right [left] vertex-transitive for all $e\in E(S)$.
\end{itemize}
\end{cor}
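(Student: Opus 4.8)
The plan is to deduce this corollary directly from the preceding proposition by invoking the standard characterizations of the three classes of semigroups in terms of the structure of their $\Dc$-classes. The first thing I would record is that, since $S$ is regular, every $\Dc$-class of $S$ contains at least one idempotent; consequently the family $\{\Dcc_e : e\in E(S)\}$ ranges over \emph{all} $\Dc$-classes of $S$. This observation is exactly what lets me convert a statement about every $\Dc$-class into a statement quantified over all idempotents $e\in E(S)$, which is the form in which both the corollary and the previous proposition are phrased.

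For part $(i)$, I would first recall the classical fact that a regular semigroup is a Clifford semigroup \iff\ $\Hc=\Dc$, and that this holds \iff\ every $\Dc$-class is a group (an $\Hc$-class containing an idempotent is a maximal subgroup, and a $\Dc$-class coincides with its unique $\Hc$-class precisely when it is a group). Hence $S$ is Clifford \iff\ $\Dcc_e$ is a group for every $e\in E(S)$. Applying part $(i)$ of the previous proposition to each such $e$ rewrites the condition ``$\Dcc_e$ is a group'' as ``$\Ga_e$ is line-transitive'', which yields the stated equivalence.

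For part $(ii)$ I would argue analogously. A regular semigroup is a left Clifford semigroup \iff\ $\Lc=\Dc$, that is, \iff\ each $\Dc$-class consists of a single $\Lc$-class; this is exactly the requirement that each $\Dc$-class be a left group, which is the description already used in the proof of the previous proposition. Thus $S$ is left Clifford \iff\ $\Dcc_e$ is a left group for all $e\in E(S)$, and part $(ii)$ of the previous proposition restates this as right vertex-transitivity of every $\Ga_e$. The right Clifford case is the exact dual, using $\Rc=\Dc$, the notion of a right group, and part $(iii)$ of the previous proposition together with left vertex-transitivity.

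No serious obstacle is expected here: essentially all the content sits in the previous proposition, and what remains is the bookkeeping of the classical equivalences between the collapse of Green's relations ($\Hc=\Dc$, $\Lc=\Dc$, $\Rc=\Dc$) and the group, left-group, or right-group structure of the individual $\Dc$-classes. The only point that merits a moment of care is the passage from ``for all idempotents $e$'' to ``for all $\Dc$-classes'', which relies on the regularity of $S$ to guarantee an idempotent in every $\Dc$-class; once that is noted, each of the three equivalences reduces to a one-line substitution into the corresponding part of the preceding proposition.
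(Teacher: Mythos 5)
Your proposal is correct and follows essentially the same route as the paper: the paper simply records the characterizations of (left/right) Clifford semigroups via $\Hc=\Dc$, $\Lc=\Dc$, $\Rc=\Dc$ (so that each $\Dc$-class is a group, left group, or right group, respectively) and then declares the corollary an obvious consequence of the preceding proposition. Your extra remark that regularity guarantees an idempotent in every $\Dc$-class, so that quantifying over $e\in E(S)$ covers all $\Dc$-classes, is exactly the implicit bookkeeping the paper leaves to the reader.
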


The analysis of the arrow-transitive case is also interesting. Next, we prove that all liw-graphs $\Ga_e$ of $S$ are arrow-transitive \iff\ $S$ is a \emph{strict regular semigroup}, that is, $S$ is a regular subsemigroup of direct products of completely simple and completely 0-simple semigroups. Alternatively, and also convenient to us, strict regular semigroups can be defined as regular semigroups whose local submonoids are Clifford semigroups. We begin with the following lemma:

\begin{lem}\label{atrans}
Let $e\in E(S)$. Then $\Ga_e$ is arrow-transitive \iff\ $\Dcc_e\cap xx'Sxx'$ is either the empty set or a subgroup of $S$ for each $x\in\oX$.  
\end{lem}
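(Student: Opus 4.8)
Since an automorphism of $\Ga_e$ preserves labels, $\Ga_e$ is arrow-transitive \iff\ it is arrow-transitive ``at each label'', and the right-hand condition is likewise a conjunction over $x\in\oX$; so I would fix $x\in\oX$ and prove the equivalence for that single $x$. Write $g=(x\wedge x')\mu=(xx')\mu$, an idempotent of $S$, so that $(x\wedge x')S=gS$, $S(x\wedge x')=Sg$ and $xx'Sxx'=gSg$. By construction the arrows of $\Ga_e$ labelled $x$ are exactly the triples $(\lv_a,x,\rv_{a'x})$ with $a$ ranging over $A_x:=\Lcc_e\cap gS$ and $a'$ the unique inverse of $a$ in $\Rcc_e\cap Sg$. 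The first step is to rewrite the matching condition for two such arrows: by Corollary \ref{aut_gr}$.(iii)$ there is $\varphi\in\Aut(\Ga_e)$ with $(\lv_a,x,\rv_{a'x})\varphi=(\lv_c,x,\rv_{c'x})$ \iff\ $a(a'x)=c(c'x)$, that is, $aa'x=cc'x$. Thus arrow-transitivity at $x$ is precisely the assertion that $aa'x$ is independent of $a\in A_x$ (vacuously true when $A_x=\emptyset$).

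Next I would set up a dictionary between the arrows labelled $x$ and the set $\Dcc_e\cap gSg$. One checks that $A_x\neq\emptyset$ \iff\ $\Dcc_e\cap gSg\neq\emptyset$: from $a\in A_x$ the idempotent $aa'$ lies in $\Dcc_e\cap gSg$ (using $ga=a$ and $a'g=a'$), while conversely any $w\in\Dcc_e\cap gSg$ produces, through any $a\in\Lcc_e\cap\Rcc_w$, a member of $A_x$ since $\Rcc_a=\Rcc_w\subseteq gS$. This disposes of the two ``empty'' cases at once. For the non-empty case I would observe that $\Dcc_e\cap gSg$ is $\Hc$-saturated inside $\Dcc_e$ (the condition $gw=w$ is constant on $\Rc$-classes and $wg=w$ is constant on $\Lc$-classes), hence it is the union of the full $\Hc$-classes filling a sub-rectangle $R_x\times L_x$ of the egg-box of $\Dcc_e$, where $R_x$ and $L_x$ are the $\Rc$- and $\Lc$-classes of $\Dcc_e$ contained in $gS$ and in $Sg$ respectively. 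Its idempotents are exactly the $aa'$ with $a\in A_x$ (any idempotent $h$ there equals $aa'$ for $a\in\Lcc_e\cap\Rcc_h$, by the Preliminaries remark that $\Rc$-related idempotents of $(g]_l$ are equal).

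The crux, and what I expect to be the main obstacle, is the identity: \emph{if $a,c\in A_x$ are $\Rc$-related then $aa'=cc'$.} This holds because $aa'$ and $cc'$ are $\Rc$-related idempotents both lying in $Sg=(g]_l$, hence equal by the same remark. Granting it, both implications follow. If $aa'x$ is constant on $A_x$, then, since $aa'x\Rc a$ (as $a\Lc e\Rc a'x$), the common value forces all $\Rcc_a$ to coincide, so $|R_x|=1$; consequently every idempotent $aa'$ equals a single $f_0$, and since each $\Lc$-class of $L_x$ also carries one such idempotent (by the dual construction), $|L_x|=1$ as well, whence $\Dcc_e\cap gSg=\Hcc_{f_0}$ is a group, i.e.\ a subgroup of $S$. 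Conversely, if $\Dcc_e\cap gSg$ is a subgroup then, being $\Hc$-saturated and contained in one $\Hc$-class, it is a single group $\Hc$-class; thus $|R_x|=1$, all $a\in A_x$ are $\Rc$-related, the identity gives $aa'=f_0$ for all of them, and therefore $aa'x=f_0x$ is constant, yielding arrow-transitivity at $x$.

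The routine parts are the idempotent computations ($aa'$, $a'a$ and their Green's positions) together with the rectangle bookkeeping; the only genuinely delicate point is extracting the \emph{equality} $aa'x=cc'x$ (not merely $\Hc$-equivalence of the two products) from single-$\Rc$-class-ness, which is exactly where the local-inverse remark on idempotents of $(g]_l$ is indispensable.
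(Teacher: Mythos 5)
Your proof is correct, and it rests on the same two pillars as the paper's argument: the translation of ``some automorphism carries one $x$-labelled arrow to the other'' into the semigroup identity $a(a'x)=c(c'x)$ (you obtain this from Corollary \ref{aut_gr}.$(iii)$, the paper from Proposition \ref{iso_bliw} together with Proposition \ref{Gclas}), and the locally inverse fact that distinct idempotents of $((x\wedge x')\mu]_l$ are never $\Rc$-related, which is what turns ``same $\Rc$-class'' into the actual equality $aa'=cc'$. The two proofs organize the forward direction differently. The paper takes two arbitrary elements $s,t\in\Dcc_e\cap xx'Sxx'$, reads off from arrow-transitivity a left isomorphism $\A_s\to\A_t$ (hence $s\Rc t$) and dually $s\Lc t$, concludes that the intersection is a single $\Hc$-class, and then quotes that $xx'Sxx'$ is an inverse submonoid to see that this $\Hc$-class contains an idempotent and so is a group. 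You never leave the idempotents: you show $\Dcc_e\cap xx'Sxx'$ is an $\Hc$-saturated rectangle each of whose rows and columns carries an idempotent of the form $aa'$ (resp.\ $b'b$), and that constancy of $aa'x$ collapses the rectangle to the single group $\Hc$-class of $f_0=aa'$. Your route is marginally more self-contained -- it exhibits the idempotent explicitly instead of invoking the inverse-submonoid structure of $xx'Sxx'$ -- at the cost of the rectangle bookkeeping; the backward directions of the two proofs are essentially identical (a unique idempotent forces $aa'=a_1a_1'$, hence $aa'x=a_1a_1'x$, hence the required automorphism).
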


\begin{proof}
Assume first that $\Ga_e$ is arrow-transitive and that $\Dcc_e\cap xx'Sxx'\neq\emptyset$ for some $x\in\oX$. Let $s,t\in\Dcc_e\cap xx'Sxx'$. Consider two representations $(\lv_a,\Ga_e,\rv_b)$ and $(\lv_{a_1},\Ga_e,\rv_{b_1})$ of $\A_s$ and $\A_t$ respectively. Hence $a_1\Lc a\Lc e$ and $a_1,a\in xx'S$. By construction of $\Ga_e$, there must exist two arrows $\ev=(\lv_a,x,\rv_c)$ and $\fv=(\lv_{a_1},x,\rv_{c_1})$ in $\Ga_e$. Now, since $\Ga_e$ is arrow-transitive, there exists $\varphi\in\Aut(\Ga_e)$ such that $\ev\varphi=\fv$. Thus $\varphi$ is a left isomorphism from $\A_s$ onto $\A_t$, and $s\Rc t$ by Proposition \ref{Gclas}$.(i)$. Similarly, using the fact that $b_1\Rc b\Rc e$ and $b_1,b\in Sxx'$, we can show that $s\Lc t$. Consequently, $\Dcc_e\cap xx'Sxx'$ is precisely an $\Hc$-class of $\Dcc_e$. Finally, since $xx'Sxx'$ is an inverse submonoid of $S$, $\Dcc_e\cap xx'Sxx'$ must be a maximal subgroup of $S$.

Assume now that $\Dcc_e\cap xx'Sxx'$ is either the empty set or a subgroup of $S$ for each $x\in\oX$. Let $\ev=(\lv_a,x,\rv_b)$ and $\fv=(\lv_{a_1},x,\rv_{b_1})$ be two arrows of $\Ga_e$ labeled by $x\in\oX$. Then $b=a'x$ for some $a'\in V(a)\cap \Rcc_e\cap Sxx'$ and $b_1=a_1'x$ for some $a_1'\in V(a_1)\cap \Rcc_e\cap Sxx'$. So, $aa',a_1a_1'\in E(\Dcc_e)\cap xx'Sxx'$ and, consequently, $aa'=a_1a_1'$ because $\Dcc_e\cap xx'Sxx'$ is a subgroup by assumption. Hence $ab=a_1b_1$, and $(\lv_a,\Ga_e,\rv_b)$ is isomorphic to $(\lv_{a_1},\Ga_e,\rv_{b_1})$ by Proposition \ref{iso_bliw}. We have shown that there exist $\varphi\in\Aut(\Ga_e)$ such that $\ev\varphi=\fv$. Therefore $\Ga_e$ is arrow-transitive.
\end{proof}

\begin{prop}
The locally inverse semigroup $S$ is a strict regular semigroup \iff\ $\Ga_e$ is arrow-transitive for all $e\in E(S)$.
\end{prop}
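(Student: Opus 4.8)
The plan is to use the definition of a strict regular semigroup as a locally inverse semigroup all of whose local submonoids $fSf$ are Clifford semigroups, and to link this to arrow-transitivity through Lemma \ref{atrans}. That lemma already rephrases arrow-transitivity of a single $\Ga_e$ as the requirement that $\Dcc_e\cap xx'Sxx'$ be empty or a subgroup for every $x\in\oX$; quantifying over all idempotents, the hypothesis ``$\Ga_e$ is arrow-transitive for all $e$'' becomes ``$\Dcc_e\cap xx'Sxx'$ is empty or a subgroup for all $e\in E(S)$ and all $x\in\oX$''. So the whole argument reduces to identifying this condition with ``$fSf$ is a Clifford semigroup for every $f\in E(S)$''.

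First I would prove, for a \emph{fixed} idempotent $f$, the equivalence
$$ fSf \text{ is Clifford} \quad\Longleftrightarrow\quad \Dcc_e\cap fSf \text{ is empty or a subgroup for every } e\in E(S). $$
For the contrapositive of $\Leftarrow$: if $fSf$ is not Clifford it contains two distinct idempotents $g\neq h$ that are $\Dc$-related inside $fSf$, hence $\Dc$-related in $S$, so $\Dcc_g\cap fSf$ contains two idempotents and cannot be a subgroup. For $\Rightarrow$ the key observation is that a $\Dc$-relation in $S$ between idempotents of $fSf$ can be \emph{pulled into} $fSf$: if $aa'=g$ and $a'a=h$ with $g,h\le f$, then $a=gah$ and $a'=ha'g$ already satisfy $fa=af=a$ and $fa'=a'f=a'$, so $a,a'\in fSf$ and $g,h$ are $\Dc$-related inside $fSf$; Cliffordness then forces $g=h$. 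Thus $\Dcc_e\cap fSf$ holds at most one idempotent, and since in the Clifford semigroup $fSf$ every element lies in a subgroup whose identity falls in $\Dcc_e\cap fSf$, the whole intersection is that single $\Hc$-class, a subgroup. Specializing $f=xx'$ and combining with Lemma \ref{atrans} yields: all $\Ga_e$ are arrow-transitive \iff\ $xx'Sxx'$ is Clifford for every $x\in\oX$.

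It then remains to upgrade the condition from the ``diagonal'' generator idempotents $xx'$ to \emph{all} idempotents, and this is the step I expect to be the main obstacle. The plan is to exploit that Cliffordness of $fSf$ is a $\Dc$-class invariant (for $g\,\Dc\,f$ one has $gSg\cong fSf$, a classical fact; compare Proposition \ref{isom_D}) together with the arrow structure of $\Ga_f$. Choosing any label $x$ in the nonempty content of the right root $\rv_f$, the defining arrow $(\lv_c,x,\rv_f)\in\oE$ supplies $c\in\Lcc_f\cap(x\wedge x')S$ and $c'\in V(c)\cap\Rcc_f\cap S(x\wedge x')$ with $c'x=f$ (the uniqueness of $c'$ coming from Lemma \ref{inv_lis}). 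Setting $g=cc'$ produces an idempotent with $g\,\Dc\,f$, and since $c\in xx'S$ and $c'\in Sxx'$ one checks $xx'g=g=gxx'$, i.e. $g\le xx'$. Hence $gSg$ is a local submonoid of the Clifford semigroup $xx'Sxx'$, so it is Clifford, whence $fSf\cong gSg$ is Clifford as well. This closes the chain: $S$ strict \iff\ $fSf$ Clifford for all $f$ \iff\ $xx'Sxx'$ Clifford for all $x$ \iff\ all $\Ga_e$ arrow-transitive. The delicate point to get right is exactly this last reduction --- verifying that every idempotent is $\Dc$-equivalent to one lying beneath a generator diagonal $xx'$ --- for which the content bookkeeping of $\Ga_f$ must be carried out with care.
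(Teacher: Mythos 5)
Your proposal is correct, and its skeleton coincides with the paper's: both directions are funneled through Lemma \ref{atrans} together with the characterization of strict regular semigroups as those whose local submonoids $fSf$ are Clifford. The difference lies in how the condition is transported from the ``generator'' local submonoids $xx'Sxx'$ to an arbitrary $fSf$. The paper does this by observing that every idempotent $f$ satisfies $f\wr (xx')\mu$ for some $x\in\oX$ (every element of $S$ has a word representative with a first letter), so that $s\mapsto sxx'$ embeds $fSf$ isomorphically as a regular subsemigroup $fSfxx'$ of $xx'Sxx'$, and regular subsemigroups of Clifford semigroups are Clifford. You instead read off from the nonempty content of $\rv_f$ in $\Ga_f$ an idempotent $g=cc'$ with $g\Dc f$ and $g\,\omega\,(xx')\mu$, and invoke the classical isomorphism $fSf\cong gSg$ for $\Dc$-related idempotents; both reductions are valid and of comparable length. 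Your intermediate equivalence for a fixed $f$ (namely, $fSf$ is Clifford iff $\Dcc_e\cap fSf$ is empty or a subgroup for every $e$) is stated in more detail than the paper's corresponding assertion, and the ``pulling a $\Dc$-relation into $fSf$'' computation ($a=gah$, $a'=ha'g$) that you supply to justify its forward direction is the one genuinely extra ingredient your route needs; it is correct. In short: same strategy, with an interchangeable final reduction step.
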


\begin{proof}
Assume first that $S$ is a strict regular semigroup, and let $e\in E(S)$ and $x\in\oX$. Since $xx'Sxx'$ is a Clifford semigroup, $\Dcc_e\cap xx'Sxx'$ is either empty or an $\Hc$-class of $\Dcc_e$. Hence $\Ga_e$ is arrow-transitive by Lemma \ref{atrans}.

Assume now that $\Ga_e$ is arrow transitive for all $e\in E(S)$. By Lemma \ref{atrans}, for each $x\in\oX$, $\Dcc_e\cap xx'Sxx'$ is either empty or a subgroup for all $e\in E(S)$. So, each $\Dc$-class of $xx'Sxx'$ is a group and $xx'Sxx'$ is a Clifford semigroup. Let now $f$ be an idempotent of $S$ and let $x\in\oX$ such that $f\omega^r xx'$. Note that $\varphi:fSf\to fSfxx',\,s\mapsto sxx'$ is an isomorphism. Hence $fSfxx'$ is a regular subsemigroup of $xx'Sxx'$ and, consequently, it is a Clifford semigroup too. We have shown that all local submonoids $fSf$ of $S$ are Clifford semigroups. So, $S$ is a strict regular semigroup.
\end{proof}

No automorphism of an rliw-graph $\Ga$ can send a vertex $\av$ into another vertex at line-distance 2 from $\av$ as, otherwise, this would contradict the fact of $\Ga$ being reduced. Hence, we will consider also the following weak version of vertex-transitivity: an liw-graph is \emph{almost vertex-transitive} if for each pair $(\av,\bv)$ of vertices of $\Ga$, there exists $\varphi\in\Aut(\Ga)$ such that $\av\varphi$ is at line-distance at most 2 from $\bv$. This condition is clearly equivalent to say that for each pair of vertices $(\av,\bv)$ of different sides, there exists $\varphi\in\Aut(\Ga)$ such that $\av\varphi$ and $\bv$ are connected by a line. The \emph{orbit} $O(\av)$ of a vertex $\av\in\Ga$ is the set
$$O(\av)=\{\av'\,:\;\av'=\av\varphi\mbox{ for some }\varphi\in\Aut(\Ga)\}\,.$$
The orbit $O(\av)$ is \emph{full} if each vertex $\bv\in\Ga$ with $\sb(\bv)\neq \sb(\av)$ is connected by a line to some vertex of $O(\av)$. Observe that $\Ga$ is almost vertex-transitive \iff\ all its orbits are \emph{full}.

\begin{prop}
Let $a\in S$ and $e,f\in E(S)$ such that $e\Lc a\Rc f$. Then
\begin{itemize}
\item[$(i)$] $\Rcc_a$ is a subsemigroup of $S$ \iff\ the orbit of $\lv_a$ in $\Ga_e$ is full.
\item[$(ii)$] $\Lcc_a$ is a subsemigroup of $S$ \iff\ the orbit of $\rv_a$ in $\Ga_f$ is full.
\end{itemize}
\end{prop}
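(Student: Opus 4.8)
The plan is to prove $(i)$ and to obtain $(ii)$ by the left--right dual argument. Since the paper treats locally inverse semigroups as $\langle 2,2\rangle$-algebras, ``subsemigroup'' here means a subalgebra for both $\cdot$ and $\wedge$, and this is the reading I will use. Fix $e\Lc a\Rc f$; then the left vertices of $\Ga_e$ are indexed by $\Lcc_e=\Lcc_a$ and the right vertices by $\Rcc_e$, so $\lv_a\in V_l(\Ga_e)$. I will route both sides of the equivalence through the intermediate condition
\begin{itemize}
\item[$(C)$] every $\Lc$-class of $\Dcc_a$ meets $\Rcc_a$ in an idempotent,
\end{itemize}
equivalently, every $\Hc$-class contained in $\Rcc_a$ is a group.

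First I would settle the graph side: the orbit of $\lv_a$ is full \iff\ $(C)$ holds. By Corollary~\ref{aut_gr}$.(i)$ the orbit is exactly $O(\lv_a)=\{\lv_b:\,b\in\Hcc_a\}$, and by the definition of $\lE$ in $\Ga_e$ a right vertex $\rv_c$ (with $c\in\Rcc_e$) is joined by a line to $\lv_b$ precisely when $c\in V(b)$. Hence fullness of $O(\lv_a)$ says: for every $c\in\Rcc_e$ there is $b\in\Hcc_a$ with $c\in V(b)$. For such $b,c$ one has $b\Lc e$ and $c\Rc e$, so $cb\Hc e$ is idempotent, forcing $cb=e$, while $bc$ is an idempotent of $\Rcc_a\cap\Lcc_c$. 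Conversely, given an idempotent in $\Rcc_a\cap\Lcc_c$, Lemma~\ref{inv_lis} produces an inverse $b$ of $c$ with $cb=e$ and $bc$ that idempotent, whence $b\in\Rcc_a\cap\Lcc_e=\Hcc_a$. As $c$ runs over $\Rcc_e$ the class $\Lcc_c$ runs over all $\Lc$-classes of $\Dcc_a$, so fullness is exactly $(C)$.

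It then remains to prove that $\Rcc_a$ is a subsemigroup \iff\ $(C)$ holds. For $(C)\Rightarrow\text{closed}$ I would show $\Rcc_a$ is a union of groups sitting in one $\Rc$-class: writing $g,h$ for the idempotent identities of the groups $\Hcc_s,\Hcc_t$ of $s,t\in\Rcc_a$, one has $gh=h$ (idempotents of the same $\Rc$-class), and $s=sg=s(hg)=(sh)g$ gives $s\ler sh\ler s$, so $sh\Lc h\Lc t$ lies in the group of the column of $t$; thus $st=(sh)t$ stays in $\Rcc_a$, and $s\wedge t=g\wedge h=h\in\Rcc_a$ by the defining property of $\wedge$ on idempotents. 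For the converse I would use closure under $\wedge$ together with the fact that $t\wedge t\Lc t$ for every $t$: then $t\wedge t$ is an idempotent of $\Lcc_t$, closure forces it into $\Rcc_a$, and so $\Rcc_a\cap\Lcc_t$ contains an idempotent for every $t\in\Rcc_a$, which is $(C)$.

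The main obstacle is this last equivalence, and inside it the identity $t\wedge t\Lc t$ and the group structure of the $\Hc$-classes of $\Rcc_a$; both are where the locally inverse hypothesis genuinely enters, through Lemmas~\ref{inv_lis} and~\ref{inv_yy} and the fact that no two idempotents of $(f]_r$ are $\Lc$-related. (Equivalently, the crux is that algebraic closure under $\cdot$ and $\wedge$ forces $\Rcc_a$ to be regular as a subsemigroup, i.e.\ each $t\in\Rcc_a$ admits an inverse in $\Rcc_a$, which is what manufactures the missing column idempotents.) Finally, $(ii)$ follows by the symmetric argument applied to $\rv_a\in V_r(\Ga_f)$, with right vertices indexed by $\Lcc_f$ and Corollary~\ref{aut_gr}$.(ii)$ in place of $(i)$.
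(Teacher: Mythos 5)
Your treatment of the graph side is sound and matches the paper's in substance: the pivot condition $(C)$ (every $\Hc$-class of $\Rcc_a$ is a group) is exactly what the paper routes through, and your identification of the orbit as $O(\lv_a)=\{\lv_b:\,b\in\Hcc_a\}$ via Corollary \ref{aut_gr}$.(i)$, together with the description of $\lE$, is an acceptable substitute for the paper's use of left isomorphisms of the graphs $\A_{ab}$ and Proposition \ref{Gclas}$.(i)$. Your proof that $(C)$ implies closure (Miller--Clifford style for the product, and $g\wedge h=h$ for $\Rc$-related idempotents $g,h$ for the sandwich operation) is correct and actually more complete than the paper, which asserts this implication without argument. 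You are also right that ``subsemigroup'' must be read in the $\langle 2,2\rangle$ sense: with multiplicative closure alone the statement is false (in the bicyclic monoid, which is generated as a binary algebra by $\{p,q\}$ and so falls under the paper's closing remark of Section 7, the $\Rc$-class of the identity is closed under the product, yet $\Hcc_p$ is not a group), a point the paper leaves entirely implicit.

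The gap is in your converse direction. The claim ``$t\wedge t\Lc t$ for every $t$'' is false. By definition $t\wedge t=tt'\wedge t't$ lies in $(tt']_r\cap(t't]_l$, which yields only $t\wedge t\ler t$ and $t\wedge t\lel t$; in general $t\wedge t$ is neither $\Rc$- nor $\Lc$-related to $t$. The paper's own four-spiral example refutes your claim: there $x\wedge x=d$, which is $\Rc$-related but not $\Lc$-related to $x$; and in the symmetric inverse monoid $t\wedge t=tt^{-1}t^{-1}t$ can be strictly below both $tt^{-1}$ and $t^{-1}t$ in the semilattice, hence $\Lc$-related to neither. Consequently, closure under $\wedge$ places the idempotent $t\wedge t$ in $\Rcc_a$, but that idempotent need not lie in $\Lcc_t$, so it does not witness $(C)$, and your argument for ``$\Rcc_a$ a subsemigroup implies the orbit of $\lv_a$ is full'' collapses. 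Your parenthetical fallback --- that closure under $\cdot$ and $\wedge$ forces $\Rcc_a$ to be a regular subsemigroup, so that some $t^*\in V(t)\cap\Rcc_a$ exists and $t^*t\in E(S)\cap\Hcc_t$ --- is the correct repair, and is what the paper tacitly appeals to when it asserts that each $\Hc$-class of the subsemigroup $\Rcc_a$ is a group; but this regularity is precisely the nontrivial point, and you neither prove it nor cite it, while the argument you do offer rests on a false identity.
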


\begin{proof}
We prove only $(i)$ since $(ii)$ is its dual. Assume first that $\Rcc_a$ is a subsemigroup of $S$ and let $\rv_b$ be a right vertex of $\Ga_e$. Since each $\Hc$-class of $\Rcc_a$ is a group, there exists $b'\in V(b)\cap \Hcc_a$. Hence $b'b\Hc ab$ and there is a left-isomorphism $\varphi$ from $\A_{ab}=(\lv_a,\Ga_e,\rv_b)$ onto $\A_{b'b}=(\lv_{b'},\Ga_e,\rv_b)$. Thus $\lv_{b'}\in O(\lv_a)$ and $(\lv_{b'},\rv_b)\in\ol{E}(\Ga_e)$ because $b'b\in E(S)$. We have shown that the orbit of $\lv_a$ in $\Ga_e$ is full. 

Assume now that $O(\lv_a)$ is full in $\Ga_e$ and let $s\in \Rcc_a$. Choose $b\in\Rcc_e$ such that $s=ab$. Hence $(\lv_a,\Ga_e,\rv_b)$ is a representation of $\A_s$. Since $O(\lv_a)$ is full, there exists $\varphi\in\Aut(\Ga_e)$ such that $(\lv_{a_1},\rv_b)$ is a line of $\Ga_e$ for $\lv_{a_1}=\lv_a\varphi$. Thus $\A_{a_1b}=(\lv_{a_1},\Ga_e,\rv_b)$ is left isomorphic to $(\lv_a,\Ga_e,\rv_b)$, and $a_1b\Rc ab$ by Proposition \ref{Gclas}$.(i)$. Consequently $a_1b\Hc ab$ because $a_1b,ab\in\Lcc_b$. Finally, observe that $a_1b\in E(S)$ since $(\lv_{a_1},\rv_b)$ is a line in $\Ga_e$ (Proposition \ref{id_inv}). We have shown that each $\Hc$-class of $\Rcc_a$ has an idempotent. Therefore, $\Rcc_a$ is a subsemigroup of $S$.
\end{proof}

The next two results are now obvious consequences of this last proposition.

\begin{cor}\label{ccs}
Let $e\in E(S)$. Then $\Dcc_e$ is a completely simple subsemigroup of $S$ \iff\ $\Ga_e$ is almost vertex-transitive.
\end{cor}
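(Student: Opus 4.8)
The plan is to translate the condition ``$\Dcc_e$ is a completely simple subsemigroup'' into a statement about $\Hc$-classes, then into a statement about $\Rc$-classes and $\Lc$-classes being subsemigroups, and finally to apply the previous proposition to reach the language of full orbits.

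First I would recall the purely semigroup-theoretic characterization: the $\Dc$-class $\Dcc_e$ of $S$ is a completely simple subsemigroup if and only if every $\Hc$-class contained in $\Dcc_e$ is a group (see \cite{howie}). Care is needed here, since being a subsemigroup alone is not enough (the bicyclic semigroup is a single $\Dc$-class and a subsemigroup, yet is not completely simple); what makes the difference is that $\Dcc_e$ be a union of groups. Concretely, if every $\Hc$-class of $\Dcc_e$ is a group, then each product $ab$ with $a,b\in\Dcc_e$ lands in $\Dcc_e$, because $\Lcc_a\cap\Rcc_b$ contains an idempotent and hence $ab\in\Rcc_a\cap\Lcc_b\subseteq\Dcc_e$; so $\Dcc_e$ is a subsemigroup, and a $\Dc$-class that is a subsemigroup and a union of groups is completely simple. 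The converse is immediate.

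Next I would observe, using the description of when $\Rcc_a$ is a subsemigroup that appears inside the proof of the previous proposition, that $\Rcc_a$ is a subsemigroup if and only if every $\Hc$-class contained in $\Rcc_a$ is a group. Since the $\Rc$-classes partition $\Dcc_e$, it follows that every $\Hc$-class of $\Dcc_e$ is a group if and only if every $\Rc$-class of $\Dcc_e$ is a subsemigroup; dually, this is also equivalent to every $\Lc$-class of $\Dcc_e$ being a subsemigroup.

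Finally I would invoke the previous proposition. Letting $a$ range over $\Lcc_e$, the vertex $\lv_a$ ranges over all left vertices of $\Ga_e$ while $\Rcc_a$ ranges over all $\Rc$-classes of $\Dcc_e$; thus ``every $\Rc$-class of $\Dcc_e$ is a subsemigroup'' is, by part $(i)$, equivalent to ``the orbit of every left vertex of $\Ga_e$ is full.'' Dually, by part $(ii)$, ``every $\Lc$-class of $\Dcc_e$ is a subsemigroup'' is equivalent to ``the orbit of every right vertex of $\Ga_e$ is full.'' Combining these, $\Dcc_e$ is a completely simple subsemigroup if and only if the orbit of every vertex of $\Ga_e$ is full, which by the observation preceding this corollary is exactly the statement that $\Ga_e$ is almost vertex-transitive. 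The only genuinely delicate point is the first step, namely pinning down the correct semigroup-theoretic equivalence (union of groups, not merely subsemigroup); after that, everything is a routine rephrasing through the previous proposition.
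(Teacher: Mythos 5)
Your route is the one the paper intends (it dismisses this corollary as an ``obvious consequence'' of the preceding proposition), and your opening reduction of ``$\Dcc_e$ is a completely simple subsemigroup'' to ``every $\Hc$-class of $\Dcc_e$ is a group'' is correct, including the warning about the bicyclic semigroup. The gap is in your second step: the equivalence ``$\Rcc_a$ is a subsemigroup if and only if every $\Hc$-class contained in $\Rcc_a$ is a group'' fails in the `only if' direction. Your own cautionary example refutes it: in the bicyclic monoid $B=\langle p,q\mid pq=1\rangle$ (an inverse, hence locally inverse, monoid, and one of the form $\LI\langle X;R\rangle$ with $x\mapsto p$, $x'\mapsto q$), the $\Rc$-class of $1$ is $\{1,p,p^2,\dots\}$, a subsemigroup whose $\Hc$-classes $\{p^n\}$, $n\ge 1$, are not groups. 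Only the `if' direction holds in general (Miller--Clifford: if $s\Rc t$ and $\Hcc_s=\Lcc_s\cap\Rcc_t$ contains an idempotent, then $st\in\Rcc_s\cap\Lcc_t$). So your claim that ``every $\Rc$-class of $\Dcc_e$ is a subsemigroup'' is \emph{by itself} equivalent to complete simplicity of $\Dcc_e$ is not justified. (You inherited this from the paper: the proof of the preceding proposition makes the same unsupported jump ``Since each $\Hc$-class of $\Rcc_a$ is a group'', and indeed part $(i)$ of that proposition appears to fail for $B$, where $\Aut(\Ga_{1})$ is trivial, the orbit of $\lv_{1}$ is not full, and yet $\Rcc_1$ is a subsemigroup.)

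The corollary is nonetheless true and your argument is easily repaired, because only the conjunction of the two conditions is ever needed. If every $\Rc$-class \emph{and} every $\Lc$-class of $\Dcc_e$ is a subsemigroup, then $s^2\Rc s$ and $s^2\Lc s$ for each $s\in\Dcc_e$, so $s^2\in\Hcc_s$ and $\Hcc_s$ is a group by Green's theorem; combined with the (correct) converse via Miller--Clifford, this gives: $\Dcc_e$ is a completely simple subsemigroup precisely when all its $\Rc$-classes and all its $\Lc$-classes are subsemigroups. Feeding this two-sided statement into parts $(i)$ and $(ii)$ of the preceding proposition --- of which you only need the implications ``orbit full $\Rightarrow$ subsemigroup'' and ``every $\Hc$-class of $\Rcc_a$ a group $\Rightarrow$ orbit of $\lv_a$ full'', which are the ones its proof actually establishes --- yields ``all orbits of $\Ga_e$ are full'', i.e.\ almost vertex-transitivity, exactly as in your final paragraph.
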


\begin{cor}
The locally inverse semigroup $S$ is completely regular (that is, a normal band of groups) \iff\ $\Ga_e$ is almost vertex-transitive for all $e\in E(S)$.
\end{cor}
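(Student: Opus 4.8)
The plan is to reduce the statement to Corollary \ref{ccs} together with the classical description of completely regular semigroups as those regular semigroups all of whose $\Dc$-classes are completely simple subsemigroups. First I would recall this description explicitly: a regular semigroup is completely regular (a union of its maximal subgroups) precisely when each of its $\Hc$-classes is a group, and for such a semigroup $\Dc=\Jc$, so that every $\Dc$-class is a completely simple subsemigroup; conversely, if every $\Dc$-class is a completely simple subsemigroup, then each $\Hc$-class, lying inside some completely simple (hence union-of-groups) $\Dc$-class, is a group, whence $S$ is completely regular.

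Next I would use the regularity of $S$ to identify its $\Dc$-classes. Since every $\Rc$-class and every $\Lc$-class of a regular semigroup contains an idempotent, every $\Dc$-class of $S$ is of the form $\Dcc_e$ for some $e\in E(S)$, and conversely each $\Dcc_e$ is a $\Dc$-class of $S$. Consequently, the condition ``every $\Dc$-class of $S$ is a completely simple subsemigroup'' is equivalent to ``$\Dcc_e$ is a completely simple subsemigroup for every $e\in E(S)$''. (Note this quantification is well posed at the level of $\Dc$-classes: by Proposition \ref{car_D_liw} the graph $\Ga_e$, and hence its almost vertex-transitivity, depends only on $\Dcc_e$, so $\Dc$-related idempotents may be used interchangeably.)

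Finally I would invoke Corollary \ref{ccs}, which asserts that for a fixed $e\in E(S)$ the $\Dc$-class $\Dcc_e$ is a completely simple subsemigroup if and only if $\Ga_e$ is almost vertex-transitive. Quantifying over all $e\in E(S)$ and chaining the three equivalences yields that $S$ is completely regular if and only if $\Ga_e$ is almost vertex-transitive for every $e\in E(S)$, which is the assertion.

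I do not expect a serious obstacle here, since the real work is already carried out in Corollary \ref{ccs} (itself resting on the preceding proposition relating full orbits of $\Aut(\Ga_e)$ to $\Hc$-classes being groups). The one point that deserves a little care — and the closest thing to a subtlety — is the parenthetical identification of completely regular locally inverse semigroups with normal bands of groups; for the proof of the biconditional I would lean on the working notion ``every $\Dc$-class completely simple'', citing the standard structure theory of completely regular semigroups as semilattices of completely simple semigroups, so that the logical content of the statement is independent of how the parenthetical is phrased.
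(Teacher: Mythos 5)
Your proposal is correct and follows the same route the paper intends: the paper presents this corollary as an immediate consequence of Corollary \ref{ccs} (quantified over all $e\in E(S)$), using exactly the standard facts you cite — that a regular semigroup is completely regular precisely when each of its $\Dc$-classes is a completely simple subsemigroup, and that every $\Dc$-class of a regular semigroup is of the form $\Dcc_e$ for some idempotent $e$. Nothing further is needed.
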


A $\Dc$-class $D$ of $S$ is a rectangular group if $D$ is a completely simple subsemigroup of $S$ and the product of any two idempotents of $D$ is another idempotent of $D$. If we look carefully to the proof of Proposition \ref{gen_inv}, we conclude that the product of any two idempotents of a completely simple $\Dc$-class $\Dcc_e$ is another idempotent \iff\ the line-connected components of $\Ga_e$ are complete bipartite graphs. The next corollary follows now from Corollary \ref{ccs}.

\begin{cor}
The $\Dc$-class $\Dcc_e$ is a rectangular group \iff\ $\Ga_e$ is almost vertex-transitive and its line-connected components are all (isomorphic) complete bipartite graphs.
\end{cor}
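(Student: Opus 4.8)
The plan is to read off the two defining features of a rectangular group as graph conditions, using the results already assembled above. Recall from the text that a $\Dc$-class $D$ is a \emph{rectangular group} when $D$ is a completely simple subsemigroup of $S$ and the product of any two idempotents of $D$ is again an idempotent of $D$. Thus $\Dcc_e$ is a rectangular group \iff\ these two conditions hold simultaneously for $D=\Dcc_e$, and the whole proof amounts to translating each condition into a property of $\Ga_e$.

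First I would invoke Corollary \ref{ccs} to turn the first condition into the statement that $\Ga_e$ is almost vertex-transitive. Next, working under the assumption that $\Dcc_e$ is already completely simple, I would use the observation extracted from the proof of Proposition \ref{gen_inv}: the product of any two idempotents of $\Dcc_e$ is again an idempotent \iff\ the line-connected components of $\Ga_e$ are complete bipartite graphs. Combining the two equivalences yields that $\Dcc_e$ is a rectangular group \iff\ $\Ga_e$ is almost vertex-transitive and every line-connected component of $\Ga_e$ is a complete bipartite graph. Note that for the backward direction the hypothesis ``almost vertex-transitive'' supplies complete simplicity via Corollary \ref{ccs}, while ``line-connected components complete bipartite'' supplies closure of $E(\Dcc_e)$ under multiplication, so the two graph conditions together are exactly what is needed.

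It remains to justify the parenthetical word \emph{isomorphic}, that is, to show that almost vertex-transitivity already forces all line-connected components to be pairwise isomorphic, so that this word records a consequence rather than an extra hypothesis. The key remark is that each $\varphi\in\Aut(\Ga_e)$ preserves lines, hence carries line-connected vertices to line-connected vertices, and therefore maps $\Omega(\av)$ isomorphically onto $\Omega(\av\varphi)$ for every vertex $\av$. Given two left vertices $\av,\bv$ of $\Ga_e$, almost vertex-transitivity produces $\varphi\in\Aut(\Ga_e)$ with $\av\varphi$ at line-distance at most $2$ from $\bv$; since $\av\varphi$ and $\bv$ are both left vertices, this distance is even, hence $0$ or $2$, so $\av\varphi$ and $\bv$ are line-connected and $\Omega(\av)\varphi=\Omega(\bv)$. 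As every vertex of an liw-graph is an endpoint of some line (condition $(iii)$ of the definition), each line-connected component contains a left vertex, and the previous sentence shows any two such components are isomorphic via a suitable automorphism of $\Ga_e$. This last step is the only part of the argument that is not an immediate citation, and I expect it to be the main (though short) obstacle, since one must deduce transitivity of $\Aut(\Ga_e)$ on line-connected components from the weaker almost vertex-transitivity hypothesis.
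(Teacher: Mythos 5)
Your proposal is correct and follows essentially the same route as the paper: cite Corollary \ref{ccs} for the equivalence between complete simplicity of $\Dcc_e$ and almost vertex-transitivity of $\Ga_e$, and the observation extracted from the proof of Proposition \ref{gen_inv} for the equivalence between closure of $E(\Dcc_e)$ under products and completeness of the line-connected components. Your additional argument justifying the parenthetical ``isomorphic'' (automorphisms carry components onto components, every component contains a left vertex, and almost vertex-transitivity plus parity of line-distance between same-side vertices forces any two components to be related by an automorphism) is a correct supplement to a point the paper leaves implicit.
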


We say that the line-connected components of $\Ga_e$ are almost vertex-transitive if for any two line-connected vertices $\av$ and $\bv$ of $\Ga_e$ with different sides, there exists $\varphi\in\Aut(\Ga_e)$ such that $\av\varphi$ is connected to $\bv$ by a line. A locally inverse semigroup $S$ is \emph{$E$-solid} if all $\Hc$-classes of its core $C(S)$ are groups. In fact, for $S$ to be $E$-solid, it is enough to show that $\Hcc_{fg}$ is a group for all $e,f,g\in E(S)$ such that $f\Lc e\Rc g$. The next result relates $E$-solid locally inverse semigroups with liw-graphs with almost vertex-transitive line-connected components.

\begin{prop}
The locally inverse semigroup $S$ is $E$-solid \iff\  each line-connected components of $\Ga_e$ is almost vertex-transitive, for all $e\in E(S)$.
\end{prop}

\begin{proof}
Let $S$ be an $E$-solid locally inverse semigroup. Consider $e\in E(S)$ and $a,b\in\Dcc_e$ such that $a\Lc e\Rc b$ and $\lv_a$ is line-connected to $\rv_b$ in $\Ga_e$. Then $(\lv_a,\Ga_e,\rv_b)$ is a representation of $\A_{ab}$ and $ab\in C(S)$ by Proposition \ref{core}. Since $S$ is an $E$-solid locally inverse semigroup, $\Hcc_{ab}$ contains an idempotent $f$ and there exists $b'\in V(b)\cap \Hcc_a$. Hence $\A_f=(\lv_{b'},\Ga_e,\rv_b)$ is left-isomorphic to $\A_{ab}$ and $\rv_b$ is adjacent to $\lv_{b'}\in O(\lv_a)$. We can now conclude that the line-connected components of $\Ga_e$ are almost vertex-transitive.

Assume now that, for all $e\in E(S)$, the line-connected components of $\Ga_e$ are almost vertex-transitive. Let $e$, $f$ and $g$ be three idempotents of $S$ such that $f\Lc e\Rc g$.  We can represent $\A_{fg}$ as $(\lv_f,\Ga_e,\rv_g)$ and
$$\lv_f\rv_e\lv_e\rv_g$$
is a walk in $\Ga_e$ from $\lv_f$ to $\rv_g$. Thus, there exists $\varphi\in\Aut(\Ga_e)$ such that $(\lv_a,\rv_g)\in\ol{E}(\Ga_e)$ for $\lv_a=\lv_f\varphi$ since the line-connected components of $\Ga_e$ are almost vertex-transitive. So $\A_{ag}=(\lv_a,\Ga_e,\rv_g)$ is left-isomorphic to $\A_{fg}$ and $ag\Rc fg$. But both $ag$ and $fg$ belong to $\Lcc_g$. Hence $ag\Hc fg$. Finally, $ag\in E(S)$ because $(\lv_a,\rv_g)\in\ol{E}(\Ga_e)$. We have shown that $\Hcc_{fg}$ contains an idempotent and, therefore, $S$ is an $E$-solid locally inverse semigroup.
\end{proof}

We end this paper by considering the case where each vertex is the endpoint of exactly one arrow.

A presentation $P=\langle X;R\rangle$ is called \emph{$X$-straight} if $\la_u=\la_v$ and $\tau_u=\tau_v$ for all $(u,v)\in R$. Note that $\la_u=\la_v$ and $\tau_u=\tau_v$ for all $(u,v)\in\varepsilon$ where $\varepsilon$ is the Auinger's congruence on $\wX^+$. Then $\la_u=\la_v$ and $\tau_u=\tau_v$ for all $(u,v)\in\mu$ \iff\ $P$ is an $X$-straight presentation. If
$$S_{x,y}=\{u\in\wX^+\,:\;\la_u=x\mbox{ and } \tau_u=y\}$$
for $x,y\in\oX$, then $S$ is the disjoint union of the subsemigroups $S_{x,y}/\mu$ \iff\ $P$ is $X$-straight. By the description of the maximal subsemilattices of the bifree locally inverse semigroup $\wX^+/\varepsilon$ given in \cite{ol17}, it is not hard to verify that $E(S_{x,y}/\mu)$ are the maximal subsemilattices of $S$ if $P$ is $X$-straight.

A locally inverse semigroup is called \emph{straight} if its maximal subsemilattices are disjoint. Hence $S$ is straight if $P$ is an $X$-straight presentation. However, the converse is not true: if $S$ is an inverse semigroup, then $S$ is obviously straight but the presentation $P$ is not $X$-straight. 

\begin{prop}
The sets $\cb(\av)$ are singleton sets for all $e\in E(S)$ and $\av\in V(\Ga_e)$  \iff\ $P$ is $X$-straight.
\end{prop}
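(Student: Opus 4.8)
The plan is to reduce the whole statement to a single description of the content of a vertex. Concretely, I would first prove that for any $e\in E(S)$ and any $a\in\Lcc_e$,
$$\cb(\lv_a)=\{\la_v\,:\;v\in\wX^+,\ v\mu=a\}\,,$$
and dually that $\cb(\rv_b)=\{\tau_w\,:\;w\in\wX^+,\ w\mu=b\}$ for any $b\in\Rcc_e$. Granting this, the proposition is immediate from the remark preceding it, namely that $P$ is $X$-straight \iff\ $\la_u=\la_v$ and $\tau_u=\tau_v$ for all $(u,v)\in\mu$. Indeed, if $P$ is $X$-straight then $\la$ (resp.\ $\tau$) is constant on each $\mu$-class, so each $\cb(\lv_a)$ (resp.\ $\cb(\rv_b)$) has at most one element; since $\Ga_e$ is an liw-graph we have $\cb(\av)\neq\emptyset$, and so the contents are singletons. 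Conversely, assume all contents are singletons and let $(u,v)\in\mu$ with $a=u\mu=v\mu$. Choosing an inverse $a'$ of $a$ and setting $e=a'a$, $f=aa'$, we have $a\in\Lcc_e$ and $a\in\Rcc_f$; then $\la_u,\la_v\in\cb(\lv_a)$ in $\Ga_e$ and $\tau_u,\tau_v\in\cb(\rv_a)$ in $\Ga_f$, forcing $\la_u=\la_v$ and $\tau_u=\tau_v$. Hence $P$ is $X$-straight.

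So the real content is the displayed description of $\cb(\lv_a)$, the right-vertex case being its dual. By construction of $\Ga_e$, an arrow labelled $x$ is incident to the left vertex $\lv_a$ exactly when $a\in(x\wedge x')S$, i.e.\ when $(x\wedge x')a=a$; thus $\cb(\lv_a)=\{x\in\oX\,:\;(x\wedge x')a=a\}$. For the inclusion $\supseteq$: if $v\mu=a$ and $\la_v=x$, I want $(x\wedge x')a=a$, and it suffices to show $(x\wedge x')z_1\,\mu=z_1\,\mu$ for the first letter $z_1$ of $v$. If $z_1=x\in\oX$, this reads $(x\wedge x')x\,\mu=(xx')x\,\mu=x\,\mu$, using $(xx')\mu=(x\wedge x')\mu$ and $xx'x\,\varepsilon=x\,\varepsilon$. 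If $z_1=(x\wedge y)$, this is the identity $(x\wedge x')(x\wedge y)=(x\wedge y)$ treated below. For the inclusion $\subseteq$: if $(x\wedge x')a=a$ and $u\mu=a$, then $v:=(x\wedge x')u$ satisfies $v\mu=a$ and $\la_v=x$, so $x$ lies in the right-hand set.

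The hard, though short, part is the identity $(x\wedge x')(x\wedge y)\,\varepsilon=(x\wedge y)\,\varepsilon$ in the bifree semigroup $\wX^+/\varepsilon$, which I would derive from local inverseness. Using $s\wedge t=ss'\wedge t't$ together with $x'\varepsilon\in V(x\varepsilon)$ (Lemma \ref{au_fi}), the letter $(x\wedge x')$ represents the sandwich $x\varepsilon\wedge x'\varepsilon=(xx')\varepsilon$, and likewise $(y'\wedge y)$ represents $(y'y)\varepsilon$; consequently the letter $(x\wedge y)$ represents the sandwich $(x\wedge x')\wedge(y'\wedge y)$ of the two idempotents $(x\wedge x')\varepsilon$ and $(y'\wedge y)\varepsilon$. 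Since $\wX^+/\varepsilon$ is locally inverse, $((x\wedge x')]_r\cap((y'\wedge y)]_l=((x\wedge y)]$, and in particular $(x\wedge y)\in((x\wedge x')]_r$, which is exactly $(x\wedge x')(x\wedge y)=(x\wedge y)$. The dual identity $(x\wedge y)(y'\wedge y)=(x\wedge y)$, coming from $(x\wedge y)\in((y'\wedge y)]_l$, handles the corresponding computation for right vertices. Once these identities are in place, the two inclusions of the previous paragraph close the content description, and the proposition follows as explained in the first paragraph.
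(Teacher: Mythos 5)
Your proposal is correct and follows essentially the same route as the paper: both directions come down to the observation that $x\in\cb(\lv_a)$ exactly when $a\in(x\wedge x')S$, equivalently when $x=\la_v$ for some representative $v\in\wX^+$ of $a$, combined with the remark that $X$-straightness amounts to $\la$ and $\tau$ being constant on $\mu$-classes. Your explicit description of the contents and the verification of $(x\wedge x')(x\wedge y)\,\varepsilon=(x\wedge y)\,\varepsilon$ merely make precise the steps the paper asserts without comment.
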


\begin{proof}
Assume that $\cb(\av)$ are singleton sets for all $e\in E(S)$ and $\av\in V(\Ga_e)$, and let $(u,v)\in R$. Let $x=\la_u$, $y=\la_v$ and $e\in E(S)$ such that $e\Rc u\mu=v\mu$. Then $e\in (x\wedge x')S\cap (y\wedge y')S$, and $(\lv_e,x,\rv_{ex})$ and $(\lv_e,y,\rv_{ey})$ are two arrows of $\Ga_e$. Hence $x=y$. Similarly, we show that $\tau_u=\tau_v$, and so $P$ is $X$-straight. 

Assume now that $P$ is $X$-straight, let $e\in E(S)$ and choose $a\Lc e$ and $x_1,x_2\in\cb(\lv_a)$. Then there are arrows $\ev_1=(\lv_a,x_1,\rv_{b_1})$ and $\ev_2=(\lv_a,x_2,\rv_{b_2})$ in $\Ga_e$, and $a\in (x_1\wedge x_1')S\cap (x_2\wedge x_2')S$. Hence $a=u\mu=v\mu$ for some $u\in x_1\wX^*$ and $v\in x_2\wX^*$, and 
$$a\in (S_{x_1,y_1}/\mu)\cap (S_{x_2,y_2}/\mu)$$
for $y_1=\tau_u$ and $y_2=\tau_v$. Thus $x_1=x_2$ because $P$ is $X$-straight, and so $\cb(\lv_a)$ is a singleton set. We can conclude that $\cb(\rv_b)$ is a singleton set for all $b\Rc e$ similarly.
\end{proof}

\vspace*{.5cm}

\noindent{\bf Acknowledgments}: This work was partially supported by CMUP (UID/ MAT/00144/2019), which is funded by FCT with national (MCTES) and European structural funds through the programs FEDER, under the partnership agreement PT2020.


\begin{thebibliography}{99}
    
\bibitem{auinger94}
    K.~Auinger, The bifree locally inverse semigroup on a set, {\em J.~Algebra\/} {\bf 166} (1994), 630--650.

\bibitem{auinger95}
		K.~Auinger, On the Bifree Locally Inverse Semigroup, {\em J.~Algebra\/} {\bf 178} (1995), 581--613.

\bibitem{billhardt}
		B.~Billhardt, Bifree locally inverse semigroups as expansions, {\em J.~Algebra\/} {\bf 283} (2005), 505--521.

\bibitem{bmp}
		K.~Byleen, J.~Meakin and F.~Pastijn, The fundamental four-spiral semigroup, {\em J.~Algebra\/} {\bf 54} (1978), 6--26.
	
\bibitem{graham}
		R.~L.~Graham, On finite 0-simple semigroups and graph theory, {\em Math.~Syst.~Theory\/} {\bf 2} (1968), 325--339.		
		
\bibitem{ha1}
    T.~Hall, Identities for existence varieties of regular semigroups, {\em Bull.~Austral.~Math. Soc.\/} {\bf 40} (1989), 59--77.%

\bibitem{houghton}
		C.~H.~Houghton, Completely 0-simple semigroups and their associated graphs and groups, {\em Semigroup Forum\/} {\bf 14} (1977), 41--67.
		
\bibitem{howie}		
    J. Howie, Fundamentals of Semigroups Theory, {\em Clarendon Press, Oxford}, 1995. 

\bibitem{ks1}
    J.~Ka\v{d}ourek and M.~B.~Szendrei, A new approach in the theory of orthodox semigroups, {\em Semigroup Forum\/} {\bf 40} (1990), 257--296.%

\bibitem{munn74}
		W.~D.~Munn, Free inverse semigroups, {\em Proc.~London Math.~Soc.\/} {\bf 29} (1974), 385--404. 

\bibitem{na1}
   	K.~S.~S.~Nambooripad, Pseudo-semilattices and biordered sets I, {\em Simon Stevin\/} {\bf 55} (1981), 103--110.
		
\bibitem{newman}
		M.~H.~A.~Newman, On theories with a combinatorial definition of equivalence, {\em Ann. of Math.} (2) {\bf 43} (1942), 223--243.
		
\bibitem{ol17}		
		L.~Oliveira, A Munn tree type representation for the elements of the bifree locally inverse semigroup, {\em Monatsh.~Math.\/} {\bf 183} (2017), 653--678.

\bibitem{reilly}
		N.~Reilly, Bipartite graphs and completely 0-simple semigroups, {\em Semigroup Forum\/} {\bf 84} (2012), 176--199.

\bibitem{sch73a}
		H.~Scheiblich, Free inverse semigroups, {\em Semigroup Forum\/} {\bf 4} (1972), 351--359.

\bibitem{sch73b}
		H.~Scheiblich, Free inverse semigroups, {\em Proc.~Amer.~Math.~Soc.\/} {\bf 38} (1973), 1--7.

\bibitem{stephen90}
		J.~B.~Stephen, Presentations of inverse semigroups, {\em J.~Pure Appl.~Algebra\/} {\bf 63} (1990), 81--112.

\bibitem{yeh}
		Y.~T.~Yeh, The existence of e-free objects in e-varieties of regular semigroups, {\em J.~Algebra\/} {\bf 164} (1992), 471--484.

\end{thebibliography}
\end{document}